\newcommand{\longdashrightarrow}[1][2.5pt]{%
  \settowidth{\@tempdima}{$\longrightarrow$}\longrightarrow% typeset arrow
  \makebox[-\@tempdima]{\hskip-0.5ex\color{white}\rule[0.5ex]{#1}{1pt}}% typeset overlay
    \phantom{\longrightarrow}
  \makebox[-\@tempdima]{\hskip-2.8ex\color{white}\rule[0.5ex]{#1}{1pt}}
  \phantom{\longrightarrow}% advance appropriate horizontal distance
}
\newcommand{\xdashrightarrow}[2][]{\ext@arrow 0359\rightarrowfill@@{#1}{#2}}
\newcommand{\arxiv}[1]{\href{https://arxiv.org/abs/#1}{\textup{\texttt{arXiv:#1}}}}
\def\ttau{\tilde \tau}
\def\ttt{\tilde t}
\def\td{\tilde d}
\def\tI{{\tilde \I}}
\def\tC{{\tilde C}}
\def\PGL{{\rm PGL}}
\def\RR{{\mathcal{R}}}
\def\R{{\mathbb R}}
\def\C{{\mathbb C}}
\def\Z{{\mathbb Z}}
\def\F{{\mathbb F}}
\def\AA{{\mathbb A}}
\def\Mat{{\rm Mat}}
\def\tree{{\rm tree}}
\def\Sym{{\rm Sym}}
\def\detp{{\det{}\!'}}
\def\Proj{{\operatorname{Proj}}}
\def\dlog{{\operatorname{dlog}}}
\def\Spec{\operatorname{Spec}}
\def\Hom{\operatorname{Hom}}
\def\Trop{{\operatorname{trop}}}
\def\D{{\mathcal{D}}}
\def\PT{{\rm PT}}
\def\sp{\operatorname{span}}
\def\I{{\mathcal{I}}}
\def\lk{{\rm lk}}
\def\a{{\mathbf{a}}}
\def\y{{\mathbf{y}}}
\def\Fl{{\rm Fl}}
\def\Gr{{\rm Gr}}
\def\hGr{\widehat{\Gr}}
\def\CC{{\mathcal{C}}}
\def\M{{\mathcal{M}}}
\def\bM{\overline{\M}}
\def\tM{{\widetilde{\M}}}
\def\GL{{\rm GL}}
\def\Vol{{\rm Vol}}
\def\val{{\rm val}}
\def\pt{{\rm pt}}
\def\Res{{\rm Res}}
\renewcommand{\P}{{\mathbb P}}
\def\F{{\mathbb F}}
\def\oU{{U}}
\def\usigma{{\underline{\sigma}}}
\def\s{{\underline{s}}}
\def\tK{{\Lambda}}
\def\lat{{\Lambda}}
\def\tlambda{\tilde \lambda}
\def\hLambda{{\hat \Lambda}}
\def\tmu{\tilde \mu}
\def\SL{{\rm SL}}
\def\SO{{\rm SO}}
\def\Spin{{\rm Spin}}
\def\J{{\mathcal{J}}}
\def\tJ{{\tilde \J}}
\def\trop{{\rm trop}}
\def\diag{{\rm diag}}
\newtheorem{conjecture}{Conjecture}
\newtheorem{theorem}[conjecture]{Theorem}
\newtheorem{lemm}[conjecture]{Lemma}
\newtheorem{proposition}[conjecture]{Proposition}
\newtheorem{corollary}[conjecture]{Corollary}
\newtheorem{definition}[conjecture]{Definition}
\newtheorem{exercise}[conjecture]{Exercise}
\newtheorem{problem}[conjecture]{Problem}
\newtheorem{assumption}[conjecture]{Assumption}
\newtheorem{remark}[conjecture]{Remark}
\newtheorem{example}[conjecture]{Example}
\numberwithin{conjecture}{section}
\numberwithin{equation}{section}
\title{Moduli spaces in positive geometry}
\author{Thomas Lam}
\address{Department of Mathematics, University of Michigan \\ 2074 East Hall, 530 Church Street \\ Ann Arbor, MI 48109-1043, USA}
\begin{document}
\begin{abstract}
These are lecture notes for five lectures given at MPI Leipzig in May 2024.  We study the moduli space $\M_{0,n}$ of $n$ distinct points on $\P^1$ as a positive geometry and a binary geometry.  We develop mathematical formalism to study Cachazo-He-Yuan's scattering equations and the associated scalar and Yang-Mills amplitudes.  We discuss open superstring amplitudes and relations to tropical geometry.
\end{abstract}
\maketitle
\section*{Introduction}

The new field of positive geometry is developing at the interface of combinatorial algebraic geometry in mathematics and scattering amplitudes in physics.  The purpose of these notes is to give an introduction to some of the ideas in this area.  Our main target audience is a mathematics graduate student with some experience in geometric combinatorics or algebraic geometry.  At the urging of the organizers of the workshop ``Combinatorial algebraic geometry from physics", I have leant towards an algebro-geometric approach, though I have kept the language as concrete as possible.  The text should be understandable to a motivated student who has taken one graduate class in algebraic geometry.  No physics background is assumed.
%, but I have made a serious attempt to formalize some physics concepts from scattering amplitudes.

\medskip

When I started writing these notes, it became clear very quickly that I could not give a comprehensive survey of the area, such is the breadth of the ideas involved.  I narrowed the scope as follows.

On the mathematical side, we focus on the space $\M_{0,n}$.  This is the configuration space of $n$ distinct points on the projective line $\P^1$.  The space $\M_{0,n}$ is a central example in combinatorial geometry, as it is simultaneously:
\begin{enumerate}
\item a quotient of an open subset of the Grassmannian, or of a matrix space,
\item a hyperplane arrangement complement,
\item (naturally) an open subset of a toric variety,
\item a very affine variety,
\item a positive geometry,
\item a binary geometry,
\item a moduli space,
\item a cluster configuration space.
\end{enumerate}
%We study these aspects of $\M_{0,n}$, as well as a partial compactification $\tM_{0,n}$ and the Deligne-Mumford-Knudsen compactificaton $\bM_{0,n}$, the moduli space of stable configurations of $n$-points on $\P^1$.  

% One of the themes of our approach is to ask: what special features of $\M_{0,n}$ and $u_{ij}$ can be generalized to other settings (such as binary geometries, positive geometries, very affine varieties)?

On the physics side, we consider three classes of amplitudes, all with tight connections to combinatorial algebraic geometry: 
\begin{enumerate}
\item
the planar scalar $\phi^3$-amplitude, 
\item
the planar gluon-amplitude for Yang-Mills theory, and 
\item 
the open superstring-amplitude.
\end{enumerate}
In all cases, we focus on the kinematic aspects of the amplitudes, and only consider \emph{tree-level} amplitudes for massless scattering.  Our aim is to give \emph{precise} definitions of various functions on kinematic space, with \emph{heuristic} explanations of the relations to quantum field theory and string theory.  Since $\M_{0,n}$ is the main mathematical object we consider, our approach to amplitudes leans heavily on the Cachazo-He-Yuan \emph{scattering equations} \cite{CHYKLT,CHYmassless,CHYarbitrary}.  For gluon amplitudes, we borrow also from twistor string theory \cite{Wit,RSV}.  We regret the absence of loop amplitudes, of supersymmetry, and of momentum-twistors, all of which are the bread and butter of modern amplitudes theory.  

A recurring theme in the notes is to ask whether amplitudes can be generalized beyond $\M_{0,n}$ to more general settings, such as that of binary geometries, positive geometries, and very affine varieties.  In \cite{Lamfuture}, we develop these amplitudes for hyperplane arrangement complements.
%Physics of tree-level amplitudes in $\phi^3$-scalar theory, gluon Yang-Mills theory, and string theory.
%Mathematics of CHY scattering equations, kinematic space in $D$ and $D=4$ dimensions, geometry of twistor string maps.

\medskip
\noindent
{\bf Overview.}  We now give a more detailed survey of the contents of this work.  In \cref{sec:M0n}, we define and study the space $\M_{0,n}$.  We use crucially a collection of cross-ratios $u_{ij}$ called \emph{dihedral coordinates} \cite{KN,Bro}.  These cross-ratios are labeled by the diagonals, but not sides, of a $n$-gon.  They form a basis of the character lattice of the intrinsic torus of $\M_{0,n}$ (\cref{prop:uijchar}), and define an affine variety $\tM_{0,n}$ (\cref{sec:MD}) that is a partial compactification of $\M_{0,n}$.  The strata of $\tM_{0,n}$ are labeled by subdivisions of the $n$-gon, or equivalently, by faces of the associahedron polytope.  Thus, $\tM_{0,n}$ is an affine patch in which we can see the combinatorics and geometry of the boundary stratification of $\bM_{0,n}$.

The dihedral coordinates $u_{ij}$ depend on a choice of a connected component $(\M_{0,n})_{>0}$ of the real points $\M_{0,n}(\R)$.  We call $(\M_{0,n})_{>0}$ the positive part of $\M_{0,n}$.  The analytic closure $(\M_{0,n})_{\geq 0}$ is a positive geometry (\cref{def:posgeom}) inside $\bM_{0,n}$, and we discuss properties of the canonical form $\Omega_{0,n} = \Omega((\M_{0,n})_{\geq 0})$, a holomorphic top-form on $\M_{0,n}$.

\medskip

In \cref{sec:binary}, we formalize ideas from \cite{AHLT} and define \emph{binary geometries}.  The dihedral coordinates $u_{ij}$ of $\M_{0,n}$ satisfy \emph{binary} $u$-equations.  For example, with $n = 5$, we have five dihedral coordinates $u_{13},u_{14},u_{24},u_{25},u_{35}$, satisfying the five relations
\begin{align}\label{eq:ueq5}
\begin{split}
u_{13} + u_{24}u_{25} &=1 \\
u_{24} + u_{13}u_{35} &=1 \\
u_{35} + u_{14}u_{24} &=1 \\
u_{14} + u_{25}u_{35} &=1 \\
u_{25} + u_{13}u_{14} &=1 
\end{split}
\end{align}
A key feature of dihedral coordinates is that if $u_{ij} = 0$ then we have $u_{kl} = 1$ for all diagonals $(k,l)$ crossing $(i,j)$.  This is the ``binary" nature of the $u$-equations.  In \cref{def:binary} we define binary geometries, a class of affine varieties with distinguished coordinates $u_i$ satisfying equations of a form similar to \eqref{eq:ueq5}.  We classify (\cref{thm:onedim}) one-dimensional binary geometries and show (\cref{thm:pseudo}) that the face poset of a binary geometry is a simplicial complex that is a pseudomanifold.  We suggest a number of potential directions for further exploration of binary geometries.
%A consequence of our focus on $\M_{0,n}$ is the rather brief appearances of the Grassmannian, another staple of combinatorial algebraic geometry.

\medskip

\cref{sec:SE} is the core of the lecture series.  We start with an informal discussion of scattering amplitudes in quantum field theory, leading to a precise definition of the \emph{kinematic space} $K_n$ for $n$-particle scattering (\cref{def:Kn}).  The link between kinematics and the geometry of $\M_{0,n}$ is made possible by a natural isomorphism between the integer points $K_n(\Z)$ and the character lattice $\Lambda(\M_{0,n})$ (\cref{prop:Kniso}).  The celebrated \emph{scattering equations} of Cachazo-He-Yuan were a revolution in the construction of tree-level amplitudes, and we interpret them as a \emph{scattering correspondence} (see \eqref{eq:scatcorr})
\begin{equation}\label{eq:corr}
\I \subset K_n \times \M_{0,n}.
\end{equation}
This correspondence is cut out by the critical points of the scattering potential, also called a likelihood function in algebraic statistics, or master function in the theory of hyperplane arrangements.

At the heart of our philosophy is that the correspondence \eqref{eq:corr} is \emph{canonically} associated to the very affine variety $\M_{0,n}$.  The CHY scalar amplitudes are obtained by summing a rational function over the pre-images of a point $\s \in K_n$ under the map $\I \to K_n$.  We make CHY's definition more conceptual in two ways.  First, we define in \cref{def:ampl} an amplitude $A(\Omega|\Omega')$ that depends on two rational top-forms $\Omega,\Omega'$, which we expect to be taken to be canonical forms of positive geometries.  Second, we define a scattering form on $K_n$ as a push-pull of the canonical form of $\M_{0,n}$.  This gives a conceptual approach to the scattering form of \cite{ABHY}.

\medskip
In \cref{sec:4dim}, we specialize to four-dimensional space-time and consider amplitudes for gluons (massless spin one particles).  We introduce spinor coordinates on kinematic space.  Mathematically, these coordinates come from the exceptional isomorphism $\Spin(4) \simeq \SL(2) \times \SL(2)$ involving the spin group $\Spin(4) \to \SO(4)$ covering the complexification of the four-dimensional Lorentz group.  Spinor coordinates capture in a beautiful way the special features of four-dimensional space-time and also encode the polarization vectors that appear in gluon scattering.

Restricting \eqref{eq:corr} to the subvariety of four-dimensional kinematics $K_n^4 \subset K_n$, the correspondence breaks up into $(n-3)$ irreducible components $J_1,\ldots,J_{n-3}$, called ``sectors" (\cref{prop:Jd}).  These sectors were discovered in Witten's twistor string theory \cite{Wit} and further studied by Roiban-Spradlin-Volovich \cite{RSV} and Cachazo-He-Yuan \cite{CHYthree}.  The sector decomposition arises from the interpretation of \eqref{eq:corr} as an incidence subvariety $\tI \subset \M_{0,n}(\P^3,n-2)$, where $ \M_{0,n}(\P^3,n-2)$ denotes the moduli space of rational $n$-pointed maps to $\P^3$ of degree $n-2$, and $\tI$ is the subvariety of maps with image in a quadric.  We explain a construction of \cite{ABCT}, which reinterprets the scattering correspondence of the $d$-th sector as a subvariety of the Grassmannian $\Gr(d+1,n)$.  This is one of the starting points of the Grassmannian formulae for scattering amplitudes.

\medskip
In \cref{sec:string}, we study the tree-level $n$-point open superstring amplitude as an integral over $(\M_{0,n})_{>0}$, explaining how field theory amplitudes can be obtained as limits of string amplitudes as strings turn into particles.  We consider a general class of stringy integrals \cite{AHLstringy}, also called Euler integrals in \cite{MHMT}, which take the form 
$$
I(\tau,c) = \int_{U_{>0}} \phi_{\tau,c} \, \Omega = \int_{\R_{>0}^d}  \frac{y_1^{\tau_1}\cdots y_n^{\tau_n}}{p_1^{c_1} \cdots p_r^{c_r}} \frac{dy_1}{y_1} \wedge \cdots \wedge \frac{dy_n}{y_n},
$$
where $p_i=p_i(\y)$ are polynomials in the variables $y_1,\ldots,g_n$.
Associated to such an integral function is a (very affine) subvariety $U$ of a toric variety, and the integral is taken over a positive part $U_{>0} \subset U(\R)$.  The integrand $\phi_{\tau,c}$ is the natural potential function of $U$, and the condition that the integral $I(\tau,c)$ converges cuts out a class of bounded characters of the character lattice $\Lambda(U)$.  In the case of the string amplitude itself, we have $U = \M_{0,n}$ and we recover the dihedral coordinates of \cref{sec:M0n} in this way.  

We explain that the field theory limit of a string integral is naturally related to tropicalization.  Indeed, the (positive) tropicalization $\trop(\phi_{\tau,c})$ of the potential function controls the field theory limit of $I(\tau,c)$.  In this way, the Feynman diagrams of $\phi^3$-theory (which are cubic trees) naturally appear, and we have the following satisfying ``commutative diagram":
\[ \begin{tikzcd}
\M_{0,n} \arrow[swap]{d}{{\rm tropicalization}} \arrow[<->]{r}{} & \mbox{string amplitude} \int_{(\M_{0,n})_{>0}} \phi_X \Omega \arrow{d}{{\rm field} \;  {\rm theory} \; {\rm limit}} \\%
\mbox{cubic trees}  \arrow[<->]{r}{} & \mbox{Feynman amplitude} \sum_{{\rm trees} \; T} \prod_{e \in E(T)} \frac{1}{X_e} 
\end{tikzcd}
\]

\medskip
In \cref{sec:veryaffine}, we review basic definitions for very affine varieties, and we speculate on possible definitions of amplitudes in that setting.

\medskip
\noindent
{\bf Exercises and Problems.} There are exercises at the end of each lecture, which we hope will be useful for a student reader.  ``Problems" are more open-ended: some of these are conjectures or open problems, and some are statements from the physics literature that I do not know proofs of in the mathematics literature.

\medskip

\noindent
{\bf Acknowledgements.} 
We acknowledge support from the National Science Foundation under DMS-1953852 and DMS-2348799, and from the Simons Foundation.

We thank the organizers, Dmitrii Pavlov, Bernd Sturmfels, and Simon Telen, of the workshop ``Combinatorial algebraic geometry from physics" for the invitation to give these lectures.  I thank the participants of the workshop for their encouragement and enthusiasm.  I thank Daniele Agostini, Igor Makhlin, Dmitrii Pavlov, Mark Spradlin, Emanuele Ventura for comments and corrections on earlier versions of this work.

My understanding of this subject owes a lot to my many collaborators.  The content of these lectures rely especially on my joint work with Nima Arkani-Hamed and Song He.

\tableofcontents

\section{Positive geometry of $\M_{0,n}$}\label{sec:M0n}
Let $[n]:=\{1,2,\ldots,n\}$.
\subsection{Matrices, Moduli space, and the Grassmannian}\label{sec:hyper}
We represent points $\sigma \in \P^1 = \C \cup \{\infty\}$ by non-zero vectors $[x_0:x_1]$, with $[x_0:x_1] \sim [\alpha x_0: \alpha x_1]$ for a nonzero scalar $\alpha$.  The general linear group $\GL(2)$ of invertible $2 \times 2$ matrices acts on $\C^2$ and thus on $\P^1$.  This action factors through the projective linear group $\PGL(2)$ which is the quotient group of $\GL(2)$ by the scalar matrices.

\begin{definition}
For $n \geq 3$, the space $\M_{0,n}$ is the moduli space of $n$ distinct points $\sigma_1,\sigma_2,\ldots,\sigma_n$ on $\P^1$ considered up to the simultaneous action of $\PGL(2)$.
\end{definition}

We may represent a point $\usigma \in \M_{0,n}$ as a $2 \times n$ matrix (with no zero columns)
$$
\begin{bmatrix}
x_{11} & x_{12} & \cdots & x_{1n} \\
x_{21} & x_{22} & \cdots & x_{2n}
\end{bmatrix}
$$
where the $i$-th column represents the point $\sigma_i \in \P^1$.  

Let $\Mat_{2,n}$ denote the space of $2 \times n$ matrices.  Let $\Mat_{2,n}^\circ \subset \Mat_{2,n}$ denote the subset where all $2 \times 2$ minors are nonzero.
The group $\GL(2)$ acts on $\Mat_{2,n}^\circ$ on the left as row operations.  The group $T \cong (\C^\times)^n$ acts on $\Mat_{2,n}^\circ$ (on the right) by scaling the columns.  Since one group acts on the left and the other on the right, the action of the two groups commute, and we obtain an action of the product group $\GL(2) \times T$.  Let $Z \subset \GL(2)$ denote the subgroup of scalar matrices, and let $D = \{(t,t,\ldots,t)\} \subset T$ denote the subgroup where all entries are equal.  Then the action of $Z$ and $D$ on $\Mat_{2,n}^\circ$ are the same, so the action of $\GL(2) \times T$ factors through the quotient group where $Z$ and $D$ are identified.  Equivalently, the action factors through the action of the group
$$
\PGL(2) \times T \cong \GL(2) \times T'
$$
where $T' = T/D \cong (\C^\times)^{n-1}$.  We leave the proof of the following result as an exercise for the reader.

\begin{lemm}
The action of $\PGL(2) \times T \cong \GL(2) \times T'$ on $\Mat_{2,n}^\circ$ is free.
\end{lemm}

We have the following commutative diagram of quotient maps:

\begin{equation}\label{eq:M0ncomm} \begin{tikzcd}
&\Mat_{2,n}^\circ \arrow{rd}{T} \arrow[swap]{ld}{\GL(2)} & \\%
\Gr^\circ(2,n) \arrow[swap]{rd}{T'} && ((\P^1)^n)^\circ \arrow{ld}{\PGL(2)}\\
&\M_{0,n}&
\end{tikzcd}
\end{equation}
Here, $\Gr^\circ(2,n) \subset \Gr(2,n)$ is the locus inside the Grassmannian of $2$-planes where all Pl\"ucker coordinates are non-vanishing, and $((\P^1)^n)^\circ$ denotes the space of $n$ distinct points in $\P^1$, obtained by removing the ``diagonals" from $(\P^1)^n$.
Since $\M_{0,n}$ is the quotient of the smooth variety $\Mat_{2,n}^\circ$ by the free action of the algebraic group $\PGL(2) \times T$, we obtain the following.
\begin{proposition}\label{prop:M0n}
$\M_{0,n}$ is a smooth complex algebraic variety of dimension $n-3$.
\end{proposition}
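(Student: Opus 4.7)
The plan is to build an explicit affine model for $\M_{0,n}$, from which both smoothness and the dimension formula are immediate. The key input is the classical fact that $\PGL(2)$ acts simply $3$-transitively on $\P^1$. Using this, I would fix a section of the $\PGL(2)$-quotient map $((\P^1)^n)^\circ \to \M_{0,n}$ in the diagram above: given any point of $((\P^1)^n)^\circ$, there is a unique element of $\PGL(2)$ sending the triple $(\sigma_1,\sigma_{n-1},\sigma_n)$ to $(0,1,\infty)$. This realizes $\M_{0,n}$ as the locus of tuples $(\sigma_2,\ldots,\sigma_{n-2}) \in (\P^1 \setminus \{0,1,\infty\})^{n-3}$ with pairwise distinct entries.

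Since $\P^1 \setminus \{0,1,\infty\}$ is isomorphic to the smooth affine curve $\AA^1 \setminus \{0,1\}$, this exhibits $\M_{0,n}$ as the complement of the diagonal arrangement $\{\sigma_i = \sigma_j\}$ inside the smooth affine variety $(\AA^1 \setminus \{0,1\})^{n-3}$. A hyperplane-arrangement complement in a smooth affine variety is itself smooth and affine of the same dimension, which is $n-3$.

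As a consistency check I would also do the abstract count: $\Mat_{2,n}^\circ \subset \Mat_{2,n} \cong \AA^{2n}$ is a Zariski open subset, hence smooth of dimension $2n$, while $\dim(\PGL(2) \times T) = 3 + n$, so by the preceding lemma the free quotient has expected dimension $2n - (3+n) = n-3$, agreeing with the slice computation.

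The only non-formal point is checking that the slice really is a scheme-theoretic section, i.e.\ that the set-level bijection between the slice and $\M_{0,n}$ comes from an isomorphism of algebraic varieties. This reduces to observing that the map $((\P^1)^n)^\circ \to \PGL(2)$ sending $(\sigma_1,\ldots,\sigma_n)$ to the unique M{\"o}bius transformation moving $(\sigma_1,\sigma_{n-1},\sigma_n)$ to $(0,1,\infty)$ is a morphism, which follows from writing that transformation down explicitly by the cross-ratio formula. I do not expect this to present a genuine obstacle.
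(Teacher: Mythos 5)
Your proof is correct, but it is not the argument the paper gives for this proposition: the paper deduces smoothness and the dimension in one line from the general principle that the quotient of a smooth variety ($\Mat_{2,n}^\circ$) by a free action of a smooth algebraic group ($\PGL(2)\times T$, via the preceding lemma) is smooth, with $\dim = 2n - (n+3) = n-3$. You instead use simple $3$-transitivity of $\PGL(2)$ to build an explicit slice of the quotient map $((\P^1)^n)^\circ \to \M_{0,n}$, identifying $\M_{0,n}$ with the complement of the diagonals in $(\AA^1\setminus\{0,1\})^{n-3}$, i.e.\ with a hyperplane arrangement complement in $\C^{n-3}$ — exactly the model the paper records \emph{after} the proposition rather than as its proof. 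The trade-off: your route is more elementary and self-contained, since exhibiting the explicit section (a morphism, by the cross-ratio formula, as you note) simultaneously shows that the quotient exists as a variety and hands you global coordinates, whereas the paper's argument leans on the quotient formalism but is shorter and treats all of $\M_{0,n}$, $\Gr^\circ(2,n)$, and $((\P^1)^n)^\circ$ uniformly through the commuting diagram of quotients. Your dimension cross-check $2n-(3+n)=n-3$ is precisely the paper's computation, and the small difference in which triple you gauge-fix ($(\sigma_1,\sigma_{n-1},\sigma_n)$ versus the paper's $(\sigma_1,\sigma_2,\sigma_n)$) is immaterial.
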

%
%\begin{exercise}
%Verify the dimension of $\M_{0,n}$ stated in \cref{prop:M0n}.
%\end{exercise}

The group $\PGL(2)$ acts simply-transitively on the space of 3 distinct ordered points in $\P^1$ (\cref{ex:PGL}).  Thus, the action of $\PGL(2)$ allows us to place $(\sigma_1,\sigma_2,\sigma_n)$ at $(0,1,\infty)$ and $\sigma_3,\ldots,\sigma_{n-1}$ are coordinates on $\M_{0,n}$.  In particular, $\M_{0,3}$ is a single point and $\M_{0,4} \cong \P^1 \setminus \{0,1,\infty\}$.

In general, $\M_{0,n}$ can be identified with the hyperplane arrangement complement in $\C^{n-3}$ for the $\binom{n}{2} - n$ hyperplanes
\begin{align*}
\{\sigma_i = 0 \mid i = 3,4,\ldots,n-1\} &\bigcup \{\sigma_i =1  \mid i = 3,4,\ldots,n-1\} \\ 
&\bigcup \{\sigma_i -\sigma_j =0 \mid 3 \leq i < j \leq  n-1\}.
\end{align*}

\subsection{Cross ratios}
For $i,j \in [n]$, we write $(ij)$ to denote the $2 \times 2$ minor of a $2 \times n$ matrix, using the columns indexed $i$ and $j$.  For two distinct points $\sigma,\sigma' \in \P^1$, the expression $\sigma - \sigma'$ is simply the difference of two complex numbers; if one of $\sigma$ or $\sigma'$ is $\infty$ then it is taken to be $\pm \infty$.  We also use the notation $\sigma_{ab}:=(\sigma_a - \sigma_b)$.

Using the action of $\PGL(2) \times T$ we can put a $2 \times n$ matrix representing a point $\usigma \in \M_{0,n}$ into the form
$$
\begin{bmatrix} 1 & 1 & \cdots & 1 \\
\sigma_1 & \sigma_2 & \cdots & \sigma_n
\end{bmatrix}.
$$
In this form, we have $(ij) = \sigma_j - \sigma_i$.

\begin{definition}
For distinct $i,j,k,l \in [n]$, define the \emph{cross ratio}
$$
[ij|kl] := \frac{(ik)(jl)}{(il)(jk)} = \frac{(\sigma_i - \sigma_k)(\sigma_j-\sigma_l)}{(\sigma_i - \sigma_l)(\sigma_j-\sigma_k)}
$$
\end{definition}

\begin{lemm}\label{lem:cr}
The cross ratio $[ij|kl]$ is a regular function on $\M_{0,n}$, satisfying the identities
\begin{align*}
&[ij|kl] = 1-[ik|jl], \qquad [ij|lk]=[ij|kl]^{-1} = [ji|kl], \\
& [ij|kl]=[kl|ij]=[ji|lk]=[lk|ji].
\end{align*}
\end{lemm}

When $n = 4$, any of the cross-ratios $[ij|kl]$ with $(i,j,k,l)$ a permutation of $1,2,3,4$, defines an isomorphism
$
\M_{0,4} \cong \P^1 \setminus \{0,1,\infty\}.
$
More generally, for any $n$, we have the following.

\begin{proposition}\label{prop:CR}
The cross-ratios define an embedding
\begin{equation}\label{eq:iota}
\iota: \M_{0,n} \hookrightarrow (\P^1 \setminus \{0,1,\infty\})^{\binom{[n]}{4}}.
\end{equation}
\end{proposition}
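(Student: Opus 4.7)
The strategy is to verify three things in sequence: that $\iota$ actually lands in the claimed target, that $\iota$ is injective on closed points, and that $\iota$ admits a regular retraction onto $\M_{0,n}$. Together these exhibit $\iota$ as an isomorphism onto a locally closed subvariety of $(\P^1 \setminus \{0,1,\infty\})^{\binom{[n]}{4}}$, i.e., as an embedding.

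For the target, observe that on $\Mat_{2,n}^\circ$ every $2 \times 2$ minor $(ij)$ is nonvanishing, so each cross-ratio $[ij|kl] = (ik)(jl)/((il)(jk))$ is a regular, nowhere-zero function on $\M_{0,n}$ and in particular avoids $0$ and $\infty$. Using the identity $[ij|kl] = 1 - [ik|jl]$ from \cref{lem:cr}, together with the fact that $[ik|jl]$ is also nowhere zero, the value $1$ is excluded as well. Hence each coordinate factor of $\iota$ maps into $\P^1 \setminus \{0,1,\infty\}$.

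For injectivity and the regular inverse, I would exploit the $\PGL(2)$-normalization recalled in \cref{sec:hyper}: every point of $\M_{0,n}$ has a unique representative $(\sigma_1,\ldots,\sigma_n)$ with $(\sigma_1,\sigma_2,\sigma_n)=(0,1,\infty)$, identifying $\M_{0,n}$ with the hyperplane-arrangement complement in $\C^{n-3}$ on coordinates $\sigma_3,\ldots,\sigma_{n-1}$. A short calculation (taking $\sigma_n \to \infty$ in the defining expression) gives
\[
[1n|2i] \;=\; \frac{(\sigma_2-\sigma_1)(\sigma_i-\sigma_n)}{(\sigma_i-\sigma_1)(\sigma_2-\sigma_n)} \;=\; \frac{1}{\sigma_i}
\]
for each $i \in \{3,\ldots,n-1\}$. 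Thus each global coordinate $\sigma_i$ on $\M_{0,n}$ is the pullback under $\iota$ of the regular function $1/t$ on the $\{1,2,i,n\}$-factor of the target. This simultaneously proves injectivity (the cross-ratios determine the normalized configuration) and produces an explicit regular retraction from the image onto $\M_{0,n}$, which suffices to conclude that $\iota$ is a locally closed immersion.

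No serious obstacle arises. The mildly delicate point is interpretational: the formula $[1n|2i] = 1/\sigma_i$ should be read as an identity of $\PGL(2)$-invariant regular functions on $\M_{0,n}$, with the left-hand side manifestly invariant by definition of the cross-ratio and the right-hand side well-defined only after choosing the distinguished $\PGL(2)$-representative. Once this is absorbed, the explicit inverse formula supplies everything needed for an embedding simultaneously: point-injectivity, injectivity of the differential, and local closedness of the image.
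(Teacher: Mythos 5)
Your proof is correct and follows essentially the same route as the paper: normalize with the $\PGL(2)$ action so that $(\sigma_1,\sigma_2,\sigma_n)=(0,1,\infty)$, observe that a single cross-ratio ($[1n|j2]=\sigma_j$ in the paper, its reciprocal $[1n|2j]=1/\sigma_j$ in your version) recovers each coordinate $\sigma_j$, and use this both for injectivity and to produce a regular inverse. The only cosmetic difference is that the paper defines its inverse on the closed subvariety $W$ cut out by the relations of \cref{lem:cr}, while you define the retraction on a neighborhood of the image via the coordinate functions; both arguments yield the embedding.
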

\begin{proof}
Using the $\PGL(2)$ action, we place $(\sigma_1,\sigma_2,\sigma_n)$ at $(0,1,\infty)$.  The cross ratio $[1n|j2] = \sigma_j$ then determines $\sigma_j$.  This shows that \eqref{eq:iota} is injective.  Let $W \subset (\P^1 \setminus \{0,1,\infty\})^{\binom{[n]}{4}}$ be the closed subvariety cut out by the ideal generated by the relations in \cref{lem:cr}.  We define a morphism $\pi: W \to \M_{0,n}$ by sending $(\sigma_1,\sigma_2,\sigma_n)$ to $(0,1,\infty)$ and $\sigma_j$ to $[1n|j2]$.  The relations of \cref{lem:cr} can then be used to show that $\pi$ is inverse to $\iota$.
\end{proof}

\subsection{Dihedral coordinates}
Define the \emph{dihedral coordinates}
\begin{equation}\label{eq:defuij}
u_{ij} := [i ,i+1|j+1,j] = \frac{(\sigma_{i}-\sigma_{j+1})(\sigma_{i+1}-\sigma_j)}{(\sigma_{i}-\sigma_{j})(\sigma_{i+1}-\sigma_{j+1})}
\end{equation}
for $i,j \in [n]$ such that $i-j \notin \{-1,0,1\} \mod n$, that is $i,j$ are not equal or adjacent modulo $n$.  We have that $u_{ij} = u_{ji}$, and thus the cross-ratios $u_{ij}$ are labeled by the diagonals, or chords, (but not the sides) of a $n$-gon with vertices $1,2,\ldots,n$.    Let $\diag_n$ denote the set of diagonals of an $n$-gon.  There are $|\diag_n| = \binom{n}{2}-n$ dihedral coordinates.

It is easy to check (\cref{ex:Rij}) that the $u_{ij}$ satisfy the following relations:
\begin{equation}\label{eq:Rij}
R_{ij}:= u_{ij} + \prod_{(k,l) \text{ crosses } (i,j)} u_{kl} - 1 = 0.
\end{equation}
A key feature of dihedral coordinates is that if $u_{ij} = 0$ then we have $u_{kl} = 1$ for all $(k,l)$ crossing $(i,j)$.  This follows from the relations $R_{kl} = 0$.

For example, for $n = 4$, we have two dihedral coordinates $u_{13}, u_{24}$ satisfying the relation 
$$
u_{13}+u_{24} = 1.
$$
For $n = 5$, the relations $R_{ij}$ are given in \eqref{eq:ueq5}. 
%we have five dihedral coordinates $u_{13},u_{14},u_{24},u_{25},u_{35}$, satisfying the five relations
%\begin{align}\label{eq:ueq5}
%\begin{split}
%u_{13} + u_{24}u_{25} &=1 \\
%u_{24} + u_{13}u_{35} &=1 \\
%u_{35} + u_{14}u_{24} &=1 \\
%u_{14} + u_{25}u_{35} &=1 \\
%u_{25} + u_{13}u_{14} &=1 
%\end{split}
%\end{align}

Let $T_U = \Spec(\C[u_{ij},u_{ij}^{-1}])$ be the $(\binom{n}{2}-n)$-dimensional torus with coordinates $u_{ij}$.  Define the closed subvariety $\oU \subset T_U$ cut out by the relations \eqref{eq:Rij}.

\begin{theorem}[\cite{Bro,AHLcluster}]\label{thm:U}
The dihedral coordinates give a closed embedding $\M_{0,n} \hookrightarrow T_U$ with image equal to $\oU$.  The ideal of $\oU$ in $T_U$ is equal to the ideal $I = (R_{ij})$ generated by the relations \eqref{eq:Rij}. 
\end{theorem}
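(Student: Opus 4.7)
The plan is to establish three things: first, the $u_{ij}$ define a morphism $\phi : \M_{0,n} \to T_U$ whose image lies in $\oU$; second, $\phi$ is a closed embedding, so $\phi^* : \C[u_{ij}^{\pm 1}] \twoheadrightarrow \mathcal{O}(\M_{0,n})$ is surjective; third, the kernel of this surjection equals $(R_{ij})$, not merely contains it.

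The first step is immediate: each $u_{ij}$ is a ratio of nonzero differences $\sigma_a - \sigma_b$ on $\M_{0,n}$ and hence an invertible regular function, so the $u_{ij}$ give a morphism $\phi$ to $T_U$, and \cref{ex:Rij} verifies $R_{ij} = 0$ on $\phi(\M_{0,n})$. For the second, I would cite \cref{prop:uijchar}, which states that the $u_{ij}$ form a $\Z$-basis of the character lattice $\Lambda(\M_{0,n})$ of the intrinsic torus of $\M_{0,n}$. Under the canonical identification $T_U \cong \Hom(\Lambda(\M_{0,n}), \C^\times)$, the morphism $\phi$ is precisely the canonical embedding of the very affine variety $\M_{0,n}$ into its intrinsic torus, and is therefore a closed embedding.

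The third step is the heart of the theorem. I would construct an explicit inverse $\psi : \oU \to \M_{0,n}$. The key input is that every cross-ratio $[ab|cd]$ is a character of $\Lambda(\M_{0,n})$, and hence by \cref{prop:uijchar} can be written as a Laurent monomial $m_{abcd}$ in the $u_{ij}$. Combined with \cref{prop:CR}, which embeds $\M_{0,n}$ in a product of $\P^1$'s cut out by the relations of \cref{lem:cr}, this gives a candidate inverse: send $(u_{ij}) \in \oU$ to the point of $\M_{0,n}$ whose cross-ratios are the $m_{abcd}(u_{ij})$. For this to land in $\iota(\M_{0,n})$, the Pl\"ucker-type relations $[ij|kl] + [ik|jl] = 1$ of \cref{lem:cr} must hold after monomial substitution, and this is exactly the content of the $R_{ij}$.

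The main obstacle is showing $(R_{ij})$ is already radical and cuts out $\oU$ scheme-theoretically — there are $\binom{n}{2} - n$ relations but the codimension is only $\binom{n}{2} - 2n + 3$, so the $R_{ij}$ are not a complete intersection and radicality is nontrivial. The natural tools are either a Jacobian rank computation at every point of $\oU$ (showing $V(R_{ij})$ is smooth of the expected dimension $n - 3$), or an inductive decomposition along the stratification by $\{u_{ij} = 0\}$: setting a single dihedral coordinate to zero splits the problem into two smaller instances on the sub-polygons with vertices $i, i+1, \ldots, j$ and $j, j+1, \ldots, i$, with base case $n = 4$ being the single relation $u_{13} + u_{24} = 1$.
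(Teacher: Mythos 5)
A preliminary remark: the paper itself does not prove \cref{thm:U} — it is imported from \cite{Bro,AHLcluster} — so your proposal must stand as a self-contained argument, with the paper only supplying ingredients. Your steps 1 and 2 are sound: the $u_{ij}$ are units satisfying \eqref{eq:Rij} by \cref{ex:Rij}, and granting \cref{prop:uijchar} (whose proof goes through the hyperplane-arrangement model and the monomial change of basis, so there is no circularity in citing it) your map to $T_U$ is the embedding of $\M_{0,n}$ into its intrinsic torus, hence a closed embedding.

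The gaps are both in step 3. First, the sentence ``the relations $[ij|kl]+[ik|jl]=1$ must hold after monomial substitution, and this is exactly the content of the $R_{ij}$'' conceals the real work: once cross-ratios are written as Laurent monomials in the $u_{ij}$, the additive relations you need are precisely Brown's extra generators $R_{A,B,C,D}=u_{A,C}+u_{B,D}-1$, and the fact that these lie in $(R_{ij})$ is not formal — it is exactly the proposition the paper proves immediately after \cref{thm:U}, by a nontrivial induction on the identity \eqref{eq:ABCD} (and it is the reason Brown included them as extra generators in the first place). Without that input your candidate inverse is not known to land in the subvariety $W$ of \cref{prop:CR}; with it, the set-theoretic statement (image $=V(R_{ij})\cap T_U$) does go through along your lines. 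Second, and more seriously, the scheme-theoretic half — that $(R_{ij})$ is radical, equivalently prime, in $\C[u_{ij}^{\pm1}]$ — is only named, not argued. The Jacobian route is explicitly flagged in the paper as not known to work (``I do not know a direct proof along these lines,'' in the sketch of smoothness of $\bM_{0,n}$ via the relations $R_{ij}$), and the inductive decomposition you sketch, setting some $u_{ij}=0$, lives on the boundary strata of the affine closure $\tM_{0,n}$, outside the torus $T_U$: it proves statements like \cref{prop:break} about $\C[u_{ij}]/(I+(u_{ij}))$, but by itself says nothing about the ideal on the locus where all $u_{ij}\neq 0$. Turning either tool into a proof of primality is essentially the content of \cite{Bro,AHLcluster}. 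So: correct skeleton, set-theoretic surjectivity repairable by invoking the $R_{A,B,C,D}\in I$ proposition, but the ideal-theoretic equality asserted in \cref{thm:U} remains unproved in your proposal.
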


In particular, $\M_{0,n}$ is isomorphic (as a scheme) to $ \Spec(\C[u_{ij},u_{ij}^{-1}]/I)$.  In contrast to \cref{prop:CR}, \cref{thm:U} identifies $\M_{0,n}$ naturally with a closed subvariety of a \emph{torus}.  

For two subsets $A,C \subset [n]$, let $u_{A,C} = \prod_{a\in A, c\in C} u_{ac}$.  Let $(A,B,C,D)$ be a decomposition of $[n]$ into four non-empty cyclic intervals, in cyclic order, and define
$$
R_{A,B,C,D} := u_{A,C} + u_{B,D} - 1.
$$
For example, for $(A,B,C,D) = (\{2,3\},\{4\},\{5,6,7\},\{1\})$, we have $R_{A,B,C,D} = u_{25}u_{26}u_{27}u_{35}u_{36}u_{37} + u_{14} - 1$.
Brown \cite{Bro} includes the generators $R_{A,B,C,D}$ in his definition of the ideal.  It follows from \cite{AHLcluster} that these additional generators are not necessary; we give a direct proof.

\begin{proposition}
The elements $R_{A,B,C,D}$ belong to the ideal $I = (R_{ij})$ of $\C[u_{ij}]$.
\end{proposition}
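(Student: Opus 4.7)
My plan is to show that $R_{A,B,C,D}$ vanishes identically on $\M_{0,n}$ by a direct cross-ratio calculation, and then invoke \cref{thm:U} to conclude $R_{A,B,C,D} \in I$ in $\C[u_{ij}]$. The crux is to recognize both $u_{A,C}$ and $u_{B,D}$ as \emph{single} cross ratios by telescoping, at which point \cref{lem:cr} finishes the job.

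First I would verify, directly from the definition \eqref{eq:defuij}, the telescoping identity
\[
\prod_{i=a}^{a'} u_{ij} \;=\; \frac{(\sigma_{a} - \sigma_{j+1})(\sigma_{a'+1} - \sigma_{j})}{(\sigma_{a} - \sigma_{j})(\sigma_{a'+1} - \sigma_{j+1})},
\]
valid whenever the cyclic interval $[a,a']$ is disjoint from $\{j,j+1\}$: consecutive factors $u_{ij}$ and $u_{i+1,j}$ share common $(\sigma_{i+1}-\sigma_j)$ and $(\sigma_{i+1}-\sigma_{j+1})$ terms that cancel, so the product collapses to the four endpoint contributions. Iterating the same telescoping in the second index over $C = [c_0, c_l]$ yields
\[
u_{A,C} \;=\; \frac{(\sigma_{a_0} - \sigma_{c_l+1})(\sigma_{a_k+1} - \sigma_{c_0})}{(\sigma_{a_0} - \sigma_{c_0})(\sigma_{a_k+1} - \sigma_{c_l+1})} \;=\; [a_0,\, a_k+1 \,|\, c_l+1,\, c_0],
\]
where $A = [a_0, a_k]$. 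Writing $p,q,t,s$ for the starting vertices of $A, B, C, D$ respectively (so $p = a_0$, $q = a_k+1$, $t = c_0$, $s = c_l+1$; these four vertices are distinct and in cyclic order since $A,B,C,D$ are non-empty), this reads $u_{A,C} = [pq|st]$. The analogous double-telescoping applied to $B = [q, t-1]$ and $D = [s, p-1]$ gives $u_{B,D} = [qt|ps]$.

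Now the symmetry $[qt|ps] = [ps|qt]$ together with the identity $[pq|st] + [ps|qt] = 1$ from \cref{lem:cr} yields
\[
u_{A,C} + u_{B,D} \;=\; 1
\]
as regular functions on $\M_{0,n}$. Hence $R_{A,B,C,D}$ vanishes identically on $\M_{0,n}$; since $\M_{0,n}$ is identified scheme-theoretically with $\oU = V(I) \subset T_U$ by \cref{thm:U}, and $\oU$ is reduced (being isomorphic to the smooth variety $\M_{0,n}$), this forces $R_{A,B,C,D} \in I$.

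The only real difficulty is careful bookkeeping in the telescoping computation --- tracking which endpoints of the cyclic intervals survive after the double product modulo $n$, and verifying the cyclic-order claim on $p,q,s,t$. Everything else is a direct application of \cref{lem:cr} and \cref{thm:U}, and this is what makes the proof ``direct'': no cluster-algebraic machinery is needed beyond the cross-ratio identities already established in this section.
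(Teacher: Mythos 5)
Your telescoping computation is fine --- it correctly shows that $u_{A,C}$ and $u_{B,D}$ are single cross-ratios and hence that $u_{A,C}+u_{B,D}=1$ holds identically on $\M_{0,n}$ (this is exactly \cref{ex:Rij}(c)). The gap is in the last step, where you convert this vanishing statement into ideal membership. The Proposition asserts membership in the ideal generated by the $R_{ij}$ in the \emph{polynomial} ring $\C[u_{ij}]$, whereas \cref{thm:U} concerns the ideal of $\oU$ inside the torus $T_U$, i.e.\ in the Laurent ring $\C[u_{ij}^{\pm 1}]$. Vanishing on $\M_{0,n}\cong\oU$ gives at best $R_{A,B,C,D}\in I\cdot\C[u_{ij}^{\pm 1}]$; to descend to $(R_{ij})\subset\C[u_{ij}]$ you would need to know that $I$ is saturated with respect to $\prod u_{ij}$ (equivalently, that the $u_{ij}$ are nonzerodivisors modulo $I$), which is not established at this point and is not harmless: it is the polynomial-ring quotient $\C[u_{ij}]/I$ that serves as the coordinate ring of $\tM_{0,n}$ and is used on the boundary strata where $u_{ij}=0$, precisely where inverting the $u_{ij}$ is not allowed.

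There is also a circularity problem with invoking \cref{thm:U} at all. The strong half of that theorem --- that the vanishing ideal of $\oU$ is generated by the $R_{ij}$ alone --- is exactly the assertion, credited to \cite{AHLcluster}, that this Proposition is meant to reprove directly; Brown's own result only gives generation by the $R_{ij}$ \emph{together with} the $R_{A,B,C,D}$. So deducing the Proposition from \cref{thm:U} either begs the question or silently outsources it to \cite{AHLcluster}. If you only use the weaker set-theoretic fact that the $R_{ij}$ cut out $\oU$ in $T_U$, your vanishing argument would only place $R_{A,B,C,D}$ in the radical of the Laurent ideal. The paper's proof avoids all of this by staying entirely inside $\C[u_{ij}]$: the identity \eqref{eq:ABCD} writes $R_{A,B,C,D}$ as a combination of other $R$'s with polynomial (indeed monomial) coefficients, and a double induction on the block sizes reduces to the base case $R_{a,c}$, which is a generator. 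Your cross-ratio identity is a useful sanity check that the statement is true on $\M_{0,n}$, but it does not by itself yield the explicit ideal membership claimed.
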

\begin{proof}
We prove the result using the following identity.  Suppose $|A| \geq 2$ and $a$ is the maximal element of $A$ in cyclic order, and set $A' := A \setminus a$.  Then we have (\cref{ex:Rij})
\begin{equation}\label{eq:ABCD}
R_{A,B,C,D} - R_{\{a\},B,C,A' \cup D} - u_{a,C} R_{A',B \cup a, C, D} + u_{B,D} R_{A',\{a\},B,C \cup D} = 0.
\end{equation}
We prove that $R_{A,B,C,D} \in I$ by reverse induction on $d = \max(|A|,|B|,|C|,|D|)$.  The base case is $d = n-3$, in which case after cyclic rotation we have $(|A|,|B|,|C|,|D|) = (1,1,1,n-3)$ and $R_{A,B,C,D} = R_{a,c}$.  Suppose the claim has been been proven for all $d' > d$, and suppose that $|D| = d$.  Suppose that $|A| \geq 2$.  Then by \eqref{eq:ABCD}, we have
$$
R_{A,B,C,D} = R_{\{a\},B,C,A' \cup D} + u_{a,C} R_{A',B \cup a, C, D} - u_{B,D} R_{A',a,B,C \cup D}.
$$
By induction, $- u_{B,D} R_{A',a,B,C \cup D}$ and $R_{\{a\},B,C,A' \cup D} $ belong to $I$.  Repeatedly applying this relation, we reduce to proving that $R_{A,B,C,D} \in I$ for $|A| = 1$.  Now if $|A|= 1$ and $|C| \geq 2$, we use the mirror reflection of \eqref{eq:ABCD} to reduce to the case $|A| = |C| = 1$, which belongs to $I$ by definition.  This completes the inductive step of the argument, and the proof.
\end{proof}

%The conditions $u_{ij} \neq 0$ for all $(i,j)$ automatically imply that $u_{ij} \neq 1$.

The embedding $\M_{0,n} \hookrightarrow T_U$ is in fact intrinsic to $\M_{0,n}$.   The group of units $ \C[\M_{0,n}]^\times/\C^\times$ of the coordinate ring of $\M_{0,n}$, modulo scalars, is a free abelian group $\lat = \lat(\M_{0,n}) := \C[\M_{0,n}]^\times/\C^\times$ (see \cref{sec:intrinsic}).  The \emph{intrinsic torus} of $\M_{0,n}$ is the torus $T$ with character lattice $\lat$.

\begin{proposition}\label{prop:uijchar}
The intrinsic torus of $\M_{0,n}$ is canonically isomorphic to the torus $T_U$.   Equivalently, the lattice $\lat = \C[\M_{0,n}]^\times/\C^\times$ can be identified with the set of Laurent monomials $\{\prod u_{ij}^{X_{ij}} \mid X_{ij} \in \Z\}$ in the dihedral coordinates.
%The dihedral coordinates $u_{ij}$ form a basis of the character lattice $\Lambda(\M_{0,n})$.
\end{proposition}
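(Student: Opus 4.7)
The two assertions are equivalent: since the intrinsic torus $T$ of $\M_{0,n}$ has character lattice $\lat$ by construction, identifying $\lat$ with the lattice of Laurent monomials $\prod u_{ij}^{X_{ij}}$ is the same as identifying $T$ with $T_U$. My plan is first to compute the abstract structure of $\lat$, and then verify that the $u_{ij}$ form a $\Z$-basis.

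For the computation of $\lat$ I would use the hyperplane-complement model from \cref{sec:hyper}. The gauge $(\sigma_1,\sigma_2,\sigma_n)=(0,1,\infty)$ exhibits $\M_{0,n}$ as the complement in $\C^{n-3}$ of the $\binom{n}{2}-n$ irreducible affine hyperplanes cut out by the linear forms $\sigma_j$, $\sigma_j-1$ (for $3 \le j \le n-1$), and $\sigma_i-\sigma_j$ (for $3 \le i < j \le n-1$). For the complement of finitely many irreducible hypersurfaces in affine space, the group of units modulo constants is free abelian with basis the defining equations; hence $\lat$ is free of rank $\binom{n}{2}-n$ with these linear forms as a distinguished basis. (A more symmetric description uses the GIT presentation $\M_{0,n} = \Mat_{2,n}^\circ/(\PGL(2)\times T)$: units on $\M_{0,n}$ are identified with $(\PGL(2)\times T)$-invariant Laurent monomials in the minors $(ij)$, equivalently with the lattice of symmetric zero-diagonal integer matrices with vanishing row sums.)

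The $\binom{n}{2}-n$ dihedral coordinates $u_{ij}$ are themselves invariant Laurent monomials in the minors, so each defines an element of $\lat$, and their count matches the rank. Linear independence is automatic from \cref{thm:U}: the $u_{ij}$ realize $\M_{0,n}$ as a closed subscheme of $T_U$ cut out by the non-monomial relations $R_{ij}$, so $\M_{0,n}$ is contained in no proper subtorus of $T_U$; equivalently, no nontrivial Laurent monomial in the $u_{ij}$ is constant on $\M_{0,n}$. Hence the $u_{ij}$ span a finite-index sublattice $\lat' \subseteq \lat$, and the only remaining point is $\lat' = \lat$.

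This unimodularity check is the main obstacle. I would carry it out by expressing each hyperplane generator of $\lat$ as a $\Z$-combination of the $u_{ij}$, equivalently by showing that the $(\binom{n}{2}-n) \times (\binom{n}{2}-n)$ matrix of exponents has determinant $\pm 1$. For small $n$ this is a direct calculation, trivial for $n=4$ and verifiable by hand for $n=5$. For general $n$ I would proceed by induction, peeling off the vertex $n$: the $u_{ij}$ with $1 \le i < j < n$ recover the generators of $\lat(\M_{0,n-1})$ via the inductive hypothesis, while new dihedral coordinates involving the index $n$ (for instance $u_{jn} = \sigma_j/\sigma_{j+1}$ in the gauge above) can be combined to produce the additional generators $\sigma_{n-1}$, $\sigma_{n-1}-1$, and $\sigma_{n-1}-\sigma_j$ of the enlarged lattice.
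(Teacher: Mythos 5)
Your first half coincides with the paper's proof: both compute $\lat$ from the hyperplane-arrangement model of \cref{sec:hyper}, so that $\lat$ is free with basis the linear forms $\sigma_j$, $\sigma_j-1$, $\sigma_i-\sigma_j$, and both reduce the proposition to showing that the monomial transformation between these forms and the $u_{ij}$ is invertible over $\Z$. Two points in your plan are shaky, however. First, the claim that "linear independence is automatic from \cref{thm:U}" is not a valid inference: knowing that the ideal of the image in $T_U$ is generated by the non-monomial elements $R_{ij}$ does not by itself exclude a binomial $\prod u_{ij}^{a_{ij}}-c$ from lying in that ideal (the paper later poses the absence of monomial relations for general binary geometries as an open problem, precisely because it is not automatic). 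This is not fatal, since independence follows once your unimodularity computation is done, but as written the step is unjustified and the ordering is circular.

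Second, and more seriously, the unimodularity check -- which you correctly identify as the main obstacle -- is only sketched, and the induction as stated does not go through. In the gauge $(\sigma_1,\sigma_2,\sigma_n)=(0,1,\infty)$, the $n$-gon coordinates $u_{i,n-1}=[i,i+1\,|\,n,n-1]$ involve $\sigma_n$, so they are not pulled back under the map forgetting $\sigma_n$ and are not the dihedral coordinates of the $(n-1)$-gon with the same label (for the $(n-1)$-gon the relevant cross-ratio is $[i,i+1\,|\,1,n-1]$). The coordinates that genuinely pull back are the $u_{ij}$ with $j\le n-2$, and there are only $(n-3)(n-4)/2$ of them, short of the rank $(n-1)(n-4)/2$ of $\lat(\M_{0,n-1})$ by $n-4$; so "the $u_{ij}$ with $i<j<n$ recover the generators of $\lat(\M_{0,n-1})$ via the inductive hypothesis" needs to be reworked, and the bookkeeping for which combinations produce the missing generators is exactly the content you have deferred. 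The paper closes this gap without induction: by \eqref{eq:KBpotential} one has $\prod_{(ij)}u_{ij}^{X_{ij}}=\prod_{a<b}(\sigma_a-\sigma_b)^{s_{ab}}$, and the change of exponents $s\mapsto X$ of \eqref{eq:Xij} and $X\mapsto s$ of \eqref{eq:sij} are mutually inverse integral linear maps; this says at once that the exponent matrix relating the $u_{ij}$ to the invariant monomials in the $(\sigma_a-\sigma_b)$ (equivalently, to your hyperplane forms) is unimodular, which is the whole point. I would recommend replacing your determinant/induction step by this observation, or else carrying out the corrected induction in full.
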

\begin{proof}
One way to obtain this result is to view $\M_{0,n}$ as the complement of $\binom{n}{2} - n$ hyperplanes in $\C^{n-3}$, as explained in \cref{sec:hyper}.  Let $f_1(x) = 0, f_2(x) = 0, \ldots$ be the linear equations cutting out these hyperplanes, and note that the normal vectors to these hyperplanes span $\C^{n-3}$.  Then the coordinate ring $\C[\M_{0,n}]$ is obtained from the polynomial ring $\C[x_1,\ldots,x_{n-3}]$ by adjoining the inverses $f_1^{-1}, f_2^{-1},\ldots$.  It follows that $\C[\M_{0,n}]^\times/\C^\times$ is isomorphic to the lattice of Laurent monomials in the $f_i$.  Finally, we have an invertible monomial transformation between the $\binom{n}{2} - n$ dihedral coordinates $u_{ij}$ and the $\binom{n}{2} - n$ linear functions $f_1,f_2,\ldots$.  This can be obtained directly, or from the equality \eqref{eq:KBpotential}.
\end{proof}

%(\cref{ex:intrinsic}).  
We shall call
\begin{equation}\label{eq:L}
\lat = \left\{\prod u_{ij}^{X_{ij}} \mid X_{ij} \in \Z \right\} = \C[\M_{0,n}]^\times/\C^\times
\end{equation}
the \emph{character lattice} of $\M_{0,n}$.  The dual lattice $\Lambda^\vee$ is the \emph{cocharacter lattice} of $\M_{0,n}$.  While the torus $T_U$ is intrinsic to $\M_{0,n}$, the basis of characters $u_{ij}$ is not, but depends on a choice of connected component of $\M_{0,n}(\R)$ (\cref{ex:intrinsic}).

\begin{example}
Suppose $n = 4$.  Then $\M_{0,4}$ is the subvariety of $(\C^\times)^2$ cut out by the single relation $u_{13} + u_{24} = 1$.  The projection of $\M_{0,4}$ to the first factor gives an isomorphism $\M_{0,4} \cong \C \setminus \{0,1\}$, and we have $\C[\M_{0,4}] \cong \C[u,u^{-1},(1-u)^{-1}]$, where $u = u_{13}$.  The character lattice is $\lat = \{u_{13}^{X_{13}} u_{24}^{X_{24}}\} = \{u^a (1-u)^b\} \cong \Z^2$.
\end{example}

\subsection{Affine closure and compactification}\label{sec:MD}
The torus $T_U$ is an open subset of the affine space $\AA_U = \C^{\binom{n}{2} -n} = \Spec(\C[u_{ij}])$.  We define a partial compactification $\tM_{0,n}$ of $\M_{0,n}$ as the closure of $\M_{0,n} \hookrightarrow T_U \hookrightarrow \AA_U$ in affine space.  The variety $\tM_{0,n}$ has coordinate ring $\C[u_{ij}]/I$, where $I = (R_{ij})$.

The partial compactification $\tM_{0,n}$ has a natural stratification with strata indexed by subdivisions of the $n$-gon, or in other words, by the faces of the associahedron.  For a point $\usigma \in \tM_{0,n}$, define 
$$
M(\usigma) := \{(i,j)  \in \diag_n \mid u_{ij}(\usigma) \neq 0\}.
$$
We may think of $M(\usigma)$ as analogous to the matroid of a matrix.  It follows from the relations $R_{ij}$ that if the diagonals $(i,j)$ and $(k,l)$ cross then at least one of $(i,j)$ and $(k,l)$ belong to $M(\usigma)$.  Thus the diagonals not belonging to $M(\usigma)$ form a subdivision of the $n$-gon by non-intersecting diagonals.  

Let $\Delta = \Delta_{0,n}$ be the simplicial complex (\cref{sec:binary}) with vertex set equal to the set of diagonals $(ij)$ of an $n$-gon, and faces equal to subdivisions $\D$ of the $n$-gon, considered as a collection of diagonals.  We have a disjoint union 
$$
\tM_{0,n} = \bigcup_{\D \in \Delta} \M_D, \qquad \M_D = \{\usigma \in \tM_{0,n}\mid \diag_n \setminus M(\usigma) = \D\}.
$$
For example, $\M_{\emptyset} = \M_{0,n}$.  Let $\tM_D$ denote the closure of $\M_D$ in $\tM_{0,n}$.

\begin{proposition}\label{prop:break}
Let $\D$ be a subdivision of the $n$-gon into polygons of sizes $n_1,n_2,\ldots,n_s$.  Then the stratum $\M_D$ is isomorphic to $\M_{0,n_1} \times \cdots \M_{0,n_s}$ and $\tM_D$ is isomorphic to $ \tM_{0,n_1} \times \cdots \times \tM_{0,n_s}$.  It is irreducible and codimension $|\D|$ in $\tM_{0,n}$.
\end{proposition}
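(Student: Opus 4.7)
The plan is to show directly from the $u$-equations that $\M_D$ decouples as a product according to the polygons $P_1,\ldots,P_s$ of $\D$. First, I would identify which dihedral coordinates are fixed on $\M_D$. By definition $u_{ij}=0$ for $(i,j)\in\D$. For any other diagonal $(k,l)$ of the $n$-gon that crosses some $(i,j)\in\D$, the relation $R_{kl}$ in \eqref{eq:Rij} has a vanishing factor in its product, so it forces $u_{kl}=1$ on $\M_D$. The only dihedral coordinates that remain free are therefore those $u_{kl}$ for which $(k,l)$ lies strictly inside some polygon $P_r$, i.e., is an internal diagonal of $P_r$ (viewed as an $n_r$-gon).

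Next I would check that, on $\M_D$, the relation $R_{kl}$ for $(k,l)$ internal to $P_r$ coincides with the defining $u$-equation for the dihedral coordinate $(k,l)$ of $\M_{0,n_r}$. The key planar-geometric observation is: if $(i,j)$ is a diagonal of the $n$-gon that crosses $(k,l)$, then either $(i,j)$ is itself an internal diagonal of $P_r$, or the chord $(i,j)$ must enter and exit $P_r$; since a chord of the $n$-gon cannot cross a side of the $n$-gon, any entry/exit must occur across a diagonal in $\D$, and hence $u_{ij}=1$ on $\M_D$. Consequently the product $\prod_{(i,j)\text{ crosses }(k,l)} u_{ij}$ reduces on $\M_D$ to the product over internal diagonals of $P_r$ that cross $(k,l)$ inside $P_r$, which is exactly the product appearing in the $u$-equation for $(k,l)$ in $\M_{0,n_r}$.

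These two steps combine to give a morphism $\M_D \to \M_{0,n_1}\times\cdots\times\M_{0,n_s}$ sending a point to its tuple of free $u$-coordinates grouped by the polygons $P_r$; by \cref{thm:U} applied to each factor, this map is an isomorphism. The same analysis, with the free coordinates now allowed to vanish, gives $\tM_D \cong \tM_{0,n_1}\times\cdots\times\tM_{0,n_s}$. Each $\M_{0,n_r}$ is irreducible, since by \cref{sec:hyper} it is Zariski open in $\C^{n_r-3}$; hence its affine closure $\tM_{0,n_r}$ is irreducible, and so is the product. For the codimension, a subdivision of an $n$-gon by $|\D|$ non-crossing diagonals produces $s = |\D|+1$ polygons with $\sum_r n_r = n+2|\D|$ (each side of the $n$-gon is used once, each element of $\D$ twice), so $\dim \M_D = \sum_r (n_r-3) = n-3-|\D|$, giving codimension $|\D|$ in $\tM_{0,n}$.

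The main obstacle is the planar-geometric lemma in the second paragraph: one must argue carefully that any diagonal of the $n$-gon crossing an internal diagonal of $P_r$ is either itself internal to $P_r$ or crosses an element of $\D$. The only subtle case is when both endpoints of $(i,j)$ happen to be vertices of $P_r$: there one needs that, since crossing with $(k,l)$ is equivalent in the $n$-gon and in $P_r$ (the cyclic order on the vertices of $P_r$ is inherited from the $n$-gon), $(i,j)$ must be a chord of $P_r$ rather than a side, and a side cannot cross the diagonal $(k,l)$. Once this convexity step is in hand, the rest of the proof is bookkeeping on the equations $R_{kl}$ together with a short counting argument.
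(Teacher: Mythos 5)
Your proof is correct and follows essentially the same route as the paper's: restrict the relations $R_{kl}$ to the locus $\{u_{ij}=0,\ (ij)\in\D\}$, use the fact that every diagonal crossing the cut gets $u=1$ to see that the surviving relations are exactly the $u$-equations of the smaller polygons, invoke \cref{thm:U} factorwise, and count dimensions. The only difference is that you handle a general subdivision in one step (via the lemma that a diagonal crossing an internal diagonal of $P_r$ is either itself internal to $P_r$ or crosses an element of $\D$), whereas the paper cuts along a single diagonal and inducts on $s$; both arguments hinge on the same planar crossing observation.
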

\begin{proof}
We prove the isomorphism $\tM_D \cong \tM_{0,n_1} \times \cdots \times \tM_{0,n_s}$ for the case $s = 2$.  The general case follows by induction and the other isomorphism is similar.
The diagonal $(ij)$ divides the $n$-gon into a polygon $P_1$ with vertices $i,i+1,\ldots,j$ and a polygon $P_2$ with vertices $j,j+1,\ldots,i$, where all vertex labels are taken cyclically modulo $n$.  When $u_{ij} = 0$, we have $u_{kl} = 1$ for all $(kl)$ crossing $(ij)$.  The relation $R_{kl} = 0$ becomes $1 + 0 - 1 = 0$ while the relation $R_{ij} = 0$ becomes $0 + 1 - 1 = 0$, and are automatically satisfied.  

Now let $(ab)$ be a diagonal that does not cross $(ij)$.  We assume that $(ab)$ is a diagonal of $P_1$.  The diagonals $(cd)$ that cross $(ab)$ are of two types: diagonals of $P_1$ that cross $(ab)$, or diagonals that cross both $(ab)$ and $(ij)$.  It follows that with $u_{ij} = 0$ and $u_{kl} = 1$ for all $(kl)$ crossing $(ij)$, the relation $R_{ab} = 0$ becomes
$$
R_{ab} = u_{ab} + \prod_{(cd)} u_{cd} - 1= 0
$$
where now $(cd)$ varies (only) over diagonals of $P_1$ that cross $(ab)$.  These are the $u$-equations for the polygon $P_1$.  Similarly, we obtain the $u$-equations for the polygon $P_2$.  More formally, let $I' = I + (u_{ij})$.  Then we have proven that
$$
\C[u_{ab}]/I' \cong (\C[u_{a_1b_1}]/I_1) \times (\C[u_{a_2b_2}]/I_2)
$$
where $I_1$ and $I_2$ are the $u$-relations for the polygons $P_1$ and $P_2$ respectively.  The last statement follows immediately from dimension counting $\dim(\M_{0,n_i}) = n_i-3$.
\end{proof}
%
%I do not know a direct proof of the following statement.  It follows 
%\begin{proposition}
%The partial compactification $\tM_{0,n}$ is smooth.
%\end{proposition}

The affine variety $\tM_{0,n}$ only partially compactifies $\M_{0,n}$.  We can use it to study the compactification $\bM_{0,n}$, the moduli space of stable rational marked curves, as follows.
\begin{definition}\label{def:bM0n}
The moduli space $\bM_{0,n}$ of stable rational curves with $n$ marked points is the (Zariski-)closure of $\M_{0,n}$ under the cross-ratio embedding $\iota$ of \eqref{eq:iota}
$$
\bM_{0,n} = \overline{\M_{0,n}} \subset (\P^1 )^{\binom{[n]}{4}}.
$$
\end{definition}

By definition, the cross-ratios $[ij|kl]$ are still well-defined on $\bM_{0,n}$, but now they take values in $\P^1 = \C \cup \{\infty\}$ rather than $\P^1 \setminus \{0,1,\infty\}$.
Since the dihedral coordinates are some of the cross-ratios, it is not hard to see that $\tM_{0,n}$ is an open subset of $\bM_{0,n}$.  Indeed, $\tM_{0,n}$ is the locus in $\bM_{0,n}$ where $u_{ij} \neq \infty$ for all diagonals $(i,j)$.  The action of $S_n$ on $\M_{0,n}$ by permuting points extends to to an action as automorphisms on $\bM_{0,n}$.  However, the action of $\alpha \in S_n$ sends $\tM_{0,n}$ to another subvariety $  \tM_{0,n}^\alpha \subset \bM_{0,n}$.  The subvariety $\tM_{0,n}^\alpha$ depends only on the dihedral ordering on $1,2,\ldots,n$ induced by $\alpha$, and often we will view $\alpha$ as a dihedral ordering.  A dihedral ordering on $1,2,\ldots,n$ is a permutation of the $n$ points, where two permutations are considered equivalent if they are related by either cyclic rotation or the reversal $w_1w_2 \cdots w_n \sim w_n w_{n-1} \cdots w_1$.  Note that the cyclic rotation $(12\cdots n)\in S_n$ sends $\tM_{0,n}$ to itself by sending $u_{ij} \mapsto u_{i+1,j+1}$, and similarly $(n,n-1,\ldots,1) \in S_n$ maps $u_{ij} \mapsto u_{n+1-j,n+1-i}$. 

By \cref{ex:Ucover}, every point $\usigma \in \bM_{0,n}$ belongs to $\tM_{0,n}^\alpha$ for some $\alpha$.  We view $$\{\tM_{0,n}^\alpha \mid \alpha \text{ a dihedral ordering}\}$$ as open charts covering $\bM_{0,n}$.

\begin{theorem}
The moduli space $\bM_{0,n}$ is a smooth, projective algebraic variety and the boundary $\bM_{0,n} \setminus \M_{0,n}$ is a simple normal crossings divisor.
\end{theorem}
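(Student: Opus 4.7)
The plan is to reduce everything to a local analysis on $\tM_{0,n}$. Projectivity of $\bM_{0,n}$ is immediate from \cref{def:bM0n}: by construction $\bM_{0,n}$ is a closed subvariety of the projective variety $(\P^1)^{\binom{[n]}{4}}$. By \cref{ex:Ucover}, the affine charts $\tM_{0,n}^\alpha$, indexed by dihedral orderings $\alpha$, form an open cover of $\bM_{0,n}$, and each differs from $\tM_{0,n}$ only by relabeling marked points. It therefore suffices to prove that $\tM_{0,n}$ is smooth of dimension $n-3$ and that its boundary $\bigcup_{(i,j)\in\diag_n}\{u_{ij}=0\}$ is simple normal crossings.

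I would proceed by induction on $n$. The base case $n=4$ is immediate: $\tM_{0,4}\cong\AA^1$ with boundary the two distinct points $(0,1),(1,0)\in\AA^2$. For the inductive step, fix $\usigma\in\tM_{0,n}$ and let $\D=\diag_n\setminus M(\usigma)$, so that $\usigma\in\M_D$. The only boundary components through $\usigma$ are $\{u_{ij}=0\}$ for $(i,j)\in\D$, so both smoothness and SNC at $\usigma$ will follow from the local claim that some Zariski open neighborhood of $\usigma$ in $\tM_{0,n}$ is isomorphic to $W\times\AA^{|\D|}$, where $W$ is a neighborhood of $\usigma|_\D$ in $\tM_D$ and the coordinates on the $\AA^{|\D|}$ factor are $(u_{ij})_{(i,j)\in\D}$. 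Granting this, $\tM_D\simeq\tM_{0,n_1}\times\cdots\times\tM_{0,n_s}$ from \cref{prop:break} is smooth by the inductive hypothesis (each $n_r<n$ when $|\D|\geq 1$), so $\tM_{0,n}$ is smooth at $\usigma$; and the divisors $\{u_{ij}=0\}$ for $(i,j)\in\D$ become coordinate hyperplanes of the $\AA^{|\D|}$ factor, establishing SNC.

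To establish the local claim, partition $\diag_n\setminus\D$ into type I diagonals (those lying inside some polygon $P_r$ of the subdivision $\D$) and type II diagonals (those crossing at least one element of $\D$). For a type II $(k,l)$, the relation $R_{kl}$ reads $u_{kl}=1-\prod_{(a,b)\text{ crosses }(k,l)}u_{ab}$, and the product contains at least one factor $u_{ij}$ with $(i,j)\in\D$, which vanishes at $\usigma$. A direct calculation shows that the Jacobian of the type II relations with respect to the type II variables equals the identity at $\usigma$: the binary nature of the $u$-equations forces all off-diagonal entries to involve partial products with two or more $\D$-factors, and hence to vanish. The implicit function theorem then expresses each type II variable as a regular function of the type I variables and the $\D$-variables in a Zariski neighborhood of $\usigma$.

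The main obstacle is to verify that substituting these expressions for the type II variables uncouples the remaining relations into the $u$-equations of the polygons $P_r$, with no residual dependence on the $\D$-variables. The key combinatorial input is that a $\D$-diagonal, being a shared edge of two polygons of the subdivision, never crosses a type I diagonal; hence the product in $R_{kl}$ for type I $(k,l)$ involves only type I and type II factors. I would complete the argument by carefully rewriting the substituted relations and checking that the $\D$-variables drop out, for which the binary structure is again essential. This yields the desired local product decomposition $\tM_{0,n}\simeq\tM_D\times\AA^{|\D|}$ and completes the inductive step.
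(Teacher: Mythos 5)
Your reduction to the charts $\tM_{0,n}^\alpha$ and the induction scheme are reasonable, and two of your observations are correct and useful: the Jacobian block of the type II relations with respect to the type II variables is the identity at $\usigma$ (each partial product retains a vanishing $\D$-factor), and a type I diagonal never crosses a $\D$-diagonal. But the step you yourself single out as the main obstacle is not merely unverified, it is false as stated. Take $n=5$ and $\D=\{(13)\}$, so type II $=\{(24),(25)\}$ and type I $=\{(14),(35)\}$. The type II relations in \eqref{eq:ueq5} solve exactly as $u_{24}=1-u_{13}u_{35}$ and $u_{25}=1-u_{13}u_{14}$, and substituting into $R_{14}$ yields $u_{14}+u_{35}-1-u_{13}u_{14}u_{35}=0$: the quadrilateral $u$-equation does \emph{not} reappear and the $\D$-variable $u_{13}$ does \emph{not} drop out. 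What is true is only that setting $u_{13}=0$ recovers the polygon relations — that is \cref{prop:break}, a statement about the stratum $\tM_\D$ itself, not about a neighborhood of it — so your claimed Zariski-local decomposition $W\times\AA^{|\D|}$ with the $u_{ij}$, $(ij)\in\D$, as coordinates on the second factor does not follow, and the identity you planned to "check" cannot be checked because it fails already in the smallest nontrivial case.

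There are two further problems with the elimination step. The implicit function theorem produces analytic (or \'etale-local/formal) solutions, not regular functions on a Zariski neighborhood; and for general $\D$ the type II relations couple type II variables to one another (e.g.\ $n=6$, $\D=\{(14)\}$: the relation for $u_{25}$ involves $u_{36}$), so there is no exact polynomial elimination as in the $n=5$ example. If you want to salvage the approach, replace the product decomposition by a rank computation at $\usigma$: in the block order (II, I, $\D$) of variables, the relation Jacobian has identity in the II$\times$II block, zeros in the (I-relations)$\times\D$ block, and its (I-relations)$\times$(I-variables) block equals the polygon Jacobian because all type II variables equal $1$ at $\usigma$; an induction hypothesis phrased as a corank statement for the polygon relations, together with $\dim_{\usigma}\tM_{0,n}=n-3$, then gives smoothness, and SNC additionally requires showing the $du_{ij}$, $(ij)\in\D$, remain independent on the tangent space (this needs an argument, not just \cref{prop:break}, which is set-theoretic). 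Be aware that the paper's own proof is only a sketch that defers smoothness to Brown's induction or to the identification of $\tM_{0,n}$ with an open subset of a smooth toric variety, and explicitly states that a direct Jacobian-style proof from the relations $R_{ij}$ is not known to the author — so the route you chose is exactly the one flagged as delicate, and the clean local product structure you hoped for is not available.
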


\begin{proof}[Sketch]
Since the $\tM_{0,n}^\alpha$ cover $\bM_{0,n}$, smoothness of $\bM_{0,n}$ follows from smoothness of $\tM_{0,n}$.  This would in principle follow from the Jacobian criterion and the relations $R_{ij}$ \eqref{eq:Rij}.  However, I do not know a direct proof along these lines.

Brown \cite{Bro} proves this result by induction, considering inclusion maps of the form $\tM_{0,n} \to \tM_{0,n_1} \times \tM_{0,n_2}$ where $n_1+n_2 = n+4$.  

The statement also follows from the result in \cref{sec:string}, where $\tM_{0,n}$ is identified with an open subset of a smooth toric variety.  This toric variety is the toric variety associated to a particular associahedron, and the smoothness follows from results in the theory of cluster algebras; see \cite{AHLcluster} for further details.
\end{proof}

The moduli space $\bM_{0,n}$ has a natural stratification whose strata correspond to knowing which cross-ratios are equal to $0$, $1$, or $\infty$.  These strata are indexed by trees with $n$ leaves labeled $1,2,\ldots,n$ (\cref{ex:trees}).

\subsection{Positive coordinates}\label{sec:poscoord}
Let us consider the real points $\M_{0,n}(\R)$.  This is an open manifold of dimension $(n-3)$.  There is an action of the symmetric group $S_n$ on $\M_{0,n}$, acting by permuting the $n$ points $\sigma_1,\ldots,\sigma_n$.  Viewing points of $\M_{0,n}$ as $2 \times n$ matrices, the symmetric group $S_n$ acts by permuting the $n$ columns.

\begin{proposition}
The symmetric group $S_n$ acts transitively on the connected components of $\M_{0,n}(\R)$.  The manifold $\M_{0,n}(\R)$ has $(n-1)!/2$ connected components, in bijection with dihedral orderings of $1,2,\ldots,n$.
\end{proposition}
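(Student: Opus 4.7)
The plan is to describe $\M_{0,n}(\R)$ as the quotient of the real configuration space $X := ((\P^1)^n)^\circ(\R)$ by $\PGL(2,\R)$, and to track connected components through this quotient via cyclic orderings on the circle $S^1 = \P^1(\R)$.

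First I would analyze $X$ directly. Identifying $\P^1(\R)$ with $S^1$, each point of $X$ determines a \emph{cyclic ordering} $\tau$ of $\{1,\ldots,n\}$, obtained by reading the labels along a fixed orientation of the circle. This assignment $X \to \{\text{cyclic orderings}\}$ is manifestly locally constant, and the locus $X_\tau \subset X$ realizing a given $\tau$ is connected: fixing one point, the remaining $n-1$ points can be slid along the complementary arcs in the prescribed cyclic order without ever colliding. Hence $X$ has exactly $(n-1)!$ connected components, one $X_\tau$ for each cyclic ordering.

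Second, I would invoke the fact that $\PGL(2,\R)$ has two connected components, distinguished by the sign of the determinant: the identity component ${\rm PSL}(2,\R)$ acts on $S^1$ by orientation-preserving homeomorphisms, while its complement acts by orientation-reversing ones. Orientation-preserving maps preserve cyclic order, so ${\rm PSL}(2,\R)$ stabilizes each $X_\tau$, whereas the non-identity component sends $X_\tau$ to $X_{\bar\tau}$, where $\bar\tau$ denotes the reversed cyclic ordering.

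Third, using the real analogue of the earlier lemma, $\PGL(2,\R)$ acts freely on $X$; since ${\rm PSL}(2,\R)$ and $X_\tau$ are both connected, the quotient $X_\tau/{\rm PSL}(2,\R)$ is connected. (An explicit slice sends three cyclically consecutive points to $(0,1,\infty)$, reducing $X_\tau/{\rm PSL}(2,\R)$ to an open cell of configurations of $n-3$ distinct real points in a prescribed interval, which is visibly connected.) Passing to the full quotient $\M_{0,n}(\R) = X/\PGL(2,\R)$, the orientation-reversing component identifies $X_\tau/{\rm PSL}(2,\R)$ with $X_{\bar\tau}/{\rm PSL}(2,\R)$, so the connected components of $\M_{0,n}(\R)$ are in canonical bijection with cyclic orderings modulo reversal, i.e.\ dihedral orderings of $1,\ldots,n$, giving the count $(n-1)!/2$.

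The $S_n$-transitivity is then immediate: $S_n$ acts by relabeling on cyclic (hence on dihedral) orderings, and this action is obviously transitive, since any ordering may be relabeled to the standard $1,2,\ldots,n$. This action is compatible with the $S_n$-action on $\M_{0,n}(\R)$ by permuting columns of a representative matrix, so transitivity on components follows. The main obstacle I expect is the rigorous verification that the cyclic ordering is a \emph{complete} invariant of components of $X$ — topologically intuitive, but technically it requires the slicing argument combined with the elementary connectedness of open cells of configurations on $S^1$.
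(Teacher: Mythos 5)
Your argument is correct, but it takes a genuinely different route from the one in the notes. You work upstairs in the real configuration space of $\P^1(\R)\cong S^1$: components there are indexed by cyclic orderings (giving $(n-1)!$ of them), and you then pass to the quotient by $\PGL(2,\R)$, using $\pi_0(\PGL(2,\R))\cong\Z/2$ — the identity component $\mathrm{PSL}(2,\R)$ preserves cyclic order, the other component reverses it — to conclude that components of $\M_{0,n}(\R)$ correspond to cyclic orderings modulo reversal, i.e.\ dihedral orderings, and transitivity of $S_n$ is then just relabeling. The paper instead argues downstairs by orbit--stabilizer: it exhibits $(\M_{0,n})_{>0}$ as a connected component, characterizes it as the locus where all dihedral coordinates $u_{ij}$ are positive, and identifies the subgroup of $S_n$ preserving the set of dihedral coordinates as the dihedral group $D_{2n}$, so that components biject with cosets $S_n/D_{2n}$, of which there are $(n-1)!/2$. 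Your version is more elementary and makes both the transitivity and the appearance of dihedral orderings completely transparent (it also yields, in passing, the component count of the configuration space itself); the paper's version buys the identification of each component with a sign pattern of the $u_{ij}$, which is what the notes use later (cf.\ \cref{ex:signpatternM0n} and the role of $(\M_{0,n})_{>0}$ as the positivity locus). One point you should make explicit rather than assert: that $\M_{0,n}(\R)$ really is $((\P^1(\R))^n)^\circ/\PGL(2,\R)$ — taking real points does not commute with quotients in general. Here it is harmless: a real point of $\M_{0,n}$ has real cross-ratios, so after gauge-fixing $(\sigma_1,\sigma_2,\sigma_n)=(0,1,\infty)$ the remaining coordinates $\sigma_j=[1n|j2]$ are real (equivalently, use the hyperplane-arrangement model of $\M_{0,n}$ from \cref{sec:hyper}), so every real point is represented by a real configuration; with that sentence added, your slicing argument goes through as stated.
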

\begin{proof}
Using the $\PGL(2)$ action, we place $(\sigma_1,\sigma_2,\sigma_n)$ at $(0,1,\infty)$.  Define the positive part $(\M_{0,n})_{> 0} \subset \M_{0,n}(\R)$ by \begin{equation}\label{eq:M0npos}
(\M_{0,n})_{> 0} = \{(\sigma_3,\ldots,\sigma_{n-1}) \mid 0 < 1 < \sigma_3 < \sigma_4 < \cdots < \sigma_{n-1} < \infty\},
\end{equation}
the space of $n$ points in $\P^1(\R)$ arranged in order.  This is clearly a connected component of $\M_{0,n}(\R)$.  By a direct calculation, $(\M_{0,n})_{> 0}$ is the locus in $ \M_{0,n}(\R)$ where all dihedral coordinates $u_{ij}$ take positive values.  The symmetric group $S_n$ sends cross-ratios to cross-ratios.  The subgroup of $S_n$ that sends dihedral coordinates to dihedral coordinates is exactly the dihedral subgroup $D_{2n} \subset S_n$ (generated by the cyclic rotation $(12\cdots n)$ and the reflection $(n(n-1) \cdots 21)$).  It follows that the connected components of $\M_{0,n}(\R)$ are in bijection with the cosets $S_n/D_{2n}$, and the second statement follows.
\end{proof}

We denote the connected components of $\M_{0,n}(\R)$ by $\M_{0,n}(\alpha)$, where $\alpha$ is a dihedral ordering of $1,2,\ldots,n$.  %We will fix one connected component of $\M_{0,n}(\R)$ and call it the \emph{positive component} or \emph{positive part}, denoting it by $(\M_{0,n})_{> 0}$.  We have that $(ij) = \sigma_j - \sigma_i > 0$ for $i < j$.  

The positive part $(\M_{0,n})_{> 0}$ \eqref{eq:M0npos} of $\M_{0,n}$ is diffeomorphic to an open ball.  We realize this using explicit parametrizations:
$$
\begin{bmatrix}
1 & 1 & 1 & 1 & 1 & \cdots & 0 \\
0 & 1 & 1+ x_1 & 1+x_1 + x_2 &1+x_1 + x_2+ x_3 & \cdots & 1 
\end{bmatrix} 
$$
$$
\begin{bmatrix}
1 & 1 & 1 & 1 & 1 & \cdots & 0 \\
0 & 1 & 1+ y_1 & 1+y_1 + y_1y_2 &1+y_1 + y_1y_2+ y_1y_2y_3 & \cdots & 1 
\end{bmatrix} 
$$
where $(x_1,\ldots,x_{n-3}) \in \R_{>0}^{n-3}$ or $(y_1,\ldots,y_{n-3}) \in \R_{>0}^{n-3}$.  The rational coordinates $(x_1,\ldots,x_{n-3})$ and $(y_1,\ldots,y_{n-3})$ are called \emph{positive coordinates}, and are related by the monomial transformation
$$
x_i = y_1 y_2 \cdots y_i.
$$  
Each coordinate system consists of regular functions on $\M_{0,n}$ that generate the field $\C(\M_{0,n})$ of rational functions.  Furthermore, the maps 
$$
(x_1,\ldots,x_{n-3}): \M_{0,n} \to (\C^\times)^{n-3}, \qquad (y_1,\ldots,y_{n-3}): \M_{0,n} \to (\C^\times)^{n-3}
$$
map $(\M_{0,n})_{>0}$ diffeomorphically onto $\R_{>0}^{n-3}$.  In these coordinate systems, we have the following explicit formulae.  For $1 \leq i < j < n$, the $2 \times 2$ minor $(ij)$ is given by
$$
(ij) = x_{i-1} + x_i + \cdots + x_{j-2} = y_1y_2 \cdots y_{i-1} (1+  y_i + y_i y_{i+1} + \cdots + y_i y_{i+1} \cdots y_{j-2}),
$$
where by convention $x_0 = 1$, and for $j = n$, we have $(in) = 1$.  The dihedral coordinates are related to the positive coordinates by
$$
u_{ij} %= \frac{(i,j+1)(i+1,j)}{(i,j)(i+1,j+1)} 
= \begin{cases} 
\frac{y_i(1+  y_{i+1}+ y_{i+1} y_{i+2} + \cdots + y_{i+1} \cdots y_{n-3})}{(1+  y_i + y_i y_{i+1} + \cdots + y_i y_{i+1} \cdots y_{n-3})} & \mbox{if $j = n-1$} \\
\frac{(1+  y_1+ y_1 y_2 + \cdots + y_1 y_2 \cdots y_{i-2})}{(1+  y_1+ y_1 y_2 + \cdots + y_1 y_2 \cdots y_{i-1})}& \mbox{if $j = n$} \\
\frac{(1+  y_i + y_i y_{i+1} + \cdots + y_i y_{i+1} \cdots y_{j-1})(1+  y_{i+1}+ y_{i+1} y_{i+2} + \cdots + y_{i+1} \cdots y_{j-2})}{(1+  y_i + y_i y_{i+1} + \cdots + y_i y_{i+1} \cdots y_{j-2})(1+  y_{i+1}+ y_{i+1} y_{i+2} + \cdots + y_{i+1} \cdots y_{j-1})} & \mbox{else}.
\end{cases}
$$
We obtain, by inspection, 
\begin{proposition}
The dihedral coordinates take values in $(0,1)$ on $(\M_{0,n})_{>0}$.
\end{proposition}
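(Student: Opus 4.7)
I would split the claim into the two inequalities $u_{ij} > 0$ and $u_{ij} < 1$ and handle them separately.

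\emph{Positivity} has essentially already been asserted in the proof of the proposition on connected components (``$(\M_{0,n})_{>0}$ is the locus where every $u_{ij}$ is positive''), but I would pause to verify it directly. Take the matrix representative with $\sigma_1 < \sigma_2 < \cdots < \sigma_n$ all finite real numbers, a valid $\PGL(2)$-representative of any point of $(\M_{0,n})_{>0}$, and read off the sign of each of the four factors in
$$u_{ij} = \frac{(\sigma_i - \sigma_{j+1})(\sigma_{i+1} - \sigma_j)}{(\sigma_i - \sigma_j)(\sigma_{i+1} - \sigma_{j+1})}.$$
For a diagonal $(i,j)$ with $1 \leq i < j \leq n$ and $2 \leq j-i \leq n-2$, there are two cases. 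If $j < n$, then every factor is of the form $\sigma_a - \sigma_b$ with $a < b$ (hence negative), and the four negatives cancel. If $j = n$, then the cyclic convention makes $j+1 \equiv 1 \pmod n$; exactly two of the four factors flip sign, and again the ratio is positive.

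\emph{Upper bound.} This is where the binary $u$-equations do the work. I would rewrite $R_{ij} = 0$ from \eqref{eq:Rij} as
$$u_{ij} = 1 - \prod_{(k,l) \text{ crosses } (i,j)} u_{kl}.$$
For any diagonal $(i,j)$ of the $n$-gon (with $n \geq 4$), each of the two open arcs cut off by $(i,j)$ contains at least one vertex, so the set of crossing diagonals is nonempty. By the positivity just established, every $u_{kl}$ appearing in the product is strictly positive, hence the product itself is strictly positive, and $u_{ij} = 1 - (\text{positive}) < 1$.

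\emph{Main obstacle.} The proof is really just a sign computation plus an appeal to the $u$-equations. The only mild care needed is in the positivity step for diagonals of the form $(i,n)$, where the cyclic identification $n+1 \equiv 1$ flips two of the four signs and one has to check that the net parity is still even. The conceptual point, rather than any technical difficulty, is that the upper bound $u_{ij} < 1$ falls out for free from the binary form of the $u$-equations, precisely because each $R_{ij}$ expresses $u_{ij}$ as ``$1$ minus a monomial in the remaining positive coordinates''.
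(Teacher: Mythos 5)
Your proof is correct, but it follows a different route from the paper. The paper deduces the proposition ``by inspection'' from the explicit formulae expressing each $u_{ij}$ in the positive coordinates $y_1,\ldots,y_{n-3}$: there each $u_{ij}$ is a ratio of polynomials with positive coefficients in which the numerator is visibly dominated termwise by the denominator, so both $u_{ij}>0$ and $u_{ij}<1$ are read off at once from the parametrization of $(\M_{0,n})_{>0}$ by $\R_{>0}^{n-3}$. You instead argue on the cross-ratio itself: positivity by a sign count on the four factors of \eqref{eq:defuij} for a finite increasing real representative (your case analysis at $j=n$, where the cyclic convention flips exactly two signs, is the right check, and such a representative does exist since a real M\"obius map can move $\sigma_n=\infty$ to a finite point while preserving the order), and then $u_{ij}<1$ from the relation $R_{ij}=0$ together with the fact that every diagonal of an $n$-gon ($n\geq 4$) is crossed by at least one other diagonal, so the product in \eqref{eq:Rij} is a nonempty product of positive numbers. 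This second step is the more conceptual gain of your approach: the upper bound is exhibited as a direct consequence of the binary structure of the $u$-equations, and hence would apply verbatim to the nonnegative part of any binary geometry once positivity of the $u_i$ is known, whereas the paper's argument is tied to the specific subtraction-free $y$-parametrization (which, on the other hand, gives the stronger statement that the $u_{ij}$, and also the $1-u_{ij}$, are positive rational expressions in the $y_i$, a fact used later). The only external input you use, the relations $R_{ij}=0$, is established in the paper before the proposition (\cref{ex:Rij}), so there is no circularity.
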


For example, for $n = 4$ we have
$$
u_{13} = \frac{y_1}{1+y_1}, \qquad u_{24} = \frac{1}{1+y_1},
$$
and for $n = 5$ we have
\begin{align}\label{eq:u5y}
\begin{split}
u_{13} &= \frac{1+y_1+y_1y_2}{(1+y_1)(1+y_2)}, \qquad u_{14} = \frac{y_1(1+y_2)}{1+y_1+y_1y_2}, \qquad u_{24} = \frac{y_2}{1+y_2},\\
 \qquad u_{25} &= \frac{1}{1+y_1}, \qquad u_{35} = \frac{1+y_1}{1+y_1+y_1y_2}.
 \end{split}
\end{align}

The next result follows immediately from these formulae.
\begin{proposition}\label{prop:inverty}
The rational function field $\C(\M_{0,n})$ of $\M_{0,n}$ is $\C(y_1,\ldots,y_{n-3})$.  The character lattice $\lat(\M_{0,n})$ is the lattice of Laurent monomials in the $n-3$ coordinates $y_1,y_2,\ldots,y_{n-3}$ and the $\binom{n-2}{2}$ polynomials 
$$
p_{ij} = (1+  y_i + y_i y_{i+1} + \cdots + y_i y_{i+1} \cdots y_{j-2})
$$
where $1 \leq i <  j-1 < j \leq n-1$.
\end{proposition}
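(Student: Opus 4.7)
The plan is to work in the affine chart where $(\sigma_1,\sigma_2,\sigma_n) = (0,1,\infty)$, which realizes $\M_{0,n}$ as the hyperplane arrangement complement in $\C^{n-3}$ from \cref{sec:hyper}, and to compare the proposed generators $\{y_i, p_{ij}\}$ with the defining linear forms of that arrangement. The central calculation will be the telescoping identity
$$
\sigma_j - \sigma_i \;=\; y_1 y_2 \cdots y_{i-1}\bigl(1 + y_i + y_i y_{i+1} + \cdots + y_i y_{i+1} \cdots y_{j-2}\bigr) \qquad (1 \leq i < j),
$$
using the conventions $\sigma_1 = 0$, $\sigma_2 = 1$, and empty products equal to $1$; this is immediate by rearranging the parametrization $\sigma_k = 1 + y_1 + y_1 y_2 + \cdots + y_1 \cdots y_{k-2}$ of \cref{sec:poscoord}.

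For the function field statement, I would note that $\C(\sigma_3, \ldots, \sigma_{n-1}) = \C(\M_{0,n})$ holds tautologically on this chart. The parametrization expresses each $\sigma_i$ polynomially in the $y$'s, while the inverse relations $y_1 = \sigma_3 - 1$ and $y_k = (\sigma_{k+2} - \sigma_{k+1})/(\sigma_{k+1} - \sigma_k)$ for $k \geq 2$ express each $y_k$ rationally in the $\sigma_i$'s. Hence $\C(y_1, \ldots, y_{n-3}) = \C(\sigma_3, \ldots, \sigma_{n-1}) = \C(\M_{0,n})$.

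For the character lattice, I would proceed in three steps. First, since the defining linear forms $\sigma_i$, $\sigma_i - 1$, $\sigma_j - \sigma_i$ are pairwise coprime primes in $\C[\sigma_3, \ldots, \sigma_{n-1}]$, unique factorization implies that $\lat(\M_{0,n}) = \C[\M_{0,n}]^\times/\C^\times$ is freely generated by these $r := \binom{n}{2} - n$ forms. Second, specializing the telescoping identity yields
\begin{align*}
\sigma_i &= p_{1,i} \quad (i \geq 3),\\
\sigma_3 - 1 &= y_1, \qquad \sigma_i - 1 = y_1 \, p_{2,i} \quad (i \geq 4),\\
\sigma_{i+1} - \sigma_i &= y_1 \cdots y_{i-1} \quad (i \geq 3), \qquad \sigma_j - \sigma_i = (y_1 \cdots y_{i-1}) \, p_{i,j} \quad (3 \leq i,\ j \geq i+2),
\end{align*}
exhibiting every defining linear form as a Laurent monomial in $\{y_k, p_{ab}\}$; hence $\{y_k, p_{ab}\}$ generates $\lat$ as an abelian group. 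Third, the set $\{y_k, p_{ab}\}$ has exactly $(n-3) + \binom{n-2}{2} = r$ elements, and a surjection $\Z^r \twoheadrightarrow \lat \cong \Z^r$ from a generating set of size equal to the rank is automatically an isomorphism (its kernel is a free subgroup whose rank must vanish), so $\{y_k, p_{ab}\}$ forms a $\Z$-basis of $\lat$.

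The main obstacle is organizational rather than conceptual: one must carefully handle the boundary cases of the telescoping formula (the case $j = i+1$ produces a pure monomial $y_1 \cdots y_{i-1}$ rather than an occurrence of some $p_{ij}$; the case $i = 1$ covers the forms $\sigma_i$; the case $i = 2$ covers the forms $\sigma_i - 1$) and check that the resulting $r$ monomials exhaust $\{y_k, p_{ab}\}$ without duplication. Each case reduces to a direct algebraic manipulation, after which the rank count closes the argument.
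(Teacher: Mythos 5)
Your proposal is correct and is essentially the argument the paper intends: the paper derives \cref{prop:inverty} "immediately" from the displayed minor formulae $(ij) = y_1\cdots y_{i-1}\,p_{ij}$, i.e.\ exactly your telescoping identity, combined with the identification (from the proof of \cref{prop:uijchar}) of $\lat$ with Laurent monomials in the linear forms of the hyperplane arrangement. Your expansion — the chart $(\sigma_1,\sigma_2,\sigma_n)=(0,1,\infty)$, the case analysis of the forms, and the rank count showing the $\binom{n}{2}-n$ functions $\{y_k,p_{ab}\}$ give an invertible monomial transformation — just fills in the details the paper leaves implicit.
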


Note that $\binom{n-2}{2} + (n-3) = \binom{n}{2} - n$ is the number of diagonals of a $n$-gon.

\subsection{Canonical form on $\M_{0,n}$}
Let $(\M_{0,n})_{\geq 0} \subset \tM_{0,n} \subset \bM_{0,n}$ be the analytic closure of the positive component $(\M_{0,n})_{>0}$ in $\tM_{0,n}$.  This is a compact semialgebraic set inside $\tM_{0,n}(\R)$.  It is cut out by the inequalities $u_{ij} \geq 0$.  In fact, the coordinates $u_{ij}$ take values in $[0,1]$ on $(\M_{0,n})_{\geq 0}$, and identify $(\M_{0,n})_{\geq 0}$ with a closed subspace of the unit cube $[0,1]^{\binom{n}{2}-n}$.  The face structure of $(\M_{0,n})_{\geq 0}$ is the same as that of the \emph{associahedron}: for each subdivision $\D$ of the $n$-gon, the (open) stratum $(\M_{0,n})_{\geq 0} \cap \M_\D$ is an open ball of codimension equal to the number of diagonals $|\D|$.  

\begin{example}
Let $n = 5$.  The two-dimensional space $(\M_{0,5})_{\geq 0}$ has five facets, given by $u_{ij} = 0$ for $(ij)$ a diagonal of the pentagon.  Consider one of the facets, which by cyclic symmetry we can take to be $u_{13} = 0$.  The five $u$-equations then reduce to
$$
u_{24} = 1, \qquad u_{25} = 1, \qquad u_{35} + u_{14} = 1.
$$
So on this facet $u_{13},u_{24},u_{25}$ are constant, and we have a one-dimensional space sitting inside two-dimensional $u_{14},u_{35}$ space, which is isomorphic to $(\M_{0,4})_{\geq 0}$, an interval.  It is easy to see that $(\M_{0,5})_{\geq 0}$ is combinatorially a pentagon.
\end{example}

\begin{definition}
The \emph{canonical form} of $(\M_{0,n})_{\geq 0}$, or the \emph{Parke-Taylor form} is the top-degree differential form
$$
\Omega = \Omega_{0,n}:= \bigwedge_{i=1}^{n-3} \dlog x_i = \bigwedge_{i=1}^{n-3} \frac{dx_i}{x_i}
=   \bigwedge_{i=1}^{n-3} \dlog y_i = \bigwedge_{i=1}^{n-3} \frac{dy_i}{y_i}
$$
\end{definition}
Since $y_i$ is a regular, non-vanishing function on $\M_{0,n}$, this is a regular top-form on $\M_{0,n}$.  It has poles in the partial compactification $\tM_{0,n}$, and in general it has both poles and zeroes in $\bM_{0,n}$.  By \cref{ex:formuij}, this form can also be written as 
\begin{equation}\label{eq:formuij}
\Omega = \bigwedge_{(ij)} \dlog \frac{u_{ij}}{1-u_{i,j}}\end{equation}
where $(ij)$ ranges over a triangulation $\D$ of the $n$-gon with no interior triangles (that is, triangles none of whose sides are sides of the $n$-gon).  The ``no interior triangle" condition is necessary.  A recent result of Silversmith \cite{Sil} states that the field extension $\C(u_{ij} \mid (ij) \in \D) \subset \C(\M_{0,n})$ has degree $2^a$ where $a$ is the number of interior triangles in the triangulation $\D$.

The Parke-Taylor form $\Omega$ encodes a remarkable amount of physics.  By taking different kinds of integrals of this form, one obtains scalar $\phi^3$-amplitudes, string theory amplitudes, gauge theory amplitudes, and so on.

For each diagonal $(ij) \in \diag_n$, let $\tM_{ij} \subset \tM_{0,n}$ be the divisor cut out by the equation $u_{ij} = 0$.  Then by \cref{prop:break}, $\tM_{ij}$ can be identified with $\tM_{0,n_1} \times \tM_{0,n_2}$, where $n_1 = j - i + 1$ and $n_2 = n+i - j +1$.  For the notion of residue in the following result, we refer the reader to \cite[Section 2]{LamPosGeom}.

\begin{lemm}\label{lem:M0npos}
The canonical form $\Omega$ has a simple pole along $\tM_{ij}$ and we have
$$
\Res_{\tM_{ij}} \Omega = \Omega_{0,n_1} \wedge \Omega_{0,n_2}
$$
under the isomorphism $\tM_{ij} \cong \tM_{0,n_1} \times \tM_{0,n_2}$.
\end{lemm}
\begin{proof}
Pick a triangulation $\D$ with no interior triangles that uses the diagonal $(ij)$.  For all other diagonals $(kl) \in \D$ the function $u_{kl}$ is a regular function on $\tM_{ij}$ that is neither identically 0 nor identically equal to 1.  Thus $\dlog \frac{u_{kl}}{1-u_{kl}}$ has neither poles nor zeroes on $\tM_{ij}$.  However, $\dlog  \frac{u_{ij}}{1-u_{i,j}} = \frac{1}{u_{ij}(1-u_{ij})} du_{ij}$ has a simple pole along $u_{ij} = 0$.  We have
$$
\Res_{z = 0} \frac{1}{z(1-z)} dz = 1
$$
and it follows that 
\begin{equation*}
\Res_{u_{ij} = 0} \Omega = \bigwedge_{(kl) \in \D \setminus (ij)}  \dlog \frac{u_{kl}}{1-u_{kl}} = \Omega_{0,n_1} \wedge \Omega_{0,n_2}.  \qedhere
\end{equation*}
\end{proof}

For example, the canonical form of $\M_{0,4}$ is $\frac{du_{13}}{u_{13}(1-u_{13})}$ which has simple poles with residues $1$ and $-1$ at the two boundaries $u_{13} = 0$ and $u_{24}=0$ ($u_{13} = 1$).

\subsection{Positive geometry}
\cref{lem:M0npos} is close to saying that $(\M_{0,n})_{\geq 0}$ is a positive geometry with canonical form $\Omega$.  Informally, a positive geometry $X_{\geq 0}$ is a semialgebraic space equipped with a top-degree rational form $\Omega(X_{\geq 0})$ whose polar structure reflects the face structure of $X_{\geq 0}$.  

\begin{definition}\label{def:posgeom}
A \emph{positive geometry} is a $d$-dimensional (oriented) semialgebraic space $X_{\geq 0}$ in the real points of a projective $d$-dimensional complex algebraic variety $X$ such that there exists a unique top-degree rational form $\Omega(X_{\geq 0})$ on $X$ with the following properties: 
\begin{enumerate}
\item either $d = 0$, and $X = X_{\geq 0}$ is a point and $\Omega(X_{\geq 0}) = \pm 1$ (depending on orientation), or
\item every facet $(F, F_{\geq 0})$ of $X_{\geq 0}$ is a $(d-1)$-dimensional positive geometry, and $\Omega(X_{\geq 0})$ has simple pole along $F$ satisfying
$$
\Res_{F} \Omega(X_{\geq 0}) = \Omega(F_{\geq 0})
$$
and these are the only poles of $\Omega(X_{\geq 0})$.
\end{enumerate}
\end{definition}
For a more careful discussion of the definition of a positive geometry we refer the reader to \cite{ABL,LamPosGeom}.  

Note that even though $(\M_{0,n})_{\geq 0}$ is combinatorially a polytope (the associahedron), it is not actually a polytope.  Instead, it can be identified with the nonnegative part of a toric variety (see \cref{sec:string}) and it is also a \emph{wondertope} in the sense of Brauner, Eur, Pratt, Vlad \cite{BEPV}.  

It follows from induction on $n$ and \cref{lem:M0npos} that $(\bM_{0,n}, (\M_{0,n})_{\geq 0})$ is a positive geometry with canonical form $\Omega$, modulo the statement that $\Omega$ has no poles on $\bM_{0,n}$ except the simple poles along the divisors $\tM_{ij}$.  We leave this as \cref{ex:Omegazero}.

\begin{theorem}[\cite{AHLcluster}]\label{thm:M0npos}
$(\bM_{0,n}, (\M_{0,n})_{\geq 0})$ is a positive geometry with canonical form $\Omega$.
\end{theorem}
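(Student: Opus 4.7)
The plan is a straightforward induction on $n$, driven by \cref{prop:break} (factorization of boundary strata) and \cref{lem:M0npos} (residue formula). The base cases are $n=3$, where $\bM_{0,3}$ is a point and $\Omega = 1$, and $n=4$, where $\bM_{0,4}\cong \P^1$, $(\M_{0,4})_{\geq 0} = [0,1]$ in the coordinate $u = u_{13}$, and $\Omega = du/(u(1-u))$ has simple poles at $u=0$ and $u=1$ with residues $+1$ and $-1$, matching the (oriented) canonical forms of the two endpoint facets.

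For the inductive step, I would identify the facets of $(\M_{0,n})_{\geq 0}$ as the codimension-one strata $\tM_{ij} \cap (\M_{0,n})_{\geq 0}$, one for each diagonal $(ij)$ of the $n$-gon, as discussed just before the theorem. By \cref{prop:break}, $\tM_{ij} \cong \tM_{0,n_1}\times \tM_{0,n_2}$ with $n_1+n_2 = n+2$, and the positive part on this divisor is $(\M_{0,n_1})_{\geq 0}\times (\M_{0,n_2})_{\geq 0}$. By the inductive hypothesis applied to each factor, and the fact that a product of positive geometries is again a positive geometry with canonical form the wedge of the two, each facet is a $(d-1)$-dimensional positive geometry with canonical form $\Omega_{0,n_1}\wedge \Omega_{0,n_2}$. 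Then \cref{lem:M0npos} gives $\Res_{\tM_{ij}} \Omega = \Omega_{0,n_1}\wedge \Omega_{0,n_2}$, which is exactly the recursive residue condition in \cref{def:posgeom}.

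The main obstacle is showing that $\Omega$ has no poles on $\bM_{0,n}$ other than the simple poles along the boundary divisors $\tM_{ij}$ and their translates under the $S_n$-action, i.e.\ no ``extra'' poles along the boundary divisors that are not visible in the chart $\tM_{0,n}$. This is the content of \cref{ex:Omegazero}, and I would establish it using the open cover $\{\tM_{0,n}^\alpha\}$ of $\bM_{0,n}$ indexed by dihedral orderings. On each chart, the $\alpha$-twisted analogue of \eqref{eq:formuij} writes $\Omega$ as $\bigwedge_{(ij) \in \D} \dlog (u^\alpha_{ij}/(1-u^\alpha_{ij}))$ for some triangulation $\D$ with no interior triangles, which manifestly has only simple poles along the coordinate divisors $\{u^\alpha_{ij}=0\}$ (and zeros along $\{u^\alpha_{ij}=1\}$) and no other singularities. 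Since every boundary divisor of $\bM_{0,n}$ corresponds to a subset of $[n]$ that is a cyclic interval in some dihedral ordering $\alpha$, hence appears as a coordinate divisor in the corresponding chart, this rules out any other poles. Uniqueness of $\Omega$ with the prescribed facet residues then follows from the standard residue-recursion argument for positive geometries \cite{ABL}: any difference $\Omega-\Omega'$ is a rational top-form on $\bM_{0,n}$ with no poles, and the induction reduces the claim that such a form vanishes to its triviality on each boundary stratum.
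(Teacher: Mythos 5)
Your overall architecture is the same as the paper's: induction on $n$ with base cases $n=3,4$, facets identified via \cref{prop:break}, residues matched via \cref{lem:M0npos}, and the whole weight of the argument resting on the claim that $\Omega$ has no poles on $\bM_{0,n}$ beyond the simple poles along the divisors $\tM_{ij}$ meeting $\tM_{0,n}$ (the paper states exactly this reduction and delegates the pole claim to \cref{ex:Omegazero}). The gap is in how you discharge that claim. Your assertion that on the chart $\tM_{0,n}^\alpha$ the form $\Omega$ is given by the $\alpha$-twisted analogue of \eqref{eq:formuij}, i.e.\ $\bigwedge_{(ij)\in\D}\dlog\bigl(u^\alpha_{ij}/(1-u^\alpha_{ij})\bigr)$, is false for $\alpha$ not dihedrally equivalent to the standard order: that wedge is (up to sign) $\Omega(\alpha)$, the canonical form of the \emph{other} component $\M_{0,n}(\alpha)$, not $\Omega=\Omega_{0,n}$. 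Already for $n=4$, with $(\sigma_1,\sigma_2,\sigma_4)=(0,1,\infty)$ one has $\Omega_{0,4}=d\sigma/(\sigma-1)$, with poles at $\sigma=1,\infty$, while the chart adapted to the ordering $1,3,2,4$ carries the form $d\sigma/(\sigma(1-\sigma))$, with poles at $\sigma=0,1$; these are genuinely different forms, so the identity you invoke fails precisely on the charts where the potential extra poles live, and the argument collapses there.

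The correct route is the one hinted at in \cref{ex:Omegazero}: on the chart $\tM_{0,n}^\alpha$ write $\Omega = f_\alpha\,\Omega(\alpha)$, where $f_\alpha$ is the ratio of Parke--Taylor factors (equivalently, $w^*\Omega_{0,n}=f_w\,\Omega_{0,n}$ for $w\in S_n$), and then compute the order of vanishing of $f_\alpha$ along each coordinate divisor $\{u^\alpha_{ij}=0\}$ of that chart. Since $\Omega(\alpha)$ has a simple pole along each such divisor, one must show that $f_\alpha$ vanishes (to order at least one) along every divisor that is a boundary divisor of $\bM_{0,n}$ not meeting $\tM_{0,n}$; this is an actual computation with the cross-ratios (or with the orders of vanishing of the $\sigma_{ab}$ in local coordinates), not something that is ``manifest'' from the chart expression. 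Once that is done, your residue matching via \cref{prop:break} and \cref{lem:M0npos}, and your uniqueness argument (a pole-free rational top-form on the rational projective variety $\bM_{0,n}$ vanishes), go through as in the paper.
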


Another proof of \cref{thm:M0npos} is given in \cite{BEPV}.  %The space $(\M_{0,n})_{\geq 0}$ is in fact diffeomorphic to the associahedron polytope as a stratified space.

\subsection{Exercises and Problems}

\begin{exercise}\label{ex:PGL}\
\begin{enumerate}
\item
Show that $\PGL(k)$ acts simply-transitively on the set of $k+1$ distinct ordered points in $\P^{k-1}$.
\item
Verify the dimension of $\M_{0,n}$ stated in \cref{prop:M0n}.
\end{enumerate}
\end{exercise}

\begin{exercise}\label{ex:Rij} \
\begin{enumerate}[label=(\alph*)]
\item
Prove \cref{lem:cr}.
\item
Prove the relations \eqref{eq:Rij}.
\item
Let $(A,B,C,D)$ be a decomposition of $[n]$ into four non-empty cyclic intervals, in cyclic order.  Show that the identity
$$
u_{A,C} + u_{B,D} = 1
$$
holds by writing both terms as cross-ratios.
%\item
%Show that these relations belong to the ideal in $\C[u_{ij}]$ generated by the relations \eqref{eq:Rij}.  For example, with $n=6$ we have
%$$
%u_{14}u_{15}+u_{26}u_{36} - 1 = R_{26} - u_{14}u_{15}R_{13} + u_{26} R_{36}.
%$$
\item
Show that
$$
\frac{1-u_{ij}}{u_{ij}} \frac{1-u_{i+1,j+1}}{u_{i+1,j+1}} = (1-u_{i,j+1})(1-u_{i+1,j}).
$$
\item
Prove \eqref{eq:ABCD}.
\end{enumerate}
\end{exercise}

\begin{exercise} \label{ex:intrinsic}
Call a character $\gamma \in \Lambda$ \emph{bounded} if it takes bounded values on $(\M_{0,n})_{>0}$.  Show that $\gamma$ is bounded if and only if it is a monomial $\gamma = \prod_{ij} u^{a_{ij}}_{ij}$ in the $u_{ij}$, where $a_{ij} \geq 0$.
%
% \
%\begin{enumerate}[label=(\alph*)]
%\item
%Prove that the intrinsic torus (\cref{sec:intrinsic}) of $\M_{0,n}$ is $T_U$ with character lattice $\lat$ \eqref{eq:L}.
%\item
%
%\end{enumerate}
\end{exercise}

\begin{exercise}\label{ex:Ucover}\
\begin{enumerate}[label=(\alph*)]
\item
If you know another definition of $\bM_{0,n}$, show that it agrees with \cref{def:bM0n}.
\item
Let $\usigma \in \bM_{0,n}$.  Show that there exists a dihedral ordering $\tau \in S_n$ such that $u_{\tau(i),\tau(j)}(\usigma) \neq \infty$ for all diagonals $(i,j)$.
\end{enumerate}
\end{exercise} 

\begin{exercise}\label{ex:trees} \
\begin{enumerate}[label=(\alph*)]
\item
Prove that the closure $\tM_D$ of the stratum $\M_\D$ in $\tM_{0,n}$ is the union $\bigcup_{\D'} \M_{\D'}$ over all subdivisions $\D'$ that refine $\D$.
\item
Find a bijection between subdivisions of the $n$-gon and planar $n$-leaf trees.  These are trees, embedded in the plane, with $n$ leaves labeled $1,2,\ldots,n$ in order when traversing the outside of the tree.  Thus the strata of $\tM_{0,n}$ are labeled by planar $n$-leaf trees.  
\item
Show that the strata of $\bM_{0,n}$ are labeled by all $n$-leaf trees with leaves labeled $1,2,\ldots,n$.  Here, the trees are treated as abstract graphs with labeled leaves.
\end{enumerate}
\end{exercise}

\begin{exercise}\label{ex:signpatternM0n}
The functions $u_{ij}$ take nonzero real values on $\M_{0,n}(\R)$.  Prove that the signs of $u_{ij}$ determine the connected component that a point lies in.  What are the possible signs for $\{u_{ij}\}$?  This can be considered a moduli space analogue of a matroid.
\end{exercise}

\begin{problem}\label{ex:formuij}
Prove \eqref{eq:formuij}.  Is there a formula for $\Omega$ using $\{u_{ij} \mid (ij) \in \D\}$ where $\D$ is a triangulation that does have interior triangles?
%Show that $\Omega_{0,n} = \bigwedge_{(i,j)} \dlog \frac{u_{ij}}{1-u_{i,j}}$ where $(i,j)$ ranges over a triangulation of the $n$-gon with no interior triangles.  
\end{problem}

\subsubsection{Canonical form of $\Gr(2,n)_{\geq 0}$}
The \emph{positive Grassmannian} $\Gr(2,n)_{> 0}$ is the locus in $\Gr(2,n)$ represented by $2 \times n$ matrices all of whose $2 \times 2$ minors satisfy $(ij) > 0$.  Its analytic closure in $\Gr(2,n)$ is the (totally) nonnegative Grassmannian $\Gr(2,n)_{\geq 0}$.  Even though $(\M_{0,n})_{>0}$ is a rather straightforward quotient of $\Gr(2,n)_{>0}$ by a positive torus $T_{>0} \cong \R_{>0}^{n-1}$, the combinatorics of $(\M_{0,n})_{\geq 0}$ and $\Gr(2,n)_{\geq 0}$ is remarkably quite different \cite{LamCDM,PosTP}!  Like $(\M_{0,n})_{\geq 0}$, the totally nonnegative Grassmannian $\Gr(2,n)_{\geq 0}$ is a regular CW-complex homeomorphic to a closed ball, but it is not a polytopal complex; see \cite{GKL1,GKL3}.

We investigate the canonical form $\Omega(\Gr(2,n)_{\geq 0})$ of the positive Grassmannian.  The map $\Gr^\circ(2,n) \to \M_{0,n}$ from \eqref{eq:M0ncomm} is a locally-trivial fibration with fiber the $(n-1)$-dimensional torus $T'$.  In fact, this family can be trivialized and we have an isomorphism $\Gr^\circ(2,n) = \M_{0,n} \times T'$.  The torus $T'$ has a canonical form $\Omega_{T'} = \bigwedge_{i=1}^{n-1} \dlog t_i$, where $t_i$ are a basis of characters of $T'$.    Define a top-form on $\Gr(2,n)$ by 
$$
\Omega(\Gr(2,n)_{\geq 0}) := \Omega_{0,n} \wedge \Omega_{T'}
$$
using the isomorphism $\Gr^\circ(2,n) \cong \M_{0,n} \times T'$, and noting that $(n-3) + (n-1) = 2n-4 = \dim \Gr(2,n)$. 

\begin{exercise}\label{ex:forms} 
%\
%\begin{enumerate}[label=(\alph*)]
%\item
\noindent
(a) Show that $\Omega(\Gr(2,n)_{\geq 0})$ has no zeroes on $\Gr(2,n)$, and simple poles only, exactly along the $n$ positroid divisors $(i,i+1) = 0$ (see \cite{LamCDM, KLS} for more on positroid varieties).

\noindent
(b)
%\item
Now let $\pi: \Mat_{2,n} \longdashrightarrow \Gr(2,n)$ be the natural quotient (rational) map.  Then $\Theta := \pi^*\Omega(\Gr(2,n)_{\geq 0})$ is a $2n-4$ form on the $2n$-dimensional affine space $\Mat_{2,n}$.  Show that (up to a constant) we have
$$
\Theta = \frac{1}{(12)(23) \cdots (n1)} \det(C_1 C_2 d^{n-2}C_1)\wedge \det(C_1 C_2 d^{n-2} C_2),
$$
where $C_1,C_2$ denote the two rows of a $2 \times n$ matrix, the notation $d^{n-2}C_1$ denotes a $(n-2) \times n$ matrix of 1-forms, all of whose rows are $dC_1$.  For example, with $n = 4$, we would have $\det(C_1 C_2 d^{n-2}C_1)=$
$$
\det \begin{bmatrix} C_{11} & C_{12} & C_{13} & C_{14} \\
C_{21} & C_{22} & C_{23} & C_{24} \\ 
dC_{11} & dC_{12} & dC_{13} & dC_{14} \\ 
dC_{11} & dC_{12} & dC_{13} & dC_{14} 
 \end{bmatrix} = C_{11} C_{22} dC_{13} \wedge dC_{14} - C_{11} C_{22} dC_{14} \wedge dC_{13} \pm \cdots
$$
%\end{enumerate}
\end{exercise}

\subsubsection{Canonical class of $\bM_{0,n}$ and zeroes of $\Omega_{0,n}$}

We recall some statements concerning the intersection theory of $\bM_{0,n}$ \cite{Keel}.
The dimension of the first Chow group $A^1(\bM_{0,n})$ (or the cohomology group $H^2(\bM_{0,n})$) is equal to $2^{n-1} - \binom{n}{2} - 1$.  For example, for $n = 4$, we get $\dim H^2(\P^1) = 1$.  A spanning set is indexed by the classes of the $2^{n-1}$ boundary divisors $D_S = D_{\bar S}$ where $S \subset 2^{[n]}$ is a subset of size $|S| \in [2,n-2]$, and $\bar S:= [n] \setminus S$.  In the notation of \cref{ex:trees}, $D_S$ is the boundary divisor corresponding to a tree with two interior vertices, one connected to the leaves in $S$, and the other to the leaves in $\bar S$.  For any four distinct $i,j,k,l \in [n]$, we have the relation
$$
\sum_{\substack{i,j \in S\\ k,l \notin S}} [D_S] = \sum_{\substack{i,k \in S\\ j,l \notin S}} [D_S] = \sum_{\substack{i,l \in S\\ j,k \notin S}} [D_S] \qquad \mbox{in $A^1(\bM_{0,n})$.}
$$
The canonical class of $\bM_{0,n}$ is given by \cite{Pan}
\begin{equation}\label{eq:KM}
K_{\bM_{0,n}} = \sum_{s=2}^{\lfloor n/2 \rfloor} \left(\frac{s (n-s)}{n-1} - 2\right) \sum_{|S| = s} [D_S].
\end{equation}
For example, for $n=4$, we get $K_{\bM_{0,4}} = -\frac{2}{3} ([0] +[1] +[\infty]) = -2[\pt]$, as expected.  For $n=5$, we get
$$
K_{\bM_{0,5}} = -\frac{1}{2} \sum_{i,j} [D_{ij}].
$$

\begin{exercise}\label{ex:Omegazero} 
 Compute the order of vanishing of $\Omega_{0,n}$ on every divisor $D_{S} \subset \bM_{0,n}$.  In particular, show that the only poles of $\Omega_{0,n}$ are the divisors $D_{S}$ such that $D_{S} \cap \tM_{0,n}$ is a divisor in $\tM_{0,n}$.  Hint: use the action of the symmetric group $S_n$ permuting the $n$ points.  For $w \in S_n$, write $w^*(\Omega_{0,n}) = f_w \Omega_{0,n}$, where $f_w$ is a rational function on $\M_{0,n}$ depending on $w$.  Now compute the poles of $f_w$ on $\tM_{0,n}$.
\end{exercise}

\begin{exercise} By definition, the class $$[\Omega_{0,n}] = [\mbox{divisor of zeroes}] - [\mbox{divisor of poles}]$$ of the rational top-form $\Omega_{0,n}$ in $\bM_{0,n}$ is equal to $K_{\bM_{0,n}}$.  Verify the formula \eqref{eq:KM} by computing $[\Omega_{0,n}]$.  %Also verify that $\Omega_{0,n}$ has no poles on $\bM_{0,n} \setminus \tM_{0,n}$.
Hint: $w \in S_n$ acts on $A^1(\bM_{0,n})$ sending $D_S$ to $D_{w(S)}$. The canonical class is invariant under the automorphisms, and thus invariant under the action of $S_n$.  Thus we can compute $K_{\bM_{0,n}}$ by averaging $w \cdot [\Omega_{0,n}]$ over all permutations $w \in S_n$.
\end{exercise}

%
%\begin{enumerate}
%%\item
%%Check \cref{lem:cr}.
%\item
%What are the automorphisms of $\P^1$ that send $\{0,1,\infty\}$ to $\{0,1,\infty\}$ in some order?
%\item
%
%
%\item 
%
%\item
%Show that
%$$
%\frac{1-u_{ij}}{u_{ij}} \frac{1-u_{i+1,j+1}}{u_{i+1,j+1}} = (1-u_{i,j+1})(1-u_{i+1,j}).
%$$
%
%\item
%Show that 
%$$
%u_A u_B + u_C u_D = 1.
%$$
%Show that these relations belong to the ideal in $\C[u_{ij}]$ generated by the relations \eqref{eq:Rij}.  For example, with $n=6$ we have
%$$
%u_{14}u_{15}+u_{26}u_{36} - 1 = R_{26} - u_{14}u_{15}R_{13} + u_{26} R_{36}.
%$$
%\end{enumerate}
%

\section{Binary geometry}\label{sec:binary}
\def\tU{{\tilde U}}
We formalize some of the properties of the relations \eqref{eq:Rij} in terms of binary geometries \cite{AHLT,AHLcluster}.  

\subsection{Binary geometries and $u$-equations}
A simplicial complex $\Delta$  on $[n]$ is a collection of subsets $\Delta \subseteq 2^{[n]}$ satisfying:
\begin{enumerate}
\item
If $F \in \Delta$ and $F' \subset F$ then $F' \in \Delta$.
\item
$\Delta$ is non-empty.
\end{enumerate}
The elements $F \in \Delta$ of a simplicial complex are called the \emph{faces} of $\Delta$.  The dimension $\dim \Delta$ is the size of the largest face in $\Delta$, minus one.  Faces of $\Delta$ that are maximal under inclusion are called \emph{facets}.  We say that $\Delta$ is \emph{pure} if all facets have the same dimension.

We say that $\Delta$ is a \emph{flag complex} if each $\{i\}$ is a face of $\Delta$ for each $i$, and for $r >2$, we have that $F = \{a_1,\ldots,a_r\} \in \Delta$ whenever $\{a_i,a_j\} \in \Delta$ for all $ 1\leq i < j \leq r$.  We write $i \sim j$ if $\{i,j\}$ is a face of a flag complex $\Delta$, and say that $i$ and $j$ are \emph{compatible}.  Otherwise, $i$ and $j$ are incompatible.

For $S \subset [n]$, let $H_S \subset \C^{[n]}$ denote the subspace where $x_s = 0$ for $s \in S$.

\begin{definition}\label{def:binary}
Let $\Delta$ be a flag simplicial complex on $[n]$.  A \emph{binary geometry} $\tU$ for $\Delta$ is an affine algebraic variety $\tU \subset \C^n$ of dimension $\dim \tU = \dim \Delta + 1$ cut out by $n$ equations of the form
\begin{equation}\label{eq:u}
R_i := u_i + \prod_{j \mid j \not \sim i} u_j^{a_{ij}} -1 = 0, \qquad i = 1,2,\ldots,n
\end{equation}
for integers $a_{ij} > 0$, satisfying the following property: for a subset $S \subset [n]$, the subvariety $\tU_S:= \tU \cap H_S $ is non-empty exactly when $S \in \Delta$, and in this case $\tU_S$ is irreducible, and pure of codimension $|S|$ in $\tU$.
\end{definition}

Note that when $u_i = 0$, the equation $R_j = 0$ gives $u_j = 1$ for any $j \not \sim i$.  Suppose that $\tU$ is a binary geometry for $\Delta$, and $x \in \tU$.  Let 
$$
F_x := \{i \in [n] \mid u_i(x) = 0\}.
$$
By the assumptions of \cref{def:binary}, we have $F_x \in \Delta$.  We have a stratification of $\tU$ by the closed subvarieties $\tU_F$, for $F \in \Delta$.  Let 
$$
\oU_F = \{ x \in \tU \mid F_x = F\}
$$ 
denote the locally-closed pieces in the stratification.  Thus $\tU = \bigsqcup_{F \in \Delta} \oU_F$, and each $\oU_F$ is itself non-empty, irreducible, and codimension $|F|$ in $\tU$.

We denote $\oU:= \oU_\emptyset$.  Thus $\oU$ is the Zariski-open subset of $\tU$ obtained by intersecting $\tU$ with the torus $T = (\C^\times)^n \subset \C^n$.  It is an irreducible, closed subvariety of the torus $T$, and thus by definition a very affine variety (see \cref{sec:veryaffine}).  Indeed, each $\oU_F$ is a very affine variety.

\begin{problem}
Which flag simplicial complexes have binary geometries?  When is a binary geometry smooth, and when are $\tU_F$ and $\oU_F$ smooth?  When do the equations \eqref{eq:u} cut $\tU$ out scheme-theoretically?
\end{problem}

\begin{example}\label{ex:triangle}
Let $\Delta = \{ \emptyset, 1,2,3,12,23,13\}$ be dual to the face complex of a triangle.  (This simplicial complex is not flag, but it still makes sense to consider the $u$-equations.)  The equations for a binary geometry $\tU$ for $\Delta$ take the form 
$$
u_1 + 1 = 1, \qquad u_2 +1 = 1, \qquad u_3 + 1 = 1,
$$
which has a unique solution $(u_1,u_2,u_3) = (0,0,0)$.  Since this has the wrong dimension, there is no binary geometry for $\Delta$.
\end{example}

\begin{example}\label{ex:square}
Let $\Delta = \{ \emptyset, 1,2,3,4,12,23,14,23\}$ be dual to the face complex of a square.  The equations for a binary geometry $\tU$ for $\Delta$ take the form 
$$
u_1 + u^a_3 = 1, \qquad u_2 +u^b_4 = 1, \qquad u_3 + u_1^c = 1, \qquad u_4 +u_2^d = 1,
$$
for positive integers $a,b,c,d$.  If we choose $a =b=c=d=1$, we obtain a binary geometry $U \cong \C^2$.  For all other choices of $a,b,c,d$, we have $\dim U < 2$ and we do not have a binary geometry.
\end{example}

\begin{example}
Let $\Delta = \Delta_{0,n}$ be the simplicial complex with vertex set equal to the set of diagonals $(ij)$ of an $n$-gon, and faces equal to subdivisions $\D$ of the $n$-gon, considered as a collection of diagonals.  Then $\tM_{0,n}$ is a binary geometry for $\Delta_{0,n}$.
\end{example}

Other examples of binary geometries are given in \cite{AHLcluster} and \cite{HLRZ}.

%
%The group of units $\Gamma = \C[\oU]^*/\C^\times$ is a finitely-generated abelian group.  
%\begin{lemm}
%We have $\Gamma = \{u^{g_1}_{1} \cdots u^{g_n}_{n} \mid g_i \in \Z\}$ is isomorphic to $\Z^n$.
%\end{lemm}
%\begin{proof}
%\TL{This first sentence is false.} Since $\oU$ is a very affine variety, we have a surjective restriction map $\psi$ from the group $\{u^{g_1}_{1} \cdots u^{g_n}_{n} \mid g_i \in \Z\}$ of Laurent monomials in the $u_i$ to $\Gamma$.  For each $i \in [n]$, we have that $U_{\{i\}} = \{u_i = 0\}$ is a codimension subvariety of $\tU$, these $n$ subvarieties are distinct, and $u_j$ has neither a pole nor a zero along $U_{\{i\}}$ for $i \neq j$.  It follows that the order of vanishing or order of pole of a Laurent monomial $u^{g_1}_{1} \cdots u^{g_n}_{n}$ on $U_{\{i\}}$ uniquely determines $g_i$.  Thus the restriction map $\psi$ is an isomorphism.  \end{proof}
%

\begin{proposition}\label{prop:Ulink}
Let $\tU$ be a binary geometry for $\Delta$ and $F \in \Delta$.  The variety $\tU_F$ is a binary geometry for the link
$$
\lk_\Delta(F) = \{G \in \Delta \mid F \cap G = \emptyset \text{ and } F \cup G \in \Delta\}.
$$
In particular, $\dim \lk_\Delta(F) + \dim F = \dim \Delta - 1$.
\end{proposition}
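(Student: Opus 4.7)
The plan is to identify $\tU_F$ with a subvariety of the coordinate affine space indexed by the vertices of $\lk_\Delta(F)$, and to show that the defining equations of $\tU$ restrict to exactly the $u$-equations of the link. First I would define $V := \{j \in [n]\setminus F : j \sim i \text{ for all } i \in F\}$. Since $\Delta$ is flag, $V$ is precisely the vertex set of $\lk_\Delta(F)$, and $\lk_\Delta(F)$ is itself a flag simplicial complex on $V$ whose compatibility relation is the restriction of that of $\Delta$ (a short flag-complex argument: a subset $S \subseteq V$ lies in $\lk_\Delta(F)$ iff $F \cup S \in \Delta$ iff all pairs in $F \cup S$ are compatible iff all pairs in $S$ are compatible in $\Delta$). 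On $\tU_F$ we have $u_i = 0$ for $i \in F$ by definition; and for each $j \in [n]\setminus(F\cup V)$, some $i \in F$ satisfies $i\not\sim j$, so the binary property noted after \cref{def:binary} (applied to $R_j$) forces $u_j = 1$. Hence projection onto the $V$-coordinates identifies $\tU_F$ with a subvariety $\tU_F' \subseteq \C^V$.

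Next I would show that the equations cutting out $\tU_F'$ are the $u$-equations of $\lk_\Delta(F)$. For $k \in V$, split the factors in $\prod_{j\not\sim k} u_j^{a_{kj}}$ by whether $j \in F$, $j \in V$, or $j \in [n]\setminus(F\cup V)$. The first bucket is empty since $k \in V$ is compatible with every $i \in F$; the third bucket contributes $1$ because each such $u_j$ equals $1$; the second bucket contributes $\prod_{j \in V,\ j\not\sim k} u_j^{a_{kj}}$, using that compatibility in $\lk_\Delta(F)$ coincides with compatibility in $\Delta$. Thus $R_k$ restricts to exactly the link equation at $k$. It remains to observe that the other original equations are automatically satisfied on $\tU_F'$: for $i \in F$, $R_i$ reduces to $0 + 1 - 1 = 0$ (no $j \in F$ is incompatible with $i$ since $F \in \Delta$ is flag); for $k \in [n]\setminus(F\cup V)$, $R_k$ reduces to $1 + 0 - 1 = 0$, because some $i \in F$ incompatible with $k$ contributes the factor $u_i = 0$.

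Finally I would verify the stratification and dimension axioms. For $S \subseteq V$, the stratum $(\tU_F')_S$ equals $\tU_{F\cup S}$, which by the binary property of $\tU$ is non-empty iff $F \cup S \in \Delta$, i.e.\ iff $S \in \lk_\Delta(F)$, in which case it is irreducible of codimension $|F|+|S|$ in $\tU$, hence codimension $|S|$ in $\tU_F'$. Taking $S = \emptyset$ gives $\dim \tU_F' = \dim \tU - |F| = \dim \Delta + 1 - |F|$, which is the required value $\dim \lk_\Delta(F) + 1$, and this forces the ``in particular'' identity $\dim \lk_\Delta(F) + \dim F = \dim \Delta - 1$.

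The main obstacle I expect is the bookkeeping in the middle paragraph: one has to be careful to use flagness at the right moment to ensure that the equations $R_k$ for $k \in V$ transport exponents $a_{kj}$ correctly into the link equations, and that the equations $R_i$ for $i \in F$ and $R_k$ for $k \notin F \cup V$ genuinely impose no new constraints on $\tU_F'$. A subtler scheme-theoretic point, which I would flag but not resolve in this proposal, is whether the identification $\tU_F \cong \tU_F'$ holds as schemes (not merely as reduced varieties); this is the content of the open question in the excerpt as to whether the $u$-equations cut out $\tU$ scheme-theoretically.
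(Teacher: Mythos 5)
Your proof is correct and takes essentially the same route as the paper's: restrict to the coordinate subspace indexed by the vertices of the link, note that $u_i=0$ for $i\in F$ forces $u_k=1$ for every $k$ outside $F$ and the link's vertex set, check that the equations indexed by link vertices restrict to the $u$-equations of $\lk_\Delta(F)$ while all other equations hold automatically, and identify the strata via $(\tilde U_F)_G=\tilde U_{F\cup G}$; if anything you are more explicit than the paper about the flagness bookkeeping, the converse inclusion, and the scheme-theoretic caveat. The one point both you and the paper leave implicit is why $\dim \lk_\Delta(F)+1$ actually equals $\dim \tilde U_F$ (the definition of binary geometry requires this equality, and a priori the link could be too small): it follows because for a maximal face $G$ of the link every coordinate on $\tilde U_{F\cup G}$ is forced to be $0$ or $1$, so this non-empty stratum is a single point of codimension $|F|+|G|$, whence $|F|+|G|=\dim\tilde U$ and every maximal face of the link has the required size.
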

\begin{proof}
The link $\lk_\Delta(F)$ is a simplicial complex on the set $S = \{j \notin F \mid F \cup \{j\} \in \Delta\}$.  On $\tU_F$ we have $u_i = 0$ for $i \in F$ and by \eqref{eq:u} we have $u_k = 1$ for $k \notin S$.  Thus $\tU_F$ is naturally embedded in the affine space $\C^S \subset \C^{[n]}$ given by $\{u_i = 0 \mid i \in F\} \cap \{u_k = 1 \mid k \notin S \cup F\}$.  The equations $R_i$ for $i \in F$ and $R_k$ for $k \notin S$ are automatically satisfied.  The equations $R_j$ for $j \in S$ become 
$$
R_j|_{\C^S} = u_j + \prod_{l \in S \mid j \not \sim l}  u_{l}^{a_{jl}} - 1.
$$
We have that $(\tU_F)_G = \tU_{F \cup G}$.  It follows that $\tU_F$ is a binary geometry for $\lk_{\Delta(F)}$.
\end{proof}

\begin{corollary}\label{cor:pure}
Suppose that a binary geometry for $\Delta$ exists.  Then $\Delta$ is pure.
\end{corollary}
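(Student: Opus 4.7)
The plan is to deduce purity directly from the dimension formula $\dim \lk_\Delta(F) + \dim F = \dim \Delta - 1$ established in \cref{prop:Ulink}, applied to an arbitrary facet $F$.

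First I would observe that for a facet $F \in \Delta$ (a face that is maximal under inclusion), the link is as small as possible: if $G \in \lk_\Delta(F)$ with $G \neq \emptyset$, then $F \cup G$ would be a face strictly containing $F$, contradicting maximality. Hence $\lk_\Delta(F) = \{\emptyset\}$, and so $\dim \lk_\Delta(F) = -1$ under the usual convention that the empty face has dimension $-1$.

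Next I would invoke \cref{prop:Ulink}, whose conclusion is valid because $\tU_F$ is non-empty (since $F \in \Delta$) and hence is a binary geometry for $\lk_\Delta(F)$. Substituting $\dim \lk_\Delta(F) = -1$ into the identity
\begin{equation*}
\dim \lk_\Delta(F) + \dim F = \dim \Delta - 1
\end{equation*}
yields $\dim F = \dim \Delta$. Since this holds for every facet $F$, all facets share the common dimension $\dim \Delta$, which is precisely the definition of $\Delta$ being pure.

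There is no real obstacle here: the corollary is essentially a formal consequence of the dimension bookkeeping built into \cref{def:binary}, already packaged by \cref{prop:Ulink}. The only point worth being careful about is the convention $\dim \emptyset = -1$ for the empty face (so that the trivial simplicial complex $\{\emptyset\}$ has dimension $-1$), which matches the equality $\dim \tU_F = \dim \lk_\Delta(F) + 1 = 0$ when $F$ is a facet, consistent with $\tU_F$ being a (reduced) point in that case.
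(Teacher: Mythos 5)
Your proof is correct and follows essentially the same route as the paper: both observe that a facet has trivial link $\{\emptyset\}$ and then apply the dimension identity of \cref{prop:Ulink} to conclude all facets have dimension $\dim\Delta$ (the paper phrases this as $|F|=|F'|$ for any two maximal faces). The extra bookkeeping about the convention $\dim\emptyset=-1$ is consistent with the paper's definition of dimension and does not change anything.
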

\begin{proof}
If $F, F' \in \Delta$ are maximal then $\lk_\Delta(F) = \{\emptyset\} = \lk_\Delta(F')$.  It follows from \cref{prop:Ulink} that $|F| = |F'|$.
\end{proof}

\begin{remark}
The $u$-equations \eqref{eq:u} that we consider are sometimes called \emph{perfect $u$-equations}.  For examples of some more general $u$-equations see \cite[Section 11]{AHLstringy} and \cite{HLRZ}.  More recent appearances of $u$-equations include \cite{ACDFS,AFSPT}.

\end{remark}

\subsection{One-dimensional binary geometries}
\begin{theorem}\label{thm:onedim}
Suppose that $\tU$ is a one-dimensional binary geometry for $\Delta$.  Then $\Delta$ consists of two isolated vertices and $U \cong \C$.
\end{theorem}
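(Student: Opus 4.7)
The plan is to combine a Jacobian computation at each boundary point with a divisor argument on the smooth projective completion of $\tU$. Since $\dim \tU = 1$ forces $\dim \Delta = 0$, and $\Delta$ is flag with each $\{i\} \in \Delta$, the complex $\Delta$ consists of $n$ mutually incompatible vertices for some $n \geq 1$; the $u$-equations take the form $R_i = u_i + \prod_{j \neq i} u_j^{a_{ij}} - 1 = 0$, and the $n$ boundary strata are the isolated points $p_i = (1, \ldots, 0, \ldots, 1)$ with $0$ in position $i$.

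The first step is a local computation of the Jacobian $J = (\partial R_k/\partial u_l)$ at each $p_i$. Direct computation gives $(J|_{p_i})_{il} = a_{il}$ for $l \neq i$ and $(J|_{p_i})_{ii} = 1$, while for $k \neq i$ one has $(J|_{p_i})_{kk} = 1$, $(J|_{p_i})_{ki} = b_k$ with $b_k := 1$ if $a_{ki} = 1$ and $b_k := 0$ otherwise, and $(J|_{p_i})_{kl} = 0$ for $l \notin \{k,i\}$ (since the remaining derivative carries a factor $u_i^{a_{ki}}$ that vanishes at $p_i$). The matrix-determinant lemma then gives $\det J|_{p_i} = 1 - \sum_k a_{ik} b_k$, and requiring this to vanish (so that $\tU$ has dimension $1$ at $p_i$) forces $\sum_{k : a_{ki} = 1} a_{ik} = 1$; since each $a_{ik}$ is a positive integer, there is a unique $j_i \neq i$ with $a_{j_i, i} = a_{i, j_i} = 1$, and $a_{ki} \geq 2$ for $k \neq i, j_i$. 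The assignment $i \mapsto j_i$ is a fixed-point-free involution on $[n]$, so $n$ must be even. When $n = 2$, the pair $\{1,2\}$ forces $a_{12} = a_{21} = 1$, both $R_1$ and $R_2$ collapse to $u_1 + u_2 - 1$, and $\tU \cong \C$.

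The delicate step is to rule out $n \geq 4$, where the pairing structure gives $f_i := \sum_{m \neq i} a_{im} \geq 1 + 2(n-2) = 2n - 3 \geq 5$. My plan is to pass to the smooth projective completion $\overline{\tU}$ of the smooth irreducible affine curve $\tU$ and analyze divisors. On $\tU$ the function $u_i$ has a single simple zero (at $p_i$) and no poles, while $1 - u_i$ has zeros of order $a_{im}$ at the other $p_m$; writing $q_1, \ldots, q_r$ for the points of $\overline{\tU} \setminus \tU$ and $n_{i,q} := \mathrm{ord}_q(u_i)$, the identity $\mathrm{div}(1 - u_i) = \sum_{j \neq i} a_{ij}\, \mathrm{div}(u_j)$ at each infinity point $q$, together with the global constraint $\sum_q n_{i,q} = -1$, imposes a tight balance relation among the $n_{i,q}$. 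I expect to deduce from these balances, combined with the pairing structure and the bound $f_i \geq 2n - 3$, that $f_i = 1$, yielding a contradiction. The clean case $r \leq 2$ can be handled directly by writing each $u_i$ as a Laurent polynomial in a uniformizer of $\overline{\tU} \cong \P^1$ and matching top and bottom Laurent degrees against the corresponding degrees of $\prod_{j \neq i} u_j^{a_{ij}}$, which forces $f_i = 1$ by subtracting the two resulting balance equations. The main obstacle is extending this Laurent-degree argument to the general case $r \geq 3$, where the infinity points can in principle cancel one another's leading terms and a more intrinsic divisor-theoretic argument is required.
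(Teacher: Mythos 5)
Your first step is sound and matches the paper's opening move: the Jacobian of $(R_1,\ldots,R_n)$ at a boundary point $p_i$ has determinant $1-\sum_{k}a_{ik}\delta_{a_{ki},1}$, so nonvanishing forces $p_i$ to be an isolated point of the zero locus, contradicting irreducibility in dimension one; hence every $p_i$ must have a unique partner $j_i$ with $a_{ij_i}=a_{j_ii}=1$ and $a_{ki}\geq 2$ otherwise. (Your observation that $i\mapsto j_i$ is a fixed-point-free involution, so $n$ is even, is correct but turns out not to be needed.) The case $n=2$ is also handled correctly. The genuine gap is the remaining case $n\geq 4$ (in the paper, $n\geq 3$): this is exactly where the theorem's content lies, and your divisor-theoretic plan does not close it. You state yourself that the Laurent-degree balance only works when there are at most two points at infinity and that $r\geq 3$ ``requires a more intrinsic divisor-theoretic argument'' you do not supply. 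Moreover, the local inputs to the divisor bookkeeping are unjustified: a binary geometry is not assumed smooth (the paper lists smoothness as an open problem), so you must pass to the normalization, and there the claims that $u_i$ has a single \emph{simple} zero over $p_i$ and that $1-u_i$ vanishes to order exactly $a_{im}$ over $p_m$ need not hold --- the orders get multiplied by $\mathrm{ord}(u_m)$ at each point over $p_m$, there may be several such points, and in the degenerate (singular-Jacobian) case the local structure at $p_i$ is precisely what is in question. So the balance relations $\sum_q n_{i,q}=-1$ and $\mathrm{ord}_{p_m}(1-u_i)=a_{im}$ are not established, and even granting them the contradiction for general $r$ is only conjectured.

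The paper closes this case by a purely local argument at a single degenerate point, which you could adopt in place of the global divisor scheme. In the pairing situation (say $i=1$, partner $j=2$, so $a_{12}=a_{21}=1$ and $a_{k1}\geq 2$ for $k\geq 3$), suppose a branch of the curve passes through $p_1$ and expand the coordinates in a local parameter $t$ on the normalization: the kernel computation forces $u_1=\alpha t+\cdots$, $u_2=1-\alpha t+\cdots$ with $\alpha\neq 0$ and $u_k=1+O(t^2)$ for $k\geq 3$; the equation $R_k=0$ then pins down $u_k=1-\alpha^{a_{k1}}t^{a_{k1}}+O(t^{a_{k1}+1})$, and comparing the coefficient of $t^{a}$, where $a=\min_{k\geq 3}a_{k1}\geq 2$, in $R_1=0$ and $R_2=0$ yields $\sum_{k:a_{k1}=a}a_{1k}\,\alpha^{a}=0$, hence $\alpha=0$, a contradiction. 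Thus $p_1$ is isolated even when the Jacobian is singular, and this works for every $n\geq 3$ at once, with no parity argument and no compactification needed.
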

\begin{proof}
By \cref{cor:pure}, $\Delta = \{\emptyset,1,2,\ldots,n\}$ must be $0$-dimensional, consisting of $n$ isolated vertices.  If $n = 1$, we see directly that no binary geometry for $\Delta$ exists.  If $n=2$, we have the two relations 
$$
u_1 + u^a_2 = 1, \qquad u_2 + u^b_1 = 1
$$
and we have binary geometry if and only if $a = b = 1$, giving $U \cong \C$.  

Now suppose that $n \geq 3$.  We will show that no binary geometry exists.  The relations are $R_i = u_i + \prod_{j\neq i} u_j^{a_{ij}} -1 = 0$, where $a_{ij} >0$.  One solution to these equations is the point $p = (u_1,u_2,u_3,\ldots,u_n) = (0,1,1,\ldots,1)$.  We shall show that this is an isolated point of $\tU$, contradicting the requirement that $\tU$ is one-dimensional and irreducible.

Consider the Jacobian matrix $(\frac{\partial R_i}{\partial u_j})_{i,j=1}^n$ at the point $p$.  It has the form
$$ J = 
\begin{bmatrix} 1 & a_{12} & a_{13} & a_{14} &  \cdots & a_{1n} \\
\beta_2 & 1 & 0 & 0 & \cdots & 0 \\
\beta_3 & 0 & 1 & 0 &\cdots & 0 \\
\vdots & \vdots & \vdots & \ddots 
\end{bmatrix}
$$
where $\beta_i = \delta_{a_{i1},1}$ is the Kronecker delta.  If the Jacobian is nonsingular at $p$ then $p$ must be an isolated point of $\tU$.  We have $\det J = 1 - a_{12}\beta_2 - a_{13}\beta_3 - \cdots - a_{1n}\beta_n$.  Since each $a_{ij}$ is a positive integer and $\beta_i \in \{0,1\}$, we have $\det J = 0$ exactly when one of the terms $a_{1j}\beta_j$ is equal to $1$, and the rest of these terms vanish.  

We now rule out the case $\det J = 0$.  By relabeling, we may assume that $j = 2$.  Thus $a_{12} = a_{21} = 1$ and $a_{i1} \geq 2$ for $i = 3,\ldots,n$.  We search for a formal power series solution $(u_1(t),\ldots,u_n(t)) \in \C[[t]]^n$ to the equations $R_1,\ldots,R_n$, where $(u_1(0),\ldots,u_n(0)) = p$.  (Assuming $\tU$ is a curve, the parameter $t$ may be taken to be a local analytic parameter on the normalization of $\tU$, in the neighborhood of a preimage of $p \in U$.)
We may assume that not all the linear terms of $u_1(t),\ldots,u_n(t)$ vanish, and considering the linear coefficients $[t]R_1, [t] R_2,\ldots$, we deduce that
\begin{align*}
&u_1(t) = \alpha t + \beta t^2 + \cdots, \qquad u_2(t) = 1 - \alpha t + \beta' t^2 + \cdots,\\
&u_3(t),u_4(t),\ldots, u_n(t) \in 1 + O(t^2),
\end{align*}
where $\alpha \neq 0$.  (This also follows from the Jacobian calculation.)
For $i \geq 3$, we have, writing $[t^e]f(t)$ for the coefficient of $t^e$ in $f(t) \in \C[[t]]$,
$$
[t^e] R_i = \begin{cases} [t^e] u_i(t) & \mbox{ if $0 < e < a_{i1}$,} \\
[t^e] u_i(t) + \alpha^{a_{i1}} & \mbox{ if $e = a_{i1}$.}
\end{cases}
$$
and thus the equation $R_i = 0$ gives
$$
u_i(t) = 1 - \alpha^{a_{i1}} t^{a_{i1}} + O(  t^{a_{i1}+1}), \qquad i = 3,\ldots,n.
$$
Now let $a = \min_{i \geq 3}(a_{i1}) \geq 2$.  Then 
\begin{align*}
[t^{a}]R_1 &= [t^{a}] u_1(t) + [t^{a}] u_2(t) + \sum_{i \geq 3 \mid a_{i1} = a} a_{1i} (-\alpha^{a}) = 0 \\
[t^{a}] R_2 &= [t^{a}] u_1(t) + [t^{a}] u_2(t) = 0,
\end{align*}
and since $a_{1i} > 0$ for all $i$, it follows that $\alpha^a = 0$, contradicting our choice that $\alpha \neq 0$.  Thus there are no non-trivial power series solutions near $p$ to the equations $R_1=0,\ldots,R_n=0$, and it follows that $p$ is an isolated point.
\end{proof}

A pure $d$-dimensional simplicial complex $\Delta$ is a \emph{pseudomanifold} if any $(d-1)$-dimensional face $F \in \Delta$ is contained in exactly two facets of $\Delta$.
\begin{theorem}\label{thm:pseudo}
Suppose that $\tU$ is a binary geometry for $\Delta$.  Then $\Delta$ is a pseudomanifold.
\end{theorem}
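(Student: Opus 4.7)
The plan is to reduce the pseudomanifold condition, via taking links, to the classification of one-dimensional binary geometries already established in \cref{thm:onedim}. By \cref{cor:pure} we already know $\Delta$ is pure; let $d = \dim \Delta$, so every facet has size $d+1$. A pseudomanifold is then precisely a pure complex in which every face of dimension $d-1$ (equivalently, every codimension-one face, of size $d$) is contained in exactly two facets.

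So I would fix an arbitrary face $F \in \Delta$ with $|F| = d$ and analyze the link $\lk_\Delta(F)$. By the dimension identity in \cref{prop:Ulink}, we have
$$
\dim \lk_\Delta(F) = \dim \Delta - 1 - \dim F = d - 1 - (d-1) = 0,
$$
so $\lk_\Delta(F)$ is a zero-dimensional flag complex, consisting of some number of isolated vertices; these vertices are in natural bijection with the facets of $\Delta$ containing $F$. By \cref{prop:Ulink}, $\tU_F$ is a binary geometry for $\lk_\Delta(F)$, and its dimension is $\dim \lk_\Delta(F) + 1 = 1$.

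Now I would invoke \cref{thm:onedim}: any one-dimensional binary geometry has underlying simplicial complex equal to two isolated vertices. Applied to $\tU_F$, this forces $\lk_\Delta(F)$ to consist of exactly two vertices, i.e., $F$ is contained in exactly two facets of $\Delta$. Since $F$ was an arbitrary face of codimension one, $\Delta$ is a pseudomanifold.

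The work has already been done elsewhere: the real content is in \cref{thm:onedim} (ruling out $n = 1$, $n \geq 3$ and pinning down the relations when $n=2$), and in the link stability statement \cref{prop:Ulink}. The only thing to check carefully in writing this up is that $\tU_F$ is genuinely nonempty and has the expected dimension, so that \cref{thm:onedim} applies; but nonemptiness and irreducibility of $\tU_F$ for $F \in \Delta$ is built into the definition of a binary geometry, and the dimension count follows from $\dim \tU_F = \dim \tU - |F| = (d+1) - d = 1$. No separate obstacle arises.
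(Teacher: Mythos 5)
Your proof is correct and follows essentially the same route as the paper: apply \cref{prop:Ulink} to a codimension-one face $F$ to see that $\tU_F$ is a one-dimensional binary geometry for $\lk_\Delta(F)$, then invoke \cref{thm:onedim} to conclude the link has exactly two vertices, i.e.\ $F$ lies in exactly two facets. The extra care you take with purity (via \cref{cor:pure}) and the dimension count is consistent with, and slightly more explicit than, the paper's argument.
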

\begin{proof}
Apply \cref{prop:Ulink} to a codimension-one face $F$ of $\Delta$, to deduce that $\tU_F$ is a one-dimensional binary geometry for $\lk_\Delta(\F)$.  Then it follows from \cref{thm:onedim} that $\lk_\Delta(F)$ consists of two vertices, or equivalently, $F$ is contained in two facets of $\Delta$.
\end{proof}

\subsection{Two-dimensional binary geometries}

\begin{proposition}\label{prop:prodbin}
Suppose $\tU$ and $\tU'$ are binary geometries for flag complexes $\Delta$ and $\Delta'$ on disjoint vertex sets.  Then $\tU \times \tU'$ is a binary geometry for $\Delta \times \Delta' = \{F \sqcup F' \mid F \in \Delta, F' \in \Delta'\}$.
\end{proposition}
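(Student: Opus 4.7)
The plan is to verify the three conditions in \cref{def:binary}—that the ambient flag complex is $\Delta \times \Delta'$, that the defining equations have the correct $u$-form, and that the stratification behaves correctly—by reducing everything to the corresponding facts for $\tU$ and $\tU'$ separately, using the key observation that in the join $\Delta \times \Delta'$ every vertex of $\Delta$ is compatible with every vertex of $\Delta'$.

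First I would check that $\Delta \times \Delta'$ is a flag complex on the disjoint vertex set $V := V(\Delta) \sqcup V(\Delta')$, with compatibility relation $i \sim j$ iff either $i,j$ lie in $V(\Delta)$ with $\{i,j\}\in \Delta$, or $i,j$ lie in $V(\Delta')$ with $\{i,j\}\in \Delta'$, or $i,j$ lie in different factors. The flag property of the join follows directly from the flag property of each factor, and the dimension satisfies $\dim(\Delta \times \Delta') + 1 = (\dim \Delta + 1) + (\dim \Delta' + 1) = \dim(\tU \times \tU')$, matching the dimension requirement in \cref{def:binary}.

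Next I would identify the defining equations. For $i \in V(\Delta)$, the product $\prod_{j \not\sim i} u_j^{a_{ij}}$ ranges over $j \in V$ with $j \not\sim i$; by the description of compatibility above, no $j \in V(\Delta')$ appears, so this product is exactly the product for the original equation $R_i$ of $\tU$. Symmetrically for $i' \in V(\Delta')$. Thus the full system of $u$-equations for $\Delta \times \Delta'$ on $\C^V = \C^{V(\Delta)} \times \C^{V(\Delta')}$ is precisely the union of the defining equations of $\tU$ and $\tU'$, which cuts out $\tU \times \tU'$.

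Finally I would verify the stratification condition. For any $S \subseteq V$, split $S = F \sqcup F'$ with $F = S \cap V(\Delta)$ and $F' = S \cap V(\Delta')$. Intersecting with $H_S$ decouples as $(\tU \times \tU') \cap H_S = \tU_F \times \tU'_{F'}$. Non-emptiness then holds iff both $F \in \Delta$ and $F' \in \Delta'$, i.e.\ iff $S \in \Delta \times \Delta'$, as required. Irreducibility of the product of two irreducible complex varieties is standard, and the codimension adds: $\operatorname{codim}(\tU_F \times \tU'_{F'}) = |F| + |F'| = |S|$. This completes the verification of \cref{def:binary} for $\tU \times \tU'$.

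There is no real obstacle here beyond bookkeeping: the proposition is a soft compatibility between the join operation on flag complexes and the product of affine varieties. The single conceptual point doing all the work is that cross-factor vertices are automatically compatible in the join, which prevents the $u$-equations of one factor from leaking into the other.
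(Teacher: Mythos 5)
Your proof is correct. The paper states this proposition without a proof, but the remark it makes immediately after the statement---that in the product binary geometry every vertex of $\Delta$ is compatible with every vertex of $\Delta'$---is precisely the key observation your argument rests on, and your verification of \cref{def:binary} (the $u$-equations decouple into the two original systems, the strata factor as $\tU_F \times \tU'_{F'}$ with non-emptiness, irreducibility, and codimension behaving multiplicatively/additively, and the dimensions add) is exactly the intended bookkeeping.
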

In the product binary geometry, any vertex of $\Delta$ is compatible with any vertex of $\Delta'$.  By \cref{prop:prodbin} and \cref{thm:onedim}, there is a unique two-dimensional binary geometry that is a product of two one-dimensional binary geometries.  This is \cref{ex:square}.

Now let $\Delta$ be a $1$-dimensional flag simplicial complex with vertex set $[n]$ that is a pseudomanifold.  Then $\Delta$ can be viewed as a graph, and as a graph it is a disjoint union of cycles.  If we further assume that this graph is connected, then $\Delta$ is the face complex of the $n$-gon.  %If $n=3$, there are no binary geometries, as we saw in \cref{ex:triangle}.  
If $n = 4$, we have \cref{ex:square}.  For $n =5$, we have $\tM_{0,5}$ with $u$-equations \eqref{eq:ueq5}.  We now give examples of binary geometries for the hexagon and the octagon.

\begin{proposition}[\cite{AHLT,AHLcluster}]\label{prop:B2}
The following $u$-equations give a binary geometry for $\Delta$ the face complex of the hexagon, with vertices labeled $u_1,v_1,u_2,v_2,u_3,v_3$ in cyclic order:
$$
u_i + u_{i+1} v_{i+1} u_{i+2} = 1, \qquad v_i + v_{i+1} u_{i+2}^2 v_{i+2} = 1,
$$
where the indexing of $u_i$ and $v_i$ is taken modulo $3$.
\end{proposition}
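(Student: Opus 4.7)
The plan is to verify each clause of \cref{def:binary}: that the equations have the prescribed incompatibility form, that $\tU$ is irreducible of dimension $\dim \Delta + 1 = 2$ with the correct stratification, and that $\tU_S = \emptyset$ exactly when $S \notin \Delta$.

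First I would confirm that the right-hand monomials encode the incompatibility pattern of the hexagon. In the cyclic order $u_1, v_1, u_2, v_2, u_3, v_3$, the neighbors of $u_i$ are $v_{i-1}, v_i$, so $u_i$ is incompatible with exactly $u_{i+1}, v_{i+1}, u_{i+2}$; likewise $v_i$ is incompatible with $v_{i+1}, u_{i+2}, v_{i+2}$. These are precisely the indices appearing in the displayed equations, with positive integer exponents $a_{ij}$.

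The main technical step is to construct a birational parametrization $\phi \colon \AA^2 \dashrightarrow \tU$, which simultaneously yields irreducibility, the dimension $\dim \tU = 2$, and coordinates on the open stratum $\oU$. Setting $t := u_1$ and $s := v_1$, I would solve $R_{u_3}, R_{u_2}, R_{v_2}, R_{u_1}$ successively to obtain
\begin{align*}
u_2 &= \frac{1-t}{1-ts}, & v_2 &= \frac{(1-ts)^2}{1-2ts+t^2 s}, \\
u_3 &= \frac{1-2ts+t^2 s}{1-ts}, & v_3 &= \frac{1-s}{1-2ts+t^2 s}.
\end{align*}
The two remaining relations $R_{v_1} = 0$ and $R_{v_3} = 0$ then reduce, after clearing denominators, to the polynomial identity $(1-ts)^2 + st^2(1-s) = 1 - 2ts + t^2 s$, which is immediate. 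Since the inverse rational map is projection $(u_i, v_i) \mapsto (u_1, v_1)$, the parametrization is birational, so $\tU$ is irreducible of dimension $2$.

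With $\phi$ in hand, I would verify the stratification by specialization. Using the $\Z/3$ cyclic symmetry $i \mapsto i+1$ and the reflection symmetry of the equations, the cases reduce to $F = \{u_1\}, \{v_1\}, \{u_1, v_1\}, \{v_1, u_2\}$: setting $t=0$, $s=0$, $t=s=0$, and $(t,s)=(1,0)$ respectively gives the irreducible strata $\{(0,s,1,1,1,1-s)\}$, $\{(t,0,1-t,1,1,1)\}$, $\{(0,0,1,1,1,1)\}$, $\{(1,0,0,1,1,1)\}$, each of the correct codimension. For a non-face $S \notin \Delta$, one has an incompatible pair $\{a,b\} \subset S$, and then the equation $R_b$ forces $u_b = 1$ whenever $u_a = 0$, so $\tU_S = \emptyset$.

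The principal obstacle is finding the parametrization and checking that the two redundant equations hold identically via the polynomial identity above; once this is established, the case analysis for the strata is routine thanks to the dihedral symmetry.
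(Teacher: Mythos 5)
Your parametrization itself is correct: I checked that the six rational functions you wrote down do satisfy all six equations identically, the incompatibility pattern you read off from the hexagon matches the monomials in the $u$-equations, the emptiness of $\tU_S$ for non-faces via the binary mechanism is fine, and the strata you list are in fact the right ones. (For what it's worth, the paper gives no proof of \cref{prop:B2} — it is cited from \cite{AHLT,AHLcluster} and left as an exercise — so your argument has to stand on its own.)

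The genuine gap is irreducibility and dimension of $\tU$ itself. \cref{def:binary} applied to $S=\emptyset$ demands that the zero locus of the six equations in $\C^6$ be irreducible of dimension $2$, and the ``pure of codimension $|S|$'' claims for the other strata presuppose $\dim\tU=2$. Your argument ``projection is inverse to $\phi$, hence $\phi$ is birational, hence $\tU$ is irreducible of dimension $2$'' only shows that $\phi$ is birational onto the closure $Y$ of its image, i.e.\ that $Y$ is an irreducible surface \emph{contained in} $\tU$; it does not rule out additional irreducible components of the zero locus, for instance components sitting over the loci $1-u_1v_1=0$ or $1-2u_1v_1+u_1^2v_1=0$ in the $(u_1,v_1)$-plane where $\phi$ degenerates, or over points where your successive solving divides by zero. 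Relatedly, you compute the strata by specializing $\phi$, but $\tU_F=\tU\cap H_F$ must be computed from the equations; that part is easy to repair (if $u_a=0$ the three incompatible coordinates are forced to equal $1$ and the system collapses, e.g.\ $u_1=0$ forces $u_2=u_3=v_2=1$ and leaves only $v_1+v_3=1$), and it also shows each boundary stratum lies where you say. What is still missing is an argument that $\tU$ has no component besides $Y$: for example, show that for \emph{every} $(u_1,v_1)\in\C^2$, not just generic ones, the fiber of the projection is contained in $Y$ — a finite case analysis over the degenerate loci (over $u_1v_1=1$ with $u_1\neq 1$ the equations force $v_2=0$ and then a contradiction, so the fiber is empty; over $(u_1,v_1)=(1,1)$ the fiber is the line $u_3=0$, $v_2+v_3=1$, which must be checked to lie in $Y$ as a limit of $\phi$, since $\phi$ is indeterminate there) — or else verify primality of the ideal by elimination, or invoke the cluster-configuration-space argument of the cited references. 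Without such a step, the central assertions of the proposition — irreducibility, $\dim\tU=2$, and purity of the strata — are not established.
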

\begin{proposition}[\cite{AHLT,AHLcluster}]\label{prop:G2}
The following $u$-equations give a binary geometry for $\Delta$ the face complex of the octagon, with vertices labeled $u_1,v_1,u_2,v_2,u_3,v_3,u_4,v_4$ in cyclic order:
$$
u_i + u_{i+1} v_{i+1} u_{i+2}^2 v_{i+2} u_{i+3} = 1, \qquad v_i+ v_{i+1} u_{i+2}^3 v_{i+2}^2 u_{i+3}^3 v_{i+3} = 1,
$$
where the indexing of $u_i$ and $v_i$ is taken modulo $4$.
\end{proposition}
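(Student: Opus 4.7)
The plan is to verify the three defining properties of a binary geometry in turn: the ``binary propagation'' that $u_i = 0$ forces $u_j = 1$ for $j \not\sim i$; the stratum-by-stratum description showing that $\tU_S$ is non-empty, irreducible, and of codimension $|S|$ precisely when $S \in \Delta$; and the global dimension $\dim \tU = 2$. My approach is to do (i) directly from the form of the equations, reduce (ii) to two model one-dimensional cases using (i), and then handle (iii) via an explicit $2$-parameter parametrization coming from the $G_2$ cluster structure.

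The first step is binary propagation. By inspection of the given equations, a variable $w' \in \{u_k, v_k\}$ appears with positive exponent in the monomial of $R_w$ if and only if $w'$ is not equal or adjacent to $w$ in the cyclic order on the octagon. So if $u_w = 0$ (resp.\ $v_w = 0$) and $w' \not\sim w$, then $R_{w'}$ becomes $u_{w'} + 0 = 1$, giving $u_{w'} = 1$. This immediately shows that for any non-face $S$ (i.e., $S$ containing a pair of non-adjacent vertices), $\tU_S$ is empty, since two elements of $S$ would have to be simultaneously $0$ and $1$.

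The second step is to analyze each face stratum. For a single vertex $w$, say $w = u_1$, propagation forces $u_2 = v_2 = u_3 = v_3 = u_4 = 1$, leaving $v_1, v_4$ free. The equations $R_{u_k}$ and the equations $R_{v_k}$ for $k \in \{2,3\}$ become automatic, and both $R_{v_1}$ and $R_{v_4}$ reduce to the single relation $v_1 + v_4 = 1$. Thus $\tU_{\{u_1\}} \cong \{v_1 + v_4 = 1\} \cong \mathbb{A}^1$, irreducible and codimension $1$. The same kind of calculation works for $w = v_1$ (yielding $u_1 + u_2 = 1$), and by cyclic symmetry all eight vertex strata are one-dimensional and irreducible. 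For an edge face $\{w, w'\}$ of the octagon, taking the union of the two propagations forces every remaining coordinate to $1$, so $\tU_{\{w,w'\}}$ is a single reduced point of codimension $2$. Together with step one, this establishes the stratum description $\tU_S \neq \emptyset \iff S \in \Delta$ and the correct codimensions on each stratum.

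The third and hardest step is to show $\dim \tU = 2$ and irreducibility of $\tU$ itself, since we have eight equations in eight variables and naively this could give a variety of the wrong dimension; we must exhibit two redundant relations. The main obstacle lies here. I would resolve it by constructing an explicit rational parametrization $\phi: \mathbb{A}^2 \dashrightarrow \tU$ that hits $\oU$, in complete analogy with the formulas \eqref{eq:u5y} for $\M_{0,5}$: writing each $u_i, v_i$ as a ratio of $G_2$ cluster $F$-polynomials in two free variables $y_1, y_2$, where the $F$-polynomial structure is dictated by the $G_2$ exchange relations with denominator vectors matching the exponents $1,1,2,1,1$ and $1,3,2,3,1$ appearing in the $R_i$. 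Once $\phi$ is written down, direct substitution verifies all eight $u$-equations and shows $\phi$ maps to the torus stratum, producing a $2$-dimensional irreducible subvariety of $\tU$. Combined with step two (which shows every proper stratum has dimension at most $1$), this forces $\dim \tU = 2$ and $\tU = \overline{\mathrm{image}(\phi)} \cup \bigcup_{F \neq \emptyset} \tU_F$, with $\oU$ irreducible and $2$-dimensional. All conditions of \cref{def:binary} are then satisfied.
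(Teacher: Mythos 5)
Your steps (i) and (ii) are correct and essentially forced: the monomial in $R_w$ involves exactly the variables not equal or adjacent to $w$ in the octagon, so the propagation argument empties all non-faces, and your reductions of the vertex strata (to $v_1+v_4=1$, resp.\ $u_1+u_2=1$) and of the edge strata (to single points) check out. Note that the paper itself does not prove this proposition: it cites \cite{AHLT,AHLcluster} and lists the verification as an exercise, so your argument has to stand on its own.

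The genuine gap is in step (iii), and it is logical rather than computational. Writing down a two-parameter family $\phi:\C^2\dashrightarrow \tilde U$ landing in the torus part $\tilde U\cap(\C^\times)^8$ only yields $\dim\tilde U\ge 2$ and exhibits \emph{one} irreducible surface inside $\tilde U$. Your step (ii) controls only the locus of $\tilde U$ lying on the coordinate hyperplanes; it says nothing about the open torus part, so nothing you have said excludes additional irreducible components there, possibly of dimension $2$ or higher, disjoint from $\overline{\mathrm{image}(\phi)}$. Hence the conclusion is not ``forced'': you still need a dominance statement, namely that every solution of the eight equations with all coordinates nonzero lies in the closure of the image of $\phi$. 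Concretely, one should invert the parametrization on the torus (solve for $y_1,y_2$ as explicit rational expressions in the $u_i,v_i$, and check that the equations then determine all eight coordinates), or equivalently eliminate variables to show that on the torus two coordinates determine the rest; this is where the real work of the proof lives, and it is also where the explicit $F$-polynomial formulas you only sketch must actually be produced and substituted. Separately, since $\emptyset\in\Delta$ and $\tilde U_\emptyset=\tilde U$, \cref{def:binary} requires $\tilde U$ itself to be irreducible, so you must also show that each vertex and edge stratum lies in the closure of the torus part (for instance by exhibiting limits of torus points along suitable $y_i\to 0$ or $y_i\to\infty$ directions of your chart); your closing decomposition $\tilde U=\overline{\mathrm{image}(\phi)}\cup\bigcup_{F\neq\emptyset}\tilde U_F$ leaves exactly this open, and without it $\tilde U$ could a priori have the one-dimensional strata as separate components.
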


\begin{conjecture}
Let $\Delta$ be the face complex of a $n$-gon.  Then a binary geometry for $\Delta$ exists if and only if $n \in \{4,5,6,8\}$.
\end{conjecture}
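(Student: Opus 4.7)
The ``if'' direction is immediate from constructions already in the text: $n = 4$ is the square of \cref{ex:square}, $n = 5$ is $\tM_{0,5}$ with equations \eqref{eq:ueq5}, $n = 6$ is \cref{prop:B2}, and $n = 8$ is \cref{prop:G2}. I focus on the converse: showing no binary geometry for the $n$-gon face complex exists when $n = 7$ or $n \geq 9$.

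First I normalize the exponents. Let $\tU$ be a hypothetical binary geometry for the $n$-cycle $\Delta$ with relations $R_i = u_i + \prod_{j \not\sim i} u_j^{e(i,j)} - 1$. Each vertex $i$ has link consisting of the two isolated neighbors $\{i-1\}, \{i+1\}$, so by \cref{prop:Ulink} the stratum $\tU_{\{i\}}$ is a one-dimensional binary geometry on those two vertices; by \cref{thm:onedim} this forces $e(i, i+2) = e(i+2, i) = 1$ for every $i$. Thus every relation takes the form $R_i = u_i + u_{i-2}\, u_{i+2} \prod_{d(i,j) \geq 3} u_j^{e(i,j)} - 1$, with only the ``long-range'' exponents (between vertices at cyclic distance at least $3$) left to determine.

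Next I analyze $\tU$ locally at a corner. Pick $S = \{1, 2\} \in \Delta$ and let $p \in \tU_S$ be the unique corner point (characterized by $u_1 = u_2 = 0$ and $u_3 = \cdots = u_n = 1$). At $p$, the Jacobian of $R_3, \ldots, R_n$ has full rank in the variables $u_3, \ldots, u_n$, so by the implicit function theorem these variables can be solved as formal power series $u_j = u_j(t, s)$ in the local coordinates $t = u_1, s = u_2$. The binary geometry hypothesis then requires the remaining relations $R_1 = 0$ and $R_2 = 0$ to hold identically as power series in $(t, s)$. Matching the coefficient of $t s$ already yields a nontrivial integer relation involving the long-range exponents appearing in $R_1, R_3, R_4, R_5, R_6$, and matching successive higher-order coefficients of $t^a s^b$ produces an infinite family of constraints on the exponents.

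My claim, and the hard part of the proof, is that this system of constraints is the closure-after-$n$-iterations condition of a rank-two Fomin--Zelevinsky exchange recurrence, so that by the classification of finite-type rank-two cluster algebras ($A_1 \times A_1, A_2, B_2, G_2$) it can be satisfied only for periods $n \in \{4, 5, 6, 8\}$. The main obstacle is to make this cluster-algebraic translation rigorous: the $u$-equations are not themselves exchange relations --- the $u$-variables are rational rather than Laurent in any underlying cluster variables --- so the precise dictionary between binary geometries for the $n$-cycle and periodic rank-two recurrences needs to be developed, extending the framework of \cite{AHLcluster}. A concrete fallback for $n = 7$, and separately for each fixed $n \geq 9$, is to carry out the coefficient extraction above directly and verify by hand that the resulting finite Diophantine system admits no positive-integer solution.
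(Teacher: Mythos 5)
First, note that this statement is stated in the paper as a \emph{conjecture}: the paper offers no proof, so what you are proposing would, if completed, settle an open problem rather than reprove a known result. Your ``if'' direction is fine and is exactly what the text already supplies (\cref{ex:square} for $n=4$, the equations \eqref{eq:ueq5} for $n=5$, and \cref{prop:B2}, \cref{prop:G2} for $n=6,8$). Your normalization step is also a correct use of the paper's tools: applying \cref{prop:Ulink} to each vertex and then \cref{thm:onedim} to the resulting one-dimensional link geometry does force the distance-two exponents to equal $1$. The local analysis at a corner point of $\tU_{\{1,2\}}$ is plausible as well, although you should justify why $R_1$ and $R_2$ must vanish identically on the formal graph produced by the implicit function theorem (this needs the fact that $\tU$ is irreducible of dimension $2$, so its germ at the corner fills out the two-dimensional smooth germ of $V(R_3,\dots,R_n)$); as written you assert it without argument.

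The genuine gap is the step you yourself flag: the claim that the resulting infinite family of integer constraints on the long-range exponents is equivalent to the period-$n$ closure condition of a rank-two Fomin--Zelevinsky exchange recurrence, so that the classification of finite-type rank-two cluster algebras forces $n\in\{4,5,6,8\}$. No dictionary between binary geometries for the $n$-cycle and such recurrences is constructed, and you concede that the $u$-variables are not Laurent in any candidate cluster variables, so the translation is not a routine citation of cluster-algebra finite-type classification --- it is the entire content of the ``only if'' direction. Without it, nothing in the proposal rules out a sporadic solution of the exponent system for some $n=7$ or $n\geq 9$. The fallback you offer (direct coefficient extraction and a finite Diophantine check ``for each fixed $n$'') can at best eliminate finitely many values of $n$ one at a time; it cannot establish the statement for all $n\geq 9$, which requires a uniform argument. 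Moreover, even for a fixed $n$, showing the exponent system has no positive-integer solution only rules out varieties of the correct dimension cut out by perfect $u$-equations satisfying your corner constraints; you would still need to check that no choice of exponents yields a variety meeting all the stratification requirements of \cref{def:binary} (non-emptiness, irreducibility, and purity of every $\tU_S$), or argue that your necessary conditions already exclude them. As it stands, the proposal is a reasonable strategy outline with a correct easy half, but the hard half remains open.
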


\subsection{Positive binary geometry}

Let $\tU$ be a binary geometry.  The \emph{nonnegative part} $U_{\geq 0}$ (resp. \emph{positive part} $U_{>0}$) is the locus inside $\tU$ where $u_i \geq 0$ (resp. $u_i >0$) for all $i \in [n]$.  It follows from \eqref{eq:u} that $u_i(x) \in [0,1]$ for $x \in U_{\geq 0}$.  Thus 
$$(u_1,\ldots,u_n): U_{\geq 0} \hookrightarrow [0,1]^n$$ 
embeds $U_{\geq 0}$ into a hypercube.  We define $U_{F, \geq 0}:= U_F \cap U_{\geq 0}$ and $U_{F, >0}:= \oU_F \cap U_{>0}$. 

We call the binary geometry $\tU$ \emph{polytopal} if $U_{F,>0}$ is an open ball, $U_{F, \geq 0}$ is a closed ball, and $U_{F, \geq 0} = \bigcup_{G \supseteq F} U_{G, >0}$.  In this case $U_{\geq 0}$, together with the stratification $U_{F, \geq 0}$ is a regular CW complex.  Thus $\Delta$ is the dual of the face poset of this regular CW complex.

\begin{problem}
Find natural conditions for a binary geometry to be polytopal.
\end{problem}

\def\bU{{\bar U}}

Viewing $\tU \subset \C^n$ as contained in the closure $\bU \subset \P^n$, a projective variety, we may apply the \cref{def:posgeom} of a positive geometry.

\begin{definition}
Suppose that $\tU$ is $d$-dimensional and thus $\dim \Delta = d-1$.  Let $\Omega_U$ be a rational form $d$-form on $\tU$.  We say that $\Omega_U$ is a canonical form for $\tU$ if either
\begin{enumerate}
\item $d = 0$ and $U = \pt$ and $\Omega_U = \pm 1$, or 
\item
$d > 0$, and $\Omega_U$ has only poles along $U_{\{i\}}$, each of these poles is simple, and for all $i \in [n]$ we have $\Res_{u_i = 0} \Omega_U = \Omega_{U_{\{i\}}}$, where $\Omega_{U_{\{i\}}}$ is a canonical form for $U_{\{i\}}$ (see \cref{prop:Ulink}).  Furthermore, we require that $\Omega_U$ has no poles at the hyperplane at infinity $\P^n \setminus \C^n$.
\end{enumerate}
In this case we call $\tU$ a positive binary geometry.
\end{definition}

One way to construct a potential canonical form $\Omega_U$ is by positively parametrizing $U$.  Namely, let $y_1,\ldots,y_d$ be regular functions on $\oU$ which restrict to an isomorphism:
$$
(y_1,\ldots,y_d): U_{>0} \to \R_{>0}^d.
$$ 
Then $\Omega_U:= \bigwedge \frac{dy_i}{y_i}$ is a candidate canonical form.  Indeed, in many cases we have the following desirable situation: the functions $y_1,\ldots,y_d$ generate the rational function field $\C(\oU)$, and furthermore, each function $u_i$ is a rational function 
$$
u_i = \frac{a_i(y)}{b_i(y)}
$$
where $a_i(y)$ and $b_i(y)$ are polynomials with positive coefficients.  This is the case for the positive parametrization of \cref{sec:poscoord}, and we will revisit this situation in \cref{sec:string}.

\begin{remark}
According to the ``pushforward heuristic" of \cite[Heuristic 4.1]{ABL}, we more generally expect to be able to construct a canonical form if we are given a dominant rational map $(\C^\times)^d \longdashrightarrow U$, which restricts to a diffeomorphism $\R_{>0}^d \to U_{>0}$, even if it is not necessarily a birational map.
\end{remark}

\subsection{Exercises and Problems}

%\begin{exercise}
%Compute the Euler characteristic
%\end{exercise}
\begin{exercise}\
\begin{enumerate}[label=(\alph*)]
\item
Prove \cref{prop:B2} and \cref{prop:G2}.
\item
The binary geometries of \cref{prop:B2} and \cref{prop:G2} are positive binary geometries.  Find canonical forms for them.
\end{enumerate}
\end{exercise}

\begin{problem}\
\begin{enumerate}[label=(\alph*)]
\item
Show that there are no monomial relations among the generators $u_i$ of a binary geometry.  That is, we have $\prod_i u_i^{a_i} = 1$ if and only if $a_i = 0$ for all $i$.
\item
Is it true that the intrinsic torus of $\oU$ (\cref{sec:intrinsic}) has character lattice equal to the lattice $\lat = \{ \prod_i u_i^{a_i} \mid a_i \in \Z\}$ of Laurent monomials in the $u_i$?
\end{enumerate}
\end{problem}

We remark that binary geometries are defined over the integers since the equations \eqref{eq:u} are integral.  

\begin{problem}
Find the Euler characteristic of $\oU$ for binary geometries $\tU$.  When does $\oU$ have polynomial point count?  See \cref{thm:pointcount} and \cref{thm:Huh} for context.
\end{problem}

\begin{problem}\label{prob:Pell2}
Study the $u$-equations
\begin{align*}
&u_1 + u_6u_8 = 1, &&u_4 + u_3u_5=1,&&u_8+u_1u_2 = 1, &&u_3+u_4u_7 = 1 \\
&u_5 + u_2u_4u_6 = 1, &&u_2+u_5u_7u_8=1, &&u_6 + u_1u_5u_7=1, &&u_7+u_2u_3u_6=1
\end{align*}
for the simplicial complex $\Delta$ with maximal facets
$$\{123,124,135,147,157,236,246,358,368,468,478,578\}.$$
Can you prove that this is a binary geometry?  What can you say about $\tU$ and $\oU$?  How is this space related to $\M_{0,6}$?
This problem is revisited in \cref{sec:Pells}.
\end{problem}
%\subsection{Notes}

\section{Scattering equations}
\label{sec:SE}
\subsection{Scattering amplitudes}
In particle physics, we are interested in predicting the outcomes of elementary particle scattering experiments.  Let $p_1,p_2,\ldots,p_n \in \R^D$ be the 
space-time momenta of $n$-particles in $D$-dimensional space-time.  The scattering amplitude is a function $A_n(p_1,p_2,\ldots,p_n)$ of the momentum vectors, and for the amplitude to be non-vanishing, the momenta $p_1,\ldots,p_n$ must satisfy momentum conservation:
\begin{equation}\label{eq:momcon}
p_1+ \cdots + p_n = 0.
\end{equation}
The function $|A_n(p_1,p_2,\ldots,p_n)|^2$ is the probability density of measuring an event with the specified momenta.

To determine the amplitude, we choose a quantum field theory (QFT), which is a choice of particles and a choice of possible interactions between these particles.  The scattering amplitude $A_n(p_1,p_2,\ldots,p_n)$ is then determined by specifying the type of each of the particles $1,2,\ldots,n$, together with information about the internal symmetries (``polarization vectors", ``quantum numbers") of each particle.  Roughly speaking, the type of a particle is a representation of some Lie group, and the additional internal information is a vector in this representation. 

In perturbative QFT, the scattering amplitude has an expansion
\begin{equation}\label{eq:Feyn}
A_n = \sum_{L =0}^\infty A_n^{(L)}, \qquad A_n^{(L)} = \sum_{\text{$L$-loop Feynman diagrams } \Gamma} \int_{\R^{L \times D}} \omega_\Gamma
\end{equation}
as a sum over the loop order $L$, where $A_n^{(L)}$ is a sum over graphs with $n$ leaves labeled $1,2,\ldots,n$, called $L$-loop Feynman diagrams.  An $L$-loop Feynman diagram is a graph with first Betti number equal to $L$.  The form $\omega_\Gamma$ is a rational differential form that depends on the momenta $p_1,\ldots,p_n$.  

In these lectures we will focus exclusively on the tree amplitude $A_n^{(0)}$, and often omit the exponent ${}^{(0)}$ in our notation.  The function $A_n^{(0)}$ is a rational function obtained as the sum over Feynman diagrams that are trees.

\subsection{Kinematic space}\label{sec:Kn}
Space-time $\R^D$ is equipped with a Lorentzian metric, a nondegenerate symmetric bilinear form with signature $(1,D-1)$.  We assume that physics, and therefore the scattering amplitude, is invariant under the orthogonal group $O_{1,D-1}(\R)$ preserving the Lorentz metric.  For example, this group includes usual rotations of space, and other linear transformations of space-time studied by Lorentz.

We will mathematically simplify the situation by complexifying the set up, simultaneously obscuring the physics.  Namely, we will consider space-time momenta as vectors $p_1,\ldots,p_n$ in a complex vector space $\C^D$ equipped with a symmetric bilinear form, and the amplitude $A_n(p_1,\ldots,p_n)$ is then a complex-valued function invariant under the (complex) orthogonal group $O(D)$, the complexification of $O_{1,D-1}(\R)$.  Let $P$ denote the $n \times D$ matrix with row vectors equal to $p_1,\ldots,p_n$.  By the first fundamental theorem of invariant theory, the ring of invariant (polynomial) functions $\C[p_1,\ldots,p_n]^{O(D)}$ is generated by the Mandelstam variables
$$
s_{ij} = (p_i+p_j)^2 = 2 p_i \cdot p_j = s_{ji}
$$
where $u \cdot v$ denotes the symmetric bilinear form on $\C^D$, and we use the standard physics convention that $u^2 := u \cdot u$.  The Mandelstam variables are also (up to a multiple of 2) the matrix entries of the $n \times n$ symmetric matrix $M := P P^T$.   The $n \times n$ matrix $M$ has rank at most $D$.  The second fundamental theorem of invariant theory states that the ideal of relations between the $s_{ij}$ is generated by the $(D+1) \times (D+1)$ minors of $M$.  We refer the reader to \cite{Pro} for more on the fundamental theorems of invariant theory.

The tree amplitude $A_n^{(0)} = A_n^{{\rm tree}}$ is thus a rational function of the $s_{ij}$.  Taking the dot product of \eqref{eq:momcon} with $p_i$, we obtain the relations 
\begin{equation}\label{eq:sijrel}
\sum_{j \neq i} s_{ij} = 0, \qquad \text{for each } i=1,2,\ldots,n.
\end{equation}
In fact, \eqref{eq:sijrel} is equivalent to \eqref{eq:momcon} in the following sense: any symmetric $n \times n$ matrix $M$ with rank $\leq D$ and row (and thus column) sums equal to 0 is of the form $M = P P^T$ where the rows of $P$ satisfy \eqref{eq:momcon}.  To see this, note that while $M$ may not be diagonalizable, it has an Autonne-Yakagi factorization $M = UDU^T$, where $U$ is unitary and $D$ is diagonal.  Since $M[1,1,\ldots,1]^T = 0$, we have $DU^T [1,1,\ldots,1]^T  = 0$.  We have $D = QQ^T$ for a $n \times D$ matrix $Q$, and thus $M = PP^T$ for $P = UQ$.    It is easy to see that $Q$ can be chosen so that $P^T [1,1,\ldots,1]^T = Q^T U^T [1,1,\ldots,1]^T = 0$, that is, $P$ satisfies momentum conservation. 

We will furthermore restrict ourselves to scattering of massless particles, that is, the momentum vectors satisfy $p_i^2 = 0$, or equivalently, $s_{ii} = 0$.  

\begin{definition}\label{def:Kn}
The (massless) kinematic space $K_n$ is the $\binom{n}{2} - n$ dimensional complex vector space of symmetric $n \times n$ matrices with diagonal entries equal to $0$, and row and column sums equal to $0$.  Equivalently, the dual $K_n^*$ of kinematic space is the vector space spanned by matrix entry functionals $s_{ij} = s_{ji}$ satisfying $s_{ii} = 0$ and the $n$ equations \eqref{eq:sijrel}.
\end{definition}

We will often view a point in $K_n$ as a collection $\s=(s_{ij})$ of complex numbers satisfying $s_{ii}=0$ and \eqref{eq:sijrel}.

Note that in our definition of kinematic space we have not fixed a dimension $D$ of space-time.  We will view scattering amplitudes in general space-time dimension as complex analytic functions on $K_n$.  Tree amplitudes $A_n^\tree$ are then rational functions on $K_n$.  In \cref{sec:4dim}, we return to discuss the special kinematics when we work in ($D=4$)-dimensional space-time of our real world.

\subsection{Scattering potential on $\M_{0,n}$}
Define the \emph{planar kinematic functions} $X_{ij} \in K_n^*$ by the formula
\begin{equation}\label{eq:Xij}
X_{ij} = \sum_{i \leq a < b \leq j-1} s_{ab},
\end{equation}
where $[i,j-1] = \{i,i+1,\ldots,j-1\}$ is a cyclic interval and the index of summation is considered cyclically.  For example, $X_{25} = s_{23}+s_{24}+s_{34}$.  We have that $X_{ij}$ vanishes when $(ij)$ is not a diagonal of the $n$-gon.  The $s_{ij}$ can be expressed in terms of $X_{ij}$ via the equation
\begin{equation}\label{eq:sij}
s_{ij} = X_{i,j+1}+X_{i+1,j} -X_{i,j} - X_{i+1,j+1}.
\end{equation}
Since $K_n^*$ has dimension $\binom{n}{2} - n$, we have the following.
\begin{proposition}
The planar kinematic functions $\{X_{ij} \mid (ij) \in \diag_n\}$ form a basis of $K_n^*$.
\end{proposition}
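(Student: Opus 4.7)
My plan is to combine a dimension count with the inversion formula \eqref{eq:sij}. The paragraph preceding the proposition records $\dim K_n^* = \binom{n}{2}-n$, and the number of diagonals of an $n$-gon is also $\binom{n}{2}-n$. Hence the set $\{X_{ij}\mid (ij)\in\diag_n\}$ has exactly the right cardinality, and it suffices to show that this set spans $K_n^*$; linear independence then follows automatically.

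To establish spanning, I would argue as follows. By definition, $K_n^*$ is generated as a vector space by the Mandelstam symbols $s_{ij}$ (with $i\neq j$), subject to $s_{ii}=0$ and the relations \eqref{eq:sijrel}. The identity \eqref{eq:sij} expresses each $s_{ij}$ as $X_{i,j+1}+X_{i+1,j}-X_{i,j}-X_{i+1,j+1}$; adopting the convention (asserted just after \eqref{eq:Xij}) that $X_{kl}=0$ whenever $(kl)$ is not a diagonal, every term on the right lies in the span of the proposed basis. Therefore each $s_{ij}$ lies in the linear span of the diagonal $X$'s, so the diagonal $X$'s span $K_n^*$, and the dimension match upgrades this to a basis.

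The only non-bookkeeping step is verifying \eqref{eq:sij} itself. Unwinding \eqref{eq:Xij}, the four $X$-terms on the right of \eqref{eq:sij} correspond to sums of $s_{ab}$ over four cyclic ``triangular'' index sets with signs $+,+,-,-$; a signed inclusion-exclusion on the pair $(a,b)$ should leave only the single contribution $s_{ij}$. I would handle this by a direct argument on pairs of indices, being careful with cyclic conventions in the edge cases $j=i+2$ and $j=n$, where one of the four $X$-terms collapses to a non-diagonal and must vanish by convention. I do not anticipate any real obstacle beyond bookkeeping; the essential content is already packaged in the formula \eqref{eq:sij} together with the dimension computation.
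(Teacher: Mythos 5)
Your proposal is correct and follows essentially the same route as the paper: the text derives the proposition precisely from the inversion formula \eqref{eq:sij} (which shows the $s_{ij}$, and hence all of $K_n^*$, lie in the span of the diagonal $X_{ij}$'s, using the stated vanishing of $X_{kl}$ for non-diagonals) together with the count $\dim K_n^* = \binom{n}{2}-n = \#\diag_n$. Your added care about verifying \eqref{eq:sij} and its cyclic edge cases is just filling in bookkeeping the paper leaves to the reader (note the vanishing of $X_{kl}$ on sides is a consequence of \eqref{eq:sijrel} and $s_{ii}=0$, not merely a convention), so there is no substantive difference.
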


Furthermore, we may define dual lattices $K_n(\Z) \subset K_n$ and $K_n^*(\Z) \subset K^*_n$ as follows:
\begin{align*}
K_n^*(\Z) &:= \sp_\Z\{s_{ij}\} = \sp_\Z \{ X_{ij}\} \\
K_n(\Z)&:= \{(\s) \in K_n \mid f(\s) \in \Z \text{ for all } f \in K_n^*(\Z)\} = \{(\s) \in K_n \mid s_{ij} \in \Z\}.
\end{align*}

The relation between $X_{ij}$ and $s_{ij}$ can be viewed in terms of the cross-ratios $u_{ij}$ and minors $(\sigma_i-\sigma_j)$ on $\M_{0,n}$.  Namely, for any point in kinematic space $K_n(\Z)$, we have a regular function on $\M_{0,n}$:
\begin{equation}\label{eq:KBpotential}
\phi_X := \prod_{(ij) } u_{ij}^{X_{ij} }= \prod_{i < j} (\sigma_i - \sigma_j)^{s_{ij}},
\end{equation}
the equality following from \eqref{eq:sij} and \eqref{eq:defuij}.  Thus, we have:
\begin{proposition}\label{prop:Kniso}
The integral kinematic space $K_n(\Z)$ is naturally isomorphic to the character lattice $\lat(\M_{0,n})$ of \eqref{eq:L}.  Integral dual kinematic space $K^*_n(\Z)$ is naturally isomorphic to the cocharacter lattice $\lat^\vee(\M_{0,n})$.
\end{proposition}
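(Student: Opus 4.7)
The plan is to exploit the explicit $\Z$-bases indexed by diagonals of the $n$-gon: $\{u_{ij}\}_{(ij) \in \diag_n}$ for $\lat(\M_{0,n})$, furnished by \cref{prop:uijchar}, and a matching $\Z$-basis $\{X_{ij}\}_{(ij) \in \diag_n}$ for $K_n^*(\Z)$, with the formula \eqref{eq:KBpotential} serving as the concrete bridge between the two sides. Once these bases are in place, the isomorphism is essentially a change-of-basis book-keeping exercise.

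First I would verify that $\{X_{ij} \mid (ij) \in \diag_n\}$ is a $\Z$-basis of $K_n^*(\Z) = \sp_\Z\{s_{ij}\}$. Using the momentum-conservation relations \eqref{eq:sijrel} and $s_{ii} = 0$, a direct computation from \eqref{eq:Xij} shows $X_{ij} = 0$ whenever $(ij)$ is a side of the $n$-gon (or $i = j$); for diagonals, \eqref{eq:Xij} exhibits $X_{ij}$ as an integer combination of the $s_{ab}$. Conversely, \eqref{eq:sij} expresses each $s_{ij}$ as an integer combination of the $X_{kl}$ (with sides understood to contribute $0$). Since there are $\binom{n}{2}-n = \dim K_n^*$ diagonals, and both transformations are integral and mutually inverse, the $X_{ij}$ indeed form a unimodular $\Z$-basis of $K_n^*(\Z)$.

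Next I would define $\psi: K_n(\Z) \to \lat(\M_{0,n})$ by $\s \mapsto \phi_X = \prod_{(ij) \in \diag_n} u_{ij}^{X_{ij}(\s)}$; this is a group homomorphism, and the right-hand side lies in $\lat(\M_{0,n})$ by \cref{prop:uijchar}. Dual to the basis $\{X_{ij}\}$ of $K_n^*(\Z)$, there is a $\Z$-basis $\{e_{kl}\}_{(kl) \in \diag_n}$ of $K_n(\Z)$ satisfying $X_{ij}(e_{kl}) = \delta_{(ij),(kl)}$, on which $\psi(e_{kl}) = u_{kl}$. Since $\{u_{kl}\}$ is a $\Z$-basis of $\lat(\M_{0,n})$ by \cref{prop:uijchar}, the map $\psi$ carries a $\Z$-basis to a $\Z$-basis and is thus an isomorphism. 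The dual statement $K_n^*(\Z) \cong \lat^\vee(\M_{0,n})$ is then obtained by taking $\Z$-duals of $\psi$.

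No step presents a real obstacle: the geometric content is entirely absorbed into \cref{prop:uijchar}, and what remains is combinatorial. The one item needing care is the unimodularity of the change of basis between $\{s_{ij}\}$ (modulo the kinematic relations \eqref{eq:sijrel}) and $\{X_{ij}\}_{(ij) \in \diag_n}$, but this is immediate from the fact that \eqref{eq:Xij} and \eqref{eq:sij} are mutually inverse with integer coefficients.
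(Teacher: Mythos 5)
Your proposal is correct and follows essentially the same route as the paper, which deduces the proposition from the fact that $\{X_{ij} \mid (ij)\in\diag_n\}$ is a basis of $K_n^*$ (with \eqref{eq:Xij} and \eqref{eq:sij} giving the mutually integral change of basis with the $s_{ij}$), the identification of $\lat(\M_{0,n})$ with Laurent monomials in the $u_{ij}$ from \cref{prop:uijchar}, and the bridge \eqref{eq:KBpotential} sending $\s \mapsto \prod_{(ij)} u_{ij}^{X_{ij}}$. You have merely made explicit the book-keeping (unimodularity, dual bases, dualizing for $K_n^*(\Z)\cong\lat^\vee$) that the paper leaves implicit after ``Thus, we have.''
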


We also have natural vector space isomorphisms $K_n \cong \Lambda \otimes_\Z \C$ and $K^*_n \cong \Lambda^\vee \otimes_\Z \C$.

%for $X_{ij} \in \Z$, where 
%\begin{equation}\label{eq:sij}
%s_{ij} = X_{i,j+1}+X_{i+1,j} -X_{i,j} - X_{i+1,j+1}
%\end{equation}
%with the convention that $X_{ij} = 0$ if $(i,j)$ is not a diagonal.
%Note that the formula \eqref{eq:sij} forces the $s_{ij}$ to satisfy $\sum_{j \neq i} s_{ij} = 0$ for $i = 1,2,\ldots,n$.

\begin{corollary}
Any cross-ratio, or more generally any Laurent monomial in cross-ratios, can be uniquely written as a Laurent monomial in the dihedral coordinates $u_{ij}$, times a sign.
\end{corollary}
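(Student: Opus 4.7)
The plan is to deduce the corollary directly from Proposition \ref{prop:uijchar}. First I would check that every cross-ratio $[ij|kl]$ is a unit in the coordinate ring $\C[\M_{0,n}]$. By Lemma \ref{lem:cr} each cross-ratio is a regular function on $\M_{0,n}$, and since each factor $(\sigma_a - \sigma_b)$ appearing in the defining ratio is a $2 \times 2$ minor that is nonvanishing on $\Mat_{2,n}^\circ$, the cross-ratio itself is nowhere zero on $\M_{0,n}$. Hence $[ij|kl] \in \C[\M_{0,n}]^\times$, and consequently every Laurent monomial in cross-ratios also lies in $\C[\M_{0,n}]^\times$.

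Given this, Proposition \ref{prop:uijchar} immediately finishes the job. The quotient $\Lambda = \C[\M_{0,n}]^\times/\C^\times$ is the free abelian group with basis the classes $\{[u_{ij}] \mid (ij) \in \diag_n\}$, so the image in $\Lambda$ of any unit $f$ obtained as a Laurent monomial in cross-ratios is uniquely a product $\prod [u_{ij}]^{X_{ij}}$. Unwinding this, one obtains a unique tuple of integers $(X_{ij})$ and a unique scalar $c \in \C^\times$ such that $f = c \prod_{(ij)} u_{ij}^{X_{ij}}$; the constant $c$ is pinned down by evaluating at any single point of $\M_{0,n}(\C)$, for example at a chosen point of $(\M_{0,n})_{>0}$.

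There is no real obstacle here: the corollary is essentially a restatement of Proposition \ref{prop:uijchar} specialized to units that are cross-ratios (or products thereof). The one subtlety worth flagging in the interpretation is that a nontrivial scalar $c$ can appear, for instance from a sign introduced by reordering the factors $(\sigma_a-\sigma_b)$, so \emph{Laurent monomial in the dihedral coordinates} should be read either modulo $\C^\times$ or as allowing a leading constant; the uniqueness of the exponents is the real content, and it is furnished entirely by Proposition \ref{prop:uijchar}.
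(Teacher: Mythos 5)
Your argument is correct, but it takes a different route from the paper's. The paper proves the corollary in one line via \cref{prop:Kniso}: any Laurent monomial in cross-ratios, once expanded in the differences, is of the form $\prod_{i<j}(\sigma_i-\sigma_j)^{s_{ij}}$ where the exponent vector $(s_{ij})$ automatically satisfies the relations \eqref{eq:sijrel} (this is what invariance under the column torus and $\PGL(2)$ forces), hence defines a point of $K_n(\Z)$; the identity \eqref{eq:KBpotential} then rewrites it on the nose as $\prod u_{ij}^{X_{ij}}$, with the exponents given explicitly by the change of basis \eqref{eq:Xij}--\eqref{eq:sij}, and uniqueness follows since the $X_{ij}$ form a basis of $K_n^*$. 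You instead observe that cross-ratios are units in $\C[\M_{0,n}]$ and invoke \cref{prop:uijchar}, i.e.\ the intrinsic-torus description of $\Lambda=\C[\M_{0,n}]^\times/\C^\times$. That is a legitimate and slightly more abstract argument: it gives existence and uniqueness of the exponents for free, but it only determines the answer modulo $\C^\times$ and gives no formula for the exponents, whereas the paper's route produces the exponents explicitly and ties the statement to the $K_n(\Z)\cong\Lambda$ identification that the surrounding discussion is about. (Also note that \cref{prop:uijchar} is itself proved in the paper by appealing to \eqref{eq:KBpotential}, so the two routes are not far apart at bottom.) Your caveat about the leading constant is a genuine feature of the statement rather than a defect of your proof: cross-ratios such as $[ik|jl]$ with $i<j<k<l$ are negative on $(\M_{0,n})_{>0}$ and hence equal $-1$ times a monomial in the $u_{ij}$, so the corollary must be read up to such a sign under either argument; the paper's proof silently restricts to the ordered form $\prod_{i<j}(\sigma_i-\sigma_j)^{s_{ij}}$, where the constant is exactly $1$.
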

\begin{proof}
Any Laurent monomial in cross-ratios is of the form $\pm \prod_{i < j} (\sigma_i - \sigma_j)^{s_{ij}}$ for integers $s_{ij}$ that define a point in $K_n(\Z)$.  The sign $\pm$ arises since $(\sigma_i -\sigma_j) = - (\sigma_j - \sigma_i)$.
\end{proof}
%\begin{proof}
%Define a lattice in $K_n^*$ integrally spanned by the $\binom{n}{2} - n$ linear functions $X_{ij}$, as $(i,j)$ vary over diagonals of the $n$-gon.
%\end{proof}

\begin{definition}\label{def:KB}
For arbitrary $X_{ij} \in \C$, or $X \in K_n$, the \emph{Koba-Nielsen potential}, or \emph{scattering potential} is the (multi-valued) function \eqref{eq:KBpotential}.  
\end{definition}

This function is called a likelihood function \cite{ST}, or master function \cite{Var} in other contexts.  While $\phi_X$ is multi-valued, its (log-)critical point set in $\M_{0,n}$, the solutions to the equation
$$
\dlog \phi_X = \sum_{(ij) \in \diag_n }X_{ij} \dlog u_{ij} =  \sum_{(ij) \in \diag_n } \frac{X_{ij}}{u_{ij}} du_{ij} = 0,
$$
is a well-defined algebraic set.

\subsection{Cachazo-He-Yuan scattering equations}
The traditional approach to (perturbative) scattering amplitudes is via the Feynman diagrams of \eqref{eq:Feyn}.  Around ten years ago, Cachazo, He, and Yuan \cite{CHYarbitrary} introduced a new approach to the computation of tree-level scattering amplitudes by solving \emph{scattering equations}.  This approach is inspired by Witten's twistor string theory \cite{Wit} which leads to explicit formulae for tree-level Yang-Mills amplitudes \cite{RSV}.

The scattering equations are the following $n$ equations
\begin{equation}\label{eq:CHY}
Q_i:= \sum_{j \neq i} \frac{s_{ij}}{\sigma_i-\sigma_j} = 0, \qquad \text{for } i = 1,2,\ldots,n.
\end{equation}
Here, $s_{ij}$ denote the Mandelstam coordinates of a point in kinematic space $K_n$, and $\sigma_1,\ldots,\sigma_n$ are $n$ distinct points on $\P^1$.  By convention, if $\sigma_i = \infty$ then the term $\frac{s_{ij}}{\sigma_i-\sigma_j}$  is declared to be 0.

\begin{proposition}
The $n$ scattering equations \eqref{eq:CHY} are equivalent to the critical point equation $\dlog \phi_X = 0$.
\end{proposition}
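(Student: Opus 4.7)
The strategy is to pull the critical point equation $\dlog \phi_X = 0$ back from $\M_{0,n}$ to the space $((\P^1)^n)^\circ$ of $n$ distinct ordered points on $\P^1$, where the calculation is a direct partial-derivative computation, and then use $\PGL(2)$-invariance to match up the $n-3$ independent critical point equations on $\M_{0,n}$ with the $n$ scattering equations \eqref{eq:CHY}. First I would fix a local chart where all $\sigma_i$ are finite and compute
$$
\dlog \phi_X = \sum_{i<j} s_{ij} \, \dlog(\sigma_i-\sigma_j) = \sum_{i=1}^n \Big(\sum_{j \neq i} \frac{s_{ij}}{\sigma_i-\sigma_j}\Big) d\sigma_i = \sum_{i=1}^n Q_i \, d\sigma_i.
$$
Thus the vanishing of $\dlog\phi_X$ \emph{on $((\P^1)^n)^\circ$} is exactly the system $Q_i = 0$ for all $i$.

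Next I would check that $\phi_X$ is $\PGL(2)$-invariant on $((\P^1)^n)^\circ$, using momentum conservation in the form $\sum_{j\neq i} s_{ij} = 0$. Invariance under translation $\sigma_i \mapsto \sigma_i + c$ is automatic. Invariance under scaling $\sigma_i \mapsto \alpha\sigma_i$ gives an overall factor $\alpha^{\sum_{i<j} s_{ij}} = 1$ because $2\sum_{i<j}s_{ij} = \sum_i\sum_{j\neq i}s_{ij} = 0$. Invariance under inversion $\sigma_i \mapsto 1/\sigma_i$ produces a factor $\prod_{i<j}(-\sigma_i\sigma_j)^{-s_{ij}}$, whose sign is $(-1)^{\sum_{i<j}s_{ij}} = 1$ and whose monomial part is $\prod_i \sigma_i^{-\sum_{j\neq i}s_{ij}} = 1$. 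Hence $\phi_X$ descends as a multivalued function on $\M_{0,n}$, and $\dlog\phi_X$ is a well-defined regular 1-form there.

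Finally I would argue equivalence. The pullback of the descended 1-form $\dlog \phi_X$ on $\M_{0,n}$ along $((\P^1)^n)^\circ \to \M_{0,n}$ is the $\PGL(2)$-invariant horizontal form $\sum_i Q_i \, d\sigma_i$, which is zero on the quotient if and only if it is zero on the total space. In particular, differentiating the invariance of $\phi_X$ along the three vector fields $\sum_i \sigma_i^k \partial_{\sigma_i}$ ($k=0,1,2$) generating the $\PGL(2)$-action yields the identities $\sum_i \sigma_i^k Q_i = 0$ identically in $\usigma$, which can also be checked by a direct symmetrization of $\frac{s_{ij}}{\sigma_i-\sigma_j}$ over the pair $(i,j)$. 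After gauge-fixing $(\sigma_1,\sigma_2,\sigma_n) = (0,1,\infty)$, the $n-3$ equations $\dlog\phi_X = 0$ on $\M_{0,n}$ read $Q_3 = \cdots = Q_{n-1} = 0$; combined with the three automatic linear relations above, we obtain a Vandermonde-type linear system in $(Q_1,Q_2,Q_n)$ whose only solution is $Q_1=Q_2=Q_n=0$, so all $n$ scattering equations hold.

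The only subtle point, and the one I would handle carefully, is the bookkeeping between the $n$ equations $Q_i=0$ and the $n-3$ independent critical point equations on $\M_{0,n}$, together with the mild issue that one of the gauge-fixed points is at $\infty$; both are resolved cleanly by working on a $\PGL(2)$-invariant lift and using that the three linear dependencies among the $Q_i$ precisely match the three infinitesimal $\PGL(2)$ directions.
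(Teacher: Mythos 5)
Your proposal is correct and follows essentially the same route as the paper: the paper's one-line proof rests on the identity \eqref{eq:KBpotential} together with the observation that $Q_i = \pm\partial_{\sigma_i}\log\phi_X$, which is exactly your computation $\dlog\phi_X = \sum_i Q_i\,d\sigma_i$ on the lift. The extra details you supply — $\PGL(2)$-invariance of $\phi_X$ via momentum conservation and the three relations $\sum_i \sigma_i^k Q_i = 0$ reconciling the $n$ equations with the $n-3$ independent critical-point equations — are exactly what the paper delegates to \cref{prop:CHYPGL2} and \cref{ex:CHYrel}, and your handling of them is sound.
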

\begin{proof}
Follows from \eqref{eq:KBpotential} and that $Q_i$ is up to sign equal to $\frac{\partial}{\partial \sigma_i} \log \phi_X$.
\end{proof}

\begin{proposition}\label{prop:CHYPGL2}
The scattering equations are invariant under the action of $\PGL(2)$ on $\{\sigma_1,\ldots,\sigma_n\}$.
\end{proposition}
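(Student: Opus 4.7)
My plan is to give two proofs: a direct computational one using generators of $\PGL(2)$, and a conceptual one via the Koba-Nielsen potential.

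For the direct approach, I would use that $\PGL(2)$ is generated by translations $\sigma_i \mapsto \sigma_i + a$, dilations $\sigma_i \mapsto \lambda \sigma_i$, and the inversion $\sigma_i \mapsto 1/\sigma_i$, and check the three cases. Translations act trivially because each $Q_i$ depends only on the differences $\sigma_i-\sigma_j$. Dilations scale $Q_i$ by $\lambda^{-1}$, so the vanishing locus is preserved. The main substance is the inversion: under $\sigma_i \mapsto 1/\sigma_i$ one computes $\sigma_i - \sigma_j \mapsto -(\sigma_i-\sigma_j)/(\sigma_i\sigma_j)$, which gives
\[
Q_i \longmapsto -\sigma_i \sum_{j\neq i} \frac{s_{ij}\sigma_j}{\sigma_i - \sigma_j}.
\]
I would then use the identity $\sigma_j = \sigma_i - (\sigma_i - \sigma_j)$ to split the sum into $\sigma_i Q_i - \sum_{j\neq i} s_{ij}$, and invoke momentum conservation $\sum_{j\neq i} s_{ij}=0$ from \eqref{eq:sijrel} to conclude $Q_i \mapsto -\sigma_i^2 Q_i$. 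Thus the vanishing locus of $\{Q_i\}$ is preserved. The one subtle point, which I would handle at the end, is the convention that $s_{ij}/(\sigma_i - \sigma_j) := 0$ when $\sigma_i = \infty$: since inversion sends $\infty \leftrightarrow 0$, one should check that the relation $Q_i \mapsto -\sigma_i^2 Q_i$ extends continuously through $\sigma_i = 0$ or $\sigma_i = \infty$ by reading the factors on both sides with the appropriate limiting convention.

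The conceptual proof is cleaner. The previous proposition identifies the scattering equations with the critical-point equation $\dlog \phi_X = 0$, where by \eqref{eq:KBpotential} we have $\phi_X = \prod_{(ij)} u_{ij}^{X_{ij}}$. Since the dihedral coordinates $u_{ij}$ are cross-ratios and hence regular, $\PGL(2)$-invariant functions on $\M_{0,n}$ by \cref{lem:cr}, the multi-valued function $\phi_X$ descends to $\M_{0,n}$ and the 1-form $\dlog \phi_X$ is an honest regular 1-form on $\M_{0,n}$. Its zero locus is therefore intrinsic to $\M_{0,n}$ and in particular $\PGL(2)$-invariant when pulled back to $\Mat_{2,n}^\circ$ or $((\P^1)^n)^\circ$. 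The only thing to check is that this intrinsic critical locus, computed by differentiating with respect to the $\sigma_i$, reproduces exactly the equations $Q_i = 0$; this is the content of the preceding proposition.

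The main (mild) obstacle is bookkeeping around the $\sigma_i = \infty$ convention, since $Q_i$ is written in affine coordinates but the scattering locus is meant to live on $(\P^1)^n$. In the potential-function argument this is automatic because $\phi_X$ is defined on $\M_{0,n}$, while in the direct computation one must verify the transformation rule $Q_i \mapsto -\sigma_i^2 Q_i$ extends to the boundary correctly, which amounts to a continuity check.
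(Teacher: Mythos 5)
Your proposal is correct. The paper itself only defers the proof to \cref{ex:Mobius}, which asks exactly for your first argument: check that a M\"obius transformation $\sigma'_i = \frac{\alpha\sigma_i+\beta}{\gamma\sigma_i+\delta}$ sends solutions to solutions; you do it by decomposing into translations, dilations and the inversion, and your key computation $Q_i \mapsto -\sigma_i^2 Q_i$ (using $\sum_{j\neq i}s_{ij}=0$ from \eqref{eq:sijrel}) is the same covariance identity one finds with a general M\"obius map, so this is essentially the paper's intended route. Your second, conceptual argument---$\phi_X$ is a monomial in cross-ratios by \eqref{eq:KBpotential}, hence $\dlog\phi_X$ is pulled back from $\M_{0,n}$ and its zero locus is $\PGL(2)$-saturated---is the viewpoint the paper only gestures at later, in the remark on $G$-invariant potentials used to explain the three-dimensional kernel of $\Phi$; it buys you invariance with no case analysis, at the cost of having to note (as you do) that the full system $\{Q_i=0\}$ coincides with the vanishing of the pulled-back form, i.e.\ that the three redundant equations (cf.\ \cref{ex:CHYrel}) carry no extra conditions. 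Your flag about the $\sigma_i=\infty$ convention is the right mild caveat for the direct computation; it can be discharged either by the limiting argument you describe or by choosing representatives/generators avoiding the degenerate configurations, and it disappears entirely in the intrinsic argument.
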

\begin{proof}
\cref{ex:Mobius}.
\end{proof}

Thus we may view the $\sigma_i$ in \eqref{eq:CHY} as defining a point $\usigma \in \M_{0,n}$.  We view the scattering equations in the following three ways:
\begin{enumerate}
\item
If a point $\s = (s_{ij}) \in K_n$ has been fixed, we may solve these equations to find solutions $\usigma \in \M_{0,n}$.  This is the original perspective of \cite{CHYarbitrary}.
\item
If a point $\usigma \in \M_{0,n}$ has been fixed, we may solve these equations to find solutions $\s \in K_n$.  There are usually infinitely many such solutions, but restricting to $(n-3)$-dimensional subspaces of $K_n$ we will find that there is generically a unique solution, defining a \emph{scattering (rational) map} from $\M_{0,n}$ to a subspace of $K_n$.  This is the perspective of \cite{ABHY,AHLstringy}.
\item
The scattering equations define a scattering correspondence variety
\begin{equation}\label{eq:scatcorr}
\I = \{(\s,\usigma) \mid \eqref{eq:CHY} \text{ holds }\} \subset K_n \times \M_{0,n}.
\end{equation}
This will be useful in \cref{sec:4dim}.  Note that the equations \eqref{eq:CHY} are homogeneous in the variables $s_{ij}$, so the incidence variety is also naturally defined in $ \P(K_n) \times \M_{0,n}$.  This correspondence has appeared in likelihood geometry \cite{HS} but I have not seen it explicitly in the amplitudes literature.
\end{enumerate}

If a point $\s \in K_n$ has been fixed, then CHY postulate that the (tree-)amplitude rational function $A_n \in \C(K_n)$ is obtained as a sum of some function over the solutions of the scattering equations.  The choice of function is determined by the choice of QFT.  While the CHY scattering amplitudes can be considered for a variety of theories, we focus on scalar particles in this section.  Before we define the CHY amplitude, we first discuss the solutions to the scattering equations.

\subsection{Solving the scattering equations}
Let us suppose that $\s \in K_n$ has been fixed, or equivalently, $p_1,\ldots,p_n$ have been fixed.  We consider the solutions to \eqref{eq:CHY} in $\M_{0,n}$.

\begin{example}\label{ex:n4SE}
Suppose $n = 4$.  Then $\dim K_n = 4-2 = 2$, and using \eqref{eq:sijrel}, we have
$$
s_{12} = s, \qquad s_{13} = -s-t, \qquad s_{14} = t, \qquad s_{23} = t, \qquad s_{24} = -s-t, \qquad s_{34} = s.
$$
where $s = X_{13}$ and $t= X_{24}$.  We use the $\PGL(2)$ action to fix $\sigma_1 = 0, \sigma_2 = 1, \sigma_4 = \infty$.  The scattering equation for $i= 4$ is automatically satisfied, and the other equations become
$$
\frac{s}{-1} + \frac{-s-t}{-\sigma} = 0, \qquad \frac{s}{1} + \frac{t}{1-\sigma}=0, \qquad \frac{-s-t}{\sigma} + \frac{t}{\sigma-1} = 0,
$$
where $\sigma = \sigma_3$.  These three equations are equivalent, with solution $\sigma = \frac{s+t}{s}$.
\end{example}
%The scattering equation for $i= 1$ is automatically satisfied, and the other equations become
%$$
%\frac{-a-b}{-1} + \frac{a}{-\sigma} = 0, \qquad \frac{-a-b}{1} + \frac{b}{1-\sigma}=0, \qquad \frac{a}{-\sigma} + \frac{b}{1-\sigma} = 0,
%$$
%where we have written all the Mandelstam variables in terms of $s_{24} = a$ and $s_{34} = b$.  These three equations are equivalent, with solution $\sigma = \frac{a}{a+b}$.

\begin{theorem}\label{thm:n!}
For generic $\s \in K_n$, the scattering equations have $(n-3)!$ solutions.
\end{theorem}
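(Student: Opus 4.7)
The plan is to identify the scattering equations as the logarithmic critical-point equations of the scattering potential $\phi_X$ on the very affine variety $\M_{0,n}$, and then to count critical points via Varchenko's theorem (as extended by Huh). First I would recall that after fixing the $\PGL(2)$-gauge $(\sigma_1,\sigma_2,\sigma_n)=(0,1,\infty)$, only $n-3$ of the $n$ scattering equations $Q_i=0$ remain independent by \cref{prop:CHYPGL2}. By the computation \eqref{eq:KBpotential}, these reduced equations are exactly $\dlog \phi_X=0$, i.e.\ the log-critical-point equations of the master function $\phi_X=\prod_{i<j}(\sigma_i-\sigma_j)^{s_{ij}}$ on the hyperplane-arrangement complement presentation of $\M_{0,n}$ given in \cref{sec:hyper}.

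Next I would invoke the general principle, due originally to Varchenko for arrangement complements and extended by Huh to arbitrary smooth very affine varieties, that for a generic choice of weights, a master function on a smooth very affine variety $U$ has exactly $|\chi(U)|$ critical points, all of them nondegenerate, and that this count is $(-1)^{\dim U}\chi(U)$. Applied to $U=\M_{0,n}$ with $\dim U = n-3$, this reduces the theorem to the identity $|\chi(\M_{0,n})|=(n-3)!$.

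To compute this Euler characteristic I would use the forgetful morphism $\pi:\M_{0,n}\to\M_{0,n-1}$ erasing the last marked point. This is a Zariski-locally trivial fibration whose fiber over $\usigma$ is $\P^1\setminus\{\sigma_1,\ldots,\sigma_{n-1}\}$, an $(n-1)$-punctured sphere with Euler characteristic $3-n$. Multiplicativity of Euler characteristic in fibrations gives $\chi(\M_{0,n})=(3-n)\chi(\M_{0,n-1})$, and starting from $\chi(\M_{0,3})=1$ we obtain $\chi(\M_{0,n})=(-1)^{n-3}(n-3)!$, as desired.

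The main obstacle is the genericity condition in Varchenko--Huh: the exponents $X_{ij}\in\C$ defining the master function must avoid a proper algebraic subset (the ``resonant locus''), and one must also verify that no critical points escape to the boundary $\bM_{0,n}\setminus\M_{0,n}$ of any smooth compactification. The first point is handled by the canonical linear identification $K_n\cong\lat(\M_{0,n})\otimes_\Z\C$ of \cref{prop:Kniso}, under which a generic $\s\in K_n$ maps to a generic character $(X_{ij})$, so Varchenko--Huh applies. The second point — that the critical locus is proper over the generic stratum of kinematic space — is automatic in the Varchenko--Huh framework applied to the very affine variety $\M_{0,n}$, because the theorem counts critical points in $U$ itself; equivalently, one can compactify using $\tM_{0,n}$ or $\bM_{0,n}$ and check that the $u_{ij}\to 0$ and $u_{ij}\to 1$ boundary strata are avoided for generic $X$, using the product-decomposition \cref{prop:break} inductively.
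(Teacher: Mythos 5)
Your proposal is correct and follows essentially the same route as the paper: the scattering equations are identified with the log-critical-point equations of $\phi_X$ on the very affine variety $\M_{0,n}$, and the count of solutions for generic kinematics is reduced to $|\chi(\M_{0,n})|$ via the Orlik--Terao/Huh theorem (the paper's \cref{thm:Huh}), with the genericity of $\s$ transported through the identification $K_n \cong \Lambda \otimes_\Z \C$ of \cref{prop:Kniso}. The only divergence is in the computation of the Euler characteristic: the paper counts points of $\M_{0,n}$ over $\F_q$ and applies the polynomial point-count result (\cref{thm:pointcount}), whereas you use multiplicativity of $\chi$ along the forgetful fibration $\M_{0,n} \to \M_{0,n-1}$ with punctured-sphere fibers. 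Both are standard and yield $\chi(\M_{0,n}) = (-1)^{n-3}(n-3)!$; your fibration argument avoids the arithmetic input, while the paper's point count is arguably more elementary since it sidesteps any local-triviality discussion. Your closing remark that no boundary check is needed is also right, since Huh's theorem counts critical points on $\M_{0,n}$ itself.
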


\cref{thm:n!} follows from a theorem of Orlik and Terao \cite{OT} (in the setting of hyperplane arrangement complements), or its generalization by Huh (\cref{thm:Huh}) to very affine varieties.  These results state that the number of solutions to the scattering equations is (generically) equal to the absolute value of the Euler characteristic of $\M_{0,n}$\footnote{For a discussion of scattering equations when $\s$ become non-generic, see \cite{KKT}.}.  The Euler characteristic of $\M_{0,n}$ is easy to compute by counting points over $\F_q$.  After gauge-fixing $\sigma_1 = 0 ,\sigma_2 = 1,\sigma_n = \infty$, we find that there are $(q-2)$ choices for $\sigma_3$, and then $(q-3)$ choices for $\sigma_4$, and so on.  This gives 
$$
\# \M_{0,n}(\F_q) = (q-2)(q-3) \cdots (q-n+2).
$$
According to the next result, a consequence of the Weil conjectures, we have $\chi(\M_{0,n}(\C)) = (-1)^{n-3}(n-3)!$.

\begin{theorem}\label{thm:pointcount}
Suppose that we have an algebraic variety $X$ defined over $\Z$.  That is, $X$ is cut out by polynomial equations with integer coefficients.  Further suppose that $X_{\F_q}$ is smooth and has polynomial point count $f(q)$ for all prime powers $q =p^n$ of some prime $p$.  Then the Euler characteristic of the complex algebraic variety $X(\C)$ is given by $\chi(X(\C)) = f(1).$
\end{theorem}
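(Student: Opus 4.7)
The plan is to invoke the Grothendieck--Lefschetz trace formula in $\ell$-adic étale cohomology, combined with the standard comparison between étale and singular cohomology over $\C$. The polynomial hypothesis on the point count will force the Frobenius eigenvalues on compactly supported cohomology to be powers of $q$, and substituting $q=1$ in the trace formula will yield the Euler characteristic.

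More precisely, first I would pick a prime $\ell \ne p$ and apply the Grothendieck--Lefschetz trace formula to $X_{\bar\F_p}$: for every $n \geq 1$,
\begin{equation*}
\#X(\F_{q^n}) = \sum_{i=0}^{2d} (-1)^i \operatorname{tr}\bigl(\operatorname{Frob}_q^n \,\big|\, H^i_c(X_{\bar\F_p}, \Q_\ell)\bigr).
\end{equation*}
Let $\alpha_{i,1}, \ldots, \alpha_{i, b_i}$ denote the Frobenius eigenvalues on $H^i_c$ (with $b_i = \dim H^i_c$). The hypothesis gives $\sum_i (-1)^i \sum_k \alpha_{i,k}^n = f(q^n)$ for all $n$. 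Writing $f(T) = \sum_j a_j T^j$, this is a power-sum identity that holds for all $n$, so by elementary linear algebra (Vandermonde / distinctness of exponentials) one concludes that the multiset of Frobenius eigenvalues, counted with the signs $(-1)^i$, consists of $a_j$ copies of $q^j$ for each $j$ (with negatives interpreted as multiplicities in odd cohomology). In particular, $\sum_i (-1)^i b_i = \sum_j a_j = f(1)$.

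Finally, I would invoke two standard facts to translate this back to $\C$: (i) the smooth proper base change and comparison theorems give $\dim_{\Q_\ell} H^i_c(X_{\bar\F_p}, \Q_\ell) = \dim_\Q H^i_c(X(\C), \Q)$ provided $X_\Z$ has good reduction at $p$ (which is built into the smoothness hypothesis); (ii) for any complex algebraic variety, $\chi(X(\C)) = \chi_c(X(\C)) = \sum_i (-1)^i \dim H^i_c(X(\C),\Q)$, a classical consequence of Poincaré--Lefschetz duality and the existence of triangulations. Combining these,
\begin{equation*}
\chi(X(\C)) = \sum_i (-1)^i b_i = f(1).
\end{equation*}

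The main obstacle is the middle step: extracting the individual Betti numbers (or at least their alternating sum) from the polynomial point count. The rigorous argument uses the Weil conjectures, specifically that Frobenius eigenvalues on $H^i_c$ are algebraic numbers whose complex absolute values are bounded by $q^{i/2}$ (Deligne's theorem), so the power-sum identity $\sum_{i,k}(-1)^i \alpha_{i,k}^n = \sum_j a_j q^{jn}$ separates cleanly by weight and forces all $\alpha_{i,k}$ to be integer powers of $q$. One must also verify that a common prime $p$ of good reduction can be chosen, but smoothness of $X_{\F_q}$ for the primes in the hypothesis is precisely what is needed, and for the intended application to $\M_{0,n}$ this is manifest from the explicit count $(q-2)(q-3)\cdots(q-n+2)$.
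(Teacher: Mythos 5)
The paper states \cref{thm:pointcount} without proof, so there is no argument of the paper to compare against; judged on its own terms, your outline is the standard \'etale-cohomology route, and its middle step is fine. In fact you need less than Deligne's bounds there: since $\sum_{i,k}(-1)^i\alpha_{i,k}^n=\sum_j a_j q^{jn}$ holds for all $n\geq 1$ and Frobenius eigenvalues are nonzero, linear independence of the functions $n\mapsto\beta^n$ for distinct $\beta$ already pins down the \emph{signed} multiplicity of every eigenvalue value, and summing these gives $\sum_i(-1)^i b_i=\sum_j a_j=f(1)$ for the special fiber without ever separating eigenvalues by cohomological degree.

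The genuine gap is your step (i): the claim that $\dim_{\Q_\ell}H^i_c(X_{\bar\F_p},\Q_\ell)=\dim_\Q H^i_c(X(\C),\Q)$ follows because good reduction at $p$ ``is built into the smoothness hypothesis.'' For a non-proper $X$ there is no smooth proper base change to invoke, and smoothness of the single fiber $X_{\F_p}$ does not control the comparison between characteristic $p$ and characteristic $0$. Concretely, $X=\Spec\Z[x,(x^2-p)^{-1}]$ is smooth over $\Z$, its fiber at $p$ is $\mathbb{G}_m$ (smooth, point count $q-1$, so $f(1)=0$), yet $X(\C)=\C\setminus\{\pm\sqrt{p}\}$ has $\chi=-1$; so no argument can complete this step from data at one prime alone, and the statement must be read with the hypothesis holding at all (or all but finitely many) primes, as it does for $\M_{0,n}$. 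With that reading, the step is repaired by Deligne's generic base change/constructibility theorem, or by spreading out a smooth compactification with normal crossings boundary over $\Z[1/N]$ and applying smooth proper base change stratum by stratum: these give $H^i_c(X_{\bar\F_p},\Q_\ell)\cong H^i_c(X(\C),\Q_\ell)$ for all sufficiently large $p$, after which your trace-formula computation at such a prime, together with $\chi_c(X(\C))=\chi(X(\C))$, yields $\chi(X(\C))=f(1)$.
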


%Note that binary geometries are defined over the integers since the equations \eqref{eq:u} are integral.  

Let us also sketch the original argument of \cite{CHYKLT} for \cref{thm:n!}, which is based on \emph{soft-limits}; see also \cite{ABFKST}.  We proceed by induction on $n$.  We gauge-fix $\sigma_1 = \infty, \sigma_2 = 0, \sigma_3 = 1$, and set $s_{nb}(\varepsilon) = \varepsilon s_{nb}$.  Substituting this into \eqref{eq:CHY}, we obtain a collection of $\varepsilon$-dependent equations.  Setting $\varepsilon = 0$, the last equation disappears, and we are left with $n -1$ equations that we can view as a set of CHY scattering equations for $n-1$ particles.  (Note that we have to adjust $s_{1j}$ to assure that momentum conservation is satisfied.).  We have $(n-1)$ equations, not depending on $\sigma_n$, and by induction there are $(n-4)!$ solutions to this system.  

Now, let $|\varepsilon|$ be nonzero but small.  Then the last equation is of the form
$$
\varepsilon \sum_{j=2}^{n-1} \frac{s_{nj}}{\sigma_n-\sigma_b} = 0.
$$
If $\sigma_1,\ldots,\sigma_{n-1}$ have been determined, then clearing denominators, this is a polynomial equation in $\sigma_n$ of degree $n-3$, and has $n-3$ solutions.  We now argue that in a small neighborhood of each of the $(n-4)!$ solutions $\usigma' = (\sigma_1,\ldots,\sigma_{n-1})$ there are $(n-3)$ solutions for $\usigma = ( \sigma_1,\ldots,\sigma_{n})$ that satisfy all the scattering equations.

\subsection{$\phi^3$-amplitude from scattering equations}\label{sec:CHYscalar}
Define the $n \times n$ matrix $\Phi$ by
$$
\Phi_{ab} := \partial_{\sigma_a} Q_b,
$$
with entries that are functions on $K_n \times \M_{0,n}$.  Since $Q_b = \partial_{\sigma_b} \log \phi_X$, the matrix $\Phi$ is a Hessian matrix.  
It follows from \cref{ex:CHYrel} that the matrix $\Phi$ has rank at most $(n-3)$.
Let $\Phi^{abc}_{pqr}$ be the $(n-3) \times (n-3)$ matrix obtained by deleting rows $a,b,c$ and columns $p,q,r$ from $\Phi$.

\begin{lemm}\label{lem:CHYred}
The \emph{reduced determinant}
$$
\det{}\!' \Phi := \frac{\det \Phi^{abc}_{pqr}}{\sigma_{ab}\sigma_{bc}\sigma_{ca}\sigma_{pq}\sigma_{qr}\sigma_{rp}}
$$
does not depends on the choice of $a,b,c,p,q,r$.
\end{lemm}
\begin{proof}\cref{ex:CHYrel}.
\end{proof}

\begin{definition}
The CHY scalar amplitude is the function on kinematic space $K_n$ given by
\begin{equation}\label{eq:CHYdef}
A_n := \sum_{{\rm solns}} \frac{1}{\detp \Phi} \frac{1}{(\sigma_{12} \sigma_{23} \cdots \sigma_{n1})^2}
\end{equation}
summed over the solutions to the scattering equations.
\end{definition}

\begin{example}\label{ex:CHY4}
Choose $\{a,b,c\} = \{1,2,4\} = \{p,q,r\}$ and use $\PGL(2)$ to place $(\sigma_1,\sigma_2,\sigma_4)$ at $(0,1,\infty)$.  Then $\Phi^{124}_{124}$ is a $1 \times 1$ matrix with entry 
$$
\frac{s+t}{\sigma^2} - \frac{t}{(\sigma-1)^2}.
$$
There are as many factors involving $\sigma_4 = \infty$ in the numerator as in the denominator of \eqref{eq:CHYdef}.  Substituting $\sigma = \frac{s+t}{s}$ from \cref{ex:n4SE}, we obtain
$$
A_4 = \frac{1}{\frac{s^2}{s+t} - \frac{s^2}{t}} \times \frac{1}{\frac{t}{s}^2} = - \frac{s+t}{st} = - \left(\frac{1}{s} + \frac{1}{t}\right) = - \left(\frac{1}{X_{13}} + \frac{1}{X_{24}}\right).
$$
\end{example}
Note that \cref{ex:CHY4} is deceptive: for $n > 4$, the solutions to the scattering equations are not rational functions in $X_{ij}$.  For $n=5$, it requires solving a quadratic equation.

The scalar amplitude defined by Cachazo-He-Yuan scattering equations recovers the scalar $\phi^3$-amplitude defined as a sum over Feynman diagrams.

\begin{theorem}[\cite{DG}]\label{thm:phi3}
We have, up to a sign,
\begin{equation}\label{eq:DG}
A_n = A_n^{\phi^3} = \sum_{T} \prod_{e \in E(T)} \frac{1}{X_{ij}}
\end{equation}
where the summation is over planar trees $T$ (\cref{ex:trees}) with $n$ leaves, interior vertices of degree three, and the product is over interior edges of $e$ which separate leaves $i,i+1,\ldots,j-1$ from $j,j+1,\ldots,i-1$.
\end{theorem}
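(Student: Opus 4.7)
The plan is to prove the equality of rational functions $A_n = A_n^{\phi^3}$ on $K_n$ by induction on $n$, matching residues along the boundary hyperplanes $\{X_{ij}=0\}$ of kinematic space. The base case $n=4$ is exactly Example 3.7 above.

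First I would analyze the pole structure of $A_n^{\phi^3}$, which is manifest: it is a sum of simple rational expressions whose only poles lie along the hyperplanes $\{X_{ij}=0\}$ indexed by diagonals of the $n$-gon. Grouping cubic planar trees $T$ according to whether they contain the internal edge labeled by $(ij)$, one obtains the factorization
$$\operatorname{Res}_{X_{ij}=0} A_n^{\phi^3} = A_{n_1}^{\phi^3}\cdot A_{n_2}^{\phi^3},$$
where $n_1 = j-i+1$ and $n_2 = n+i-j+1$ are the sizes of the two sub-polygons determined by $(ij)$, and the factorization uses that the set of trees containing this edge is in bijection with pairs of cubic trees on the two sides.

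Next I would prove the analogous factorization for the CHY amplitude, namely $\operatorname{Res}_{X_{ij}=0} A_n = A_{n_1}\cdot A_{n_2}$. This is the technical core. The strategy is geometric: as $X_{ij}\to 0$, the potential $\phi_X = \prod u_{kl}^{X_{kl}}$ degenerates so that a subset of its $(n-3)!$ critical points is dragged onto the boundary divisor $\tM_{ij}\cong \tM_{0,n_1}\times \tM_{0,n_2}$ of $\tM_{0,n}$ (\cref{prop:break}), where they factorize into products of critical points of the two smaller potentials. One then evaluates the CHY integrand in this limit using \cref{lem:M0npos}, namely $\operatorname{Res}_{\tM_{ij}}\Omega_{0,n} = \Omega_{0,n_1}\wedge\Omega_{0,n_2}$, together with a block-matrix analysis of $\Phi$ adapted to the partition of indices across the diagonal $(ij)$. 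Here \cref{lem:CHYred} (independence of $\detp\Phi$ from the choice of $(a,b,c,p,q,r)$) is essential: one selects three gauge-fixed indices in each sub-polygon so that $\Phi^{abc}_{pqr}$ decomposes into a block-diagonal form whose two blocks are precisely the reduced Hessians of the two smaller scattering problems. The Parke-Taylor factor $(\sigma_{12}\cdots\sigma_{n1})^{-2}$ similarly splits in this limit, and the degree counts in the $\sigma$'s match so that the limit is clean.

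Finally I would invoke a uniqueness argument to close the induction. Both $A_n$ and $A_n^{\phi^3}$ are rational functions on $K_n$ that are homogeneous of degree $-(n-3)$ under the scaling $s_{ij}\mapsto t\,s_{ij}$: on the right this counts internal edges of a cubic tree, and on the left it follows from the homogeneity of the scattering equations \eqref{eq:CHY}. Their singular locus lies in the finite arrangement $\bigcup_{(ij)}\{X_{ij}=0\}$, and a partial-fractions argument shows that a homogeneous rational function with prescribed simple-pole residues along such an arrangement is determined by those residues (together with the degree). By induction the residues agree along every $\{X_{ij}=0\}$, hence $A_n = A_n^{\phi^3}$.

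The hardest step is without doubt the CHY residue calculation: one must simultaneously track which of the $(n-3)!$ critical points migrate to the boundary divisor $\tM_{ij}$ as $X_{ij}\to 0$, and verify the precise factorization of both $\detp\Phi$ and the Parke-Taylor factor in the limit. Carrying this through in detail is essentially the content of the argument of Dolan-Goddard cited in the theorem, and more recently it is naturally reinterpreted through the positive-geometry viewpoint of pushing forward the canonical form $\Omega_{0,n}$ to the ABHY associahedron in kinematic space.
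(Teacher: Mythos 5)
The paper does not give its own proof of \cref{thm:phi3}: it cites Dolan--Goddard and records only that their argument is an induction verifying that both sides satisfy the same recursion. Your plan --- match residues (i.e.\ the factorization recursion) along the planar hyperplanes $\{X_{ij}=0\}$, with the base case $n=4$, and defer the CHY residue computation to the Dolan--Goddard analysis --- is essentially that same route, so in spirit your proposal and the paper's treatment coincide.

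There is, however, a genuine gap in your closing step. The partial-fractions/uniqueness argument needs the hypothesis that the singular locus of the CHY sum $A_n$ is contained in the planar arrangement $\bigcup_{(ij)}\{X_{ij}=0\}$ with at most simple poles, and you assert this rather than prove it. A priori, poles of a sum over solutions of the scattering equations can occur whenever solutions degenerate, which happens along \emph{every} channel $s_S=0$ for $S\subset[n]$ with $2\le|S|\le n-2$, not only for cyclic intervals, and also potentially along the discriminant locus where solutions collide and $\detp\Phi$ vanishes. The vanishing of the residues at non-planar channels is exactly where the choice of the squared planar Parke--Taylor factor $(\sigma_{12}\sigma_{23}\cdots\sigma_{n1})^{-2}$ enters (with a generic integrand such poles do occur, cf.\ the partial amplitudes of \cref{rem:partial}), so this must be part of the residue analysis, not an afterthought; similarly one must rule out higher-order poles and contributions from the collision locus. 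Once those points are established, your argument does close: the difference $A_n-A_n^{\phi^3}$ is homogeneous of negative degree $-(n-3)$ and regular everywhere, hence zero. Alternatively, note that the paper offers a logically independent route to the same identity via \cref{thm:scatform}, \cref{prop:pullbackAn} and \cref{thm:pushforward} (the push-pull/associahedron picture), which packages the pole and residue bookkeeping geometrically through \cref{lem:M0npos} instead of channel-by-channel estimates.
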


The proof of \cref{thm:phi3} in \cite{DG} is by induction, verifying that both sides of satisfy a recursion.

The cubic planar trees appearing in \cref{thm:phi3} are the ``Feynman diagrams" \eqref{eq:Feyn} of a QFT called scalar $\phi^3$-theory, and $A_n = A_n^{(0)}$ is called the \emph{planar $\phi^3$-amplitude} (at tree level).  The choice of $\phi^3$-theory corresponds to the choice of degree 3 interior vertices.  If we imagine inflowing momenta $p_i$ at leaf $i$, and assume that momentum is conserved at every interior vertex, then 
$$
\mbox{momentum along edge }e = \pm (p_i + p_{i+1} + \cdots + p_{j-1}) = \mp (p_j + p_{j+1} + \cdots + p_{i-1}),
$$
and $X_{ij}$ is the dot product of this momentum vector with itself.  The appearance of squared momenta, called ``propagators", in the denominator is a general feature of the rational functions associated to Feynman diagrams.

$$
\begin{tikzpicture}
\coordinate (A) at (0,0);
\coordinate (B) at (1,0);
\coordinate (C) at (1.5,-0.866);
\coordinate (D) at (2.5,-0.866);
\draw[thick] (A) --(B)--(C)--(D);
\node (L1) at (-0.5,-0.866) {$1$};
\node (L2) at (-0.5,+0.866) {$2$};
\node (L3) at (1.5,+0.866) {$3$};
\node (L4) at (3,0) {$4$};
\node (L5) at (3,-2*0.866) {$5$};
\node (L6) at (1,-2*0.866) {$6$};
\draw[thick] (L1)--(A)--(L2);
\draw[thick] (L3)--(B);
\draw[thick] (L4)--(D)--(L5);
\draw[thick] (L6)--(C);
\node[align=left] at (2,-1) {$e$};
\node[text width=7cm] at (7,0) {The momentum traveling along the edge $e$ is equal to $\pm (p_4 + p_5) = \mp (p_1+p_2+p_3+p_6)$, with square given by $X_e = X_{46} = s_{45}$.};
\end{tikzpicture}
$$

\begin{remark}\label{rem:partial}
More generally, for a permutation $\alpha \in S_n$, we define a Parke-Taylor factor 
\begin{equation}\label{eq:PT}
\PT(\alpha) := \frac{1}{\sigma_{\alpha(1) \alpha(2)} \cdots \sigma_{\alpha(n)\alpha(1)}}.
\end{equation}
If we replace the factor $\frac{1}{(\sigma_{12} \sigma_{23} \cdots \sigma_{n1})^2}$ in \eqref{eq:CHYdef} by $\PT(\alpha)\PT(\beta)$ we obtain \emph{partial amplitudes}, which are sums over another (usually smaller) collection of trees; see \cref{ex:partial}.
\end{remark}

\def\g{{\mathfrak{g}}}
\def\Lie{{\rm Lie}}
\begin{remark}
Let $Y$ be a smooth complex algebraic variety and $f: Y \to \C$ be a function on $Y$.  Suppose that a Lie group $G$ acts freely on $Y$ and $f$ is a $G$-invariant function.  That is, for $g \in G$, we have the equality $f = f \circ g^*$, where $g^*: Y \to Y$ is the automorphism induced by the action of $g$.  Taking differentials, we get
\begin{equation}\label{eq:dfg}
df = df \circ dg^*.
\end{equation}
For a vector $v \in \g$ in the Lie algebra $\g = \Lie\, G$, we have a corresponding vector field $X_v$ on $Y$.  Then \eqref{eq:dfg} says that the $1$-form $df$ is annihilated by the vector field $X_v$.  Indeed, $df$ is annihilated by the $d = \dim G$ dimensional space of vector fields
\begin{equation}\label{eq:Xv}
\{X_v \mid v \in \Lie \, G\}.
\end{equation}
The quotient map $Y \to Y/G$ induces a map of tangent bundles $TY \to T(Y/G)$, with kernel equal to the subbundle with sections given by \eqref{eq:Xv}.  The $1$-form $df$ naturally descends to a section of the cotangent bundle $T^*(Y/G)$.

Taking $Y = (\P^1)^n \setminus \{{\rm diagonals}\}$, and $f = \phi_X$, and $G = \PGL(2)$, this explains the three-dimensional kernel of $\Phi$.
The reduced Jacobian $\detp \Phi$ of \cref{lem:CHYred} is a Jacobian associated to $f$, viewed as a function on $\M_{0,n} = Y/G$.
\end{remark}

\subsection{Delta functions} \label{sec:delta}
As we have explained in \cref{prop:Kniso}, the scattering potential, and thus the scattering correspondence \eqref{eq:scatcorr} is intrinsically associated to $\M_{0,n}$.  On the other hand, the reduced determinant of \cref{lem:CHYred} is coordinate dependent: it depends on choosing some of the $\sigma_i$ as coordinates.  We develop formalism to reformulate the definition \eqref{eq:CHYdef} in a more conceptual way.

We first introduce the formalism of ``holomorphic delta functions" that are common in the physics literature.  Recall that the delta function $\delta(x)$ is a distribution on the real line satisfying
$$
\int_{-\infty}^{\infty} h(x) \delta(x) dx = h(0).
$$
We will use this formula to motivate the definition of the \emph{holomorphic delta function}, as follows.
Let $X$ be an $n$-dimensional complex algebraic variety, and suppose that $f_1,f_2,\ldots,f_n$ are meromorphic functions defined on $X$.  (If any of the $f_i$ have singularities on $X$, by convention these singularities are removed form $X$.) 

\begin{definition}
Let $\omega$ be a holomorphic $n$-form on $X$.  Assume that the equations $f_1= f_2 = \cdots = f_n = 0$ have non-degenerate isolated solutions $\{p_1,\ldots,p_r\}$ on $X$.
The integral $\int \delta^n(f_1,f_2,\ldots,f_n) \omega$ is defined as:
$$
\int \delta^n(f_1,f_2,\ldots,f_n) \omega = \sum_{p \in \{p_1,\ldots,p_r\}} h(p) \det\left(\frac{\partial f_j}{\partial g_i}\right)^{-1}(p)
$$
where for each solution $p$, we let $g_1,\ldots,g_n$ be local coordinates such that in the neighborhood of $p$, we have 
$$
\omega(x) = h(x) dg_1 \wedge dg_2 \wedge \cdots \wedge dg_n.
$$
\end{definition}

If $f'_1= f'_2= \cdots = f'_n = 0$ defines the same solution set as $f_1= f_2 = \cdots = f_n = 0$, then 
\begin{equation}\label{eq:deltaJ}
\int \delta^n(f'_1,\ldots,f'_n) \omega = \int \delta^n(f_1,f_2,\ldots,f_n) \det\left(\frac{\partial f'_i}{\partial f_j}\right)^{-1} \omega 
\end{equation}
So we have the formula $\delta^n(f'_1,\ldots,f'_n) = \delta^n(f_1,f_2,\ldots,f_n) \det\left(\frac{\partial f'_i}{\partial f_j}\right)^{-1}$.  In particular, $\delta^2(f,g) = - \delta^2(g,f)$ and $\delta(\alpha f) = \delta(f)/\alpha$ for a scalar $\alpha$.  We also use the notation $\delta^n(f_1,f_2,\ldots,f_n)  = \delta(f_1)\delta(f_2) \cdots \delta(f_n)$.

\def\CHY{{\rm CHY}}
%The CHY definition \eqref{eq:CHYdef} of scattering amplitudes can be formulated in terms of delta functions.  Let 
Now, define
\begin{equation}\label{eq:deltaCHY}
\delta^{\CHY}:=  \frac{\sigma_{ab} \sigma_{bc} \sigma_{ca} }{\sigma_{12} \sigma_{23} \cdots \sigma_{n1}} \prod_{i \notin \{a,b,c\}} \delta(Q_i),
\end{equation}
%
%$$
%\Omega^{\CHY} := (\sigma_a - \sigma_b)^2 (\sigma_b-\sigma_c)^2 (\sigma_c-\sigma_a)^2 \prod_{i \notin \{a,b,c\}} \delta(Q_i) \prod_{i \notin \{a,b,c\}} d\sigma_i,
%$$
which (similar to \cref{lem:CHYred}) does not depend on the choice of $\{a,b,c\} \subset [n]$.  The following reformulation of the CHY amplitude is essentially the one given in \cite{BBBDF}.
\begin{proposition}\label{prop:BBB} The definition \eqref{eq:CHYdef} can also be written as
$$
A_n = \int \delta^{\CHY}\Omega_{0,n}
$$
where the three points $\sigma_a,\sigma_b,\sigma_c$ have been gauge-fixed, for example to $\{0,1,\infty\}$.
\end{proposition}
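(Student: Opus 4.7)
The strategy is to unwind the right-hand side in a gauge where $\sigma_a, \sigma_b, \sigma_c$ are fixed (say, to $0,1,\infty$) and show the result collapses to the defining sum \eqref{eq:CHYdef}. The key preliminary ingredient is the ``Parke-Taylor'' expression for the canonical form: starting from the $\PGL(2)$-invariant meromorphic top form
$$
\omega_{\PT} = \frac{d\sigma_1 \wedge d\sigma_2 \wedge \cdots \wedge d\sigma_n}{\sigma_{12}\sigma_{23}\cdots\sigma_{n1}}
$$
on $(\P^1)^n$, its $\PGL(2)$-reduction to $\M_{0,n}$ produces a Faddeev--Popov-type Jacobian factor $\sigma_{ab}\sigma_{bc}\sigma_{ca}$, yielding the identity
$$
\Omega_{0,n} = \frac{\sigma_{ab}\sigma_{bc}\sigma_{ca}}{\sigma_{12}\sigma_{23}\cdots\sigma_{n1}} \bigwedge_{j \notin \{a,b,c\}} d\sigma_j
$$
in the gauge where $\sigma_a,\sigma_b,\sigma_c$ are constants. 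I would check this either by a direct pushforward/residue computation or by comparing to the explicit expression $\Omega_{0,n} = \bigwedge_i d\log y_i$ from \cref{sec:poscoord} after choosing compatible coordinates.

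Second, substituting this expression for $\Omega_{0,n}$ and the definition \eqref{eq:deltaCHY} of $\delta^{\CHY}$ into the right-hand side, the two factors of $\frac{\sigma_{ab}\sigma_{bc}\sigma_{ca}}{\sigma_{12}\cdots\sigma_{n1}}$ combine to give
$$
\int \delta^{\CHY}\, \Omega_{0,n} = \int \frac{(\sigma_{ab}\sigma_{bc}\sigma_{ca})^2}{(\sigma_{12}\cdots\sigma_{n1})^2}\, \prod_{i \notin \{a,b,c\}} \delta(Q_i)\, \bigwedge_{j \notin \{a,b,c\}} d\sigma_j.
$$
Applying the definition of the holomorphic delta function integral from \cref{sec:delta}, with the $Q_i$ ($i \notin \{a,b,c\}$) as the vanishing functions on the $(n-3)$-dimensional variety $\M_{0,n}$ (with its residual coordinates $\sigma_j$, $j \notin \{a,b,c\}$), each solution of the scattering equations contributes the prefactor divided by the Jacobian $\det(\partial Q_i/\partial \sigma_j)_{i,j \notin \{a,b,c\}}$, which is exactly the minor $\det \Phi^{abc}_{abc}$.

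Third, taking $(p,q,r) = (a,b,c)$ in the defining formula of \cref{lem:CHYred} yields $\det \Phi^{abc}_{abc} = \detp\Phi \cdot (\sigma_{ab}\sigma_{bc}\sigma_{ca})^2$. Substituting, the powers of $\sigma_{ab}\sigma_{bc}\sigma_{ca}$ cancel, and we obtain
$$
\int \delta^{\CHY}\, \Omega_{0,n} = \sum_{\text{solns}} \frac{1}{(\sigma_{12}\cdots\sigma_{n1})^2\, \detp \Phi},
$$
which is precisely \eqref{eq:CHYdef}. The main obstacle, as well as the only step requiring real content, is the Parke--Taylor identity for $\Omega_{0,n}$ in the three-point-fixed gauge: everything else is bookkeeping. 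Note that independence of the gauge choice $\{a,b,c\}$ is automatic on both sides --- on the left by the invariance of $\delta^{\CHY}$ noted after \eqref{eq:deltaCHY}, and on the right by the analogous invariance of $\detp\Phi$ in \cref{lem:CHYred} --- so we may fix the most convenient gauge throughout.
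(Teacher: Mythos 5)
Your proof is correct, and it is worth noting that the paper itself gives no proof of this proposition: it is stated with a pointer to the reference [BBBDF], and the closest in-paper computation is the proof of the \emph{next} proposition (the identity $\delta^{n-3}(\phi_X;\Omega_{0,n}) = \delta^{\CHY}$), which uses exactly the ingredient you flag as the crux, namely the gauge-fixed expression $\Omega_{0,n} = \frac{1}{\sigma_{23}\sigma_{34}\cdots\sigma_{n-2,n-1}}\,d\sigma_3\wedge\cdots\wedge d\sigma_{n-1}$ for $(\sigma_1,\sigma_2,\sigma_n)=(0,1,\infty)$. Your route is the natural direct verification: Parke--Taylor/Faddeev--Popov identity for $\Omega_{0,n}$ in the three-point-fixed gauge, then the holomorphic delta-function formalism, then \cref{lem:CHYred} with $(p,q,r)=(a,b,c)$ so that $\det\Phi^{abc}_{abc} = \detp\Phi\,(\sigma_{ab}\sigma_{bc}\sigma_{ca})^2$ and the gauge factors cancel; this bookkeeping is all sound. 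Two small points to tighten. First, the Parke--Taylor identity you defer is easiest to check exactly as you suggest from \cref{sec:poscoord}, but with the $x$-coordinates rather than the $y$-coordinates: in the gauge $(\sigma_1,\sigma_2,\sigma_n)=(0,1,\infty)$ one has $x_i=\sigma_{i+2}-\sigma_{i+1}$, so $\bigwedge_i \frac{dx_i}{x_i}$ immediately gives the displayed expression up to sign, and independence of the choice of $\{a,b,c\}$ then follows from $\PGL(2)$-invariance of the form $\frac{\bigwedge d\sigma_i}{\sigma_{12}\cdots\sigma_{n1}}$; the overall sign ambiguity is the same orientation ambiguity already present in $\Omega_{0,n}$. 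Second, when one of the fixed points is $\infty$ (as in the gauge $\{0,1,\infty\}$), your formula for $\Omega_{0,n}$ and the prefactor of $\delta^{\CHY}$ each contain factors involving $\sigma_c=\infty$ that must be cancelled against matching factors in $\sigma_{12}\cdots\sigma_{n1}$ before evaluating, exactly as the paper does in \cref{ex:CHY4} and in its proof of the following proposition; with that limiting convention made explicit your argument goes through verbatim.
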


\subsection{General formalism for scalar amplitudes}
We suggest a formalism that produces scalar amplitudes just from the potential $\phi_X$, which is canonically associated to the variety $\M_{0,n}$.  There is some similarity with the approach of Mizera \cite{Miz} who worked in twisted (co)homology.
%It is useful to have formalism that produces the formula \eqref{eq:CHYdef} or \cref{prop:BBB} just from the potential $\phi_X$.  We suggest the following.  

Let $X$ be an $n$-dimensional complex algebraic variety.  Let $\phi$ be a multi-valued function, which we call the potential function, and $\omega$ be a rational $n$-form on $X$.  Let $g_1,\ldots,g_n$ be local coordinates such that we have 
$$
\omega(x) = h(x) dg_1 \wedge dg_2 \wedge \cdots \wedge dg_n.
$$
We define the degree $n$ delta function $\delta^n(\phi; \omega)$ by
\begin{equation}\label{eq:deltan}
\delta^n(\phi; \omega) := h(x) \prod_{i=1}^n \delta\left( \frac{\partial \log \phi}{\partial g_i}\right).
\end{equation}
If $g'_1,\ldots,g'_n$ is another set of local coordinates, then $\omega = h'(x) dg'_1 \wedge dg'_2 \wedge \cdots \wedge dg'_n$ where 
$$
h'(x) = h(x) \det\left(\frac{\partial g_i}{\partial g'_j}\right).
$$
It thus follows from \eqref{eq:deltaJ} that $\delta^n(\phi; \omega)$ is well-defined.  

\begin{lemm}
Let $\phi$ be a potential with isolated non-degenerate critical points, and let 
$$
\omega = h(x) dg_1 \wedge dg_2 \wedge \cdots \wedge dg_n, \qquad \eta = r(x) dq_1 \wedge dq_2 \wedge \cdots \wedge dq_n
$$ 
be two $n$-forms.  We have
$$
\int \delta^n(\phi; \omega) \eta = \sum_p h(p) r(p) \det \left(\frac{\partial^2 \log \phi}{\partial g_i \partial q_j}\right)^{-1} = \int \delta^n(\phi; \eta) \omega.
$$
where the summation is over solutions $p$ to $\dlog \phi = 0$.
\end{lemm}

\begin{proposition}
We have $\delta^{n-3}(\phi_X;\Omega_{0,n}) = \delta^{\CHY}$.  Thus the CHY scalar amplitude is given by
$$
A_n = \int \delta^{n-3}(\phi_X;\Omega_{0,n})  \Omega_{0,n}.
$$
\end{proposition}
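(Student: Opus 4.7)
The plan is to read off the definition \eqref{eq:deltan} in the gauge where three of the $\sigma_i$ are fixed, identify the local coordinates with $\sigma_i$ for $i \notin \{a,b,c\}$, and recognize that the resulting expression is precisely $\delta^{\CHY}$. The only ingredient not already established in the excerpt is a Parke–Taylor expression for $\Omega_{0,n}$ in the $\sigma$-coordinates, so the proof has two parts.

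First I would establish the following Parke–Taylor formula for the canonical form:
\begin{equation}\label{eq:PTform}
\Omega_{0,n} = \frac{\sigma_{ab}\sigma_{bc}\sigma_{ca}}{\sigma_{12}\sigma_{23}\cdots\sigma_{n1}} \bigwedge_{i\notin\{a,b,c\}} d\sigma_i,
\end{equation}
valid in the chart in which $\sigma_a,\sigma_b,\sigma_c$ have been gauge-fixed to arbitrary generic values. I would prove this by computing both sides in the specific parametrization of \cref{sec:poscoord}, with $(\sigma_1,\sigma_2,\sigma_n) = (0,1,\infty)$ and $\sigma_i = 1+x_1+\cdots+x_{i-2}$. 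On the one hand, by definition $\Omega_{0,n} = \bigwedge_{i=1}^{n-3} dx_i/x_i$. On the other hand, the Jacobian of the substitution $x_i \mapsto \sigma_{i+2}$ is triangular of unit determinant, so $\bigwedge dx_i = \bigwedge d\sigma_{i+2}$, while $x_i = \sigma_{i+2}-\sigma_{i+1}$ makes the denominator telescope against the cyclic Parke–Taylor product (with the factors involving $\sigma_n = \infty$ formally dropping out, which is the usual content of the gauge $\sigma_{ab}\sigma_{bc}\sigma_{ca}$ Faddeev–Popov factor). Independence of the right-hand side of \eqref{eq:PTform} from the choice of $\{a,b,c\}$ then follows either by direct verification using a single transposition of a gauge-fixed index with a free one, or equivalently by checking that both sides are $\PGL(2)$-invariant top forms on $((\P^1)^n)^\circ$ with simple poles exactly along $\sigma_i = \sigma_{i+1}$ and matching residues.

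Next I apply the definition \eqref{eq:deltan} of $\delta^{n-3}(\phi_X;\Omega_{0,n})$ with the local coordinates $g_i = \sigma_i$ for $i \notin \{a,b,c\}$ on $\M_{0,n}$ and with
\[
h = \frac{\sigma_{ab}\sigma_{bc}\sigma_{ca}}{\sigma_{12}\sigma_{23}\cdots\sigma_{n1}}
\]
read off from \eqref{eq:PTform}. Since $\phi_X = \prod_{i<j}(\sigma_i-\sigma_j)^{s_{ij}}$, a direct differentiation gives $\partial \log \phi_X / \partial \sigma_i = Q_i$ for every $i$, so
\[
\delta^{n-3}(\phi_X;\Omega_{0,n}) = \frac{\sigma_{ab}\sigma_{bc}\sigma_{ca}}{\sigma_{12}\sigma_{23}\cdots\sigma_{n1}} \prod_{i\notin\{a,b,c\}} \delta(Q_i),
\]
which is precisely $\delta^{\CHY}$ as defined in \eqref{eq:deltaCHY}. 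Well-definedness (independence of the choice $\{a,b,c\}$) on both sides is then simultaneously guaranteed, matching the remark right after \eqref{eq:deltaCHY}. The integral formula $A_n = \int \delta^{n-3}(\phi_X;\Omega_{0,n})\,\Omega_{0,n}$ is then immediate from \cref{prop:BBB}.

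The main obstacle is the Parke–Taylor identity \eqref{eq:PTform}: one must justify the cancellation of $\sigma_n = \infty$ factors in the cyclic denominator against the Faddeev–Popov numerator $\sigma_{ab}\sigma_{bc}\sigma_{ca}$, which is the only place where a genuine computation (rather than invocation of an earlier result) is required. Everything else in the argument is a direct substitution into the two definitions.
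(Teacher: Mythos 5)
Your proposal is correct and follows essentially the same route as the paper: gauge-fix three points, express $\Omega_{0,n}$ in the $\sigma$-coordinates (the Parke–Taylor expression, with the $\sigma_n=\infty$ factors cancelling between the Faddeev–Popov numerator and the cyclic denominator), note $\partial_{\sigma_i}\log\phi_X = Q_i$, and read off $\delta^{n-3}(\phi_X;\Omega_{0,n})=\delta^{\CHY}$ from the definition, with the integral formula then coming from \cref{prop:BBB}. The only difference is cosmetic: the paper works directly in the chart $(\sigma_1,\sigma_2,\sigma_n)=(0,1,\infty)$, while you first state the Parke–Taylor form for a general gauge $\{a,b,c\}$ and then reduce to that same chart.
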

\begin{proof}
Fixing $\{\sigma_1,\sigma_2,\sigma_n\} = \{0,1,\infty\}$ the potential $\phi = \phi_X$ and form $\Omega_{0,n}$ become
$$
\phi = \prod_{j=3}^{n-1} (-\sigma_j)^{s_{1j}} (1-\sigma_j)^{s_{2j}} \prod_{3 \leq i <j \leq n-1} \sigma_{ij}^{s_{ij}},\;\;\;\Omega = \frac{1}{\sigma_{23} \sigma_{34} \cdots \sigma_{n-2,n-1}} d\sigma_3 \wedge \cdots \wedge d\sigma_{n-1}.
$$
For $i =3,4,\ldots,n-1$, we have that $\partial_{\sigma_i} \log \phi  = Q_i$, where the latter has been specialized to $\{\sigma_1,\sigma_2,\sigma_n\} = \{0,1,\infty\}$.  It follows that 
$$
 \delta^{n-3}(\phi;\Omega) = \frac{1}{\sigma_{23} \sigma_{34} \cdots \sigma_{n-2,n-1}}  \delta(Q_3)\delta(Q_4) \cdots \delta(Q_{n-1}).
$$
This equals $\delta^\CHY$, after noting that in \eqref{eq:deltaCHY}, two factors in both the numerator and denominator involve $\sigma_n = \infty$, and cancel out.
\end{proof}

\begin{example}
Let $n=4$ and write $\Omega_{0,4} = \frac{du}{u(1-u)}$ where $u = u_{13}$.  We have
$$
\phi = u^s (1-u)^t, \qquad \frac{\partial \log \phi}{\partial u} = \frac{s}{u} - \frac{t}{1-u}, \qquad \delta^1(\phi;\Omega) = \delta( \frac{s}{u} - \frac{t}{1-u}) \frac{1}{u(1-u)},
$$
where $s = X_{13}$ and $t = X_{24}$.  We calculate (noting that $\delta(af) = \delta(f)/a$)
\begin{align*}
\int \delta^1(\phi;\Omega) \Omega &= \int  \delta( \frac{s}{u} - \frac{t}{1-u}) \frac{du}{u^2(1-u)^2} \\
&= -\frac{1}{s+t} \int \delta(u - \frac{s}{s+t}) \frac{du}{u(1-u)} \\
& = - \frac{1}{s+t} \frac{1}{(s/(s+t))(t/(s+t))} = - (\frac{1}{s} + \frac{1}{t}) = -A_4^{\phi^3}(s,t).
\end{align*}
\end{example}

%We note that with the definition \eqref{eq:deltan}, we have the following.
The above formalism suggests the following very general definition.
\begin{definition}\label{def:ampl}
Let $U$ be an $n$-dimensional very affine variety with character lattice $\lat$, and let $\phi$ be the intrinsic potential on $U$.  For two top-forms $\Omega,\Omega'$ on $U$, we define the (partial) amplitude on $\lat_\R$,
$$
A(\Omega|\Omega'):= \int \delta^n(\phi; \Omega) \Omega' = \int \delta^n(\phi; \Omega') \Omega =: A(\Omega'|\Omega).
$$
\end{definition}
We expect to obtain top-forms $\Omega,\Omega'$ from the general theory of positive geometries.  See \cref{sec:intrinsic} for definitions of $\lat$ and $\phi$, and for further discussion.

\subsection{From scattering correspondence to scattering form}
\def\sign{{\rm sign}}
\def\uc{{\underline{c}}}

Suppose we have a dominant rational map $f = (f_1,f_2,\ldots,f_n): X \longdashrightarrow Y$ between complex algebraic varieties of the same dimension $n$, and suppose $\omega$ is a rational $r$-form on $X$.  Then we define the \emph{pushforward} $f_*\omega$ (also called the \emph{trace}) as follows.   Suppose that the map $f$ has (generically) degree $d$.  Let $W \subset Y$ be an (analytic) open set such that $f^{-1}(W) = V_1 \sqcup V_2 \sqcup \cdots V_d$ is a disjoint union of open sets $V_1,\ldots,V_d \subset X$, where $f|_{V_i} : V_i \to W$ is biholomorphic.  We then define
$$
f_* \omega := \sum_{i=1}^d ((f|_{V_i})^{-1})^* \omega|_{V_i},
$$
where the form on the right hand side, defined as a rational form on $W$, is extended to a rational form on $Y$ by analytic continuation.

Let $y_1,y_2,\ldots,y_n$ be local coordinates on $Y$.  In these coordinates, we may write the rational map $f$ as $f = (f_1,f_2,\ldots,f_n)$.  If $f$ has degree $d$, the $n$-equations $f_1(x)-y_1= f_2(x)-y_2=\cdots=f_n(x)-y_n=0$ will generically have $d$ isolated solutions.  Suppose that $\omega$ has degree $n$, that is, $\omega$ is a top form.  Then the integral $\int \delta^n(f_1(x)-y_1,f_2(x)-y_2,\ldots, f_n(x)-y_n) \omega$ will be a well-defined rational function on $Y$.

\begin{proposition}
We have 
$$
f_* \omega = \left(\int  \delta^n( f_1(x)-y_1,f_2(x)-y_2,\ldots, f_n(x)-y_n) \omega\right) dy_1 \wedge \cdots \wedge dy_n.
$$
\end{proposition}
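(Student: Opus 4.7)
The plan is to unfold both sides in local coordinates at each preimage and observe that they give the same Jacobian sum. Both sides are rational $n$-forms on $Y$, so it suffices to check agreement on a Zariski-open subset $W \subset Y$ over which $f$ is an unramified $d$-sheeted cover and $\omega$ has neither poles nor zeroes at the preimages.

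First I would fix such a generic $y \in W$ with $f^{-1}(y) = \{p_1,\ldots,p_d\}$. Near each $p_i$ choose local coordinates $x_1,\ldots,x_n$ on $X$ (we may as well take them to be the standard coordinates inherited from an affine chart), and write $\omega = h(x)\, dx_1 \wedge \cdots \wedge dx_n$ in a neighborhood of $p_i$. Since the $y_j$ are treated as parameters, $\partial(f_j-y_j)/\partial x_i = \partial f_j/\partial x_i$, so the delta-function definition gives
\begin{equation*}
\int \delta^n\bigl(f_1(x)-y_1,\ldots,f_n(x)-y_n\bigr)\,\omega = \sum_{i=1}^d h(p_i)\,\det\!\left(\frac{\partial f_j}{\partial x_k}(p_i)\right)^{-1}.
\end{equation*}

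Next I would compute the pushforward directly. Let $g^{(i)} := (f|_{V_i})^{-1} : W \to V_i$, so $g^{(i)}(y)$ is the branch of $f^{-1}(y)$ lying in $V_i$. Pulling back,
\begin{equation*}
(g^{(i)})^*\omega = h\bigl(g^{(i)}(y)\bigr)\,\det\!\left(\frac{\partial g^{(i)}_k}{\partial y_l}\right) dy_1 \wedge \cdots \wedge dy_n.
\end{equation*}
The matrix $(\partial g^{(i)}_k/\partial y_l)$ is the inverse of $(\partial f_l/\partial x_k)$ evaluated at $p_i$, so its determinant equals $\det(\partial f_j/\partial x_k)(p_i)^{-1}$. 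Summing over the $d$ sheets,
\begin{equation*}
f_*\omega = \left(\sum_{i=1}^d h(p_i)\,\det\!\left(\frac{\partial f_j}{\partial x_k}(p_i)\right)^{-1}\right) dy_1 \wedge \cdots \wedge dy_n.
\end{equation*}

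Comparing the two displays gives the claimed identity on $W$. Since both $f_*\omega$ and the integral on the right-hand side (multiplied by $dy_1 \wedge \cdots \wedge dy_n$) are rational $n$-forms on $Y$ and agree on the Zariski-dense open $W$, they agree as rational forms on $Y$. There is no real obstacle here; the only mild subtlety is bookkeeping the fact that the Jacobian of the inverse map equals the inverse of the Jacobian, which is where the two definitions line up. The argument needs only that $f$ is generically étale and $\omega$ has no pole at the generic fibre, both of which hold because $f$ is dominant between varieties of the same dimension and $\omega$ is rational.
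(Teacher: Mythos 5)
Your argument is correct: the paper states this proposition without proof (only illustrating it with the $x \mapsto x^2$ example), and your local-coordinate computation — identifying the Jacobian of each inverse branch with the inverse Jacobian appearing in the delta-function definition, then extending from a dense open set by rationality — is exactly the intended verification. Nothing further is needed.
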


\begin{example}
Let $f : \C \to \C$ be given by $x \mapsto y = x^2$, and consider the $1$-form $\omega = \frac{dx}{x-a}$.  The pushforward is 
$$
f_*\omega = \omega|_{x = \sqrt{y}} + \omega|_{x = -\sqrt{y}} = \frac{d\sqrt{y}}{\sqrt{y}-a} +  \frac{-d\sqrt{y}}{-\sqrt{y}-a} = \frac{dy}{y-a^2}.
$$
On the other hand, using $\frac{\partial (x^2-y)}{\partial x} = 2x$, the delta function integral can be computed as
$$
\int \delta(x^2-y) \frac{dx}{x-a} =  \left( \frac{1}{2x(x-a)} \right)_{x \mapsto \sqrt{y}} + \left( \frac{1}{2x(x-a)} \right)_{x \mapsto -\sqrt{y}} = \frac{1}{y-a^2}.
$$
\end{example}

Recall the scattering correspondence $\I \subset K_n \times \M_{0,n}$ \eqref{eq:scatcorr}.  Consider the following diagram:
\[ \begin{tikzcd}
&\I \arrow{rd}{q} \arrow[swap]{ld}{p} & \\%
\M_{0,n} && K_n
\end{tikzcd}
\]

\begin{definition}\label{def:scatform}
The scattering form on $K_n$ is the $(n-3)$-form
$$
\Psi_n := q_* p^* \Omega_{0,n}.
$$
\end{definition}
The pullback $p^*\Omega_{0,n}$ is a rational $(n-3)$-form on $\I$.  By \cref{thm:n!}, the map $q: \I \to K_n$ is a degree $(n-3)!$ map between two varieties of dimension $\binom{n}{2} - n$, and thus the pushforward $q_*p^* \Omega_{0,n}$ is well-defined.

We now give an explicit formula for $\Psi_n$.  

\begin{theorem}\label{thm:scatform}
We have
\begin{equation}\label{eq:scatform}
\Psi_n = \sum_{\D} \sign(\D) \bigwedge_{(ij) \in\D} \dlog X_{ij}
\end{equation}
summed over triangulations $\D$ of the $n$-gon, or equivalently over maximal simplices in $\Delta_{0,n}$.  Here, the signs $\sign(\D) \in \{+,-\}$ are chosen so that we obtain an orientation of the simplicial complex $\Delta$.
\end{theorem}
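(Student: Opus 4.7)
My plan is induction on $n$, comparing residues of both sides along the coordinate hyperplanes $\{X_{ij}=0\} \subset K_n$. Write $\Omega_A := \sum_\D \sign(\D) \bigwedge_{(ij) \in \D} \dlog X_{ij}$ for the right-hand side. First observe that $\Psi_n$ and $\Omega_A$ are rational $(n-3)$-forms on $K_n$, both invariant under the scaling $X_{ij} \mapsto \lambda X_{ij}$: this is transparent for $\Omega_A$ since each $\dlog X_{ij}$ is scale-invariant, and for $\Psi_n$ it follows from the homogeneity of the scattering equations in the $X_{ij}$, which makes both $p$ and $q$ equivariant with respect to the scaling (acting trivially on $\M_{0,n}$).

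For the base case $n=4$, \cref{ex:n4SE} gives the unique scattering solution $\sigma_3 = (s+t)/s$, so with positive coordinate $y_1 = \sigma_3 - 1 = t/s$ we compute $p^*(dy_1/y_1) = d(t/s)/(t/s) = dt/t - ds/s$, which pushes forward as $\Psi_4 = \dlog X_{24} - \dlog X_{13}$, matching $\Omega_A$ after fixing the orientation with $\sign(\{24\}) = +1, \sign(\{13\}) = -1$.

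For the inductive step, fix a diagonal $(ij)$ splitting the $n$-gon into sub-polygons $P_1, P_2$ of sizes $n_1, n_2$ with $n_1 + n_2 = n + 2$. The residue $\Res_{X_{ij}=0} \Omega_A$ picks up only triangulations $\D$ containing $(ij)$, and these biject with pairs $(\D_1, \D_2)$ of triangulations of $P_1, P_2$. Identifying $K_n|_{X_{ij}=0}$ with $K_{n_1} \oplus K_{n_2}$ via the natural splitting of Mandelstam variables, an orientation argument on the simplicial complex $\Delta_{0,n}$ gives
\[
\Res_{X_{ij}=0} \Omega_A \;=\; \varepsilon(i,j) \cdot \Omega_A^{(n_1)} \wedge \Omega_A^{(n_2)}
\]
for some sign $\varepsilon(i,j) \in \{\pm 1\}$. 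The analogous factorization on the left,
\[
\Res_{X_{ij}=0} \Psi_n \;=\; \varepsilon(i,j) \cdot \Psi_{n_1} \wedge \Psi_{n_2},
\]
is the technical core, and combines three ingredients: (i) \cref{prop:break}, which identifies $\tM_{ij} \cong \tM_{0,n_1} \times \tM_{0,n_2}$; (ii) \cref{lem:M0npos}, which yields $\Res_{\tM_{ij}} \Omega_{0,n} = \Omega_{0,n_1} \wedge \Omega_{0,n_2}$; and (iii) a degeneration of the scattering correspondence as $X_{ij} \to 0$: a precise fraction of the $(n-3)!$ branches of $\I$ approach the boundary stratum $\tM_{ij}$, and they organize into $\binom{n-3}{n_1-3}$ families, each containing $(n_1-3)!\,(n_2-3)!$ solutions converging to pairs $(\usigma_1,\usigma_2)$ of solutions of the two sub-scattering systems. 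By induction, $\Psi_{n_k} = \Omega_A^{(n_k)}$ for $k=1,2$, so the residues of $\Psi_n$ and $\Omega_A$ coincide along every $\{X_{ij}=0\}$.

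Consequently $\Psi_n - \Omega_A$ has no simple poles along any $\{X_{ij}=0\}$; combined with the absence of higher-order poles and of poles elsewhere on $K_n$ (which follows from \cref{lem:M0npos} and the fact that $\I$ degenerates only over this coordinate arrangement), the difference is a regular rational $(n-3)$-form on $K_n \cong \mathbb{A}^{\binom{n}{2}-n}$. Scale-invariance then forces its polynomial coefficients (with respect to any wedge basis $\bigwedge_{(ij) \in S} dX_{ij}$) to be homogeneous of degree $-(n-3) < 0$, hence to vanish, giving $\Psi_n = \Omega_A$. The main obstacle will be item (iii) above: while the factorization of scattering solutions under $X_{ij} \to 0$ is folklore in the CHY literature as ``factorization on a pole,'' rendering it as a rigorous residue calculation requires careful tracking of how the $(n-3)!$ branches of $\I$ split at $\{X_{ij}=0\}$, distinguishing the branches that approach $\tM_{ij}$ (which produce the residue) from those that remain in the interior (which contribute zero), and bookkeeping multiplicities and signs throughout.
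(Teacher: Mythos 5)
Your overall architecture (scale-invariance/projectivity, matching simple poles and residues along the hyperplanes $\{X_{ij}=0\}$, induction on $n$ with the $n=4$ base case, and concluding that a pole-free homogeneous form of negative degree vanishes) is the same as the paper's, but the step you yourself flag as ``the technical core'' --- item (iii), the factorization $\Res_{X_{ij}=0}\Psi_n = \pm\,\Psi_{n_1}\wedge\Psi_{n_2}$ --- is exactly the content of the theorem, and your proposal does not prove it. You describe it as a degeneration of solution branches of $\I$ as $X_{ij}\to 0$, to be made rigorous by bookkeeping which of the $(n-3)!$ branches approach $\tM_{ij}$; beyond being unproven, the counting you state (branches organizing into $\binom{n-3}{n_1-3}$ families of size $(n_1-3)!\,(n_2-3)!$) is not obviously correct --- the standard CHY factorization statement is that exactly $(n_1-3)!\,(n_2-3)!$ solutions degenerate on the channel --- and already for $n=5$ your count would force all solutions to degenerate. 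A second unproven assertion is that $\Psi_n$ has poles \emph{only} along the coordinate hyperplanes and that these are simple: a pushforward along the non-proper map $q:\I\to K_n$ can a priori acquire poles where solutions escape to the boundary of $\M_{0,n}$, so ``$\I$ degenerates only over this coordinate arrangement'' is not something you get for free from \cref{lem:M0npos}.

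The paper closes both gaps by a different mechanism that avoids branch-tracking altogether: it compactifies the correspondence inside $K_n\times X_P$, where $X_P$ is the toric variety of the associahedron containing $\tM_{0,n}$ as a hypersurface complement, so that the projection to $K_n$ becomes a \emph{proper} degree-$(n-3)!$ map; it identifies the restriction of $q$ to the slices $H(\uc)$ with the algebraic moment map of $X_P$, so the boundary divisors $\tM_{ij}$ map onto the facets $K_{ij}$; and it then invokes the general fact that taking residues commutes with pushforward along proper maps (the paper cites \cite[Proposition 2.5]{KR}). With that, $\Res_{K_{ij}}\Psi_n = q_*\Res_{\tM_{ij}}\Omega = q_*(\Omega_{0,n_1}\wedge\Omega_{0,n_2})$ by \cref{lem:M0npos}, and the factorization of the residue into $\Psi_{n_1}\wedge\Psi_{n_2}$ follows purely from the restriction of the potential $\phi_X$ to $\tM_{ij}$ (where $u_{kl}=1$ for all crossing diagonals), which identifies the boundary correspondence with the product of the two smaller scattering correspondences via \cref{prop:break}. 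If you want to keep your branch-degeneration route you would need to supply a genuine proof of the CHY ``factorization on a pole'' at the level of the correspondence, including the behavior of the non-degenerating branches and the pole structure at infinity; the properness-plus-residue-commutation argument is what replaces all of that in \cref{sec:scatproof}.
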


There is an overall sign-ambiguity in \cref{thm:scatform}, matching the sign-ambiguity of $\Omega_{0,n}$ which depends on an ordering of the positive coordinates, or equivalently, an orientation of the manifold $(\M_{0,n})_{>0}$.

We spell out details of $\sign(\D)$.  The choice $\sign(\D)$ depends on the ordering of the elements $(ij) \in \D$ that appear in $\bigwedge_{(ij) \in\D} \dlog X_{ij}$, so that the form $\sign(\D) \bigwedge_{(ij) \in\D} \dlog X_{ij}$ depends only on $\D$, and not the ordering.  If we fix an ordering $(i_1j_1),(i_2,j_2),\ldots,(i_{n-3},j_{n-3})$ of the diagonals in $\D$, there is a unique other triangulation $\D'$ obtained by flipping the diagonal $(i_1,j_1)$, swapping it for $(i'_1,j'_1)$.  We ask that 
\begin{equation}
\label{eq:signflip}
\sign(\D) \dlog X_{i_1,j_1} \wedge \cdots \wedge \dlog X_{i_{n-3},j_{n-3}} = - \sign(\D') \dlog X_{i'_1,j'_1} \wedge \cdots \wedge \dlog X_{i_{n-3},j_{n-3}}
\end{equation}
for any two $\D,\D'$ related in this way.  This is equivalent to picking an orientation (that is, a generator for $H_{n-3}(|\Delta|,\Z)$) on the geometric realization $|\Delta|$ of $\Delta$, which is an orientable manifold (that is, $H_{n-3}(|\Delta|,\Z) \cong \Z$).  Indeed, $|\Delta|$ is a sphere, the boundary sphere of the dual of the associahedron polytope, which is orientable.

\begin{example}\label{ex:Psi4}
Let $n = 4$ and as usual we gauge-fix $(\sigma_1,\sigma_2,\sigma_4)$.  The canonical form is $\frac{d\sigma}{\sigma - 1} = \frac{d(\sigma-1)}{\sigma-1}$, and the scattering equations give $\sigma = \frac{X_{13} + X_{24}}{X_{13}}$.  Substituting, we get
$$
\Psi_4 = \dlog( \sigma - 1) = \dlog (X_{24}/X_{13}) =  \dlog X_{24} - \dlog X_{13}.
$$
\end{example}

The proof of \cref{thm:scatform} is given in \cref{sec:scatproof}.

\begin{remark}
The combinatorial formula in \cref{thm:scatform} is used as a definition of the scattering form in \cite{ABHY}.  Our definition of $\Psi_n$ as a push-pull gives the scattering form a conceptual explanation which can be extended to the general setting, for example of a very affine variety as in \cref{def:genscatform}.  See also \cite{FM} for a similar approach.
\end{remark}

%\begin{remark}
%The planar scattering form can be viewed as the Laplace transform of the tropicalization $\Trop_{\geq 0} U$.  See \cref{sec:veryaffine}.
%\end{remark}

Let us consider the subspace $H(\uc) \subset K_n$ given by the linear equations
\begin{equation}\label{eq:constant}
X_{ij} + X_{i+1,j+1} - X_{i,j+1} - X_{i+1,j} = - s_{ij} = c_{ij}
\end{equation}
where $c_{ij}$ is a constant, where $1 \leq i < j-1 < j \leq n-1$.  A quick calculation shows that these are $\binom{n-2}{2}$ linearly independent equations, so that $\dim H(\uc) = n-3$.

\begin{lemm}
Let $\D$ be a triangulation of the $n$-gon.  Then $\{X_{ij} \mid (ij) \in \D\}$ is a basis for $H(\uc)$.
\end{lemm}
 
\begin{proposition}\label{prop:pullbackAn}
Let $\iota_{\uc}: H(\uc) \hookrightarrow K_n$ denote the inclusion.
The pullback $d^{n-3}X := \iota_{\uc}^* \sign(\D) \bigwedge_{(ij) \in\D} \dlog X_{ij}$ does not depend on $\D$.
The pullback $\iota_{\uc}^* \Psi_n$ is, up to sign, equal to the planar $\phi^3$-amplitude
$$
\iota_{\uc}^* \Psi_n = \pm A^{\phi^3}_n \, d^{n-3}X.
$$
\end{proposition}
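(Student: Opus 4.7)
The plan rests on a single ``Ptolemy-type'' identity among the planar kinematic functions $X_{ij}$ that holds modulo constants on $H(\uc)$, combined with the bijection between triangulations of the $n$-gon and planar cubic $n$-leaf trees (\cref{ex:trees}(b)). By \eqref{eq:sij}, the defining relations \eqref{eq:constant} of $H(\uc)$ are equivalent to fixing every Mandelstam $s_{ab}$ to a constant. For any cyclically ordered quadruple $i<j<k<l$ in $[n]$, a direct expansion using \eqref{eq:Xij} gives
\[
X_{ik} + X_{jl} - X_{il} - X_{jk} \;=\; -\!\!\!\sum_{\substack{a \in [i,j-1] \\ c \in [k,l-1]}}\!\! s_{ac},
\]
which is therefore constant on $H(\uc)$. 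Differentiating yields the flip identity
\[
dX_{ik} + dX_{jl} = dX_{il} + dX_{jk} \qquad \text{on } H(\uc).
\]
This relation already proves the preceding Lemma: by iterated application, every $X_{ab}$ with $(ab) \notin \D$ can be written as a linear combination of $\{X_{ij} \mid (ij) \in \D\}$ plus a constant, so these $n-3$ functions span, and, by the dimension count $\dim H(\uc) = n-3$, form a basis.

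For the first assertion of the proposition I would read $d^{n-3}X$ as $\sign(\D)\,\iota_\uc^*\!\bigwedge_{(ij)\in\D} dX_{ij}$ (interpreting the ``$\dlog$'' in the statement as ``$d$,'' since otherwise the Catalan-many monomial prefactors $\prod X_{ij}^{-1}$ make the pullback visibly depend on $\D$). Since the flip graph on triangulations is connected, it suffices to check invariance under a single flip. Suppose $\D$ and $\D'$ differ by replacing $(ik) \in \D$ with $(jl) \in \D'$ inside a quadrilateral with vertices $i,j,k,l$. The four remaining edges $(ij),(jk),(kl),(il)$ are each either a polygon side (in which case the corresponding $X$ is identically zero) or a diagonal in $\D \cap \D'$. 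Placing the flipped diagonal first in the wedge ordering and substituting $dX_{ik} = -dX_{jl} + dX_{il} + dX_{jk}$ into $\bigwedge_{(ab) \in \D} dX_{ab}$, every term involving $dX_{il}$ or $dX_{jk}$ vanishes (either identically, or because that 1-form already appears as a factor of the wedge over $\D \setminus \{(ik)\} = \D' \setminus \{(jl)\}$). What remains is $-\bigwedge_{(ab) \in \D'} dX_{ab}$, and the sign convention \eqref{eq:signflip} that $\sign(\D) = -\sign(\D')$ under a flip exactly cancels this minus, giving the claimed common value.

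The second assertion is then a short combinatorial reshuffle: by \cref{thm:scatform} and the first assertion,
\[
\iota_\uc^* \Psi_n \;=\; \sum_\D \sign(\D) \prod_{(ij)\in\D}\!\frac{1}{X_{ij}} \,\iota_\uc^*\!\bigwedge_{(ij)\in\D}\! dX_{ij} \;=\; \left(\sum_\D \prod_{(ij)\in\D}\!\frac{1}{X_{ij}}\right)\! d^{n-3}X.
\]
Under the bijection of \cref{ex:trees}(b), triangulations $\D$ correspond to planar cubic $n$-leaf trees $T$ whose internal edges $e \leftrightarrow (ij)$ carry propagators $X_e = X_{ij}$, so the bracketed sum equals $\pm A_n^{\phi^3}$ by \cref{thm:phi3}, finishing the argument.

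The main obstacle is the sign bookkeeping in the flip step: one must verify that the $-1$ produced by the Ptolemy substitution in the $dX$'s matches precisely the combinatorial sign convention \eqref{eq:signflip} that orients the boundary sphere $|\Delta|$ of the associahedron dual. Beyond that, the argument consists only of the algebraic identity above, a dimension count, and a direct invocation of \cref{thm:phi3}.
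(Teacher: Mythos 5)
The paper itself gives no argument here: its ``proof'' of \cref{prop:pullbackAn} is just a pointer to \cref{ex:pullbackAn}, so your write-up is supplying the missing details rather than paralleling an existing one. Your argument is essentially correct and is the intended one: the quadrilateral identity $X_{ik}+X_{jl}-X_{il}-X_{jk}=-\sum_{a\in[i,j-1],\,c\in[k,l-1]}s_{ac}$ is right, the flip computation with \eqref{eq:signflip} and connectivity of the flip graph gives the first claim, and factoring out $\prod_{(ij)\in\D}X_{ij}^{-1}$ from \cref{thm:scatform} together with \cref{thm:phi3} gives the second. Your reading of $\dlog$ as $d$ in the definition of $d^{n-3}X$ is also the intended one, as the $n=5$ example following the proposition confirms.

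One sentence in your proof is false as stated and should be repaired, though the repair is one line: the relations \eqref{eq:constant} are \emph{not} equivalent to fixing every Mandelstam $s_{ab}$ — they fix only the $\binom{n-2}{2}$ variables $s_{ab}$ with $1\leq a<b-1<b\leq n-1$ (fixing all of them would cut $K_n$ down to a point, not to an $(n-3)$-dimensional $H(\uc)$; e.g.\ $s_{12}=X_{13}$ still varies on $H(\uc)$). Your conclusion survives because the cross-terms appearing in the quadrilateral identity are exactly of the fixed type: labelling the flip quadrilateral so that $1\leq i<j<k<l\leq n$ in linear order, every $s_{ac}$ with $a\in[i,j-1]$ and $c\in[k,l-1]$ satisfies $a<c-1$ and $c\leq n-1$, hence equals a constant $-c_{ac}$ on $H(\uc)$. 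With that substitution for the ``therefore constant'' step, the flip identity $dX_{ik}+dX_{jl}=dX_{il}+dX_{jk}$ on $H(\uc)$, and the rest of your argument, go through as written.
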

\begin{proof}
\cref{ex:pullbackAn}.
\end{proof}

The choice of the affine subspaces $H(\uc) \subset K_n$ may appear mysterious.  We explain them in \cref{sec:scatmap2}.  We remark that pullbacks of $\Psi_n$ to other subspaces of $K_n$ are also of physical interest, and are called \emph{partial amplitudes} (\cref{rem:partial}).  

\begin{example}
Let $n =4$.  Then the scattering form is (up to sign) the $1$-form 
$$
\Psi_n = \frac{dX_{13}}{X_{13}} - \frac{dX_{24}}{X_{24}}
$$
on the two-dimensional space $K_4$.  Pulling back to the one-dimensional affine subspace $X_{13} + X_{24} = c$, we have $-dX_{13} = d X_{24}$, so 
$$
\iota_{\uc}^* \Psi_n = (\frac{1}{X_{13}} + \frac{1}{X_{24}}) dX_{13},
$$
so the coefficient of $dX_{13}$ is exactly $A_4^{\phi^3}$.
\end{example}

\begin{example}
Let $n = 5$.  We may use the two linear functions $X_{13}, X_{14}$ as coordinates on $H(\uc)$, and then
$$
X_{35} = -X_{14} + c_{14} + c_{24}, \qquad X_{25} = - X_{13} + c_{13} +c_{14}, \qquad X_{24} = X_{14}-X_{13} + c_{13}
$$ 
on $H(\uc)$.  One checks that the sign works out so that all $ \iota_{\uc}^* \sign(\D) \bigwedge_{(ij) \in\D} \dlog X_{ij}$ is always $\pm dX_{13} dX_{14}$ with the same sign.
\end{example}

\begin{remark}
Recall from \cref{thm:pseudo} that if $\tU$ is binary geometry for $\Delta$, then $\Delta$ is a pure pseudomanifold.  In particular, it makes sense to ask that $\Delta$ is orientable, or oriented, using the same requirement as \eqref{eq:signflip}.  Now suppose that $\tU$ is a binary geometry for a pure, orientable, pseudomanifold $\Delta$.  We then have a combinatorial scattering form $\Psi_{\oU}$ on $\Lambda(\oU)_\R$, analogous to the formula in \cref{thm:scatform}, given by
$$
\Psi_{\oU} := \sum_{F \in \Delta} \sign(F) \bigwedge_{i \in F} \dlog X_i
$$
where the sum is over the maximal simplices $F$ of $\Delta$, and the differential forms $\sign(F) \bigwedge_{i \in F} \dlog X_i$ are chosen to give an orientation of $\Delta$.
\end{remark}

\subsection{Scattering map} \label{sec:scatmap}
\def\Ass{{\rm Ass}}
Restricting the scattering correspondence projection $\I \to K_n$ to the subspace $H(\uc)$, we obtain a rational map \cite{ABHY,AHLstringy}
\begin{equation}
\label{eq:scatPhi}
\Phi = \Phi(\uc): \M_{0,n} \longdashrightarrow H(\uc), \qquad (\usigma) \mapsto (X_{ab})
\end{equation}
of degree $(n-3)!$, between two varieties of dimension $(n-3)$.  The map $\Phi(\uc)$ can be described in coordinates explicitly.  First, use $\PGL(2)$ to take $\sigma_n$ to $\infty$, as usual.  Then the scattering equation $Q_n=0$ disappears, and summing $Q_1,\ldots, Q_k$, we obtain
$$
s_{k,k+1} = - \sum_{\substack{1 \leq i \leq k \\ k+1 \leq j \leq n-1 \\ (i,j) \neq (k,k+1)}} \sigma_{k,k+1} \frac{s_{ij}}{\sigma_{ij}}.
$$
Substituting this into \eqref{eq:Xij}, we can express each $X_{ab}$ as a sum over $s_{ij}$ where $(ij)$ is a diagonal:
\begin{equation}\label{eq:scatmap}
X_{ab} = - \sum_{\substack{1 \leq i < a\\a < j < b}} \sigma_{a,j} \frac{s_{ij}}{\sigma_{ij}} - \sum_{\substack{a \leq i <b \\ b \leq j <n}} \sigma_{i,b-1}  \frac{s_{ij}}{\sigma_{ij}} - \sum_{\substack{1 \leq i < a\\ b \leq j < n}} \sigma_{a,b-1}  \frac{s_{ij}}{\sigma_{ij}}.
\end{equation}
Note that this equality only holds if we assume the scattering equations $Q_i= 0$.  
%By definition, on $H(\uc)$ each $s_{ij}$ for $(ij)$ a diagonal is equal to the constant $-c_{ij}$.  It follows that \eqref{eq:scatmap} may be viewed as a rational map, called the \emph{scattering map} (\cite{ABHY})
%\begin{equation}
%\label{eq:scatPhi}
%\Phi = \Phi(\uc): \M_{0,n} \to H(\uc), \qquad (\usigma) \mapsto (X_{ab}).
%\end{equation}
For example, for $n = 5$, we may use the two linear functions $X_{13}, X_{14}$ as coordinates on $H(\uc)$, and then gauge-fixing $(\sigma_1,\sigma_2,\sigma_n)$ to $(0,1,\infty)$ as usual, the scattering map is
\begin{equation}\label{eq:n5scat}
X_{13} = \frac{c_{13}}{\sigma_3} + \frac{c_{14}}{\sigma_4}, \qquad X_{14} = \frac{\sigma_3 c_{14}}{\sigma_4} + \frac{(\sigma_3 -1)c_{24}}{(\sigma_4-1)}.
\end{equation}

\begin{theorem}\label{thm:pushforward}
We have $\Phi(\uc)_* \Omega_{0,n} = \iota_{\uc}^* \Psi_n$.  
\end{theorem}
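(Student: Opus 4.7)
The plan is to exhibit both sides as the pushforward from a common space built by base change. Form the fiber product $\I_\uc := \I \times_{K_n} H(\uc) \subset H(\uc) \times \M_{0,n}$, the incidence variety cut out from $\I$ by the constraint $\s \in H(\uc)$. It sits in a Cartesian square
\[
\begin{tikzcd}
\I_\uc \arrow{r}{j} \arrow[swap]{d}{q'} & \I \arrow{d}{q} \\
H(\uc) \arrow{r}{\iota_\uc} & K_n
\end{tikzcd}
\]
with $q'$ the restriction of $q$; write $p' := p \circ j$ for its projection to $\M_{0,n}$.

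The first step is to check that $p': \I_\uc \to \M_{0,n}$ is a birational isomorphism. The fiber of $p'$ over a generic $\usigma$ is $V_\usigma \cap H(\uc)$, where $V_\usigma \subset K_n$ is the linear subspace cut out by the scattering equations $Q_i(\usigma,\cdot) = 0$. Since the potential $\phi_X$ is $\PGL(2)$-invariant (\cref{prop:CHYPGL2}), three of the $n$ linear forms $Q_i$ in the $s_{ij}$ are redundant, so $V_\usigma$ has codimension $n-3$ in $K_n$ and hence dimension $\binom{n-2}{2}$; since $H(\uc)$ is an affine subspace of complementary dimension $n-3$, the intersection $V_\usigma \cap H(\uc)$ is generically a single reduced point. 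Thus $p'$ is birational, and the scattering map factors as $\Phi(\uc) = q' \circ (p')^{-1}$.

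The identity now follows from naturality of push-pull. Applying base change for the Cartesian square above to the rational top form $p^*\Omega_{0,n}$ on $\I$,
$$
\iota_\uc^* \Psi_n \;=\; \iota_\uc^* (q_* p^* \Omega_{0,n}) \;=\; q'_* (j^* p^* \Omega_{0,n}) \;=\; q'_* (p')^* \Omega_{0,n}.
$$
Since $p'$ is birational, $((p')^{-1})_* \Omega_{0,n} = (p')^* \Omega_{0,n}$ as rational top forms on $\I_\uc$, so that
$$
\Phi(\uc)_* \Omega_{0,n} \;=\; q'_* \bigl((p')^{-1}\bigr)_* \Omega_{0,n} \;=\; q'_* (p')^* \Omega_{0,n} \;=\; \iota_\uc^* \Psi_n.
$$

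The main thing to justify carefully is the base-change identity $\iota_\uc^* q_* = q'_* j^*$ for pushforward of rational top forms along a finite map. This is a standard commutation for Cartesian squares where the vertical map is generically étale: on a small analytic neighborhood $W$ of a generic $\s \in H(\uc) \subset K_n$ we have $q^{-1}(W) = \sqcup_k V_k$ with each $q|_{V_k}$ biholomorphic, and the sum-over-sheets formula $(q_*\omega)|_W = \sum_k (q|_{V_k}^{-1})^* (\omega|_{V_k})$ commutes with restriction to $H(\uc) \cap W$, because $q'^{-1}(H(\uc) \cap W) = \sqcup_k (V_k \cap j(\I_\uc))$ with each component biholomorphic to $H(\uc) \cap W$ under $q'$. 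The only hypothesis to verify for a generic $\uc$ is that $q$ is étale at the preimages of a generic point of $H(\uc)$, which is exactly the content of \cref{thm:n!} together with the smoothness of $\I$ over a Zariski-open subset of $K_n$.
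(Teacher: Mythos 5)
Your proof is correct and is essentially the paper's argument in expanded form: the paper's proof consists precisely of the observation that $\Phi(\uc)$ is the restriction of $q$ to $q^{-1}(H(\uc))$ (your birational factorization through the fiber product $\I_\uc$) together with the statement that pushforward commutes with such restriction (your base-change identity, verified by the local sum-over-sheets description). The extra details you supply — the dimension count showing $p'$ is birational and the étale-locus justification of base change — are consistent with what the paper leaves implicit.
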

\begin{proof}
The map $\Phi(\uc)$ is the restriction of the map $q$ to (the inverse image of) the subspace $H(\uc)$.  The definition of the pushforward of a form commutes with such pullbacks.  
\end{proof}

\begin{example}
For $n =4$, the scattering map is $X = \frac{c}{\sigma}$, where $X = X_{13}$, $\sigma = \sigma_3$ and $c = c_{13}$.  The canonical form is 
$$
\Omega_{0,4} = \dlog y_1 = \frac{d\sigma}{\sigma - 1},
$$
where $y_1 = \sigma - 1$. Thus substituting $\sigma = \frac{c}{X}$, we get
$$
\Phi(\uc)_* \Omega_{0,4} = -c \frac{dX}{X^2(c/X - 1)} = -c \frac{dX}{X(c - X)} = -\left(\frac{1}{X} + \frac{1}{c-X} \right)dX.
$$
This agrees (up to sign) with $A_4^{\phi^3} dX$, after using $X_{13} + X_{24} = c$.
\end{example}

\begin{remark}
There are two other proofs of \cref{thm:pushforward}.  For the first proof, combining with \cref{prop:pullbackAn} and \cref{thm:phi3}, the statement is equivalent to 
$$
A^{\phi^3}_n = \int \Omega((\M_{0,n})_{\geq 0})  \prod_{(ij) \in \D} \delta(X_{ij} - \Phi(\uc)_{ij}) 
$$
where $\D$ is a triangulation of the $n$-gon and $\Phi(\uc)_{ij}$ denotes the $X_{ij}$-coordinate of the point $\Phi(\uc) \in H(\uc)$.  Using \cref{prop:BBB}, this reduces to showing the identity of delta functions
$$
  \prod_{(ij) \in \D} \delta(X_{ij} - \Phi(\uc)_{ij}) = \frac{\sigma_{ab} \sigma_{bc} \sigma_{ca}}{\sigma_{12} \sigma_{23} \cdots \sigma_{n1}}\prod_{i \notin \{a,b,c\}} \delta(Q_i),
$$
which is a direct Jacobian calculation.

For the second proof, as explained in \cite[Section 7]{AHLstringy}, the map $\Phi:\M_{0,n} \to H(\uc)$ can be interpreted as the algebraic moment map of the toric variety of the associahedron $\Ass_{n-3}$ (see \cref{sec:string}).  The result then follows from the pushforward formula of \cite{ABL}, together with the identification of $ \iota_{\uc}^* \Psi_n$ as the canonical form $\Omega(\Ass_{n-3})$ of the $(n-3)$-dimensional associahedron $\Ass_{n-3}$, which is the intersection of the affine subspace $H(\uc)$ with the positive orthant $\{X_{ij} \geq 0 \mid (ij) \in \diag_n\} \subset K_n$.
\end{remark}

%The pushfoward formula holds in the more general setting of \cref{thm:scatmapgen}.

\begin{remark}\label{rem:Ass}
Another geometric statement \cite{ABHY,AHLstringy} is that when $c_{ij} >0$ are positive then $(\M_{0,n})_{\geq 0}$ is mapped diffeomorphically onto an associahedron polytope, and $\Phi_* \Omega((\M_{0,n})_{\geq 0})$ is the canonical form of the associahedron.  For example, with $n=5$, as $\usigma$ vary within $(\M_{0,5})_{> 0}$, we have $(\sigma_3,\sigma_4)$ in the region $1 < \sigma_3 < \sigma_4 < \infty$.  The image of this region under the map \eqref{eq:n5scat} in $H(\uc)$ is a pentagon, the two-dimensional associahedron.
\end{remark}

\subsection{Proof of \cref{thm:scatform}}\label{sec:scatproof}
Our proof is a variant of the proof of \cite[Theorem 7.12]{ABL}, which states that the pushforward of the canonical form of a projective toric variety $X_P$ under the algebraic moment map is equal to the canonical form of the corresponding polytope $P$.

Let $\Psi'_n$ denote the right hand side of \eqref{eq:scatform}.  We first note that both $\Psi_n$ and $\Psi'_n$ are projective, meaning that they are pullbacks of forms from $\P(K_n)$.  For $\Psi_n$ this follows from the fact that the scattering equations, and thus the scattering correspondence as well, are invariant under the dilation action of $\C^\times$ on $K_n$.  For $\Psi'_n$ this follows from the definition of the signs $\sign(\D)$; see \cref{ex:pullbackAn}.  Thus it suffices to show that $\Psi_n$ and $\Psi'_n$ have the same poles on $K_n$, and the same residues at these poles.  The poles of $\Psi'_n$ are along the hyperplanes $K_{ij} = \{X_{ij} = 0\}$.  The poles of $\Psi_n$ come from the image of the poles of $\Omega_{0,n}$.  By \cref{lem:M0npos} and \cref{thm:M0npos}, the poles of $\Omega_{0,n}$ are along the divisors $\tM_{ij} = \{u_{ij} = 0\}$ of $\tM_{0,n}$.  

As explained in \cref{sec:toricstringy} and \cite{AHLcluster}, the affine variety $\tM_{0,n}$ is the complement of a hypersurface $H$ in the toric variety $X_P$, where $P$ is an associahedron.  Under the identification $\tM_{0,n} \cong X_P\setminus H$, the canonical form $\Omega_{0,n}$ is identified with the canonical form of the torus of $X_P$.  Furthermore, the stratification of $X_P$ into torus orbit closures induces the stratification of $\tM_{0,n}$ into $\tM_{\D}$.  Let us consider the inclusions 
$$K_n \times \M_{0,n} \subset K_n \times \tM_{0,n} \subset K_n \times X_P$$
and taking the closure of the scattering correspondence we obtain inclusions $\I \subset \I' \subset \I''$.  The projection $\I'' \to K_n$ is a proper map of degree $(n-3)!$.  There is an open subset $a: V \hookrightarrow K_n$ such that the pullbacks $\I'' \times_{K_n} V$ and $\I \times_{K_n} V$ are isomorphic.  Thus $\Psi_n = q_* \Omega$, where $\Omega$ is the canonical form of $X_P$, pulled back to $\I''$.

By \cite[Section 7.1]{AHLstringy}, the restriction of $q$ to (the inverse image of) a subspace $H(\uc)$ can be identified with the algebraic moment map of $X_P$.  The divisors $\tM_{ij} = \{u_{ij} = 0\}$ are sent by $q$ to the facets of $P$, which are exactly the hyperplanes $K_{ij} = \{X_{ij} = 0\}$.  By \cite[Proposition 2.5]{KR}, taking residues commutes with pushforward, so we have
$$
\Res_{K_{ij}} \Psi_n = \Res_{K_{ij}} q_* \Omega = q_* \Res_{\tM_{ij}} \Omega = q_* (\Omega_{0,n_1} \wedge \Omega_{0,n_2})
$$
where the last equality is by \cref{lem:M0npos}.  On $\tM_{ij}$, all characters $u_{kl}$ where $(kl)$ crosses $(ij)$, are equal to 1.  So over $K_{ij}$, the potential restricts to
$$
\phi_X|_{K_{ij} \times \tM_{ij}} = \prod_{(ab)} u_{ab}^{X_{ab}}
$$
where the product is over all diagonals that do not cross $(ij)$.  This is exactly the potential of $\tM_{0,n_1} \times \tM_{0,n_2}$ in the notation of \cref{lem:M0npos}.  Thus the map $q: q^{-1}(K_{ij}) \to K_{ij}$ can be identified with the map
$$
q_1 \times q_2\times {\rm id}: (K_{n_1} \times X_{P,n_1}) \times (K_{n_2} \times X_{P,n_2}) \times W \to K_{n_1} \times K_{n_2} \times W
$$
where the direct sum decomposition $H_{ij} = K_{n_1} \times K_{n_2} \times W$ corresponds to diagonals contained in the polygon on one side of $(ij)$, the diagonals on the other side of $(ij)$, and the diagonals that cross $(ij)$.  We deduce that $\Res_{K_{ij}} \Psi_n = \Psi_{n_1} \times \Psi_{n_2}$.  On the other hand, it is immediate from the definition of $\Psi'_n$ that $\Res_{K_{ij}} \Psi'_n = \Psi'_{n_1} \times \Psi'_{n_2}$.  By induction on $n$, with the base case $n=4$ checked in \cref{ex:Psi4}, we conclude that $\Res_{K_{ij}} \Psi_n = \Res_{K_{ij}} \Psi'_n$ for any of the hyperplanes $K_{ij}$.  It follows that $\Psi_n = \Psi'_n$.

\subsection{Exercises and Problems}
\begin{exercise}\label{ex:Mobius}
Prove \cref{prop:CHYPGL2}, as follows.  The group $\PGL(2)$ acts on $\P^1$ as M\"obius transformations. For each $i=1,2,\ldots,n$, let $\sigma'_i = \frac{\alpha \sigma_i + \beta}{\gamma \sigma_i + \delta}$.  Show that $(\sigma_1,\ldots,\sigma_n)$ is a solution to the scattering equations if and only if $(\sigma'_1,\ldots,\sigma'_n)$ is.
\end{exercise}

\begin{exercise}\

\begin{enumerate}[label=(\alph*)]
\item
Fill out the details in the proof of \cref{prop:nullSE}.
\item
Write down conditions that guarantee that $r(z)$ in \eqref{eq:rz} is a regular morphism $r: \P^1 \to \P^{D-1}$ of degree $n-2$.
\end{enumerate}
\end{exercise}

\begin{exercise}
Solve the scattering equations for $n = 5$ by hand or by computer.
\end{exercise}

\begin{exercise}\label{ex:CHYrel}\
\begin{enumerate}[label=(\alph*)]
\item 
Show that $\sum_{i=1}^n \sigma_i^{\ell} Q_i = 0$ for $\ell = 0,1,2$.  Thus a solution to $n-3$ of the scattering equations $Q_i$ is automatically a solution to the remaining 3.
\item
%\label{ex:reddet}
Prove \cref{lem:CHYred} by using (a) and the following general statement.  Let $M$ be a $n \times n$ matrix and suppose that the nullspace $K$ of $M$ has dimension $k$.  Let $m^{a_1,\ldots,a_k}_{b_1,\ldots,b_k}$ denote the minor obtained from $M$ by removing rows $a_1,\ldots,a_k$ and columns $b_1,\ldots,b_k$.  Then we have
$$
m^{a_1,\ldots,a_k}_{b_1,\ldots,b_k} \Delta_{a'_1,\ldots,a'_k}(K) = m^{a'_1,\ldots,a'_k}_{b_1,\ldots,b_k}  \Delta_{a_1,\ldots,a_k}(K)
$$
where $\Delta_I(K)$ denote the Pl\"ucker coordinates of $K$ viewed as a point in the Grassmannian $\Gr(k,n)$.  A similar statement holds for changing $b_1,\ldots,b_k$.
\end{enumerate}
\end{exercise}

\begin{exercise}
Compute the Euler characteristics of $\tM_{0,n}$ and $\bM_{0,n}$ using the fact that Euler characteristic is additive under open-closed decompositions into algebraic sets.
\end{exercise}
%
%\begin{exercise}
%Consider the map $f: \C \to \C$ given by $x \mapsto y=x^2$.  Calculate the pushforward forms
%$$
%f_* \frac{dx}{x}, \qquad f_* \frac{dx}{(x-1)(x-2)}.
%$$
%\end{exercise}

\begin{exercise}\label{ex:pullbackAn}\
\begin{enumerate}[label=(\alph*)]
\item
Show that the formula \eqref{eq:scatform} for the scattering form $\Psi_n$ is cyclic invariant up to sign.  
\item 
Prove that $\Psi_n$ can be written as a polynomial in the forms $\dlog(X_{ij}/X_{kl})$.  This says that $\Psi_n$ is \emph{projective} in the sense of \cite{ABHY}.
\item
Prove \cref{prop:pullbackAn}.
\end{enumerate}
\end{exercise}

\begin{exercise}\label{ex:partial}
Let $\alpha$ be a cyclic ordering of $[n]$.  A (usual, cubic, $n$-leaf) planar tree $T$ is \emph{compatible} with $\alpha$ if there exists an embedding of $T$ into the plane so that the leaves are cyclically arranged in the order given by $\alpha$.  The partial amplitude $A_n(12\cdots n|\alpha)$ is given by
$$
A_n^{\phi^3}(123\cdots n|\alpha) := \sum_{T} \prod_{e \in E(T)} \frac{1}{X_{ij}}
$$
where the sum is over planar trees compatible with $\alpha$.
\begin{enumerate}
\item
Check for $n = 4$ and $n=5$, that the partial amplitudes can be obtained by using the Parke-Taylor factor $\PT(12\cdots n)\PT(\alpha)$ in \eqref{eq:CHYdef} instead of $\frac{1}{(\sigma_{12}\sigma_{23} \cdots \sigma_{n1})^2}$.
\item Check that $A_4(1234|2134)$ can be obtained by pulling back the planar scattering form $\Psi_4$ to the subspace $X_{24}=c$, for a constant $c$.  Which partial amplitude do you get by pulling back to $X_{13} = c$?
\item For each partial amplitude $A_5(12345|\alpha)$ find a subspace $\iota: H \hookrightarrow K_5$ such that the amplitude can be obtained by the pullback $\iota^* \Psi_5$.
\end{enumerate}
\end{exercise}

\begin{problem}
Express the reduced determinant of \cref{lem:CHYred} in terms of the dihedral coordinates $u_{ij}$.
\end{problem}

\begin{problem}
Explore the scattering equations and \eqref{eq:qz} within the setting of the moduli space of stable, rational, $n$-pointed maps to $\P^1$.
\end{problem}

\subsubsection{Tropical $u$-equations}
Recall the $u$-equations \eqref{eq:Rij} satisfied by the dihedral coordinates $u_{ij}$.  This equation has no minus signs (if we place the $1$ on the other side of the equality), and we can formally take its ``positive tropicalization", given by 
$$
u_{ij} \mapsto U_{ij}, \qquad + \mapsto \min, \qquad \times \mapsto +, \qquad {\rm constant} \mapsto 0 
$$
obtaining the \emph{tropical $u$-equation}
$$
\Trop(R_{ij}) := \min(U_{ij}, \sum_{(kl) \text{ crosses } (ij)} U_{kl}) = 0.
$$

We now take $\{U_{ij} \in K_n\}$ to be the basis dual to $\{X_{ij} \in K_n^*\}$.  Thus the tropical $u$-equations define a subset of $K_n^*$.  
\begin{lemm}\label{lem:tropu}
The intersection of the $\binom{n}{2}-n$ tropical $u$-equations is a pure polyhedral fan $\Trop_{\geq 0} \M_{0,n}$ in $K_n^*$ of dimension $n-3$, with maximal cones given by 
$$
C(\D) = \sp_{\geq 0}(X_{ij} \mid (ij) \in \D) \subset K_n^*
$$
for $\D$ a triangulation of the $n$-gon.
\end{lemm}
\begin{proof}
Let $X \in K_n^*$ belong to the intersection of $\Trop(R_{ij}) = 0$.  Clearly, $U_{ij}(X) \geq 0$ for all $(ij)$.  Let
$$
\D_X = \{(ij) \mid U_{ij}(X) >0\}.
$$
If $(ij)$ and $(kl)$ are diagonals that cross, then $\Trop(R_{ij}) = 0$ implies that they cannot both be elements of $\D$.  It follows that the possible choices of $\D_X$ are exactly the subdivisions of the $n$-gon, and conversely, if $\D_X$ is a subdivision of the $n$-gon then $X$ satisfies the tropical $u$-equations.  For a fixed subdivision $\D'$ of the $n$-gon we obtain a cone $C(\D')$ with dimension equal to the number of diagonals used, and the set we seek is the union of these cones.  The maximal cones correspond to triangulations $\D$.
\end{proof}

The fan $\Trop_{\geq 0} \M_{0,n}$ is the \emph{positive tropical pre-variety} associated to the equations $R_{ij} = 0$.  It turns out that $\Trop_{\geq 0} \M_{0,n}$ is equal to the positive tropicalization of $\M_{0,n}$ in the sense of \cite{SW}.  Note that the rays (that is, the one-dimensional cones) of the fan $\Trop_{\geq 0} U$ are exactly $\{\R_{\geq 0} \cdot X_{ij}\}$.  The monoid of bounded characters (\cref{ex:intrinsic}) on $\M_{0,n}$ are exactly the characters taking nonnegative values on these rays.  

Let $\Delta = \Delta_{0,n}$ be the simplicial complex of subdivisions of the $n$-gon, as in \cref{sec:MD}.  Then the tropical $u$-equations cut out the \emph{cone over $\Delta$}.  This suggests that for binary geometries the following is a reasonable definition of positive tropicalization.

\def\pre{{\rm pre}}
\begin{definition}\label{def:tropu}
Let $U$ be a binary geometry for a flag simplicial complex $\Delta$ on $[n]$.  The \emph{positive tropical pre-variety} $\Trop^\pre_{\geq 0} U$ of $U$ is the intersection of the tropical $u$-equations $\Trop(R_i) = 0$ for $i=1,2,\ldots,n$.
\end{definition}

It follows from the same calculation as in \cref{lem:tropu} that $\Trop^\pre_{\geq 0}U$ is a pure polyhedral fan whose cones are exactly
$$
C(F) :=  \sp_{\geq 0}(X_{i} \mid i \in F).
$$
We do not know (\cref{prob:binarytrop}) the relation between $\Trop^\pre_{\geq 0} U$ and the usual notion of (positive) tropicalization (\cref{sec:postrop}).  The intersection of the tropical $u$-equations is analogous to the definition of the \emph{(positive) Dressian} \cite{ALS,SWDress}.

\begin{problem}\label{prob:binarytrop} 
How is the positive tropical pre-variety $\Trop^\pre_{\geq 0} U$ in \cref{def:tropu} related to the positive tropicalization of the very affine variety $U$?  %Since the definition of a binary geometry automatically guarantees that \cref{def:tropu} produces a simplicial fan of the correct dimension, this suggests that  %When is $\Trop_{\geq 0} U$ equal to the positive tropicalization?
\end{problem}

The study of the tropicalization of binary geometries has recently been initiated by Cox and Makhlin \cite{CM}.

\section{Positive geometry of four dimensional space-time}
\label{sec:4dim}

In \cref{sec:SE}, we looked at the the kinematics of $n$-particle scattering in $D$-dimensional space-time.  Space-time of the real world is four-dimensional, so the case $D = 4$ is of particular physical importance, so we specialize to $D =4$ in this section.  For physics background to the material of this section, see \cite{Dix, Bri, EH}.

\subsection{Spinors}
  In \cref{def:Kn}, kinematic space $K_n$ for $n$-particle scattering is defined to be a complex vector space of dimension $\binom{n}{2} - n$.  This is viewed as the space of symmetric $n \times n$ matrices of the form $M = P P^T$, where the row vectors of the matrix $P$ are the momentum vectors $p_1,\ldots,p_n \in \C^D$.  For low $D$, the rank of the matrix $M$ is bounded above by $D$, and the ``correct" kinematic space is the subvariety $K_n^D \subset K_n$ cut out by the conditions that all $(D+1) \times (D+1)$ minors vanish.

Setting space-time dimension to $D = 4$ leads to the remarkable algebra and geometry of spinors.  We view $\C^4 \cong \Mat_{2 \times 2}$ as the space of $2 \times 2$ matrices, with isomorphism chosen so that the symmetric bilinear form is given by $p^2 = p \cdot p = \det(p)$.  Equivalently, the inner product is
$$
\begin{bmatrix} a & b \\ c & d \end{bmatrix} \cdot \begin{bmatrix} a' & b' \\ c' & d' \end{bmatrix} = \frac{1}{2}(ad' + a'd - bc' - b'c).
$$
The massless condition that $p^2 = \det(p) = 0 $ implies that $p$ has rank $\leq 1$ and can be written as $p = \lambda \tlambda$ where $\lambda$ (resp. $\tlambda$) is a column (resp. row) vector in $\C^2$.  The vectors $\lambda,\tlambda \in \C^2$ are called \emph{spinors}.  

There is a redundancy in this description: for a non-zero scalar $t \in \C^\times$, the spinors $(\lambda,\tlambda)$ and $(t\lambda,t^{-1}\tlambda)$ represent the same momentum vector $p$.  This feature is (perhaps?) a slight annoyance when we consider scattering of scalar particles, but an advantage when we consider scattering of gluons.  Spinor coordinates are also preferred when considering scattering of fermions (spin $1/2$ particles), though we will only consider gluon scattering.  For more on \emph{spinor-helicity formalism}, see the textbook \cite{EH}.

\begin{lemm}\label{lem:spinordet}
Suppose the spinors of $p$ and $q$ are $(\lambda,\tlambda)$ and $(\mu, \tmu)$ respectively.  Then
$$
2p \cdot q = \det(\lambda,\mu) \det(\tlambda,\tmu)
$$
where the determinants are taken by forming $2 \times 2$ matrices, using two column (resp. row) vectors.
\end{lemm}
\begin{proof}
\cref{ex:spinordet}.
\end{proof}

Now, let $\lambda_1,\ldots,\lambda_n$ and $\tlambda_1,\ldots,\tlambda_n$ be spinors for the $n$ particles.  We use the standard shorthand:
$$
\langle i j \rangle := \det(\lambda_i,\lambda_j), \qquad [i j] := \det(\tlambda_i,\tlambda_j)
$$
satisfying the identities
$$
\langle i j \rangle = - \langle j i \rangle, \qquad [ij] = -[ji], \qquad s_{ij} = \langle i j \rangle [i j].
$$
Momentum conservation $p_1+p_2+ \cdots+ p_n = 0$ becomes the identity $\sum_i \lambda_i \tlambda_i = \begin{bmatrix} 0 & 0 \\ 0 & 0 \end{bmatrix}$, which combining with \cref{lem:spinordet} gives
\begin{equation}\label{eq:spinorMC}
\sum_{i} \langle j i \rangle [i k] = 0, \text{ for all } j, k.
\end{equation}

Let $\SO(D)$ denote the complex special orthogonal group in $D \geq 3$ dimensions.  The group $\SO(D)$ is not simply-connected and its simply-connected cover is the spin group $\Spin(D)$, which is a double cover of $\SO(D)$.  When $D=4$, we have an exceptional isomorphism $\Spin(4) \cong \SL(2) \times \SL(2)$ arising from the Dynkin diagram isomorphism $D_2 \cong A_1 \times A_1$.  The covering map $\SL(2) \times \SL(2) \to \SO(4)$ has kernel given by the two element group $\{(I,I), (-I,-I)\}$.  The $\SO(4)$ action on $\C^4$ is replaced by the $\SL(2) \times \SL(2)$ action on the pair $(\lambda, \tlambda)$.

We now view $\lambda = (\lambda_1,\ldots,\lambda_n)$ as a $2 \times n$ matrix and $\tlambda = (\tlambda_1,\ldots,\tlambda_n)$ as a $n \times 2$ matrix.  By the first fundamental theorem of invariant theory, the ring of $\SL(2)$-invariant polynomial functions on $2 \times n$ (resp. $n \times 2$) matrices is generated by the $2 \times 2$ determinants $\langle i j \rangle$ (resp. $[ij]$).  This ring, also called the \emph{Pl\"ucker ring}, is the coordinate ring of the cone over the Grassmannian $\Gr(2,n)$.  Thus the kinematics of $n$ massless particle scattering in $D=4$-dimensional space-time involves two 2-planes $\lambda,\tlambda$ in $n$-dimensional space, and \eqref{eq:spinorMC} says that $\lambda,\tlambda$ are orthogonal 2-planes (\cref{ex:orthogrel}).

Let $\hGr(2,n)$ denote the cone over the Grassmannian $\Gr(2,n)$.  Thus the coordinate ring of $\hGr(2,n)$ is the ring generated by $\langle i j \rangle$.
\begin{definition}
\emph{Spinor kinematic space} is the space $\tK_n^4$ of pairs $(\lambda,\tlambda) \in \hGr(2,n) \times \hGr(2,n)$ satisfying $\lambda \perp \tlambda$.\end{definition}
%\begin{definition}
%\emph{Spinor kinematic space} is either
%\begin{enumerate}
%\item the space $\tK_n^4$ of pairs $(\lambda,\tlambda) \in \hGr(2,n) \times \hGr(2,n)$ satisfying $\lambda \perp \tlambda$, or
%\item the space $\hLambda_n^4$ of pairs $(\lambda,\tlambda) \in \Mat_{2,n} \times \Mat_{2,n} $ satisfying $\lambda \perp \tlambda$.
%\end{enumerate}
%\end{definition}

For more on the algebraic geometry of spinor kinematic space, we refer the reader to the recent work \cite{EPS}.

\begin{remark}
In $D=3$ dimensions, we have the exceptional isomorphism $\Spin(3) \cong \SL(2)$, arising from the Dynkin diagram isomorphism $A_1 \cong B_1$.  In $D=6$ dimensions, we have the exceptional isomorphism $\Spin(6) \cong \SL(4)$, arising from the Dynkin diagram isomorphism $A_3 \cong D_3$.  Though we will focus on $D=4$, there is special physics appearing for all three choices of space-time dimension $D = 3,4,6$. 
\end{remark}

\subsection{Gluon scattering}
Elementary particles come with a discrete parameter, \emph{spin}, which takes values in $0,\frac{1}{2},1,\frac{3}{2},2,\ldots$.  Spin 0 particles are also called scalar particles, and the amplitudes that we looked at in \cref{sec:CHYscalar} are scattering amplitudes for scalars.  A particle with nonzero spin has intrinsic angular momentum -- it is allowed to be rotating, and the axis of rotation gives additional degrees of freedom.  For scattering of particles with nonzero spin, the amplitudes depend on not just the momentum vector of the particle, but also additional information.  In the case of spin 1 particles, the additional information is called a polarization vector.  Thus scattering amplitudes are functions of momentum vectors $p_1,\ldots,p_n$ and polarization vectors $\xi_1,\ldots,\xi_n$.

Gluons are elementary particles that mediate the strong force, that is, the exchange of gluons leads to an interaction between elementary particles called the strong force.  Gluons appearing in the strong force are massless spin 1 particles with ``color", and we commonly call any such particle in a QFT a gluon.  Color refers to the symmetries of a compact Lie group called a gauge group.  In the case of the strong force, this gauge group is ${\rm SU}(3)$.  The dependence of the scattering amplitude on the gauge group can often be separated out in the sense that the amplitude can be factored as the product of a ``group theory factor" and a ``kinematic factor".  We ignore the gauge group in our presentation and focus on the kinematics.  That is, we consider ``color-stripped amplitudes".  However, let us remark that some of the beautiful combinatorics and geometry of amplitudes arises exactly because of the gauge group.  For example, taking the rank of the gauge group to infinity (that is, ${\rm SU}(N)$ with $N \to \infty$) leads to the consideration of ``planar amplitudes".

For gluon scattering, polarization vectors have two degrees of freedom.  We may consider the polarization vector as a linear combination of a positive helicity polarization vector and a negative helicity one.  So to compute amplitudes, it suffices to compute the $2^n$ helicity amplitudes, where each of the $n$ particles is given either positive or negative helicity.  These amplitudes are typically denoted $A_n(1^+,2^-,3^-,\ldots,n^+)$ (particle 1 has helicity $+$, particle 2 has helicity $-$, ...) and so on.

It is a remarkable and beautiful fact that the additional information of the polarization vector, or equivalently of helicity, is stored in the choice of spinors $(\lambda,\tlambda)$ (which previously appeared redundant).   Let $T \cong (\C^\times)^n$ act on $\tK_n^4$ by letting $t = (t_1,\ldots,t_n) \in T$ send $(\lambda_i,\tlambda_i)$ to $(t_i\lambda_i,t_i^{-1} \tlambda_i)$.  Let $h = (h_1,\ldots,h_n) \in \{\pm 1\}^n$ denote a helicity vector.  Then we have the following assertion:
\begin{equation}\label{eq:helicity}
\parbox[c]{\textwidth}{Helicity $h$ amplitudes are weight vectors for the $T$-action with weight $(2h_1,\ldots,2h_n)$.}
%\mbox{Helicity $h$ amplitudes are weight vectors for the $T$-action with weight $(2h_1,\ldots,2h_n)$.}
\end{equation}
In other words, helicity $h$ amplitudes are rational functions on $\tK_n^4$ that scale by $\prod_i t_i^{2h_i}$ under the $T$-action on $\tK_n^4$.  We can also state this projectively, using the projective variety $\Proj(\C[\tK_n^4]) \subset \Gr(2,n) \times \Gr(2,n)$.

\begin{definition}
For an appropriate line bundle $L$ on $\Proj(\C[\tK_n^4])$, helicity $h$ amplitudes are rational sections of $L$ with $T$-weight equal to $(2h_1,\ldots,2h_n)$.
%on an appropriate GIT quotient $\tK_n^4/\!\!/T$.
\end{definition}
%We phrase this conclusion another way.  The space $\tK_n^4/\!\!/T$ with redundancy $(\lambda_i,\tlambda_i) \simeq (t\lambda_i,t^{-1} \tlambda_i)$ factored out is essentially the space of the momenta $p_1,\ldots,p_n$, modulo $\SO(4)$.  Instead of looking for functions on this space, we are now looking for sections of a line-bundle on this space.

The coordinate ring $\C[\tK_n^4]$ of the variety $\tK_n^4$ is the multi-homogeneous coordinate ring of (or multi-cone over) the flag variety $\Fl(2,n-2;n)$ of partial flags $\{(0 \subset \lambda \subset \tlambda^\perp \subset \C^n)\}$ of dimensions $2,n-2$ in $\C^n$.  (Here, we have used the isomorphism $\Gr(2,n) \cong \Gr(n-2,n)$ sending $\tlambda$ to $\tlambda^\perp$.)  The action of $T$ on $\Fl(2,n-2;n)$ is the usual one, though to preserve the conventions on weights we should take only the subtorus $\{(t_1,\ldots,t_n) \mid \prod_i t_i = 1\}$.  We could also think of helicity amplitudes as rational sections of a line bundle on an appropriate GIT quotient $\Fl(2,n-2;n)/\!\!/T$, though I do not know how useful this point of view is.

\subsection{Scattering equations from marked rational maps}
We briefly review the origin of the scattering equations, and in this subsection we work in general space-time dimension $D$.  Let us consider the following rational map from a $n$-punctured Riemann sphere $\P^1 \setminus \{\sigma_1,\ldots,\sigma_n\}$ to $\C^D$:
\begin{equation}\label{eq:qz}
q(z) := \sum_{i=1}^n \frac{p_i}{z - \sigma_i}, \qquad z \in \P^1 \setminus \{\sigma_1,\ldots,\sigma_n\}.
\end{equation}
The map \eqref{eq:qz} depends on a choice of $\usigma$ and on momentum vectors $p_1,\ldots,p_n \in \C^D$, which we assume are massless and satisfy momentum conservation.  Let 
\begin{equation}\label{eq:rz}
r(z) = q(z) \prod_{i}(z-\sigma_i) =  \sum_{i=1}^n p_i \prod_{j \neq i}(z - \sigma_i),
\end{equation}
which is now a polynomial in $z$.  Due to the momentum conservation assumption, we have that $r(z)$ has degree $ \leq n-2$.  We may view $r(z)$ as a rational map $r: \P^1 \to \P^{D-1}$.  The specific form \eqref{eq:rz} says that $r(\sigma_i) = p_i$, now viewed as a point in $\P^{D-1}$.  Let $(K_n \times \M_{0,n})^\circ \subset K_n \times \M_{0,n}$ denote the dense open subset where $r(z)$ is a regular morphism of degree $n-2$.  We may then consider the formula for $r(z)$ (or $q(z)$) as defining a morphism
$$
(K_n \times \M_{0,n})^\circ \longrightarrow \M_{0,n}(\P^{D-1}, (n-2)),
$$
where $\M_{0,n}(\P^{D-1}, (n-2))$ is the moduli space of $n$-pointed rational curves of degree $n-2$ in $\P^{D-1}$.  It would be interesting to study the scattering equations within the context of the moduli space $\overline{\M}_{0,n}(\P^{D-1}, (n-2))$ of stable rational curves.

The null-cone, or light-cone, of $\C^D$ is the subset of $\C^D$ consisting of vectors satisfying $p^2 = p \cdot p = 0$ (the massless condition).  It is a quadric in $\C^D$, singular only at the origin.

\begin{proposition}\label{prop:nullSE}
The scattering equations are equivalent to the condition that $r(z)$ has image in the null-cone of $\C^D$.
\end{proposition}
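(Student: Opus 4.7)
The plan is to compute $q(z) \cdot q(z)$ directly and observe that its residues at the poles $z = \sigma_i$ are precisely (constant multiples of) the scattering equations $Q_i$. Since $r(z) = q(z) \prod_i (z - \sigma_i)$ has image in the null-cone if and only if $r(z) \cdot r(z) \equiv 0$ as a polynomial in $z$, and since $r(z)^2 = q(z)^2 \prod_i(z-\sigma_i)^2$, this is the same as asking $q(z) \cdot q(z) \equiv 0$ as a rational function. So everything reduces to analyzing $q(z)^2$.

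First I would write
\[
q(z)^2 = \sum_{i,j} \frac{p_i \cdot p_j}{(z-\sigma_i)(z-\sigma_j)} = \sum_i \frac{p_i^2}{(z-\sigma_i)^2} + \sum_{i \neq j} \frac{p_i \cdot p_j}{(z-\sigma_i)(z-\sigma_j)}.
\]
The masslessness assumption $p_i^2 = 0$ kills the double-pole terms, so only the cross terms survive. Next I would apply the partial fraction identity
\[
\frac{1}{(z-\sigma_i)(z-\sigma_j)} = \frac{1}{\sigma_i - \sigma_j} \left( \frac{1}{z-\sigma_i} - \frac{1}{z-\sigma_j} \right),
\]
and collect the coefficient of $1/(z-\sigma_i)$ by symmetrizing the $i,j$ summation. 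Using $s_{ij} = 2 p_i \cdot p_j$, a direct bookkeeping gives the clean identity
\[
q(z)^2 = \sum_{i=1}^n \frac{Q_i}{z - \sigma_i}.
\]

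From here the equivalence is immediate: a sum of simple poles at distinct points $\sigma_1,\ldots,\sigma_n$ vanishes identically as a rational function if and only if every residue $Q_i$ vanishes, i.e. all scattering equations hold. Multiplying by $\prod_i(z-\sigma_i)^2$ turns this into the statement that the polynomial $r(z)^2$ vanishes identically, which is exactly the condition that the image of $r$ lies in the null-cone. The only real subtlety is the partial-fractions symmetrization, which is routine, so I do not expect a serious obstacle; momentum conservation $\sum_i p_i = 0$ is not needed for this equivalence (it is used only elsewhere, to guarantee $\deg r \leq n-2$).
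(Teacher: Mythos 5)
Your argument is correct, and it takes a cleaner, more self-contained route than the paper's. The paper's sketch differentiates $r(z)^2 = 0$ to get $r'(z)\cdot r(z) = 0$ and evaluates at $z = \sigma_i$; as written this only gives the implication from the null-cone condition to the scattering equations, and to reverse it along those lines one must note that masslessness already gives $r(\sigma_i)^2 = p_i^2\prod_{j\neq i}(\sigma_i - \sigma_j)^2 = 0$, that $Q_i = 0$ then forces a double zero of the polynomial $r(z)^2$ at each $\sigma_i$, and finally invoke momentum conservation to bound $\deg r(z)^2 \leq 2n-4 < 2n$, so that $r(z)^2 \equiv 0$. Your identity
\[
q(z)^2 \;=\; \sum_{i=1}^n \frac{Q_i}{z-\sigma_i}
\]
(which I checked; the symmetrization indeed produces $\sum_{j\neq i} 2p_i\cdot p_j/(\sigma_i - \sigma_j) = Q_i$, using only $p_i^2 = 0$ to kill the double poles) packages both implications at once: vanishing of a sum of simple poles at distinct points is equivalent to vanishing of all residues, so no degree count is needed. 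Your side remark that momentum conservation plays no role in the equivalence itself, only in guaranteeing $\deg r \leq n-2$, is correct and is an observation the paper's route obscures. What the paper's approach buys in exchange is a statement that generalizes immediately to checking tangency-type conditions (differentiate and evaluate) without writing out partial fractions; what yours buys is a one-line exact identity identifying the $Q_i$ as the residues of $q(z)^2$, which is the standard and arguably more illuminating formulation.
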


\begin{proof}[Sketch Proof]
Differentiating $r(z)^2 = 0$, we obtain the condition $r'(z) \cdot r(z) = 0$.  Evaluating this equation at $z= \sigma_i$ gives equations equivalent to the scattering equations.
\end{proof}

\subsection{Scattering equations in four dimensions}
When $D=4$, we identify $\C^4 \cong \Mat_{2\times 2}$ and ask that $r(z)$ has image in rank one matrices.  Alternatively, we view $r(z)$ as a degree-$(n-2)$ map 
$$
r: \P^1 \setminus \{\sigma_1,\ldots,\sigma_n\} \to \P^3
$$
and ask that the image of $r$ lies in the quadric $\P^1 \times \P^1 \subset \P^3$.  

In this case, we can write the $2 \times 2$ matrix $r(z)$ as $r(z) = \tau(z) \ttau(z)$ where $\tau(z)$ is a column vector of polynomials, and $\ttau(z)$ is a row vector of polynomials.  This representation is unique (up to rescaling $(\tau,\ttau) \mapsto (t \tau, t^{-1} \ttau)$) when the four matrix entries of $r(z)$ do not have a common root, or equivalently, when $r(z) = 0$ has no solutions.  Additionally imposing that $r(z)$ has degree equal to $n-2$, this is exactly the situation when $r(z)$ defines a genuine map $\P^1 \to \P^1 \times \P^1 \subset \P^3$ of degree $n-2$.  The polynomials $\tau(z),\ttau(z)$ are rational maps $\tau, \ttau: \P^1 \to \P^1$ obtained by composing $r$ with the projections to one of the factors. 

Recall the scattering correspondence $\I \subset K_n \times \M_{0,n}$.  We may restrict the family $\I \to K_n$ to the subvariety $K_n^4 \subset K_n$ of rank $\leq 4$ matrices.  Let $P_n^4$ denote the space of $n \times 4$ matrices $P$ whose rows are massless momentum vectors $p_1,\ldots,p_n$ satisfying momentum conservation.  Then we have a map $P_n^4 \to K_n^4$ (sending $P \mapsto PP^T$) and we can pull the family $\I$ back to a family $\tI' \to P_n^4$.  We let $\tI \subset P_n^4 \times \M_{0,n}$ denote those irreducible components of $\tI'$ that map dominantly to $P_n^4$.
Thus the map $\tI \to P_n^4$ is a morphism of degree $(n-3)!$; the preimages of a point in $P_n^4$ are the solutions to the scattering equations.
%More formally, we define $\tI \subset P_n^4 \times \M_{0,n}$ by restricting to the locus $(P_n^4)^\circ \subset P_n^4$ where the map to $P_n^4$ has degree $(n-3)!$, and then taking the closure inside $P_n^4 \times \M_{0,n}$.

\begin{proposition}\label{prop:Jd}
The correspondence $\tI \subset P_n^4 \times \M_{0,n}$ has $n-3$ irreducible components $\tJ_1,\tJ_2,\ldots,\tJ_{n-3}$, all of the same dimension $3n-4 = \dim P_n^4$.  For $d = 1,2,\ldots,n-3$, the $d$-th irreducible component $\tJ_d$ corresponds to factorizations $r(z) = \tau(z) \ttau(z)$ where $\tau(z)$ has degree $d$ and $\ttau(z)$ has degree $\td=n-2-d$.
\end{proposition}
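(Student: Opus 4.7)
The plan is to interpret $\tI$ as a moduli space of marked rational maps $r: \P^1 \to \P^3$ landing in the quadric $Q \cong \P^1 \times \P^1$, stratified by the bidegree of the factorization through $Q$.

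By \cref{prop:nullSE} and the identification $\C^4 \cong \Mat_{2 \times 2}$ under which $p^2 = \det p$, a point $(P, \usigma) \in \tI'$ is characterized by the condition that the matrix polynomial $r(z) = \sum_i p_i \prod_{j \ne i}(z - \sigma_j)$ has rank one for every $z$. Every such rank-one polynomial factors as $r(z) = \tau(z)\ttau(z)$ with $\tau \in \C^2[z]$ a column and $\ttau \in \C^2[z]$ a row, unique up to the scaling $(\tau,\ttau) \sim (c\tau, c^{-1}\ttau)$ whenever $\tau_1,\tau_2$ (respectively $\ttau_1,\ttau_2$) have no common root. Momentum conservation forces $\deg r \leq n - 2$, so on the generic locus $d + \td = n - 2$, where $d = \deg \tau$ and $\td = \deg \ttau$; the bidegree $(d, \td)$ is a locally constant invariant stratifying $\tI'$.

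For each $d$, let $\mathcal{F}_d$ denote the moduli of bidegree-$(d,\td)$ factorizations modulo scaling; it is irreducible of dimension $2(d+1) + 2(\td+1) - 1 = 2n - 1$. Define
\[
\Psi_d : \M_{0,n} \times \mathcal{F}_d \;\longrightarrow\; P_n^4 \times \M_{0,n}, \qquad (\usigma, \tau, \ttau) \mapsto \bigl((p_i)_{i=1}^n, \usigma\bigr),
\]
with $p_i := \tau(\sigma_i)\ttau(\sigma_i) / \prod_{j \ne i}(\sigma_i - \sigma_j) \in \Mat_{2 \times 2}$. By Lagrange interpolation, the reconstructed polynomial $\sum_i p_i \prod_{j \ne i}(z - \sigma_j)$ coincides with $\tau(z)\ttau(z)$ (both have degree $\leq n - 2$ and agree at the $n$ points $\sigma_i$), so momentum conservation $\sum_i p_i = 0$ is automatic (the coefficient of $z^{n-1}$ in $\tau\ttau$ vanishes), and the scattering equations hold by \cref{prop:nullSE}. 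Hence $\Psi_d$ lands in $\tI'$, and we let $\tJ_d$ denote the Zariski closure of its image.

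The source of $\Psi_d$ is irreducible of dimension $(n-3) + (2n - 1) = 3n - 4$, and $\Psi_d$ is generically injective by uniqueness of the factorization up to scaling, so $\tJ_d$ is irreducible of dimension $3n - 4 = \dim P_n^4$. The degenerate cases $d \in \{0, n-2\}$ correspond to a constant factor $\tau$ (respectively $\ttau$), forcing all column (respectively row) spinors to be proportional; this carves out a proper closed subvariety of $P_n^4$, so these strata are not components of $\tI$. For $1 \le d \le n - 3$, distinctness of the $n - 3$ components $\tJ_d$ follows from the locally constant bidegree invariant. The hardest step is verifying that each $\tJ_d$ genuinely maps dominantly to $P_n^4$, ruling out the possibility that despite the dimension match the generic image is a proper subvariety. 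One structural route leverages \cref{thm:n!}: the covering degree $(n-3)! = \deg(\tI \to P_n^4)$ decomposes as a sum over the dominant components, and each summand is a positive integer, matching the Eulerian-number sector counts of twistor-string theory.
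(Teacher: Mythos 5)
Your construction is essentially the paper's: factor $r(z)=\tau(z)\ttau(z)$, parametrize by $(\usigma,\tau,\ttau)$ modulo the scaling, count $(n-3)+\bigl(2(d+1)+2(\td+1)-1\bigr)=3n-4$, discard $d\in\{0,n-2\}$ because a constant factor would force all $\lambda_i$ (or all $\tlambda_i$) to be proportional, and use that $\tI\to P_n^4$ is generically finite of degree $(n-3)!$. Up to the last step this is correct, and your Lagrange-interpolation check that $\Psi_d$ really lands in $\tI'$ (momentum conservation from the vanishing coefficient of $z^{n-1}$, masslessness and the scattering equations via \cref{prop:nullSE}) is a clean way of making the construction explicit.

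The genuine gap is the step you flag yourself: showing that each $\tJ_d$ dominates $P_n^4$, i.e.\ is a component of $\tI$ and not just of $\tI'$, and the route you sketch does not close it. The Eulerian-number sector counts \eqref{eq:Euler} cannot be used as input: in these notes that formula is recorded \emph{after} \cref{prop:Jd} as a statement from the physics literature, and proving it is posed as an open problem, so invoking it here is circular. Moreover, the abstract argument ``the degree $(n-3)!$ decomposes into positive degrees of the dominant components'' only constrains components already known to dominate; it is perfectly consistent with some of the $\tJ_d$ lying over proper subvarieties of $P_n^4$ with positive-dimensional fibres, so it cannot show that all $n-3$ candidates actually occur. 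The paper finishes differently, with no sector counts: since $\tI$ is by construction the union of the components of $\tI'$ dominating $P_n^4$ and $\tI\to P_n^4$ has degree $(n-3)!$, $\tI$ is pure of dimension $3n-4$; over the locus where the fibre consists of exactly $(n-3)!$ solutions every solution factors with $1\le d\le n-3$, so $\tI\subseteq\tJ_1\cup\cdots\cup\tJ_{n-3}$; and each $\tJ_d$, being an irreducible $(3n-4)$-dimensional subvariety of $\tI$, is full-dimensional and hence a component of $\tI$ --- and components of $\tI$ dominate by definition, which is precisely the dominance you were missing. If you prefer not to assert that generic $(\usigma,\tau,\ttau)$ arise from points of $\tI$ over generic momenta, you need an independent argument --- e.g.\ checking surjectivity of the differential of the composite $\M_{0,n}\times\mathcal{F}_d\to P_n^4$ at one explicit point, or the inductive constructions of solutions in each sector in \cite{RSV,CHYthree}; the dimension match by itself does not rule out positive-dimensional fibres over a proper subvariety.
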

\begin{proof}
Given a point in $\tI$, sitting over a generic point $P \in P_n^4$, we have a corresponding map $r(z) = \tau(z)\ttau(z)$ defined by \eqref{eq:rz}, mapping $ \P^1 \setminus \{\sigma_1,\ldots,\sigma_n\}$ to the quadric $\P^1 \times \P^1 \subset \P^3$.  Since $r(z)$ has degree $n-2$, if $\tau(z)$ has degree $d$ then $\ttau(z)$ has degree $\td =n-2-d$.  First note that for a generic point $p_1,\ldots,p_n \in P_n^4$, neither $\tau(z)$ nor $\ttau(z)$ can be constant.  This is because by \eqref{eq:rz} we have $r(\sigma_i) = p_i$, and this would force all the $\lambda_i$ to be parallel, or all the $\tlambda_i$ to be parallel, which is generically not the case.  Thus $1 \leq d \leq n-3$.

The possible maps $\tau: \P^1 \to \P^1$ (resp. $\ttau$) is determined by a pair of polynomials of degree $\leq d $ (resp. $\leq \td = n-d-2$), belonging to a $2(d+1)$ (resp. $2(\td+1)$) dimensional affine space.  Thus $\tau(z),\ttau(z)$ is represented by a point in this affine space, and an open subset of this affine space arises in this way.  The possible maps $r(z)$ obtained in this way is thus an irreducible $(2n-1)$-dimensional space, since ($\alpha \tau(z))(\alpha^{-1} \ttau(z))= \tau(z) \ttau(z)$ for a scalar $\alpha$.  Also $\dim \M_{0,n} = n-3$ so we have described an irreducible subvariety of $\tI$ of dimension $3n-4$; let us call this family $\tJ_d$.

Now, $\dim P_n^4 = 4n - 4 - n = 3n-4$, with 4 constraints from momentum conservation and $n$ from the massless condition.  Since $\tI \to P_n^4$ is generically finite of degree $(n-3)!$, we have $\dim \tI = \dim P_n^4 = 3n-4$.  It follows that each of the subvarieties $\tJ_d$ described above is full-dimensional in $\tI$.  Over the locus $(P_n^4)^\circ$ where $\tI \to (P_n^4)^\circ$ has exactly degree $(n-3)!$, every solution belongs to one of the $\tJ_d$.
Thus $\tJ_1,\ldots,\tJ_{n-3}$ are exactly the irreducible components of $\tI$.
\end{proof}

The irreducible components $\tJ_d$ of $\tI$ separate the solutions of the scattering equations into $n-3$ sectors.  Let $E_{n,k}$ denote the Eulerian number, counting the number of permutations in $S_n$ with $k$ ascents.  Thus $\sum_k E_{n,k} = n!$.  For example, $E_{3,1} = 1$, $E_{3,2}  = 4$, and $E_{3,3} = 1$.  In \cite{CHYthree}, Cachazo--He--Yuan explain the following beautiful formula (earlier observed in a slightly different setting in \cite{RSV}).

\medskip

\noindent
{\bf CHY Formula.} 
\begin{equation}
\label{eq:Euler}
\#\{ \text{solutions in the $k= d+1$-th sector} \} =  E_{n-3,k-2}.
\end{equation}
Equivalently, the degree of the map $\tJ_d \to P_n^4$ is equal to the Eulerian number $E_{n-3,d-1}$.

\medskip

I do not know of an explanation of this CHY formula similar to the proof of \cref{thm:n!} that depends on the calculation of Euler characteristic.

\subsection{Twistor strings}\label{sec:twistor}
According to a proposal of Witten \cite{Wit}, and expanded upon by Roiban--Spradlin--Volovich \cite{RSV}, gluon scattering amplitudes in Yang-Mills theory can be obtained from \emph{twistor string theory}.  Indeed, this proposal was one of the key motivations for the CHY scattering equations.  

Substituting $z = \sigma_i$ into \eqref{eq:rz}, we have the equality of $2 \times 2$ matrices
$$
\tau(\sigma_i) \ttau(\sigma_i) = r(\sigma_i) = \prod_{j\neq i}(\sigma_j-\sigma_i) p_i = \prod_{j\neq i}(\sigma_j-\sigma_i)  \lambda_i \tlambda_i 
$$
and it follows that for some $t_i,\ttt_i \neq 0$, we have the equations
\begin{equation}\label{eq:ttt}
t_i \tau(\sigma_i) = t_i \sum_{m=0}^d \tau_m \sigma_i^m = \lambda_i, \qquad \ttt_i \ttau(\sigma_i) = \ttt_i \sum_{m=0}^{\td} \ttau_m \sigma_i^m =\tlambda_i
\end{equation}
where $t_i \ttt_i = \prod_{j\neq i}(\sigma_j-\sigma_i)^{-1}$, and $\tau_m$ (resp. $\ttau_m$) are the coefficients of the $2$-component polynomial $\tau(z)$ (resp. $\ttau(z)$).  When $\lambda_i,\tlambda_i$ have been fixed, we view \eqref{eq:ttt} as constraints on the $k,\tilde k$-planes (with $k = d+1$ and $\tilde k = \td + 1$) $C \in \Gr(k,n)$, $\tC \in \Gr(\tilde k,n)$:
\begin{equation}\label{eq:CtC}
C = \begin{bmatrix}
t_1 & t_2 & \cdots & t_n \\
t_1 \sigma_1 & t_2 \sigma_2 & \cdots & t_n \sigma_n \\
\vdots & \vdots & \ddots &\vdots \\
t_1 \sigma_1^{d} & t_2 \sigma_2^{d} & \cdots & t_n \sigma_n^d 
\end{bmatrix}, \qquad \tC = \begin{bmatrix}
\ttt_1 & \ttt_2 & \cdots & \ttt_n \\
\ttt_1 \sigma_1 & \ttt_2 \sigma_2 & \cdots & \ttt_n \sigma_n \\
\vdots & \vdots & \ddots &\vdots \\
\ttt_1 \sigma_1^{\td} & \ttt_2 \sigma_2^{\td} & \cdots & \ttt_n \sigma_n^{\td}
\end{bmatrix} 
\end{equation}
stating that $\lambda \subset C$ and $\tlambda \subset \tC$.  In other words, the existence of $\tau(z),\ttau(z)$, gives the 2-vectors $\tau_0,\ldots,\tau_d,\ttau_0,\ldots,\ttau_{\td}$, which witness $\lambda \subset C$ and $\tlambda \subset \tC$.
\begin{lemm}\label{lem:ttt}
With $t_i \ttt_i = \prod_{j\neq i}(\sigma_j-\sigma_i)^{-1}$, we automatically have $C \cdot \tC^T = 0$.  
\end{lemm}
Since $k + \tilde k= n$, we have that $C$ and $\tC$ are orthogonal complements of each other.  In other words, $C = \tC^\perp$ as points in $\Gr(k,n)$.  The scattering equations are now the constraint that $\lambda \subset C \subset \tlambda^\perp$.

%
% now the scattering equations are formulated as a subvariety of the Grassmannian: 
%$$
%\{C \text{ of the form \eqref{eq:CtC} } \in \Gr(d,n) \mid \lambda \subset C \subset \tlambda^\perp\}.
%$$

Following \cite{ABCT}, we view \eqref{eq:CtC} using the Veronese map $\theta: \P^1 \to \P^{k-1}$ (with $k = d+1$) induced by the map 
$$
\theta: \C^2 \to \Sym^{k-1} \C^2 \cong \C^{k}, \qquad v \mapsto v \otimes v \otimes \cdots \otimes v.
$$
The representation $\Sym^{k-1} \C^2$ of $\GL(2)$ gives a homomorphism $\theta: \GL(2) \to \GL(k)$, and the Veronese map is equivariant with respect to the actions of $\GL(2)$:
$$
\theta(g \cdot v) = \theta(g) \cdot \theta(v), \qquad g \in \GL(2), v \in \C^2.
$$
The map $\theta \times \theta \cdots \theta: (\C^2)^n \to (\C^{k+1})^n$ descends to a rational \emph{Veronese map} $\theta: \Gr(2,n) \longdashrightarrow \Gr(k,n)$ given by the formula
$$
\begin{bmatrix} a_1 & a_2 & \cdots & a_n \\
b_1 & b_2 & \cdots & b_n
\end{bmatrix} \longmapsto \begin{bmatrix} a^{k-1}_1 & a^{k-1}_2 & \cdots & a^{k-1}_n \\
a_1^{k-2}b_1 & a_2^{k-2}b_2 & \cdots & a_n^{k-2}b_n \\
\vdots & \vdots & \ddots & \vdots \\
b^{k-1}_1 & b^{k-1}_2 & \cdots & b^{k-1}_n
\end{bmatrix}
$$
Note that $\theta$ is only a rational map.  It is not, for example, defined on the $\binom{n}{2}$ points $\sp(e_i,e_j) \in \Gr(2,n)$.  

The matrices $C, \tC$ of \eqref{eq:CtC} are, generically, those of the form $\theta(V)$ for $V \in \Gr(2,n)$.
The scattering correspondence can now be viewed as a subvariety of $\Gr(2,n) \times \tK_n^4$:
\begin{equation}\label{eq:Jd}
\J_d = \J_{k-1} := \{(V,\lambda,\tlambda) \mid \lambda \subset \theta(V) \subset \tlambda^\perp\} \subset \Gr(2,n) \times \tK_n^4.
\end{equation}
The fiber of $\J_d$ over generic $(\lambda,\tlambda)$ can be identified with the fiber of $\tJ_d$ over the corresponding $P \in P_n^4$, where $p_i = \lambda_i \tlambda_i$.

\begin{definition}\label{def:ABCT}
Define the \emph{ABCT variety} $V(k,n) := \overline{\theta(\Gr(2,n))} \subset \Gr(k,n)$ as the closure of the image of $\theta$, a $(2n-4)$-dimensional irreducible subvariety of the Grassmannian.
\end{definition}

\begin{remark}
Let us mention the following natural appearance of more positive geometry.
\begin{conjecture}
The closure $V(k,n)_{\geq 0}:= \overline{\theta(\Gr(2,n)_{>0})}$ is a positive geometry.
\end{conjecture}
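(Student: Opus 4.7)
The plan is to exploit the fact that the Veronese map $\theta \colon \Gr(2,n) \dashrightarrow V(k,n)$ is birational, and to transport the positive geometry structure from $(\Gr(2,n),\Gr(2,n)_{\geq 0})$ across this birational equivalence to $(V(k,n),V(k,n)_{\geq 0})$ by pushforward. The underlying principle is exactly the ``pushforward heuristic'' of \cite{ABL} already used in the proof of \cref{thm:scatform}.

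The first step is to verify that $\theta$ restricts to a diffeomorphism $\Gr(2,n)_{>0} \xrightarrow{\sim} V(k,n)_{>0}$. Positivity is preserved: for a representative $2\times n$ matrix with entries giving $\sigma_1 < \cdots < \sigma_n$ and positive weights, every $k \times k$ Pl\"ucker minor of the derived matrix $C$ in \eqref{eq:CtC} factors as $t_{i_1}\cdots t_{i_k}$ times a Vandermonde determinant $\prod_{a<b}(\sigma_{i_b}-\sigma_{i_a})$, which is manifestly positive. Injectivity of $\theta$ on the Zariski-open locus (and hence on $\Gr(2,n)_{>0}$) follows because, for $n \geq k+1$, the rational normal curve of degree $k-1$ through the $n$ column-points of a representative of $\theta(V)$ is unique, and recovers $V \in \Gr(2,n)$.

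The second step is to define the candidate canonical form as the pushforward
\[
\Omega(V(k,n)_{\geq 0}) := \theta_*\,\Omega(\Gr(2,n)_{\geq 0}),
\]
using the canonical form on the positive Grassmannian from \cref{ex:forms}. Because $\theta$ is birational between irreducible $(2n-4)$-dimensional varieties, this pushforward is a well-defined rational top-form on $V(k,n)$. One then verifies the residue axiom of \cref{def:posgeom} inductively on $n$. By \cref{ex:forms}, the form $\Omega(\Gr(2,n)_{\geq 0})$ has simple poles only along the $n$ positroid divisors $\{(i,i+1)=0\}$, and its residues there are canonical forms of lower-dimensional positive geometries that split as products (analogously to \cref{prop:break} for $\tM_{0,n}$). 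Using compatibility of residues with pushforward under birational maps, the residue of $\theta_*\Omega(\Gr(2,n)_{\geq 0})$ along a facet of $V(k,n)_{\geq 0}$ equals the pushforward of the corresponding residue, which by the induction hypothesis is the canonical form of that facet, realized as (a product of) lower ABCT geometries.

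The main obstacle is controlling the indeterminacy of the Veronese map on the boundary. As noted after \cref{def:ABCT}, $\theta$ fails to be defined at points such as $\sp(e_i,e_j)$ where many Pl\"ucker coordinates vanish, and such loci live precisely in the boundary of $\Gr(2,n)_{\geq 0}$. A clean treatment should resolve this indeterminacy by passing to a suitable compactification; a natural candidate is to extend $\theta$ along the map $\bM_{0,n} \to \bM_{0,n}(\P^{k-1},k-1)$ to the moduli of stable rational maps, where the indeterminacy is resolved by bubbling off components. One must then verify that no spurious boundary strata of $V(k,n)_{\geq 0}$ arise from these exceptional loci, and that the pushforward form has no poles outside the expected facets. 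A secondary subtlety is that $V(k,n)$ is in general singular, so the residue calculus should be interpreted in the more general framework of \cite{ABL,LamPosGeom}; checking that the singular locus is disjoint from $V(k,n)_{>0}$ and that the boundary facets of $V(k,n)_{\geq 0}$ lie in the smooth locus of each facet stratum is the most delicate part of the argument.
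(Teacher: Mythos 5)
This statement is left as an open conjecture in the paper --- there is no proof there to compare against --- so your proposal has to stand on its own as a complete argument, and it does not yet. What you have written is the expected strategy (the pushforward heuristic of \cite{ABL}, transporting $\Omega(\Gr(2,n)_{\geq 0})$ along the birational map $\theta$), but the step where the argument has to do real work is exactly the step you defer. Your inductive verification of the residue axiom assumes that the facets of $V(k,n)_{\geq 0}$ are the (closures of the) images of the $n$ positroid divisors $\{(i,i+1)=0\}$, and that the only possible poles of $\theta_*\Omega(\Gr(2,n)_{\geq 0})$ lie along them. Neither is justified: $\theta$ is undefined on large parts of $\partial \Gr(2,n)_{\geq 0}$ (e.g.\ at $\sp(e_i,e_j)$ and wherever consecutive columns degenerate), so the boundary of the closure $\overline{\theta(\Gr(2,n)_{>0})}$ is not simply the image of the boundary; strata can be contracted, new strata can appear from the indeterminacy locus, and pushforward along a map that contracts or misses boundary divisors does not send simple poles to simple poles along predictable divisors. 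The paper itself stresses that the face structure of $V(k,n)_{\geq 0}$ \emph{differs} from that of $\Gr(2,n)_{\geq 0}$ and is only conjecturally related to the momentum amplituhedron (the HKZ picture), so ``the residue along a facet equals the pushforward of the residue along the corresponding positroid divisor'' is precisely the unknown content of the conjecture, not a consequence of residue--pushforward compatibility. In the same vein, the claim that residues of $\Omega(\Gr(2,n)_{\geq 0})$ split as products of lower ABCT geometries, which your induction on $n$ needs, is asserted by analogy with \cref{prop:break} but never established.

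Two smaller points. First, your birationality argument is slightly off: a rational normal curve of degree $k-1$ in $\P^{k-1}$ is determined by $k+2$ points in general position, not $k+1$; since $k\leq n-2$ one still has $n\geq k+2$, so the conclusion survives, but as stated the count is wrong, and one should also say why the $n$ column points of a generic $C=\theta(V)$ (which are only defined up to the $\GL(k)$ choice of basis of $C$) are in general position on such a curve. Second, invoking \cref{def:posgeom} for the possibly singular variety $V(k,n)$ and for facet strata whose geometry is unknown is not a ``secondary subtlety'': without identifying those facets and showing they are themselves positive geometries, the recursive definition cannot even be set up. In short, the proposal is a reasonable research plan --- essentially the one the surrounding text of the paper suggests --- but the boundary analysis it postpones is the whole problem, so there is a genuine gap rather than a proof.
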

Note that the face structure of $V(k,n)_{\geq 0}$ and $\Gr(2,n)_{\geq 0}$ differ, even though their interiors are naturally diffeomorphic.
I expect the face structure of $V(k,n)_{\geq 0}$ to be closely related to that of the momentum amplituhedron \cite{DFLP} (see \cref{sec:SH}) in the same way that the face structure of $(\M_{0,n})_{\geq 0}$ matches that of the associahedron.
\end{remark}

%\subsection{Momentum amplituhedron and HKZ conjecture}

%He-Kuo-Zhang \cite{HKZ} conjecture that $V(2,n)$
%\begin{conjecture}
%
%\end{conjecture}

\subsection{Extracting spinor-helicity gluon amplitudes} 
%Define the correspondence
%$$
%\J'_d =\J'_{k-1} := \{(C,\lambda,\tlambda) \mid \lambda \subset C \subset \tlambda^\perp\} \subset V(k,n) \times \tK_n^4.
%$$
%Then over a open dense subset of $\tK_n^4$, the map $\J_d \to \tK_n^4$ has degree given by the Eulerian number $E_{n-3,k-2}$. 
%

In \cref{sec:scatmap}, we saw that the scalar amplitude could be obtained by interpreting the scattering equations as a scattering map from $\M_{0,n}$ to kinematic space $K_n$.  To obtain spinor-helicity amplitudes, we similarly take the scattering correspondence $\J_d$ \eqref{eq:Jd} and view it as a map from $\Gr(2,n)$ to a subvariety of $\tK_n^4$.   Since $\dim \Gr(2,n) = 2n-4$, we need a $(2n-4)$-dimensional subvariety of $\tK_n^4$.

Let $[n] = N \sqcup P$, where the particles in $N$ have negative helicity and the particles in $P$ have positive helicity.  Let $k = |N|$ and $n-k = |P|$.
We define an ``ambitwistor" subvariety $\Lambda_{N,P}$ of $\tK_n^4$ as follows.  Let $\pi_N: \C^n \to \C^{N}$ and $\pi_P: \C^n \to \C^P$ denote the orthogonal projections.  Viewing $(\lambda_i, i \in N)$ as a $2 \times k$ matrix defining a two-plane $\lambda_N \subset \C^N$ and similarly $\tlambda_P \subset \C^P$.  Define
$$
\Lambda_{N,P} := \{(\lambda, \tlambda) \in \tK_n^4 \mid (\lambda,\tlambda) \in \Gr(2,\pi_N^{-1}(\lambda_N)) \times \Gr(2, \pi_P^{-1}(\tlambda_P))\} \subset \tK_n^4.
$$
Since $\dim \pi_N^{-1}(\lambda_N) = n-k+2$ and $\dim \pi_P^{-1}(\lambda_P) = k+2$, the product of Grassmannians has dimension $2(n-k) + 2k = 2n$, and the subvariety of the product satisfying $\lambda \perp \tlambda$ has dimension $2n-4$.  This is our desired subvariety, and the correspondence $\J_d$ defines a dominant pushforward map
$$
\Phi(\lambda_N,\tlambda_P): \Gr(2,n) \to \Lambda_{N,P}.
$$
Note that in $\Lambda_{N,P}$, the spinors $\lambda_i, i\in N$ and $\tlambda_i,i \in P$ are considered fixed, while the remaining spinors $\lambda_i, i \in P$ and $\tlambda_i, i \in N$ are considered varying.

By \cite{MaSk,HZ,DFLP,HKZ}, the spinor-helicity amplitude $A_{N,P}$ can be obtained as the coefficient of % pushforward form $\Phi(\lambda_N,\tlambda_P)_* \Omega(\Gr(2,n)_{\geq 0})$ by appropriately extracting the coefficient of 
$$
\prod_{i \in N} (d \tlambda_i^1 \wedge d \tlambda_i^2) \prod_{j \in P} (d \lambda_j^1 \wedge d \lambda_j^2)
$$
in 
$$
\Phi(\lambda_N,\tlambda_P)_* \Omega(\Gr(2,n)_{\geq 0}) \wedge d^4(\lambda \tlambda).
$$
To this computation, we pull the $(2n-4)$-form $\Theta = \Phi(\lambda_N,\tlambda_P)_* \Omega(\Gr(2,n)_{\geq 0})$ back to $\hLambda_n^4$.  Wedging with $d^4(\lambda \tlambda)$ gives us a well-defined $2n$-form on $\Mat_{2,n} \times \Mat_{2,n}$.
We explain in the following example.

\begin{example}
Let us consider the case $|N|=k=2$ and $|P|=n-2$, called ``MHV amplitudes".  Then $\dim \pi_N^{-1}(\lambda_N) = n$ so $\pi_N^{-1}(\lambda_N) = \C^n$ while $\dim \pi_P^{-1}(\lambda_P) = 4$.  If $(\lambda,\tlambda) \in \Lambda_{N,P}$ then $\dim \lambda^\perp = n-2$ so generically we must have $\tlambda = \lambda^\perp \cap \pi_P^{-1}(\lambda_P)$.  In other words, the projection 
$$
\Lambda_{N,P} \to \Gr(2,\pi_N^{-1}(\lambda_N)) \cong \Gr(2,n), \qquad (\lambda,\tlambda) \mapsto \lambda
$$
is birational.  The composition of $\Phi(\lambda_N,\tlambda_P)$ with this projection is an isomorphism, and thus the pushfoward form $\Omega =\Phi(\lambda_N,\tlambda_P)_* \Omega(\Gr(2,n)_{\geq 0})$ is simply the canonical form on $\Gr(2,n)$.  As in \cref{ex:forms}, we pull the form back to a $(2n-4)$-form $\Theta$ on $\Mat_{2,n}$.  We have, with $N = \{a,b\}$, up to a constant,
$$
\Theta = \frac{\langle a b \rangle^2}{\langle 12 \rangle \langle 23 \rangle \cdots \langle n1 \rangle} \prod_{j \in P} (d \lambda_j^1 \wedge d \lambda_j^2) + \text{ other terms},
$$
where the other terms involve $d \lambda_i^\alpha$ for $i \in N$, and vanish if we assume that $\lambda_i$ is constant for $i \in N$.  (Note that this form is invariant under scaling each of the spinors $\lambda_i$.)
Now, the momentum conservation constraint $\lambda \tlambda = 0$ consists of four equations, and $d^4(\lambda \tlambda)$ is the wedge of them.  We have, up to a constant, 
$$
d^4(\lambda \tlambda) = \langle a b \rangle^2 d\tlambda_a^1 \wedge d\tlambda_a^2 \wedge d\tlambda_b^1 \wedge d\tlambda_b^2 + \text{ other terms}.
$$
The other terms either involve $d\tlambda_j$ for $j \in P$, or they vanish in the product $\Theta \wedge d^4(\lambda \tlambda)$.  Thus, 
$$
\Theta \wedge d^4(\lambda \tlambda) =  \frac{\langle a b \rangle^4}{\langle 12 \rangle \langle 23 \rangle \cdots \langle n1 \rangle} \prod_{i \in N} (d \tlambda_i^1 \wedge d \tlambda_i^2) \prod_{j \in P} (d \lambda_j^1 \wedge d \lambda_j^2)
$$
and
$$
A_{N,P} = A^{{\rm gluon}}_{N,P} = \frac{\langle a b \rangle^4}{\langle 12 \rangle \langle 23 \rangle \cdots \langle n1 \rangle}.
$$
This is the famed \emph{Parke-Taylor formula} for MHV amplitudes.
\end{example}

We can also use the scattering correspondence diagram
\[ \begin{tikzcd}
&\J_d \arrow{rd}{q} \arrow[swap]{ld}{p} & \\%
\Gr(2,n) && \Lambda_n^4
\end{tikzcd}
\]
to define a scattering form on spinor kinematic space, analogous to \cref{def:scatform}.

\begin{definition}\label{def:Up}
The Yang-Mills scattering form is the $(2n-4)$-form
$$
\Upsilon_n := q_* p^* \Omega(\Gr(2,n)_{\geq 0})
$$
on spinor kinematic space $\Lambda_n^4$.
\end{definition}

We expect that $\Upsilon_n$ is equal to the scattering form defined in \cite{HZ}.  We formulate this as a conjecture.
\begin{conjecture}
The $(2n-4)$-form $\Upsilon_n$ coincides with the $\mathcal{N}=4$ SYM scattering form defined in \cite{HZ}.
\end{conjecture}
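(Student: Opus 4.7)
My plan is to mimic the strategy that worked for the scalar scattering form in \cref{sec:scatproof}: establish projectivity, match poles and residues against the HZ form, and close the induction on $n$. The key new feature is that, by \cref{prop:Jd}, the correspondence $\J = \bigsqcup_{d=1}^{n-3} \J_d$ is reducible, so I expect the comparison to split into a sum of sector-by-sector statements $\Upsilon_n^{(d)} = q_{d,*} p_d^* \Omega(\Gr(2,n)_{\geq 0})$, where $q_d, p_d$ are the restrictions to $\J_d$. The HZ form is itself assembled from contributions associated to helicity partitions $[n] = N \sqcup P$ with $|N| = d+1$, so the sectors on both sides should be in bijection via $d \leftrightarrow k-1$.

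First I would verify the base cases. For $n=4$ there is only the $d=1$ sector, $\Gr(2,4)$ is four-dimensional, and \cref{def:Up} produces a $4$-form on $\Lambda_4^4$ that can be computed explicitly via the Veronese embedding $\theta: \Gr(2,4) \hookrightarrow V(2,4) \subset \Gr(2,4)$ of \cref{def:ABCT}; comparing with the four-point Parke-Taylor expressions of \cite{HZ} directly verifies the $n=4$ case. Next I would establish that $\Upsilon_n$ is a projective form, i.e.\ invariant under rescaling each $\lambda_i$ and each $\tlambda_i$ by independent scalars $t_i$ (as required by \eqref{eq:helicity}), which follows because $\Omega(\Gr(2,n)_{\geq 0})$ is invariant under the column-scaling torus $T$ and the scattering correspondence \eqref{eq:Jd} is equivariant under the joint $T$-action on $\lambda, \tlambda$ and $\theta(V)$.

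The heart of the argument is the residue computation. The poles of $\Upsilon_n$ come from those of $p^*\Omega(\Gr(2,n)_{\geq 0})$ along the positroid divisors $(i,i{+}1)=0$ (by \cref{ex:forms}), pushed forward along $q$. Because $q$ is proper on compactifications, residues commute with pushforward (as in \cite[Proposition 2.5]{KR}), so I can reduce to understanding what happens when the $2 \times n$ matrix $V$ degenerates so that two cyclically adjacent columns become parallel. Concretely, on the positroid divisor where $(i,i{+}1) \to 0$, the Veronese image $\theta(V)$ degenerates in a controlled way: the $k$-plane $\theta(V)$ develops a factorization compatible with a split of $[n]$ into two cyclic intervals $I, I^c$ meeting at $\{i,i{+}1\}$. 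Under the scattering equations, this forces the multi-particle momentum $P_I = \sum_{j \in I} p_j$ onto the null cone, i.e.\ $P_I^2 \to 0$, which is precisely the factorization channel appearing in the HZ form. I would then show that
\begin{equation*}
\Res_{(i,i+1) = 0}\, \Upsilon_n \;=\; \sum_{(d_L,d_R)} \Upsilon_{n_L}^{(d_L)} \wedge \Upsilon_{n_R}^{(d_R)} \wedge \frac{d P_I^2}{P_I^2}
\end{equation*}
(in appropriate coordinates on the factorized kinematics), where the sum on the right is over the ways to distribute the sector index $d$ across the factorization. This is the spinor-helicity analog of \cref{lem:M0npos} combined with the argument of \cref{sec:scatproof}, and it matches the factorization axiom satisfied by the HZ form.

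The main obstacle I anticipate is precisely this factorization step: whereas in the scalar case $\tM_{0,n}$ is a toric-variety complement and the residue calculation reduces to the pushforward formula of \cite{ABL} via the associahedron moment map, here the analogous role is played by the ABCT variety $V(k,n)$ of \cref{def:ABCT}, whose positive part is conjectured but not proven to be a positive geometry, and whose boundary structure (which ought to be that of the momentum amplituhedron of \cite{DFLP}) is not yet fully under control. Granting the expected positive-geometry structure of $V(k,n)_{\geq 0}$ and the identification of its boundary stratification with the factorization channels, the residue matching proceeds cleanly and induction on $n$ closes the argument. Without that input, one can still extract the statement sector-by-sector in low-$n$ examples, and a careful pushforward calculation using \eqref{eq:CtC} should allow one to bypass $V(k,n)$ altogether by working on $\Gr(2,n)$ directly, at the cost of a more intricate residue analysis on positroid divisors.
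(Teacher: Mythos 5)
The statement you are trying to prove is posed in the paper as an open conjecture: the author explicitly writes ``We expect that $\Upsilon_n$ is equal to the scattering form defined in \cite{HZ}. We formulate this as a conjecture,'' and no proof is given anywhere in the text (the closely related statement of He--Kuo--Zhang \cite{HKZ}, that the scattering map sends $\Gr(2,n)_{>0}$ diffeomorphically onto the interior of the momentum amplituhedron, is likewise cited as a conjecture). So there is no proof in the paper to compare yours against, and your text should be read as a research plan rather than a proof.

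As a plan it is a sensible transplant of the scalar argument of \cref{sec:scatproof}, but several of its load-bearing steps are exactly the open points, and at least one step as written would fail. First, the scalar proof worked because the target form $\Psi'_n$ had an explicit combinatorial definition whose poles and residues could be computed independently, and because the boundary geometry was completely controlled: $\tM_{0,n}$ sits in a toric variety, the scattering map restricted to $H(\uc)$ is the algebraic moment map, and \cref{lem:M0npos} gives the factorization of residues. In your setting you would need (i) a characterization of the \cite{HZ} form by its poles, residues and projectivity (a uniqueness statement you do not supply), (ii) the positive-geometry/boundary structure of $V(k,n)_{\geq 0}$ or of the momentum amplituhedron, which you correctly flag as conjectural, and (iii) control of how the scattering correspondence $\J_d$ of \eqref{eq:Jd} degenerates over the positroid divisors $(i,i{+}1)=0$, including the fact that sectors can mix under degeneration; none of these is established, so the induction cannot close. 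Second, your displayed residue formula is dimensionally inconsistent: with $n_L+n_R=n+2$, the wedge $\Upsilon_{n_L}^{(d_L)}\wedge\Upsilon_{n_R}^{(d_R)}$ already has degree $2n-4$, while $\Res_{(i,i+1)=0}\Upsilon_n$ must have degree $2n-5$; moreover a residue along a simple pole cannot retain a $\dlog P_I^2$ factor in the same variable, and the factorized kinematics carries extra internal spinor degrees of freedom (and a sum over internal helicity) that your formula does not account for. Finally, the claimed sector-by-sector match with helicity partitions conflates the decomposition of $\tI$ into components $\tJ_d$ (\cref{prop:Jd}) with a decomposition of the form defined in \cite{HZ}; in the paper, helicity amplitudes are extracted by pulling $\Upsilon_n$ back to the subvarieties $\Lambda_{N,P}$, not by splitting $\Upsilon_n$ itself, so that identification also needs an argument. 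The base case $n=4$ and the projectivity/torus-equivariance check are fine, but by themselves they only give evidence, not a proof.
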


\subsection{Exercises and Problems}
\begin{exercise}\label{ex:spinordet} \
\begin{enumerate}[label=(\alph*)]
\item
 Prove \cref{lem:spinordet}.
 \item
 Let $n = 4$.  Prove that 
 $$
 \frac{\langle i j \rangle^4}{\langle 12 \rangle \langle 23 \rangle \langle 34 \rangle \langle 14 \rangle} =  \frac{[ kl ]^4}{[12][23][34][14]}
  $$
  where $\{i,j,k,l\} = \{1,2,3,4\}$.
   \item
 Prove \cref{lem:ttt}.
 \item Figure out the meaning of, and then prove, the following identity of four-point amplitudes:
 $$
 A_4(1^-,2^-,3^+,4^+) +  A_4(2^-,1^-,3^+,4^+) +  A_4(2^-,3^+,1^-,4^+) =0.
 $$
 \item Prove the identity
 $$
 A_5(1,2,5,3,4) = A_5(1,2,4,3,5) + A_5(1,4,2,3,5) + A_5(1,4,3,2,5)
 $$
 for any choice of helicities for $1,2,3,4,5$.

 \end{enumerate}
\end{exercise}

\begin{exercise}\label{ex:orthogrel} Let $X \subset \Gr(k,n) \times \Gr(\ell,n)$ denote the subvariety of pairs $(V,W) \in  \Gr(k,n) \times \Gr(\ell,n)$ such that $V \perp W$.  Figure out or look up the generators for the ideal that cuts out $X$.  In the case $k=2=\ell$, check that one recovers the equations of \eqref{eq:spinorMC}.
\end{exercise}

\begin{exercise}
Calculate the helicity amplitude $A_{N,P}$ when $|N|=|P|=3$ and $n = 6$.
\end{exercise}

\begin{problem}
Use the geometry and topology of $\tI$ to prove the CHY formula \eqref{eq:Euler}.
\end{problem}

\begin{problem}\footnote{This problem is addressed by Agostini, Ramesh, and Shen \cite{ARS}.  In particular, (b), (c) have been solved for $k=3$.} Recall the subvariety $V(k,n) \subset \Gr(k,n)$ from \cref{def:ABCT}.
\begin{enumerate}[label=(\alph*)]
\item
The variety $V(3,6)$ is a codimension one irreducible subvariety of $\Gr(3,6)$.  It is cut out by the condition that $6$ points on $\P^2$ lie on a conic.  Find, in Pl\"ucker coordinates, the relation that cuts out this variety.
\item
Calculate the ideal of $V(k,n)$.
\item
What is the degree of the subvariety $V(k,n) \subset G(k,n)$?  What is the cohomology class $[V(k,n)] \in H^*(\Gr(k,n))$?  (We expect this to be closely related to, or identical to, the sum of the cohomology classes of the $(2n-4)$-dimensional positroid varieties which appear in the formula for the amplitude in \cite{Grassbook}.)
\end{enumerate}
\end{problem}

\subsubsection{Spinor-helicity polytopes} \label{sec:SH} 
As mentioned in \cref{rem:Ass}, the pullback $\iota^*_{\uc} \Psi_n$ of the scattering form to $H(\uc)$ is the canonical form of the associahedron polytope, a positive geometry.  Analogously, in momentum-twistor coordinates, the (super-)Yang-Mills amplitude can be obtained from the canonical form of the \emph{amplituhedron} \cite{AT}, another positive geometry.  In recent years, interest has turned to the momentum amplituhedron $\M_{n,k} \subset \Lambda_n^4$ \cite{DFLP, HZ, HKZ}.

\def\tL{{\tilde L}}
\def\tY{{\tilde Y}}
Let $L$ be a full-rank $(n-k+2) \times n$ matrix and $\tL$ be a full-rank $(k+2) \times n$ matrix, viewed as linear maps.  They induce (rational) maps on Grassmannians
$$
\tL: \Gr(k,n) \longdashrightarrow \Gr(k,k+2), \qquad L: \Gr(k,n) \longdashrightarrow \Gr(n-k,n-k+2)
$$
where the first map is just $V \mapsto \tL(V)$ for $V \in \Gr(k,n)$ and the second map is $V \mapsto L(V^\perp)$.  These combine to a rational map
$$
(\tL,L): \Gr(k,n) \longdashrightarrow \Gr(k,k+2) \times \Gr(n-k,n-k+2).
$$
Let $\sp(L), \sp(\tL) \subset \C^n$ denote the $(n-k+2)$-dimensional (resp. $(k+2)$-dimensional) subspaces spanned by the rows of $L$ (resp. $\tL$).  We have a map  
\begin{align}\label{eq:tLL}
\begin{split}
 \Gr(k,k+2) \times \Gr(n-k,n-k+2) &\to  \Gr(2,k+2) \times \Gr(2,n-k+2) \\ &\cong \Gr(2,\sp(\tL)) \times \Gr(2,\sp(L)) \\ &\hookrightarrow \Gr(2,n) \times \Gr(2,n),
 \end{split}
\end{align}
where the first map takes orthogonal complements.  In the following, we abuse notation by letting $\Lambda_n^4$ refer to a subvariety of $\Gr(2,n) \times \Gr(2,n)$ (instead of the cones over such).  Let $\Phi_{(\tL,L)}$ denote the composition of $(\tL,L)$ with \eqref{eq:tLL}.

\begin{lemm}[\cite{DFLP}]
The map $\Phi_{(\tL,L)}$ lands in $\Lambda_n^4$.
\end{lemm} 
\begin{proof}
Equip $\C^n$ and $\C^{n-k+2}$ with their natural inner products, and view $L: \C^n \to \C^{n-k+2}$ and $L^T: \C^{n-k+2} \to \C^n$ as linear maps so that $\langle a, L(b) \rangle_{\C^{n-k+2}} = \langle L^T(a), b \rangle_{\C^{n}}$.  Let $Y = L(V^\perp)$.  Let $v \in V^\perp$ and $u \in Y^\perp \subset \C^{n-k+2}$.  Then we have 
$ \langle v, L^T u \rangle_{\C^n} = \langle L(v), u \rangle_{\C^{n-k+2}} = 0 $
since $L(v) \in Y$.  Thus $V^\perp \subset (L^T(Y^\perp))^\perp$ or $L^T(Y^\perp) \subset V$.  Here, $L^T(Y^\perp) \subset \C^n$ is a 2-dimensional subspace.  Similarly, with $\tY = \tL(V)$, we have $\tL^T(\tY^\perp) \subset V^\perp$.  It follows that $L^T(Y^\perp)$ and $\tL^T(\tY^\perp)$ are orthogonal.
\end{proof}

The image of $(\tL,L)$ in $\Gr(k,k+2) \times \Gr(n-k,n-k+2)$ has codimension 4, and thus dimension $2k+2(n-k)-4 = 2n-4$.  The subvariety $\Phi_{(\tL,L)}(\Gr(k,n)) \subset \Lambda_n^4$ also has dimension $2n-4$, and it is exactly these subvarieties that the scattering form $\Upsilon_n$ of \cref{def:Up} should be pulled back to, to obtain Yang-Mills amplitudes.

We now specialize to the case where all Grassmannians and matrices are defined over $\R$.  The \emph{momentum amplituhedron} $\M_{n,k}$ \cite{DFLP} is defined to be the image of $\Gr(k,n)_{\geq 0}$ under the map $(\tL,L)$, or under the map $\Phi_{(\tL,L)}$, when $(\tL,L)$ satisfies the appropriate positivity condition.  Positivity conditions are discussed in \cite[(2.16) and below (2.32)]{DFLP}, which we do not review here.  An important motivation of our presentation is the work of He-Kuo-Zhang \cite{HKZ}, who conjecture that the scattering map of \cref{sec:twistor} sends $\Gr(2,n)_{>0}$ diffeomorphically onto the interior of the momentum amplituhedron.  

\begin{remark}
The amplituhedron, defined as a subspace of the Grassmannian $\Gr(k,k+m)$ is a direct generalization of a polytope, with the Grassmannian replacing projective space.  In contrast, the momentum amplituhedron lives in the rather more complicated $\Lambda_n^4$ which can be identified with a partial flag variety.  There are nevertheless advantages to the latter setup.  The analogue of the scattering equations, and the twistor string variety $V(k,n) \subset \Gr(k,n)$ is not known in the setting of amplituhedron, nor is the analogue of the HKZ conjecture \cite{HKZ}.  In momentum space, the pairs $(\lambda,\tlambda) \in \Lambda_n^4$ exhibits the natural parity duality $\lambda \leftrightarrow \tlambda$, whereas for the amplituhedron parity duality is more involved \cite{GLparity}.
\end{remark} 

We suggest the following definition, an analogue of the Grassmann polytopes of \cite{LamCDM}.

\begin{definition}
A \emph{spinor-helicity polytope} is the image $$P :=\Phi_{(\tL,L)}(\Gr(k,n)_{\geq 0}) \cong (\tL,L)(\Gr(k,n)_{\geq 0}),$$ when the map is well-defined on the totally-nonnegative Grassmannian.
\end{definition}

\begin{problem}
Find conditions on $(\tL,L)$ to guarantee that $\Phi_{(\tL,L)}(\Gr(k,n)_{\geq 0})$ is well-defined.
\end{problem}
In the case of Grassmann polytopes, this problem is solved by Karp \cite{Kar}.  A sufficient condition is discussed earlier in \cite{LamCDM} for ``tame Grassmann polytopes".

Let us also suggest the following Schubert calculus problem which is the spinor-helicity version of \cite{LamAmpl}.  Recall the positroid varieties $\Pi_f \subset \Gr(k,n)$ \cite{KLS}.

\begin{problem}
What is the cohomology class of $[\Phi_{(\tL,L)}(\Pi_f)]$?  When do we have $\dim \Phi_{(\tL,L)}(\Pi_f) = \dim \Pi_f$, and in that case, what is the degree of the map $\Pi_f \to \Phi_{(\tL,L)}(\Pi_f)$?
\end{problem}

\section{String amplitudes}\label{sec:string}
\def\Fcal{{\mathcal{F}}}
\def\Re{{\rm Re}}
In this section, we introduce string theory amplitudes, following the presentation of \cite{AHLstringy}.  For other perspectives, see for example \cite{SS,MizThesis}.

\subsection{Open string amplitudes at tree level} Recall that we defined the Koba-Nielsen potential $\phi_X$ on $\M_{0,n}$ in \cref{def:KB}.
\begin{definition}
The \emph{tree level $n$-point open superstring amplitude} is the function on kinematic space $K_n$ defined as the following integral:
\begin{equation}\label{eq:string}
I_n(\s) := (\alpha')^{n-3} \int_{(\M_{0,n})_{>0}} \phi_{\alpha' X} \Omega((\M_{0,n})_{>0}) =  (\alpha')^{n-3} \int_{(\M_{0,n})_{>0}} \prod_{a<b} (ab)^{\alpha' s_{ab}} \Omega_{0,n}.
\end{equation}
This function should be thought of as the analytic continuation of the integral in the domain where the integral converges.
\end{definition}
In string theory, particles are replaced by either circles $S^1$ (in closed string theory) or intervals $[0,1]$ (in open string theory).  The inverse string tension $\alpha'$ is a parameter that measures the size of the string.  In the usual formula for vibrating strings, we have that string tension and string length are inversely related.  As $\alpha' \to 0$, the strings shrink in size and one obtains point particle scattering in the limit.  Whereas the field theory scattering amplitude is a sum over $L$-loop Feynman diagrams with $n$ distinguished leaves, the string theory amplitude is an integral over $\M_{L,n}$, the moduli space of genus $L$ curves with $n$ marked points.  

We consider only genus zero string amplitude.  In closed string theory, the integral is over all complex points of $\M_{0,n}$.  In open string theory, the integral is over the set of real points of $\M_{0,n}$.  The choice of a component $(\M_{0,n})_{>0}$ in \eqref{eq:string} is precisely analogous to the sum over \emph{planar} trees in \cref{thm:phi3}.  

Let us consider \eqref{eq:string} for $n = 4$.  In this case, setting $s = s_{12}$ and $t = s_{23}$, we have
\begin{equation}\label{eq:I4}
I_4(s,t) = \alpha' \int_{0}^1 \frac{du}{u(1-u)} u^{\alpha's} (1-u)^{\alpha' t} = \alpha' B(\alpha's, \alpha' t),
\end{equation}
where $B(s,t)$ is the Euler Beta function 
\begin{equation}\label{eq:Beta}
B( s,t) =  \int_{0}^1 \frac{du}{u(1-u)} u^{s} (1-u)^{t}  = \frac{\Gamma(s) \Gamma(t)}{\Gamma(s+t)}.
\end{equation}
The integral converges when $\Re(s), \Re(t) > 0$ and $B(s,t)$ extends to a meromorphic function in the two complex variables $s,t$.  Since the gamma function has no zeroes, the poles of $B(s,t)$ are the poles of $\Gamma(s) \Gamma(t)$ which occur when $s$ or $t$ is a nonpositive integer.  Thus $B(s,t)$ has a pole at $s = 0$, at $t = 0$, and a zero at $s+t = 0$.  As a function of $\alpha'$, the integral $I_4(s,t)$ thus has a a simple pole at $\alpha' = 0$; the prefactor $\alpha'$ in \eqref{eq:I4} ensures a well-defined limit as $\alpha' \to 0$.  Indeed, field theory amplitudes are recovered from string theory amplitudes in the limit as inverse string tension goes to 0: 
\begin{equation}
\label{eq:I4A4}
\lim_{\alpha' \to 0} I_4(s,t) = \frac{s+t}{st} = \frac{1}{s} + \frac{1}{t} = A_4^{\phi^3}(s,t),
\end{equation}
where we have used the approximation $\alpha' \Gamma(\alpha' s) \sim \frac{1}{s}$ when $\alpha' \to 0$.
This limit is also called a ``low-energy" limit of string theory.

The Beta function was studied by Euler and Legendre in the 18th and 19th centuries.  In the 20th century, Veneziano \cite{veneziano1968construction} noticed the interpretation of the Beta function as an amplitude, and this was one of the key ingredients that started string theory. 

The connected components of $\M_{0,n}(\R)$ are in bijection with dihedral orderings of $1,2,\ldots,n$.  For a dihedral ordering $\beta$, we denote by $\M_{0,n}(\beta)$ the corresponding connected component, and by $\Omega(\beta)$ its canonical form.  We have the following larger family of partial string amplitudes 
$$
I_n(\beta|\gamma) := (\alpha')^{n-3} \int_{\M_{0,n}(\gamma)} \phi_{\alpha' X} \Omega(\beta),
$$
where $\beta,\gamma$ are two dihedral orderings.  These are string theory versions of partial $\phi^3$-amplitudes; see \cref{rem:partial} and \cite{MizThesis} for further discussion.  When $\beta$ is fixed, say $\beta = +$ is the positive ordering, and $\gamma$ varies we have a family of integral functions $I_n(+|\gamma)$.  These integral functions are not linearly independent -- they span a vector space of dimension $(n-3)!$.

\subsection{Toric stringy integrals} \label{sec:toricstringy} The string integral \eqref{eq:string} belongs to a class of integral functions called \emph{stringy integrals} in \cite{ALS} and \emph{Euler integrals} in \cite{MHMT}, which have the following form:
$$
\int \frac{y_1^{\tau_1}\cdots y_n^{\tau_n}}{p_1^{c_1} \cdots p_r^{c_r}} \frac{dy_1}{y_1} \wedge \cdots \wedge \frac{dy_n}{y_n},
$$
where $p_1,p_2,\ldots,p_r$ are polynomials in $y_1,\ldots,y_n$, and $\tau_1,\ldots,\tau_n,c_1,\ldots,c_r$ are complex parameters.  Besides string amplitudes, Euler integrals also include hypergeometric integrals and various generalizations, and also Feynman integrals.  We refer the reader to the survey \cite{MHMT} for a discussion of the many beautiful properties of these integrals.

Here we focus on an algebro-geometric perspective.  Let $p(y) \in \C[y]$ be a polynomial in variables $y_1,\ldots,y_d$ with non-vanishing constant term, and let $p_1(y),p_2(y), \ldots, p_r(y)$ be the irreducible factors of $p$.  To fix conventions, we assume that the constant terms in $p_1(y),\ldots,p_r(y)$ are all equal to $1$.  Let $T_y \cong (\C^\times)^d = \Spec(\C[y_1^{\pm 1},\ldots,y_d^{\pm_1}])$ denote the torus with coordinates $y_1,\ldots,y_d$.  Let $\oU \subset T_y$ be the open subset where $p(y) \neq 0$.  The variety $\oU$ is a very affine variety.  

\begin{lemm}
The very affine variety $\oU$ has character lattice $\lat$ equal to the lattice of Laurent monomials in the elements $y_1,y_2,\ldots,y_d, p_1(y),\ldots,p_r(y)$. 
\end{lemm}
\begin{proof}
Suppose that $r(y) = f(y)/g(y) \in \C(y_1,\ldots,y_d)$ is a rational function that is a unit in $\C[\oU]$, and $f(y), g(y)$ have no common factors.  Any irreducible factor $h(y)$ of $f(y)$ or $p(y)$ which is not a monomial in $y_1,\ldots,y_d$ defines a non-empty hypersurface $h(y) = 0$ in the torus $T_y$.  It follows that $h(y)$ must equal one of the $p_1(y),\ldots,p_r(y)$.  Thus $r(y)$ is a Laurent monomial in the functions $y_1,y_2,\ldots,y_d, p_1(y),\ldots,p_r(y)$.  Furthermore, irreducibility implies that there are no monomial relations between these Laurent monomials (as regular functions on $\oU$).  The result follows.
\end{proof}
%The group of units of $\C[\oU]$, modulo scalars $\C^\times$, is the lattice of Laurent monomials in the elements $y_1,y_2,\ldots,y_d, p_1(y),\ldots,p_r(y)$.  

Thus $\oU$ has an intrinsic scattering potential (see \cref{sec:verypot})
$$
\phi_{\tau, c} := \prod_{i=1}^d y_i^{\tau_i} \prod_{j=1}^{r} p_j(y)^{-c_i}.
$$
We have chosen the signs of the exponents for reasons that will be clear later.  

The variety $\oU$ can naturally be identified with an open subset of a toric variety.  Let $P \subset \Z^d$ be the Newton polytope of $p(y)$; this is the convex hull of the exponent vectors of the monomials in $p(y)$.  We assume that $P$ is full-dimensional and let $A = P \cap \Z^d = \{\a_1,\a_2,\ldots,\a_p\}$ be the set of lattice points in $P$.  Consider the toric variety $X_A$ that is the closure of the image of the map
\begin{equation}\label{eq:XA}
T_y \ni (y_1,\ldots,y_d) \mapsto (y^{\a_1}: \cdots : y^{\a_p}) \in \P^{p-1}.
\end{equation}
This map is an inclusion as long as the $\Z$-affine span of $\{\a_1,\a_2,\ldots,\a_p\}$ is equal to $\Z^d$.  We will assume this by replacing $p(y)$ with a power of it, not changing the variety $\oU$.  

The polynomial $p(y)$ is a linear combination of the monomials $y^{\a_i}$.  Thus the hypersurface $\{p(y) = 0\} \subset T_y$ is the intersection of $T_y$ with a hyperplane $H \subset \P^{r-1}$ cut out by the corresponding linear equation.  Then $\oU$ can be identified with $T_y \setminus H$.

If $p(y)$ has positive coefficients, then one of the connected components of $\oU$ is simply the positive part $(T_y)_{>0} = \R_{>0}^d$ of the torus $T_y$.  We denote this positive component by $\oU_{>0}$.  Henceforth, we make the
\begin{assumption}
The polynomial $p(y)$ has positive coefficients.
\end{assumption}

\begin{definition}
The string integral of $\oU$ is the integral function
$$
I(\tau,c):=  (\alpha')^d\int_{\oU_{>0}} \phi_{\alpha'\tau,\alpha'c} \, \Omega
$$
where $\Omega =  \frac{dy_1}{y_1} \wedge \cdots \wedge \frac{dy_n}{y_n}$ is the canonical form of $\oU_{>0}$, and $(\tau,c) \in \lat_\C$.
\end{definition}

We investigate the convergence of the string integral, assuming that $(\tau,c)\in \lat_\R$.

\begin{example}\label{ex:Beta}
We consider the Beta function \eqref{eq:Beta}.  Setting $ y= u/(1-u)$, we have
$$
B( s,t) =  \int_{0}^1 \frac{du}{u(1-u)} u^{s} (1-u)^{t}  = \int_0^\infty y^s (1+y)^{-t-s} \frac{dy}{y}.
$$
So $B(s,t)$ is a string integral where we have a single polynomial $p(y) = 1+y$.  We may break up the integral as $\int_0^\infty = \int_0^1 + \int_1^\infty$.  When $y \to 0$, the integrand behaves like $y^{s-1} dy$ and $\int_0^1 y^{s-1} dy$ converges when $s > 0$.  When $y \to \infty$, both the integrand behaves like $y^{s+(-t-s)} = y^{-t}$, and $\int_1^\infty y^{-t-1} dy$ converges when $t > 0$.   Thus the integral converges (absolutely) if $s>0$ and $t >0$.
\end{example}

Let $\lat^\vee_{y}$ be the cocharacter lattice of $T_y$.  The vector space $\lat^\vee_{y,\R}$ can be viewed as the Lie algebra of the real Lie group $T_y(\R)$.  The exponential map $\exp: \lat^\vee_{y,\R} \to \oU_{>0} = (T_y)_{>0}$ is an isomorphism, and we may rewrite 
\begin{equation}\label{eq:logI}
I(\tau,c) =   (\alpha')^{d} \int_{\lat^\vee_{y,\R}} \prod_{i=1}^d \exp(\alpha' \tau_i Y_i)  \prod_{j=1}^r p_j(\exp(Y))^{-\alpha' c_j}  \, dY_1 \cdots dY_d.
\end{equation}
where $Y_i = \log y_i$ are coordinates on $\lat^\vee_{y,\R}$.  Define the (positive) tropicalization of rational functions in $y_1,\ldots,y_d$ by 
$$
y_i \mapsto Y_i, \qquad + \mapsto \min, \qquad \times \mapsto +, \qquad \div \mapsto -. %&\qquad {\rm constant} \mapsto 0 
$$
This assignment makes sense for the potential $\phi_{\alpha'\tau,\alpha'c}$ even when $\alpha',\tau,c$ are not integers, and we obtain
$$
\Trop(\phi) =  \Trop(\phi_{\alpha'\tau,\alpha'c}):= \alpha' (\sum_{i=1}^d \tau_i  Y_i - \sum_{j=1}^d c_j \Trop(p_j)(Y)).
$$
We view this as a piecewise linear function on $\lat^\vee_{y,\R}$.  We call a piecewise linear function \emph{positive} if it is positive on $\lat^\vee_{y,\R} \setminus \{0\}$.  The analysis of \cref{ex:Beta} can be generalized to give the following result.  It is often formulated in terms of Minkowski sums of Newton polytopes \cite{AHLstringy,NP,MHMT}.

\begin{theorem}[{\cite[Claim 3]{AHLstringy}}]\label{thm:stringy}
Suppose $(\tau,c) \in \lat_\R$.  The integral \eqref{eq:logI} converges if and only if the integral
\begin{equation}\label{eq:tropint}
\int_{\lat^\vee_{y,\R}} \exp(-\Trop(\phi_{\alpha'\tau,\alpha'c})(Y)) dY_1 \cdots dY_d
\end{equation}
converges.  This is the case if and only if the piecewise linear function $\Trop(\phi)(Y)$ is positive on $\lat^\vee_{y,\R}$.
\end{theorem}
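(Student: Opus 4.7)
The plan is to compare both integrals \eqref{eq:logI} and \eqref{eq:tropint} against one another via an asymptotic analysis of $\log p_j(e^Y)$, and then to reduce positivity of the tropical function to a combinatorial Laplace-transform computation on cones. First I would switch to logarithmic coordinates $Y_i = \log y_i$, so that the measure $\prod dy_i/y_i$ becomes Lebesgue measure $dY$ on $\lat^\vee_{y,\R}$ and the integrand in \eqref{eq:logI} becomes $\exp\!\bigl(\alpha'\sum_i \tau_i Y_i - \alpha'\sum_j c_j \log p_j(e^Y)\bigr)$. For each polynomial $p_j(y) = \sum_{a \in A_j} p_{j,a}\, y^a$ with strictly positive coefficients, the elementary sandwich
$$
\bigl(\min_{a \in A_j} p_{j,a}\bigr)\,e^{\max_a (a\cdot Y)} \;\le\; p_j(e^Y) \;\le\; |A_j|\bigl(\max_{a \in A_j} p_{j,a}\bigr)\,e^{\max_a (a\cdot Y)}
$$
yields $\log p_j(e^Y) = \max_a(a\cdot Y) + O(1)$ uniformly in $Y$, with constants depending only on the coefficients of $p_j$. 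Writing $F(Y) := \alpha'\sum_i \tau_i Y_i - \alpha'\sum_j c_j \max_a(a\cdot Y)$, we obtain $\log \phi_{\alpha'\tau,\alpha'c} = F + O(1)$, so \eqref{eq:logI} converges if and only if $\int_{\lat^\vee_{y,\R}} e^{F(Y)}\,dY$ does. Using the identity $\max_a(a\cdot Y) = -\min_a(-a\cdot Y)$, one checks that $F(Y) = -\Trop(\phi_{\alpha'\tau,\alpha'c})(-Y)$, so that the change of variables $Y \leftrightarrow -Y$ identifies $\int e^{F}\,dY$ with the tropical integral \eqref{eq:tropint}. This establishes the first equivalence.

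Next I would show that \eqref{eq:tropint}, equivalently $\int_{\lat^\vee_{y,\R}} e^{-G(Y)}\,dY$ with $G := \Trop(\phi_{\alpha'\tau,\alpha'c})$, converges if and only if $G > 0$ on $\lat^\vee_{y,\R} \setminus \{0\}$. The function $G$ is piecewise linear and positively homogeneous of degree $1$. Let $\Sigma$ be the common refinement of the inner normal fans of the Newton polytopes $P_1, \ldots, P_r$; this is a complete polyhedral fan on whose maximal cones $G$ is linear. Then
$$
\int_{\lat^\vee_{y,\R}} e^{-G(Y)}\,dY \;=\; \sum_{\sigma \in \Sigma^{\max}} \int_\sigma e^{-\langle w_\sigma,\, Y\rangle}\,dY,
$$
where $w_\sigma$ is the covector such that $G|_\sigma = \langle w_\sigma, \cdot\rangle$. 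After triangulating each $\sigma$ into simplicial subcones and diagonalizing via a lattice basis, the cone-integral is an explicit product of one-dimensional exponentials and converges precisely when $\langle w_\sigma, v\rangle > 0$ for every nonzero $v \in \sigma$. Since every nonzero vector of $\lat^\vee_{y,\R}$ lies in some maximal cone $\sigma$ and $G$ agrees with $w_\sigma$ there, the finite sum converges if and only if $G$ is strictly positive on every nonzero vector.

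The main obstacle is making the $O(1)$ comparison in the first step genuinely uniform: a priori the constant depends on how many exponents $a$ attain the maximum $\max_a(a\cdot Y)$, and this jumps across the walls of $\Sigma$. The bound above is nevertheless uniform in $Y$ because $|A_j|$ and the coefficients $p_{j,a}$ are finitely many global constants, and strict positivity of the $p_{j,a}$ keeps $\min_a p_{j,a}$ bounded away from zero. A secondary technical point is the reduction from general cones $\sigma$ (which need not be simplicial) to simplicial cones for the explicit Laplace computation; this is standard and only uses that positivity of $w_\sigma$ on $\sigma$ is equivalent to its positivity on each simplicial subcone of a triangulation. All boundary walls between cones of $\Sigma$ have Lebesgue measure zero and so do not affect any of the integrability assertions.
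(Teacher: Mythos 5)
Your proof is correct and follows essentially the same route as the paper's (sketched) argument: sandwich the integrand above and below by constant multiples of the exponential of the tropicalized potential, then decompose $\lat^\vee_{y,\R}$ into the maximal cones of linearity and test convergence cone by cone via Laplace transforms of cones. Your only addition is the explicit reflection $Y \mapsto -Y$ reconciling the $\min$-convention of $\Trop$ with the $\max$ asymptotics of $\log p_j(e^Y)$, a sign the paper's sketch glosses over and which does not affect any of the convergence statements.
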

\begin{proof}[Idea of Proof.]
Bound the integrand $\prod_{i=1}^d \exp(\tau_i Y_i)  \prod_{j=1}^r p_j(\exp(Y))^{-c_j} $ above and below by multiples of $ \exp(-\Trop(\phi_{\tau,c})(Y)) $.
\end{proof}

\begin{example}\label{ex:Beta2}
We continue \cref{ex:Beta}. We have
$$
\Trop(y^s (1+y)^{-t-s}) = sY -(s+t) \min(0,Y) = \begin{cases} s Y & \mbox{ if $Y >0$,} \\
-tY & \mbox{if $Y < 0$.}
\end{cases}
$$
This function is positive if and only if $s>0$ and $t >0$.
\end{example}

The domains of linearity of the piecewise linear function $\Trop(\phi)$ give a complete fan $\Fcal$ in $\lat^\vee_{y,\R}$, where $\Trop(\phi)$ restricts to a linear function on each maximal cone of $\Fcal$.  The integral \eqref{eq:tropint} becomes a sum of integrals over the maximal cones $C$ of $\Fcal$,
$$
\int_{C} \exp(-(a_1 Y_1+ \cdots + a_d Y_d)) dY_1 \cdots dY_d,
$$
where $a_i$ are linear functions in $\alpha',\tau,c$.  These integrals are Laplace transforms of cones, and are rational functions in the $a_i$.
\begin{proposition}\label{prop:cone}
Let $C$ be a full-dimensional cone and $\ell$ a linear function that is positive on $C \setminus \{0\}$.  Then
$$
\int_{C} \exp(-\ell(Y)) dY_1 \cdots dY_d = \Vol(C \cap \{\ell(Y) \leq 1\})
$$
where $\Vol$ denotes the normalized volume where the standard simplex has volume $1$.
\end{proposition}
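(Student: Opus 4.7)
The strategy is to reduce to the case of a simplicial cone by triangulation, do a linear change of variables, and evaluate a product of one-variable exponential integrals.

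First I would note that both sides of the claimed identity are additive with respect to subdivisions of $C$ into full-dimensional cones with disjoint interiors: the left side because the integrand is absolutely integrable (since $\ell > 0$ on $C \setminus \{0\}$) and the right side by additivity of (normalized) volume. Choosing any triangulation of $C$ into simplicial cones, it therefore suffices to prove the formula when $C = \R_{\geq 0} \langle v_1, \ldots, v_d \rangle$ is a simplicial cone spanned by linearly independent vectors $v_1, \ldots, v_d$.

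Next, parametrize such a $C$ by the linear map $\R_{\geq 0}^d \to C$, $(t_1,\ldots,t_d) \mapsto \sum_i t_i v_i$, whose Jacobian determinant is $|\det V|$, where $V$ is the matrix with columns $v_i$. Since $\ell(\sum t_i v_i) = \sum t_i \ell(v_i)$ and each $\ell(v_i) > 0$ by hypothesis, the integral factors:
\[
\int_{C} e^{-\ell(Y)}\,dY_1\cdots dY_d \;=\; |\det V|\,\prod_{i=1}^{d}\int_0^\infty e^{-\ell(v_i)\,t_i}\,dt_i \;=\; \frac{|\det V|}{\prod_{i=1}^d \ell(v_i)}.
\]

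On the other hand, the slice $C \cap \{\ell(Y) \leq 1\}$ is the simplex with vertices $0, v_1/\ell(v_1),\ldots, v_d/\ell(v_d)$, whose Lebesgue volume is $\tfrac{|\det V|}{d!\,\prod_i \ell(v_i)}$. With the stated normalization in which the standard simplex has volume $1$ (so normalized volume equals $d!$ times Lebesgue volume), this becomes exactly $\tfrac{|\det V|}{\prod_i \ell(v_i)}$, matching the integral.

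The only subtle point is fixing the volume normalization consistently between the two sides; once that convention is nailed down, the computation is mechanical, and triangulations suffice because one can always choose a common refinement on which both sides are additive. No convergence issue arises: positivity of $\ell$ on $C \setminus \{0\}$ makes $e^{-\ell}$ integrable on every simplicial subcone, and the one-variable integrals above are elementary.
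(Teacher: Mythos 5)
Your proof is correct. The paper itself gives no argument for this proposition; it only illustrates it with the coordinate-orthant computation in \cref{ex:coneLT}, and your reduction of a general (pointed, polyhedral) cone to that case---triangulating into simplicial cones, over which both the exponential integral and the normalized volume are additive, and then applying the linear change of variables $(t_1,\ldots,t_d)\mapsto\sum_i t_i v_i$ with Jacobian $|\det V|$---is exactly the natural completion, with the volume normalization ($d!$ times Lebesgue) handled correctly on both sides.
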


\begin{example}\label{ex:coneLT}
We have
$$
\int_{\R_{>0}^d} \exp(-(a_1Y_1 + \cdots + a_d Y_d)) dY_1 \cdots dY_d 
= \prod_{i=1}^d \int_{\R_{>0}} \exp(-a_iY_i)  dY_i 
= \prod_{i=1}^d \frac{1}{a_i}
$$
%\begin{align*}
%&\int_{\R_{>0}^d} \exp(-(a_1Y_1 + \cdots + a_d Y_d)) dY_1 \cdots dY_d \\
%&= \prod_{i=1}^d \int_{\R_{>0}} \exp(-a_iY_i)  dY_i \\
%&= \prod_{i=1}^d \frac{1}{a_i}
%\end{align*}
which is the normalized volume of a simplex with vertices $0, \frac{1}{a_1} e_1, \ldots, \frac{1}{a_d} e_d$, where $e_i$ are coordinate vectors dual to the functions $Y_i$.
\end{example}

\begin{corollary}[{\cite[Claim 3]{AHLstringy}}]
Suppose that $(\tau,c)$ are such that $\Trop(\phi)$ is positive.  Then we have $\lim_{\alpha' \to 0} I(\tau,c) = \Vol(\Trop(\phi_{\tau,c})) \leq 1)$.
\end{corollary}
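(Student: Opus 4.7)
The plan is to rescale the integration variable to expose the tropical limit of the integrand, apply dominated convergence, and finally evaluate the resulting integral using the fan decomposition that underlies \cref{prop:cone}.

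First, I substitute $Y = -Z/\alpha'$ in \eqref{eq:logI}; the Jacobian $1/(\alpha')^d$ cancels the prefactor $(\alpha')^d$, giving
\[
I(\tau,c) = \int_{\lat^\vee_{y,\R}} \exp\Bigl(-\sum_i \tau_i Z_i\Bigr)\prod_{j=1}^{r} p_j\bigl(\exp(-Z/\alpha')\bigr)^{-\alpha' c_j}\,dZ_1\cdots dZ_d.
\]
Because each $p_j(y) = \sum_\a a^{(j)}_\a y^\a$ has positive coefficients, the elementary observation that a finite positive sum is pinched between its largest term and the number of terms times its largest term, combined with positive homogeneity of the tropical polynomial, yields the uniform two-sided estimate
\[
\bigl|\,\alpha'\log p_j\bigl(\exp(-Z/\alpha')\bigr) + \Trop(p_j)(Z)\,\bigr| \le \alpha' K_j
\]
for a constant $K_j$ independent of both $Z$ and $\alpha'$. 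Multiplying by $-c_j$ and summing over $j$, the exponent of the full integrand equals $-\Trop(\phi_{\tau,c})(Z) + \epsilon(Z,\alpha')$ with $|\epsilon(Z,\alpha')| \le \alpha' K$ for $K = \sum_j |c_j| K_j$.

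Next, for $\alpha' \in (0,1]$ the integrand is therefore bounded pointwise by $e^K\exp(-\Trop(\phi_{\tau,c})(Z))$ and converges pointwise to $\exp(-\Trop(\phi_{\tau,c})(Z))$ as $\alpha'\to 0$. The positivity hypothesis on $\Trop(\phi_{\tau,c})$, combined with \cref{thm:stringy}, ensures this dominating function is integrable on $\lat^\vee_{y,\R}$, so dominated convergence gives
\[
\lim_{\alpha'\to 0} I(\tau,c) = \int_{\lat^\vee_{y,\R}} \exp(-\Trop(\phi_{\tau,c})(Z))\,dZ.
\]

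To finish, I would observe that $\Trop(\phi_{\tau,c})$ is linear on each maximal cone $C$ of the fan $\Fcal$, that these cones tile $\lat^\vee_{y,\R}$ up to boundaries of measure zero, and that on each $C$ the restriction is a linear functional strictly positive on $C\setminus\{0\}$ by hypothesis. Applying \cref{prop:cone} on each cone gives $\int_C \exp(-\Trop(\phi_{\tau,c}))\,dZ = \Vol\bigl(C \cap \{\Trop(\phi_{\tau,c}) \le 1\}\bigr)$, and summing over $C$ via additivity of volume produces $\Vol(\{\Trop(\phi_{\tau,c}) \le 1\})$. The main technical step is the uniform sandwich estimate: it is the reason dominated convergence goes through cleanly despite the $c_j$ possibly having mixed signs, and it is the only place the assumption that the $p_j$ have positive coefficients enters nontrivially.
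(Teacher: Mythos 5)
Your argument is correct, and it rests on the same two pillars as the paper's proof --- the sandwich estimate behind \cref{thm:stringy} and the cone-by-cone Laplace transform of \cref{prop:cone} --- but it packages them differently. The paper keeps the variable $Y$ fixed, quotes from the proof of \cref{thm:stringy} that the integrand in \eqref{eq:logI} is pinched between $\gamma^{\pm\alpha'}$ times $\exp(-\Trop(\phi_{\alpha'\tau,\alpha'c}))$, evaluates \eqref{eq:tropint} exactly as $\Vol(\Trop(\phi_{\alpha'\tau,\alpha'c})\leq 1)$, and then uses the homogeneity $\Vol(\alpha'\Trop(\phi)\leq 1)=(\alpha')^{-d}\Vol(\Trop(\phi)\leq 1)$ to cancel the prefactor, letting $\gamma^{\alpha'}\to 1$ at the end. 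You instead absorb the scaling into the substitution $Y=-Z/\alpha'$ (which also quietly fixes the $Y\mapsto -Y$ reflection implicit in the paper's $\min$-convention for $\Trop$), prove the uniform bound $\lvert\alpha'\log p_j(\exp(-Z/\alpha'))+\Trop(p_j)(Z)\rvert\leq\alpha' K_j$ explicitly rather than citing it, and then exchange limit and integral by dominated convergence before applying \cref{prop:cone}. What your route buys is self-containedness and a cleaner limit mechanism (a genuine pointwise-uniform error $e^{O(\alpha')}$ plus DCT, with no need to manipulate the $\alpha'$-dependent tropical volume); what the paper's route buys is brevity, since the estimate is already embedded in \cref{thm:stringy} and the scaling identity makes the cancellation of $(\alpha')^{d}$ transparent. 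There is no gap: the dominating function is integrable precisely because positivity of $\Trop(\phi_{\tau,c})$ makes each restriction to a maximal cone of $\Fcal$ a positive linear functional, which is the same input you use in the final volume computation, so no circularity arises from invoking \cref{thm:stringy} there.
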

\begin{proof}
Summing over maximal cones of $\Fcal$, we deduce from \cref{prop:cone} that
$$
\int_{\lat^\vee_{y,\R}} \exp(-\Trop(\phi_{\tau,c})(Y)) dY_1 \cdots dY_d =  \Vol(\Trop(\phi)) \leq 1.
$$
Now, 
$$
\Vol(\Trop(\phi_{\alpha'\tau,\alpha'c}) \leq 1)  = \Vol(\alpha'\Trop(\phi) \leq 1) = \frac{1}{(\alpha')^d} \Vol(\Trop(\phi)) \leq 1).
$$
The factor $\frac{1}{(\alpha')^d} $ cancels out the prefactor of $(\alpha')^d$ in the definition of $I(\tau,c)$.  Finally, by the proof of \cref{thm:stringy}, the integral $I(\tau,c)$ is bounded above and below by $(\alpha')^d$ times the integral \eqref{eq:tropint}, up to a factor of $\gamma^{\alpha'}$ for positive constants $\gamma$.  Since $\lim_{\alpha'\to 0} \gamma^{\alpha'} = 1$, the result follows.
\end{proof}

\begin{example}
We continue \cref{ex:Beta2}.  When $s, t> 0$, the inequality $\Trop(y^s (1+y)^{-t-s}) \leq 1$ cuts out the region $[-1/t,1/s]$ which has volume $1/s+1/t$.  This agrees with \eqref{eq:I4A4}.
\end{example}

%
%$$
%\int_{\R_{>0}^d} \prod_i u_i^{\alpha' X_i} \Omega
%$$
%where $\Omega$ is the canonical form on the torus $T_A$.

We return to the string amplitude.
\begin{theorem}\label{thm:limitstring}
The field theory limit of the open string amplitude is given by the planar $\phi^3$-amplitude:
$$
\lim_{\alpha' \to 0} I_n(\s) = A_n^{\phi^3}(\s).
$$
\end{theorem}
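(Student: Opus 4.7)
The plan is to apply the general convergence and field-theory-limit machinery of \cref{thm:stringy} and its corollary to the very affine variety $\oU = \M_{0,n}$, and then to identify the resulting volume with the sum over cubic planar trees appearing in \cref{thm:phi3}. In other words, I would pass from the integral representation through tropical geometry to arrive at a sum of Laplace-transform contributions, one per maximal cone of $\Trop_{\geq 0}\M_{0,n}$.

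Concretely, first I would fix the positive parametrization $y_1,\ldots,y_{n-3}$ of $(\M_{0,n})_{>0}$ from \cref{sec:poscoord}; by \cref{prop:inverty} each dihedral coordinate $u_{ij}$ is a ratio of polynomials in $y$ with positive coefficients, so the potential $\phi_X = \prod u_{ij}^{X_{ij}}$ is of the toric stringy form handled in \cref{sec:toricstringy}. Applying the corollary of \cref{thm:stringy} in the domain of $\s$ where convergence holds gives
\[
\lim_{\alpha'\to 0} I_n(\s) = \Vol\bigl(\Trop(\phi_X) \leq 1\bigr),
\]
so everything reduces to computing the PL function $\Trop(\phi_X)$ on $\Lambda^\vee_{y,\R} \cong \R^{n-3}$.

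Second, I would identify the fan of linearity of $\Trop(\phi_X)$ with the positive tropicalization of $\M_{0,n}$ inside $K_n^\ast$. Since $\Trop(\phi_X) = \sum_{(ij)} X_{ij}\,\Trop(u_{ij})$ and $\Trop(u_{ij})$ is a tropicalized positive rational function, the discussion surrounding \cref{lem:tropu} identifies this fan with $\Trop_{\geq 0}\M_{0,n}$, whose maximal cones $C(\D)$ are indexed by triangulations $\D$ of the $n$-gon and are spanned by the rays $\{X_{ij}\mid (ij)\in\D\}$. On the cone $C(\D)$ the tropical $u$-equations force $\Trop(u_{ij}) = U_{ij}$ for $(ij)\notin \D$ and $\Trop(u_{ij})=0$ for $(ij)\in\D$, and dually $\Trop(\phi_X)$ is the linear form whose value at the ray $X_{kl}$ ($(kl)\in\D$) equals $X_{kl}$ itself. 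Unimodularity of this fan, which follows from smoothness of the associahedron toric variety (see \cref{sec:toricstringy} and \cite{AHLcluster}), guarantees that each $C(\D)$ is a standard orthant in its natural coordinates.

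Third, I would apply \cref{prop:cone} and the template computation of \cref{ex:coneLT} to each maximal cone. Summing the Laplace transforms,
\[
\Vol\bigl(\Trop(\phi_X)\leq 1\bigr) = \sum_{\D}\int_{C(\D)} e^{-\Trop(\phi_X)(Y)}\,dY = \sum_{\D}\prod_{(ij)\in\D}\frac{1}{X_{ij}},
\]
where the sum runs over triangulations $\D$ of the $n$-gon. By \cref{thm:phi3}, this last expression is precisely $A_n^{\phi^3}(\s)$, which completes the argument where the integral converges; the general case follows by analytic continuation of both sides in $\s$.

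I expect the main obstacle to be the second step, namely rigorously matching the fan of linearity of $\Trop(\phi_X)$ inside $\Lambda^\vee_{y,\R}$ with the associahedral fan and verifying that the linear functional on $C(\D)$ is the tautological one with coefficients $X_{ij}$. This is essentially the content of identifying $\tM_{0,n}$ with an open subset of the associahedron toric variety (and the scattering map of \cref{sec:scatmap} with its moment map); once this identification is in hand, the final volume computation and comparison with \cref{thm:phi3} is immediate.
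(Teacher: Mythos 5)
Your proposal is correct and follows essentially the same route as the paper's proof: the positive parametrization via \cref{prop:inverty} placing $\M_{0,n}$ in the toric stringy setting, the field-theory limit as $\Vol(\Trop(\phi_X)\leq 1)$ from the corollary to \cref{thm:stringy}, the identification of the cones of linearity of $\Trop(\phi_X)$ with the triangulation cones of $\Trop_{\geq 0}\M_{0,n}$ (exactly the step the paper delegates to \cref{ex:posparam}), and the cone-by-cone Laplace transforms via \cref{prop:cone} recovering the sum of \cref{thm:phi3}. One small slip: on $C(\D)$ you have the roles of $(ij)\in\D$ and $(ij)\notin\D$ interchanged when describing $\Trop(u_{ij})$ (it is $\Trop(u_{ij})=U_{ij}$ for $(ij)\in\D$ and $0$ otherwise), but your subsequent evaluation of $\Trop(\phi_X)$ on the rays, and hence the volume computation, is correct.
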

\begin{proof}
Let $y_1,\ldots,y_d$, where $ d= n-3$ be the positive parameters of \cref{sec:poscoord}.  By \cref{prop:inverty}, the lattice $\lat$ of Laurent monomials in $u_{ij}$ is isomorphic to the lattice of Laurent monomials in $\{y_1,\ldots,y_d\} \cup \{p_{ij}(y)\}$.  Thus $\M_{0,n} \cong \oU$ where $\oU \subset T_y$ is defined by the non-vanishing of $p(y) = \prod p_{ij}(y)$. 

Let $\lat^\vee$ be the intrinsic cocharacter lattice of $\M_{0,n}$.  The functions $U_{ij}$ and the functions $Y_i$ are linear functions on $\lat^\vee_\R$.  Define a linear map $\pi: \lat^\vee_\R \to \lat^\vee_{y,\R}$ by projection, that is, by the equality $Y_i = Y_i \circ \pi$.  The key statement is that $\pi$ sends maximal cones of $\Trop_{\geq 0} U$ to maximal cones of $\Fcal$ (\cref{ex:posparam}).  The summation of \cref{thm:phi3} can be written as
$$
A_n^{\phi^3} = \sum_{C \subset \Trop_{\geq 0} U} \prod_{(ij) \in \D} \frac{1}{X_{ij}}
$$
summed over maximal cones of $ \Trop_{\geq 0} U$ spanned by $X_{ij}, (ij) \in \D$.  This in turn matches the summation 
$$
\int_{\lat^\vee_{y,\R}} \exp(-\Trop(\phi_{\tau,c})(Y)) dY_1 \cdots dY_d = \Vol(C \cap (\Trop(\phi) \leq 1))
$$
over maximal cones of $\Fcal$.
\end{proof}

\subsection{Scattering map revisited}\label{sec:scatmap2}
We now interpret the scattering equations in terms of the integrals $I(\tau,c)$.  The ``method of steepest descent" or "saddle-point method" estimates the asymptotics of integrals
$$
\int f(x) \exp(\alpha' g(x)) dx
$$
when $\alpha' \to \infty$, by considering the behavior of the integral near the critical points of $g(x)$.  In our setting, $g = \log \phi_{\tau, c}$, and the critical point equations are those for $\log \phi_{\tau, c}$, which are exactly the scattering equations in the case of the string integral.  This is also the setting of the positive models of \cite[Section 6]{ST}.

\begin{definition}\label{def:stringSE}
The scattering equations for the string integral $I(\tau,c)$ are the critical point equations
$$
\dlog \phi_{\tau,c} = 0
$$
on $\oU$.
\end{definition}   
In the string integral, positive coordinates $y_1,\ldots,y_d$ on $\oU$ have been distinguished.  This choice gives a particularly elegant way to formulate the scattering map of \cref{sec:scatmap} and an explanation of the subspaces $H(\uc) \subset K_n$.  Indeed, we may write
$$
\prod_{(ij)} (ij)^{s_{ij}} = \prod_{(ij)} u_{ij}^{X_{ij}} = \prod_{i=1}^{n-3} y^{\tau_i}_i \prod_{ij} p^{-c_{ij}}_{ij}(y).
$$
Under the positive parametrization of \cref{sec:poscoord}, when $i,j$ satisfy $1 \leq i < j-1 < j \leq n-1$ we have 
$$
(ij) = p_{ij}(y) \times \mbox{ monomial in } y_1,\ldots,y_{n-3}.
$$
It follows that for such $i,j$ we have $c_{ij} = - s_{ij}$.  Thus the subspace $H(\uc)$ is obtained by fixing the exponents $c_{ij}$ and allowing $\tau_i$ to vary.  More abstractly, the coordinates $(y_1,\ldots,y_n)$ provide a sublattice $\lat' \subset \lat$.  The orthogonal sublattice $(\lat')^\perp \subset \lat^\vee$ is a space of linear functions that are fixed on each subspace $H(\uc)$.  The constants $\uc$ are the values that elements of $(\lat')^\perp$ take.

We now generalize the scattering map $\Phi(\uc)$ of \eqref{eq:scatPhi} to the general setting of a stringy integral.  The scattering equations (\cref{def:stringSE}) can be written as a collection of $d$ equations 
$$
\frac{1}{\phi} \frac{\partial \phi}{\partial y_i} = 0,\qquad i = 1,2,\ldots,d
$$
which we can write as 
$$
\frac{\tau_i}{y_i} = \sum_{j=1}^r \frac{c_j}{p_j(y)} \frac{\partial p_j(y)}{\partial y_i} ,\qquad i = 1,2,\ldots,d.
$$
or 
$$
\tau_i= y_i \sum_{j=1}^r \frac{c_j}{p_j(y)} \frac{\partial p_j(y)}{\partial y_i} ,\qquad i = 1,2,\ldots,d.
$$
Recall that $(\tau,c)$ are coordinates on $\lat_\R$.  For fixed $c$, we view this as a (rational) map 
$$
\Phi(c): U(\R) \to H(c) \cong \Lambda_{y,\R}
$$
where $H(c) \subset \lat_\R$ is an affine subspace where the $c$-coordinates are held fixed and the $\tau$-coordinates vary.

\begin{theorem}[\cite{AHLstringy}] \label{thm:scatmapgen}
Suppose that $I(\tau,c)$ converges, or equivalently by \cref{thm:stringy}, that $\Trop(\phi_{\tau,c})$ is positive.  Then we have 
$$
\Phi(c)_* \Omega_y = \lim_{\alpha' \to 0} I(\tau,c)  d\tau_1 \wedge \cdots \wedge d\tau_d
$$ 
as forms on $H(c)$.  Furthermore, $\Phi(c)$ sends $\R_{>0}^d$ to the interior of the polytope $R^\vee$ polar dual to
$$
R = \{\Trop(\phi) \leq 1\}
$$
and $\Phi(c)_* \Omega_y$ is the canonical form of $R$.
\end{theorem}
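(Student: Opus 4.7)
The plan is to interpret $\Phi(c)$ as the algebraic moment map of the projective toric variety $X_A$ from \eqref{eq:XA}, and then apply the pushforward theorem for canonical forms together with the Laplace-transform volume computation (\cref{prop:cone}) to obtain both assertions simultaneously. First I would rewrite the scattering equations in logarithmic coordinates $Y_i = \log y_i$: setting $\Psi(Y) := \sum_j c_j \log p_j(e^Y)$, the equations $\tau_i = \sum_j c_j\, y_i \partial_{y_i} \log p_j(y)$ become $\tau_i = \partial_{Y_i}\Psi(Y)$, so $\Phi(c): \lat^\vee_{y,\R} \to H(c) \cong \lat_{y,\R}$ is exactly the gradient map $\nabla\Psi$. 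Each $\log p_j(e^Y)$ is the log-moment-generating function of the positive atomic measure on the lattice points of the Newton polytope $P_j$ of $p_j$ weighted by the coefficients of $p_j$, and is therefore strictly convex; since $c_j > 0$ throughout the convergence region, $\Psi$ is strictly convex on $\lat^\vee_{y,\R}$.

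Standard convex analysis then shows that $\nabla\Psi$ is a diffeomorphism from $\lat^\vee_{y,\R}$ onto the interior of the closure of $\nabla\Psi(\lat^\vee_{y,\R})$, which a computation of its limits at the boundary of each cone of $\Fcal$ identifies with the Minkowski sum $\sum_j c_j P_j$. Matching facet normals -- on a maximal cone $C$ of $\Fcal$, the linearization $\ell_C$ of $\Trop(\phi_{\tau,c})$ has $Y$-gradient equal, up to a translation by $\tau$, to the vertex of $\sum_j c_j P_j$ dual to $C$ -- shows that this Minkowski sum coincides with the polar dual $R^\vee$ of $R = \{\Trop(\phi_{\tau,c}) \leq 1\}$ under the identification $H(c) \cong \lat_{y,\R}$, proving the positivity assertion that $\Phi(c)$ is a diffeomorphism from $\R_{>0}^d$ onto the interior of $R^\vee$. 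At this stage I invoke the pushforward theorem for algebraic moment maps \cite[Theorem 7.12]{ABL}, used earlier in \cref{sec:scatproof}: the pushforward of the canonical form of the positive torus of $X_A$ under the algebraic moment map is the canonical form of the corresponding polytope. Applied to $\Phi(c)$ and $R^\vee$, this yields $\Phi(c)_*\Omega_y = \Omega(R^\vee)$ as forms on $H(c)$.

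To finish, I would verify that the coefficient of $d\tau_1 \wedge \cdots \wedge d\tau_d$ in $\Omega(R^\vee)$ agrees with $\lim_{\alpha'\to 0} I(\tau,c)$. By the corollary computing the field-theory limit of $I(\tau,c)$ as $\Vol(R)$ and by \cref{prop:cone}, this volume decomposes as $\sum_C \prod_i 1/a_{C,i}(\tau)$, where $C$ ranges over maximal cones of $\Fcal$ and the $a_{C,i}(\tau)$ are the $\tau$-linear forms defining the facets of $R^\vee$ incident to the vertex of $R^\vee$ dual to $C$. This sum over vertices of local rational functions is exactly the standard partial-fraction expansion of the canonical function of the simple polytope $R^\vee$, which closes the identification. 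The main technical obstacle I foresee is keeping track of the $\tau$-dependent affine shift carefully enough to equate the Minkowski sum $\sum_j c_j P_j$ with the polar dual $R^\vee$ as phrased, and extending the Laplace-transform step of \cref{prop:cone} to situations where $\Fcal$ fails to be simplicial or smooth, where a refinement must be chosen before partial fractions can be carried out vertex by vertex.
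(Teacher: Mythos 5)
The paper itself contains no proof of this theorem: it is quoted from \cite{AHLstringy}, and the remark immediately following it points to exactly the route you take (identify $\Phi(c)$ with an algebraic moment map and apply the pushforward theorem of \cite{ABL}), while the convex-analytic argument that $\nabla\Psi$ is a diffeomorphism onto the interior of the weighted Minkowski sum is also how the cited reference argues, so your proposal is essentially the intended proof. Two points deserve tightening: (i) convergence of $I(\tau,c)$ does \emph{not} by itself force every $c_j>0$ (two factors $p_j$ with the same tropicalization can carry exponents of mixed sign while $\Trop(\phi_{\tau,c})$ stays positive, and then $\Psi$ need not be convex), so the strict-convexity step should be run under the positivity hypothesis on $c$ that the cited result actually carries rather than deduced from convergence; and (ii) the moment map you invoke is that of the toric variety attached to $\sum_j c_j P_j$ (e.g.\ take $c$ rational, replace the factors by $\tilde p=\prod_j p_j^{m_j}$, apply \cite[Theorem 7.12]{ABL}, then extend by continuity), not of $X_A$ itself, whose moment map has image the Newton polytope $P$. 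The translation issue you flag is indeed only bookkeeping: for fixed $\tau$ one has $R^\vee=\tau-\sum_j c_j P_j$, so ``$\Phi(c)(\R_{>0}^d)=\mathrm{int}\,\sum_j c_j P_j$'' is equivalent to saying that $0$ lies in the interior of $R^\vee$, i.e.\ to positivity of $\Trop(\phi_{\tau,c})$ as in \cref{thm:stringy}.
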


\begin{remark}
The scattering map $\Phi(c): U(\R) \to H(c) \cong \Lambda_{y,\R}$ can also be identified with the algebraic moment map of the toric variety $X_A$.  See \cite[Section 7.3]{ABL} and \cite{AHLstringy}.
\end{remark}

\subsection{$u$-coordinates}\label{sec:ucoord}
Our setup distinguishes a subsemigroup inside the character lattice $\lat$.  Call a character $\chi \in \lat$ \emph{bounded} if it takes bounded values on $\oU_{>0}$.  Write $\chi = \frac{a(y)}{b(y)}$ in reduced form, where $a(y)$ and $b(y)$ are polynomials in $y_1,\ldots,y_d$.

\begin{lemm}\label{lem:bounded}
For a character $\chi = \frac{a(y)}{b(y)} \in \lat$, the following are equivalent:
\begin{enumerate}
\item  $\chi$ is bounded,
\item  the Newton polytope of $a(y)$ is contained inside the Newton polytope of $b(y)$,
\item the function $\Trop(\chi)$ is positive.
\end{enumerate}
\end{lemm}

For example, all the five rational functions in \eqref{eq:u5y} satisfy this criterion.  Denote by $\Gamma \subset \lat$ the subsemigroup of bounded characters.

\begin{lemm}
With our assumption that the Newton polytope $P$ of $p(y)$ is full-dimensional, we have that 
\begin{enumerate}
\item
$\Gamma$ generates the rational function field $\C(y) = \C(\oU)$, and
\item
$\Gamma \cup \Gamma^{-1}$ generate the coordinate ring $\C[\oU]$. 
\end{enumerate}
\end{lemm}
\begin{proof}
We may, and will, assume that $P$ is sufficiently large by replacing $p(y)$ by a large power of it.  By \cref{lem:bounded}, the rational functions
$$
\frac{y^\a}{p(y)}, \qquad \a \in P \cap \Z^d
$$
belong to $\Gamma$.  Taking lattice points $\a'$ and $\a' + e_i$ shows that $y_i \in \C(\Gamma)$.  It follows that $\C(\Gamma) = \C(y) = \C(\oU)$.  

The condition that $\Trop(\chi)$ is positive from \cref{lem:bounded} is an open condition in $\lat_\R$, and defines an open polyhedral cone $B \subset \lat_\R$ so that $B \cap \lat = \Gamma$.  Since $\Gamma \neq \emptyset$ we have that $B$ is a non-empty full-dimensional cone.  It follows that for $B$ contains adjacent lattice points in all coordinate directions.  In particular $y^{\pm 1}_1,\ldots,y^{\pm 1}_d,p^{\pm 1}_1(y),\ldots,p^{\pm 1}_r(y)$ all belong to $\Gamma \Gamma^{-1}$.  Thus $\C[\Gamma \cup \Gamma^{-1}] = \C[\oU]$.
\end{proof}

\begin{definition}
The \emph{$u$-coordinates} on $\oU$ are the generators of $\Gamma$.  The affine closure of $\oU$ is $\tU = \Spec(\C[\Gamma])$.
\end{definition}

\begin{proposition}\label{prop:tU}
The partial compactification of $\tU$ is the affine open $X_A \setminus H$ in the projective toric variety $X_A$ of \eqref{eq:XA}.  In particular, $\tU$ has a natural stratification indexed by the faces of the Newton polytope $P$.
\end{proposition}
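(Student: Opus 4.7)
The plan is as follows. First I would unpack the coordinate ring of $X_A \setminus H$ concretely. Since $X_A \subset \P^{p-1}$ is the closure of the embedding \eqref{eq:XA}, the hypersurface $H$ is the hyperplane cut out by the linear form $\sum_i c_i z_{\a_i}$ corresponding to $p(y) = \sum_i c_i y^{\a_i}$. Dehomogenizing by this linear form, the affine open $X_A \setminus H$ has coordinate ring
\[
\C[X_A \setminus H] \;=\; \C\bigl[\, y^{\a}/p(y) \;\big|\; \a \in A \,\bigr] \;\subset\; \C(y_1,\ldots,y_d),
\]
which coincides with the $\C$-linear span (inside $\C(y)$) of the semigroup $\Gamma_A := \{\, y^{\mathbf{b}}/p(y)^k \;|\; k \geq 0,\ \mathbf{b} \in kP \cap \Z^d\,\}$ of ``degree-zero'' Laurent monomials.

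Next I would show that this subring of $\C(y)$ agrees with $\C[\Gamma]$. The inclusion $\C[\Gamma_A] \subseteq \C[\Gamma]$ is immediate from \cref{lem:bounded}, since each element of $\Gamma_A$ has numerator Newton polytope $\{\mathbf{b}\} \subseteq kP = \mathrm{Newt}(p^k)$, hence is bounded. For the reverse inclusion, I would take $\chi \in \Gamma$ and express it as a Laurent monomial $\chi = y^{\alpha} \prod_j p_j(y)^{\beta_j}$ in the generators of $\lat$. Setting $c_j := \max(0,-\beta_j)$ writes $\chi = a(y)/\prod_j p_j(y)^{c_j}$ with $a(y) := y^{\alpha} \prod_{\beta_j > 0} p_j^{\beta_j}$ a genuine polynomial. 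For any $N \geq \max_j c_j$, multiplying numerator and denominator by $\prod_j p_j^{\,N-c_j}$ gives $\chi = q(y)/p(y)^N$ with $q(y) = a(y)\prod_j p_j^{\,N-c_j}$. Boundedness of $\chi$ supplies $\mathrm{Newt}(a) \subseteq \sum_j c_j P_j$, and summing Minkowski summands produces
\[
\mathrm{Newt}(q) \;\subseteq\; \sum_j c_j P_j + \sum_j (N-c_j) P_j \;=\; N\sum_j P_j \;=\; NP.
\]
Expanding $q(y) = \sum_{\mathbf{b} \in NP \cap \Z^d} \lambda_{\mathbf{b}}\, y^{\mathbf{b}}$ therefore exhibits $\chi$ as a $\C$-linear combination of elements of $\Gamma_A$, so $\Gamma \subseteq \C[\Gamma_A]$. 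Hence $\C[\Gamma] = \C[\Gamma_A] = \C[X_A \setminus H]$, giving $\tU = \Spec \C[\Gamma] \cong X_A \setminus H$.

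Finally, for the stratification I would invoke the standard torus-orbit decomposition of the projective toric variety $X_A = \bigsqcup_{F \preceq P} O_F$, with orbits in bijection with the faces $F$ of $P$. Because $p(y)$ has strictly positive coefficients, the linear form defining $H$ restricts on the toric substratum $\overline{O_F}$ to the nonzero section $\sum_{\a_i \in F} c_i y^{\a_i}$, so $H \cap \overline{O_F}$ is a proper hypersurface. Removing $H$ therefore preserves the orbit decomposition, yielding
\[
\tU \;=\; X_A \setminus H \;=\; \bigsqcup_{F \preceq P} \bigl( O_F \setminus H \bigr),
\]
indexed by faces $F$ of $P$, with the open stratum $F=P$ recovering $\oU = T_y \setminus H$. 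The only non-routine ingredient is the Minkowski-sum manipulation in the second paragraph; everything else combines \cref{lem:bounded} with standard toric-geometric dictionaries.
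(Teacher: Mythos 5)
The paper offers no proof of \cref{prop:tU} (it is left as an exercise), so I can only assess your argument on its own terms. Its architecture is sensible, but your very first identification conceals the one genuinely delicate point. You assert that $\C[X_A\setminus H]=\C\bigl[\,y^{\a}/p(y)\mid \a\in A\,\bigr]$ coincides with the linear span of $\Gamma_A=\{y^{\mathbf b}/p(y)^k\mid \mathbf b\in kP\cap\Z^d\}$. The degree-$k$ part of the ring on the left is spanned by $y^{\mathbf b}/p^k$ with $\mathbf b$ in the $k$-fold sumset $A+\cdots+A$, which is in general a \emph{proper} subset of $kP\cap\Z^d$: the asserted equality is precisely the integer decomposition (projective normality) property of $P$ with respect to its lattice points, and it fails for general lattice polytopes. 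Concretely, for the Reeve-type example $p=1+y_1+y_2+y_1y_2y_3^2$, the character $y_1y_2y_3/p^2$ lies in $\Gamma$ (its Newton datum sits inside $2P$) but not in $\C[\,y^{\a}/p\mid\a\in A\,]$, since every element of the $N$-fold sumset of $A$ has third coordinate even. What your second paragraph does correctly establish is $\C[\Gamma]=\operatorname{span}_\C\Gamma_A$; hence you have identified $\tU$ with the complement of $H$ in the \emph{normal} projective toric variety of $P$ (the normalization of $X_A$), not with $X_A\setminus H$ itself. Since $X_A\setminus H$ contains every torus-fixed point (vertex coefficients of $p$ are nonzero), a non-normal locus of $X_A$ survives in $X_A\setminus H$, so this is not a removable technicality. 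To recover the statement as written you must either verify the IDP condition in the case at hand, or invoke the paper's license to replace $p$ by a power $p^N$ (which changes neither $\oU$, $\lat$, nor $\Gamma$) together with the theorem of Ewald--Wessels/Bruns--Gubeladze--Trung that $NP$ has IDP for $N\ge d-1$; your write-up should say this explicitly.

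Two smaller omissions. In the reverse inclusion you call $a(y)=y^{\alpha}\prod_{\beta_j>0}p_j^{\beta_j}$ a genuine polynomial, which presumes $\alpha\ge 0$; this needs a line of justification --- since each $p_j$ has constant term $1$, the componentwise-minimal exponents of the reduced numerator and denominator are $\alpha_+$ and $\alpha_-$, and the Newton-polytope containment of \cref{lem:bounded} forces $\alpha_+\ge\alpha_-$, hence $\alpha_-=0$. Your manipulation $q=a\prod_j p_j^{N-c_j}$ also tacitly assumes $p$ squarefree (harmless, but worth a word when $p$ has repeated factors). Finally, in the stratification paragraph, nonemptiness of $O_F\setminus H$ follows because the vertices of a face $F$ are vertices of $P$ and therefore carry nonzero coefficients of $p$, not because all coefficients are positive --- lattice points of $P$ need not occur in $p$ at all; with that adjustment the orbit-decomposition argument is fine.
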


In the case that $U = \M_{0,n}$, we have $\tU = \tM_{0,n}$.  In this case the polytope $P$ is an associahedron.

\subsection{CEGM amplitudes and Grassmannian string integrals}\label{sec:Xkn}
An important generalization of $\M_{0,n}$ is the configuration space $X(k,n)$ of $n$ points $\sigma_1,\ldots,\sigma_n \in \P^{k-1}$ in \emph{general linear position}, that is, no $k$ of the points belong to the same hyperplane in $\P^{k-1}$.  Let $\Gr(k,n)^\circ \subset \Gr(k,n)$ be the subset of the Grassmannian where all Pl\"ucker coordinates are non-vanishing.  The space $X(k,n)$ is isomorphic to the torus quotient $\Gr(k,n)^\circ/T'$, and is a very affine variety.  

The space $X(k,n)$ has a positive part $X(k,n)_{>0}$, called \emph{positive configuration space} \cite{ALS,ALSnon}, the image of the component $\Gr(k,n)_{>0} \subset \Gr(k,n)(\R)$ where all Pl\"ucker coordinates are positive.  Positive configuration space can be positively parametrized \cite{PosTP,SW,ALS} with coordinates $y_1,y_2,\ldots,y_d$, where $d = \dim X(k,n) = k(n-k) - (n-1)$.  For example, the following coordinates $y_1,y_2,y_3,y_4$ positively parametrize the four-dimensional space $X(3,6)$:
$$
\begin{bmatrix}%[cccccc]
 0 & 0 & -1 & -1 & -1 & -1 \\
 0 & 1 & 0 & -1 & -1-y_1 & -1-y_1-y_1 y_3 \\
 1 & 0 & 0 & 1 & 1+y_1+y_1 y_2 & 1+y_1+y_1y_2+y_1y_2 y_3 +y_1y_3 +y_1y_2 y_3 y_4
\end{bmatrix}.
$$
The Pl\"ucker coordinates $\Delta_I = p_I(y)$ are nonnegative polynomials in $y_1,\ldots,y_d$.  We let $p_1(y),p_2(y), \ldots$ be the distinct irreducible factors of these polynomials.  This puts $X(k,n)$ in the setting of \cref{sec:toricstringy}.  The corresponding partial compactification $\tU$ from \cref{prop:tU} is studied in \cite{ALS}.

Cachazo-Early-Guevara-Mizera \cite{CEGM} generalized the scattering equations to $X(k,n)$ and defined corresponding scalar amplitudes, called \emph{CEGM amplitudes} or \emph{generalized biadjoint scalar amplitudes}; see \cite{CE,CEZ} for some recent work in this direction.  These amplitudes are the field theory limits of the \emph{Grassmannian string integrals} \cite{AHLstringy} which have the form
$$
I(\s) =(\alpha')^d \int_{X(k,n)_{>0}} \prod_{I \in \binom{[n]}{k}} p^{\alpha' s_I}_I  \;\Omega(X(k,n)_{>0}) = \int_{\R_{>0}^d} \prod_{I \in \binom{[n]}{k}} p^{\alpha' s_I}_I(y)  \; \frac{dy_1}{y_1} \wedge \cdots \wedge  \frac{dy_d}{y_d}
$$
where the $s_I$ satisfy $\sum_{a_2,\ldots,a_k} s_{a_1,a_2,\ldots,a_k} = 0$ for any $a_1$, the analogue of momentum conservation, which also guarantees the integrand is torus-invariant and descends to $X(k,n)$.

\subsection{Exercises and Problems}
\begin{exercise} \
\begin{enumerate}[label=(\alph*)]
\item
Verify that the integral \eqref{eq:Beta} converges when $\Re(s) >0$ and $\Re(t) > 0$.
\item
Work out the details in the proof of \cref{thm:stringy}.  To do so, decompose the integration domain $\Lambda^\vee_{y,\R} \cong \R^d$ into the maximal cones of the domains of linearity of $\trop(\phi)$ and analyze the convergence on each cone separately.
\end{enumerate}
\end{exercise}

\begin{exercise}\label{ex:posparam}
Show that the map $\pi: \lat^\vee_\R \to \lat^\vee_{y,\R}$ in the proof of \cref{thm:limitstring} sends maximal cones of $\Trop_{\geq 0} U$ to maximal common domains of linearity of $\Trop(p_{ij})(Y)$.  Then check that the contributions of these cones to $A_n^{\phi^3}$ and to $\lim_{\alpha' \to 0} I_n$ match.
\end{exercise}

\begin{exercise}
Prove \cref{lem:bounded}.
\end{exercise}

\begin{exercise}
Prove \cref{prop:tU}.  How is the stratification of \cref{prop:tU} related to the vanishing and non-vanishing of the generators of $\Gamma$?
\end{exercise}

\begin{exercise}
Recall the positive parametrization of $\M_{0,5}$ by parameters $y_1,y_2$.  The five variables $u_{13},u_{14},u_{24},u_{25},u_{35}$ are subtraction-free rational functions in $y_1,y_2$.  
\begin{enumerate}[label=(\alph*)]
\item Consider the (positive) tropicalization $\trop(\phi_X)$ as a piecewise-linear function on $\Lambda^\vee_{y,\R}$.  Compute the domains of linearity of this piecewise-linear function.
\item The domains of linearity of $\trop(\phi_X)$ give the structure of a complete fan in $\Lambda^\vee_{y,\R}$.  This complete fan has five one-dimensional cones.  Compute the minimal lattice generators $r_1,r_2,\ldots,r_5 \in \Lambda^\vee_y$ of these cones.
\item Verify that the generators $r_i$ can be labeled as $r_{13},r_{14},r_{24},r_{25},r_{35}$ so that
$$
\trop(u_{ij})(r_{ab}) = \delta_{(ij),(ab)}.
$$
\end{enumerate}

\end{exercise}

\subsubsection{Simplicial $\Gamma$ and $u$-equations}
Consider the setting of \cref{sec:ucoord}.  The nicest situation is when $\Gamma$ is generated by $u_1,\ldots,u_d$ and these generators also form a basis for the character lattice $\lat$.  This is the case for $\M_{0,n}$, and the generators are exactly the dihedral coordinates $u_{ij}$.  We say in this case that $\Gamma$ is \emph{simplicial}.

\begin{problem}
Find examples of simplicial $\Gamma$.
\end{problem}
See \cite[Section 9]{AHLstringy} for related discussion.  Even when $\Gamma$ is simplicial, and we have natural coordinates $u_1,\ldots,u_d$ on $\lat$, it is not clear when we obtain a binary geometry in the sense of \cref{sec:binary}.  Note that the $u$-equation $R_i$ for $u_i$ exists exactly because the function $1-u_i$ is a monomial in $u_1,\ldots,u_d$.

\begin{problem}
Let $u$ be a generator of $\Gamma$.  When do we have $1-u \in \lat$, and when do we have $1-u \in \Gamma$?  More generally, for which pairs $(c, u^X) \in \C^\times \times \lat$ do we have $c - u^X \in \lat$?
\end{problem}

\subsubsection{Pell's space}\label{sec:Pells}
This problem comes from \cite[Section 4]{HLRZ}.  Consider the polynomials $p_i(y) = (1+ y_i)$ for $i= 1,2\ldots,d$ and $p_{d+i}(y) = (1+y_i + y_iy_{i+1})$ for $i=1,2,\ldots,d-1$.  The following is my interpretation of statements in \cite{HLRZ}, which we may consider conjectures.  It extends \cref{prob:Pell2} which corresponds to the case $d = 3$.

\begin{problem}\footnote{This problem has been addressed by Bossinger, Telek, and Tillmann-Morris \cite{BTT}.}\
\begin{enumerate}[label=(\alph*)]
\item
Show that the Newton polytope $P$ of $p(y) = \prod_i p_i(y)$ is simple, has $3d-1$ facets, and $P_d$ vertices, where $P_d$ is Pell's number recursively defined by $P_1 = 1, P_2 = 2, P_d = 2P_{d-1} + P_{d-2}$.
\item
We have $\dim \lat = (2d-1) + d$.  Show that the minimal generators of $\Gamma$ are a basis of $\lat$.   Call theses generators $u_1,\ldots,u_{3d-1}$.
\item
Show that $\tU$, ``Pell's space", is a binary geometry for the simplicial complex $\Delta$, defined to be the face complex of $P$.  Find and prove the $u$-equations in terms of $u_1,\ldots,u_{3d-1}$.
\item 
Study the geometry of $\tU$.  Is there a moduli interpretation of this space?
\end{enumerate}
\end{problem}

\section{Very affine amplitudes?}\label{sec:veryaffine}
\def\RR{{\mathcal R}}
\def\trop{{\Trop}}
\def\CC{{\mathcal C}}
\def\TT{{\mathcal T}}
\def\oX{{\mathring{X}}}
\def\ray{{\rm rays}}

%\subsection{Very affine varieties}
\begin{definition}
A \emph{very affine variety} $\oU$ is an irreducible closed subvariety of a torus.  
\end{definition}

Examples of very affine varieties include: $\M_{0,n}$, complements of essential hyperplane arrangements, configuration spaces of points in $\P^k$, and so on.  In this section, we speculate on amplitudes for very affine varieties.  In \cite{Lamfuture}, we study these amplitudes in more detail in the situation of hyperplane arrangement complements.

\subsection{Intrinsic torus}
\label{sec:intrinsic}
The group $\lat = \lat(U) := \C[\oU]^\times/\C^\times$ of units of the coordinate ring of $\oU$, modulo scalars, is a free abelian group of finite rank by a theorem of Rosenlicht \cite{Ros} and Samuel \cite{Sam}.  Let $T = \Hom(\lat, \C^\times)$ be the torus with character group $\lat \cong \Z^n$, and we denote by $\lat^\vee$ the dual group of cocharacters of $T$.  There is an inclusion 
$$
\iota: \oU \hookrightarrow T, \qquad p \mapsto (f \mapsto f(p)),
$$
and any closed embedding of $\oU$ into a torus $T'$ is a composition of $\iota$ with a homomorphism $T \to T'$.

\subsection{Scattering potential}\label{sec:verypot}
Let $\oU \subset T \cong (\C^\times)^n$ be a $d$-dimensional very affine variety, and let $u_1,u_2,\ldots,u_n$ be the coordinates of $T$.  Then $\oU$ has a (multi-valued) potential function 
$$
\phi_X = \prod_{i=1}^n u_i^{X_i}
$$
for $X = (X_1,\ldots,X_n) \in \lat_\C \cong \C^n$.  When $T$ is the intrinsic torus, we call $\phi_X$ the intrinsic potential on $\oU$.
%
%While this function is multi-valued, its critical point set, the solutions to the equation
%$$
%d \log \phi_X = \sum_{i=1}^n X_i \,\dlog\, u_i =  \sum_{i=1}^n  \frac{X_i}{u_i} du_i = 0
%$$
%is a well-defined subvariety of $\oU$.  In the case that $X_i \in \Z$, we note that $\phi_X$ is simply the restriction of a character of $T$ to the subvariety $\oU$.  

%
%Pick a basis $u_1,\ldots, u_n$ of $L$, which we view as characters of $T$.  
%
%
%
%\begin{definition}
%For $X=(X_1,\ldots,X_n) \in \C^n$, the (scattering) potential of $\oU$ is the multi-valued function
%$$
%\phi_X: \oU \to \C, \qquad \phi_X = u_1^{X_1} \cdots u_n^{X_n}.
%$$
%The log potential is the (multi-valued) function $\log \phi_X: \oU \to \C$.
%\end{definition}

\begin{definition}
The \emph{scattering equations} of $\oU$ are the critical point equations of the log potential on $\oU$:
$$
\dlog \phi_X = \sum_{i=1}^n X_i \,\dlog\, u_i =  \sum_{i=1}^n  \frac{X_i}{u_i} du_i = 0.
$$
While $\phi_X$ is in general multi-valued, the equations $\dlog \phi_X = 0$ give a well-defined subvariety of $\oU$.
\end{definition}

The following result of Huh generalizes earlier work of Orlik and Terao \cite{OT} in the case of hyperplane arrangement complements.

\begin{theorem}[\cite{Huh}]\label{thm:Huh}
Suppose that $\oU$ is smooth.  For generic $X$, the critical point set consists of $|\chi(\oU)|$ reduced points, where $\chi(\oU)$ denotes the Euler characteristic.
\end{theorem}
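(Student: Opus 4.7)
The plan is to interpret the critical locus of $\log \phi_X$ as the zero scheme of a section of the logarithmic cotangent sheaf on a smooth compactification of $\oU$, and then count these zeros via Chern classes, identifying the answer with $|\chi(\oU)|$ through a logarithmic Gauss--Bonnet formula. This approach follows the general philosophy relating master functions and characteristic classes.

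First, I would invoke Hironaka to choose a smooth projective compactification $\iota: \oU \hookrightarrow X$ such that $D := X \setminus \oU$ is a simple normal crossings divisor with components $D_1, \ldots, D_m$. Each intrinsic character $u_i \in \lat$ extends to a nonzero rational function on $X$ whose divisor $\sum_k \nu_{i,k} D_k$ is supported on $D$. Consequently each $d\log u_i$ extends to a global section of $\Omega^1_X(\log D)$, and for $X = (X_1,\ldots,X_n) \in \lat_\C$ the logarithmic derivative $\omega_X := \sum_i X_i \, d\log u_i = \dlog \phi_X$ defines a global section of $\Omega^1_X(\log D)$. This assembles into an $\mathcal{O}_X$-linear map $\alpha: \lat_\C \otimes \mathcal{O}_X \to \Omega^1_X(\log D)$.

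Second, I would check that the zero locus of $\omega_X$ on $\oU$ is precisely the critical set of $\log \phi_X$, and that for generic $X$ this zero locus is reduced. Over $\oU$, the sheaf $\Omega^1_X(\log D)$ restricts to $\Omega^1_{\oU}$, and surjectivity of $\alpha|_{\oU}: \lat_\C \otimes \mathcal{O}_{\oU} \twoheadrightarrow \Omega^1_{\oU}$ follows from the fact that the intrinsic map $\oU \hookrightarrow T$ is a closed embedding, so the differentials $du_i$ span $T^*_p\oU$ at every smooth point. Standard Bertini-type arguments then give reducedness and transversality for generic $X$.

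Third, I would show that for generic $X$ no zero of $\omega_X$ lies on the boundary $D$. Along a stratum $D_I = \bigcap_{k \in I} D_k$, the residue of $\omega_X$ along each $D_k$ ($k \in I$) is the linear form $\mathrm{res}_{D_k}(\omega_X) = \sum_i \nu_{i,k} X_i$ on $\lat_\C$. Since the $u_i$ generate the character lattice $\lat$ of the intrinsic torus, the vanishing-order vectors $(\nu_{i,k})_i \in \lat^\vee$ are nonzero for each $k$, so these residues cut out a proper linear subspace of $\lat_\C$. A dimension count over all strata shows that generic $X$ avoids the union of these bad loci, forcing all zeros of $\omega_X$ to lie in $\oU$.

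Finally, I would count: for generic $X$ the transverse zero scheme of a section of a rank-$d$ vector bundle represents the top Chern class, so the number of critical points equals $\int_X c_d(\Omega^1_X(\log D))$. The logarithmic Gauss--Bonnet theorem (provable by induction on strata using $c(\Omega^1_X(\log D)) = c(\Omega^1_X)\prod_k (1+[D_k])^{-1}$ expressed via the excision sequence $\chi(X) = \chi(\oU) + \sum \chi(D_k^\circ)$, or directly from Deligne's mixed Hodge theory) gives $\int_X c_d(\Omega^1_X(\log D)) = (-1)^d \chi(\oU)$, yielding $|\chi(\oU)|$ critical points as claimed.

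The principal obstacle is the boundary non-escape statement in Step~3: controlling the behavior of $\omega_X$ along every stratum $D_I$ uniformly. The computation of residues is elementary, but ensuring one can genuinely \emph{guarantee} genericity across all strata at once relies on the fact that the intrinsic embedding $\oU \hookrightarrow T$ produces nontrivial residue data at every boundary divisor; without the intrinsic (rather than an arbitrary) torus embedding, residues could degenerate and critical points could escape to infinity, breaking the count.
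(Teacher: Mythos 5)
The paper does not actually prove this statement: it is quoted verbatim from Huh, so there is no internal proof to compare against. Your outline is essentially the known argument behind the cited result (and behind the Orlik--Terao case it generalizes): extend $\dlog \phi_X$ to a global section of $\Omega^1_X(\log D)$ on an SNC compactification, show that for generic $X$ its zeros stay off the boundary, get reducedness from generic smoothness of the incidence variety $\{(p,X) : \dlog\phi_X(p)=0\}$ (a rank-$(n-d)$ subbundle of $U\times\lat_\C$, hence smooth), and then identify the count with $\int_X c_d(\Omega^1_X(\log D)) = (-1)^d\chi(U)$ by logarithmic Gauss--Bonnet. All of these steps are sound, including the degenerate possibility that the critical locus is empty for generic $X$, in which case the same identity forces $\chi(U)=0$.

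The one place where your written justification is not yet a proof is precisely the step you flag as the principal obstacle: the claim that for every boundary divisor $D_k$ the vanishing-order vector $(\nu_{i,k})_i$ is nonzero. This does not follow from ``the $u_i$ generate the character lattice''; what it really uses is that $U$ is \emph{closed} in its intrinsic torus $T$ (this is where ``very affine'' enters, and why the argument fails for a general smooth affine $U$). The short argument: if $\nu_{i,k}=0$ for all $i$, then every $u_i$ is a unit at the generic point $\xi$ of $D_k$, so the embedding $U\hookrightarrow T$ extends to a morphism $\varphi\colon V\to T$ on a neighbourhood $V$ of $\xi$ in $X$; since $\varphi(V\cap U)\subset U$ and $U$ is closed in $T$, we get $\varphi(V)\subset U$, and then $\varphi$ and the inclusion $V\subset X$ agree on the dense open $V\cap U$, hence on all of $V$ by separatedness, forcing $\xi=\varphi(\xi)\in U$, contradicting $\xi\in X\setminus U$. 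With this in hand, no stratum-by-stratum dimension count is needed: there are finitely many boundary divisors, each contributing one nonzero linear form $\lambda_k(X)=\sum_i \nu_{i,k}X_i$, and for $X$ off the finite union of hyperplanes $\{\lambda_k=0\}$ the $\dlog z_k/z_k$-coefficient of the section equals $\lambda_k\neq 0$ along $D_k$, so no zero of $\dlog\phi_X$ lies anywhere on $D$. Once that lemma is inserted, your proof is complete and is, in substance, the same route as the cited one.
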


\subsection{Very affine positive geometries}
Let $U$ be a very affine variety.  We consider positive geometries inside compactifications $X$ of $U$.   For example, one can take $X$ to be a tropical compactification in the sense of Tevelev \cite{Tev}.  %A very affine positive geometry for $U$ is a positive geometry 

%Let $(X,X_{\geq 0})$ be a positive geometry.  Let $\partial X = \overline{X_{\geq 0} \setminus X_{>0}}$ be the Zariski-closure of the boundary of $X_{\geq 0}$.  The \emph{open stratum} of $(X,X_{\geq 0})$ is the complement $\oX := X \setminus \partial X$.  The canonical form $\Omega(X_{\geq 0})$ is regular on the open stratum $\oX$.

\begin{definition}
A very affine positive geometry is a triple $(U,X,X_{\geq 0})$ where $(X,X_{\geq 0})$ is a positive geometry, $X$ is a compactification of $U$, and $X_{\geq 0}$ is the analytic closure in $X$ of a connected component of $U(\R)$.
\end{definition}

For a very affine positive geometry, there is a natural definition of string amplitude.  Let $\phi_Z$ denote the intrinsic potential of $\oU$.

\begin{definition}\label{def:vastring}
The string amplitude of a very affine positive geometry $(U,X,X_{\geq 0})$ is the function on $\Lambda_\C$ defined by analytic continuation of the integral function
$$
I(Z) := (\alpha')^d \int_{X_{>0}} \phi_{\alpha' Z} \; \Omega(X_{\geq 0}),
$$
where $Z \in \Lambda_\C$.
\end{definition}

\begin{definition}\label{def:vascalar}
The scalar amplitude of a very affine positive geometry $(U,X,X_{\geq 0})$ is the rational function on $\Lambda_\C$ defined by 
$$
A(Z) := \int \delta^d(\phi_{Z}; \Omega(X_{\geq 0})) \; \Omega(X_{\geq 0}),
$$
where $Z \in \Lambda_\C$, and the delta function $\delta^d(\phi_{Z}, \Omega(X_{\geq 0}))$ is defined in \cref{sec:delta}.
\end{definition}

We expect that the scalar amplitude should be the field-theory limit of the string amplitude, that is, $A(Z) = \lim_{\alpha' \to 0} I(Z)$.  More generally, if we are given two very affine positive geometries $(U,X, X_{\geq 0})$ and $(U,X, Y_{\geq 0})$ with the same very affine variety $U$, we can define the partial string amplitude
$$
I_{X_{>0},Y_{>0}}(Z) := (\alpha')^d \int_{Y_{>0}} \phi_{\alpha' Z} \; \Omega(X_{\geq 0}), \qquad I_{Y_{>0},X_{>0}}(Z) := (\alpha')^d \int_{X_{>0}} \phi_{\alpha' Z} \; \Omega(Y_{\geq 0})
$$
and the partial scalar amplitude (cf. \cref{def:ampl})
$$
A_{X_{>0},Y_{>0}}(Z) = A_{Y_{>0},X_{>0}}(Z) :=A(\Omega(X_{\geq 0})|\Omega(Y_{\geq 0}))=  \int \delta^d(\phi_{Z}; \Omega(Y_{\geq 0})) \; \Omega(X_{\geq 0}). %=  \int \delta^d(\phi_{Z}; \Omega(X_{\geq 0})) \; \Omega(Y_{\geq 0}).
$$

We may also define a scattering form using the scattering correspondence:
\[ \begin{tikzcd}
&\I \arrow{rd}{q} \arrow[swap]{ld}{p} & \\%
U && \lat_\C
\end{tikzcd}
\]

\begin{definition}\label{def:genscatform}
For a very affine positive geometry $(U,X,X_{\geq 0})$, the scattering form $\Upsilon(X_{\geq 0})$ on $\lat_\C$ is given by
$$
\Upsilon:= q_* p^* \Omega(X_{\geq 0}).
$$
\end{definition}

\subsection{Positive parametrizations of very affine varieties}
In some cases, we can positively parametrize the very affine $U$, putting us in the situation of \cref{sec:string}, and \cref{def:vastring} and \cref{def:vascalar} will hold.

Let $\oU \subset T$ be a $d$-dimensional very affine variety in the intrinsic torus $T$, and let $u_i$ be the coordinates of the $T$.  Let $\lat= \{u_1^{a_1} \cdots u_n^{a_n} \mid (a_1,\ldots,a_n) \in \Z^n \}$ be the character lattice.  Suppose that we have $y_1,\ldots,y_d \in \lat$ such that $\C(y_1,\ldots,y_d) = \C(\oU)$.  In particular, each $u_i \in \C(y)$ is a rational function $u_i(y) \in \C(y)$ in $y_1,\ldots,y_d$.    Let $p_1(y),p_2(y), \ldots,p_r(y) \in \C[y]$ be the list of all distinct irreducible factors (assumed to have constant term 1) appearing in the numerator or denominator of $u_i(y)$, excluding $y_1,\ldots,y_d$ themselves.   

\begin{definition}
In this situation, suppose that $p_i(y) \in \lat$ for $i=1,2,\ldots,r$.  Then we say that $y_1,\ldots,y_d$ give a parametrization of $\oU$.  We call this a \emph{positive parametrization} if the coefficients of $p_i(y)$ are all positive.
\end{definition}

\begin{lemm}
Suppose $y_1,\ldots,y_d \in \lat$ give a parametrization of $\oU$.  Then there is an invertible monomial transformation between $\{y_1,y_2,\ldots,y_d, p_1(y),\ldots,p_r(y)\}$ and $\{u_1,\ldots,u_n\}$, and $\{y_1,y_2,\ldots,y_d, p_1(y),\ldots,p_r(y)\}$ also form a basis of $\lat$.
\end{lemm}
\begin{proof}
There are no monomial relations between $y_1,\ldots,y_d, p_1(y),\ldots, p_r(y)$, since we have chosen all the polynomials to be irreducible with constant term 1.  It follows that they form a basis of $\lat$.
\end{proof}

Thus in the case of a (positive) parametrization, we are in the situation \cref{sec:toricstringy}.

%Note that if we have a parametrization of $\oU$ then in particular $\oU$ must be rational and smooth.

\subsection{Tropicalizations of very affine varieties}
%Henceforth, we assume that $\oU$ is defined over $\R$.  
Now suppose that $\oU$ is a very affine variety, but we do not have (or do not know) any natural top-form $\Omega$ on $\oU$ (arising from a positive geometry), nor do we have a positive parametrization of $\oU$.  In this case, we can still try to define a scalar amplitude by viewing the formula \cref{thm:phi3} as a Laplace transform of $\trop_{\geq 0} U$, cf. \cref{prop:cone}.  We begin by defining the tropicalization of a very affine variety, and refer the reader to \cite{MS,Tev} for more on tropicalization. 

Let $\CC = \bigcup_{n \geq 1} \C((t^{1/n}))$ denote the field of Puiseux series, equipped with a valuation 
$$
\val: \CC \to \R \cup\{\infty\}, \qquad \val(f(t)) = \begin{cases} \text{minimal degree appearing in }f(t) & \mbox{if $f \neq 0$} \\
\infty & \mbox{if $f = 0$.}
\end{cases}
$$

\begin{definition}
The tropicalization $\trop(\oU)$ is the subset of $\lat^\vee_\R:= \lat^\vee \otimes_\Z \R \cong \R^n$ obtained by taking the closure of the valuations of all $\CC$ points of $\oU$:
$$
\trop(\oU):= \overline{\{ \val(p) \mid p \in \oU(\CC)\}} \subset \lat^\vee_\R.
$$
\end{definition}

The tropicalization $\trop(\oU)$ can be given the structure of a polyhedral fan, pure of dimension $d = \dim(\oU)$.  There are a number of different definitions of the fan structure on $\trop(\oU)$.  We assume that one such fan structure has been fixed.

\begin{example}\label{ex:tropM04}
Suppose that $\oU = \M_{0,4}$.  Then the intrinsic torus has coordinates $(u_{13},u_{24})$, and $\oU$ is given by the single equation $u_{13}+u_{24}=1$.  The tropicalization $\trop(\oU)$ consists of the three 1-dimensional cones (rays): $\R_{\geq 0} \cdot (1,0),  \R_{\geq 0} \cdot (0,1), \R_{\geq 0} \cdot (-1,-1)
$.  The positive tropicalization $\trop_{>0}(\oU)$ consists of the two 1-dimensional cones: $\R_{\geq 0} \cdot (1,0),  \R_{\geq 0} \cdot (0,1)$.
\end{example}
Let $C \subset \lat^\vee_\R$ be a maximal cone of $\trop(\oU)$.  The affine span of $C$ is a $d$-dimensional subspace $W \subset \lat^\vee_\R$.  We define a $d$-form on $W$ that will serve as an integration measure.  Since $W$ is rationally defined, the intersection $\lat^\vee \cap W$ is a sublattice of $\lat^\vee$ that spans $W$.  We define an integration measure $d^dW$ by declaring that a unit cube in $\lat^\vee \cap W$ has unit volume with respect to $d^dW$.  The standard simplex in $\lat^\vee \cap W$ has normalized volume equal to one.

%Let $w_1,w_2,\ldots,w_d$ be functions on $W$ that are dual to a basis of $\lat \cap W$, and define $d^dC:= dw_1 \wedge \cdots \wedge dw_d$.

We propose the following general notion of (partial) amplitude for a very affine variety.  
\begin{definition}
Let $\TT$ denote a subset of maximal cones of $\trop(\oU)$.  Then the $\TT$-partial amplitude is the rational function on $\lat_\R$
\begin{align}\label{eq:ampldef}
\begin{split}
A_{\TT}(X) &:= \sum_{C \in \TT} \int_{C} e^{-X} d^dW = \Vol((\bigcup_\TT C) \cap \{ X \leq 1\}) = \sum_{C \in \TT} \Vol( C \cap \{ X \leq 1\}),
\end{split}
\end{align}
where $X \in \lat_\R$ is viewed as a linear function on $\lat^\vee_\R$.  The second equality is by \cref{prop:cone}.%, with $U_i$ corresponding to $u_i$ under the isomorphism $\lat \cong \C[\oU]^*/\C^\times$.
\end{definition}
%By \cref{prop:cone}, we have
%$$
%\Vol( C \cap \{ X \leq 1\}) =\int_{C} e^{-X} d^dC$$
%for an appropriate measure $d^dC$ on the affine span of $C$.  
We view $A_{\TT}(X)$ as the Laplace transform of $\TT \subset \lat^\vee_\R$.  In the case the cones $C$ is simplicial with appropriately chosen generators $r_1,r_2,\ldots,r_d$ that generate $W$, we have
$$
 \Vol(C \cap \{X \leq 1\}) =\prod_{i=1}^d \frac{1}{(r_i,X)}
$$
is a product of inverses of linear functions $(r_i,X)$ on $\lat^\vee_\R$; see \cref{ex:coneLT}.

\begin{remark}
In the case of the configuration space $X(k,n)$ of \cref{sec:Xkn}, Cachazo-Early-Zhang \cite{CEZ} have studied subfans $\TT$ called \emph{chirotopal tropical Grassmannians}.  See also the recent work of Antolini and Early \cite{AE}.
\end{remark}

\subsection{Positive components}
To proceed to the definition of \emph{planar} amplitudes, we need to specify a set of cones $\TT$.   This roughly amounts to picking a connected component of $\oU(\R)$.  We henceforth assume that $\oU$ is defined over $\R$.  

In the case $\oU = \M_{0,n}$, the following are roughly equivalent:
\begin{enumerate}
\item 
the choice of a connected component of $\M_{0,n}(\R)$;
\item
the choice of a positive parametrization of $\M_{0,n}$;
\item
the choice a canonical form $\Omega_{0,n}$ on $\M_{0,n}$;
\item
the choice of ``planar trees" among all $n$-leaf trees;
\item
the choice of an affine compactification $\M_{0,n} \subset \tM_{0,n}$;
\item
the choice of a subset of ``planar" boundary divisors of $\bM_{0,n}$;
\item
the choice of a subsemigroup $\Gamma \subset \lat$ of bounded characters;
\item
the choice of the Parke-Taylor factor ${\rm PT}(\alpha)$;
\item
the choice of a subfan $\Trop_{\geq 0} \M_{0,n} \subset \Trop \M_{0,n}$.
\end{enumerate}

Let $\kappa: \lat \to \Z/2\Z$ be a group homomorphism.  If a basis $u_1,\ldots,u_n$ of $\lat$ is chosen, we write $\kappa$ as a sequence in $\{+,-\}^n$ with a $+$ indicating $\kappa(u_i) = 0 \in \Z/2\Z$ and a $-$ indicating $\kappa(u_i) = 1 \in \Z/2\Z$.  The torus $T$, as an abstract algebraic variety, has an alternative group structure denoted $T_\kappa$, given by sending the identity $(1,1,\ldots,1)$ to element $((-1)^{\kappa_1},\ldots,(-1)^{\kappa_n})$, where $\kappa_i = \kappa(u_i)$.  Equivalently, $T_\kappa$ has characters given by $(-1)^{\kappa(u)} u$, for $u \in \lat$.  

The set of real points $T(\R)$ of the torus has $2^n$ components.  We let $T_{>0}$ be the connected component containing the identity, and let $T_\kappa(\R) :=(T_\kappa)_{>0}$.  Thus $T(\R) = \bigsqcup_\kappa T_\kappa(\R)$.   

Define a \emph{component} of $\oU(\R)$ to be a non-empty intersection
$$
\oU_\kappa:= \oU \cap T_\kappa(\R)
$$ 
of the very affine variety with $T_\kappa(\R)$.  If non-empty, this is a union of connected components of $\oU(\R)$, and in many interesting cases $\oU_\kappa$ is a single connected component of $\oU(\R)$.  This is the case for $\M_{0,n}$ (\cref{ex:signpatternM0n}).  While $\oU_\kappa$ can consist of multiple connected components, we will use $\kappa \in\pi_0(T(\R))$ as an input to our definition of amplitudes.

\subsection{Positive part of tropicalization}\label{sec:postrop}
%$\RR$ denote the field of real Puiseux series, and let
Given a choice of component $T_\kappa(\R)$, we may define $\trop_{\kappa}U$ in a number of ways.  The first one is the positive tropicalization of \cite{SW}.  Let  $\RR_{>0} \subset \CC$ be the semifield consisting of nonzero Puiseux series such that the coefficient of the lowest degree term is real and positive.   Let $\oU(\CC)$ denote the $\CC$-points of $\oU$.  Let $\oU(\RR_{>0}) \subset \oU(\CC)$ denote the subset of points $p \in \oU(\CC)$ where $u(p) \in \RR_{>0}$ for every character $u \in \lat$ of $T$.

\begin{definition}
The positive tropicalization $\trop_{> 0}(\oU)$ is the subset of $\lat^\vee_\R$ obtained by taking the closure of the valuations of all $\RR_{>0}$ points of $\oU$:
$$
\trop_{>0}(\oU):= \overline{\{ \val(p) \mid p \in \oU(\RR_{>0})\}} \subset  \lat^\vee_\R.
$$
\end{definition}

The positive tropicalization $\trop_{>0}(\oU)$ is a subfan of $\trop(\oU)$, also pure of dimension $d$.  Let $\oU(\RR_{\kappa}) \subset \oU(\CC)$ denote the subset of points $p \in \oU(\CC)$ where $ (-1)^{\kappa(u)}u(p) \in \RR_{>0}$ for every character $u \in \lat$ of $T$.

\begin{definition}
The $\kappa$-tropicalization $\trop_\kappa(\oU)$ is the set
$$
\trop_{\kappa}(\oU):= \overline{\{ \val(p) \mid p \in \oU(\RR_{\kappa}) \text{ for all } u \in L\}} \subset  \lat^\vee_\R.
$$
\end{definition}
We view $\trop_\kappa(\oU)$ as the tropicalization of the component $\oU_\kappa$.

\begin{example}
Continuing \cref{ex:tropM04}, 
\begin{itemize}
\item $\trop_{++}(\oU) = \trop_{>0}(\oU)$  consists of the two 1-dimensional cones $\R_{\geq 0} \cdot (1,0)$ and $\R_{\geq 0} \cdot (0,1)$.
\item $\trop_{+-}(\oU)$ consists of the two 1-dimensional cones $\R_{\geq 0} \cdot (0,1)$ and $\R_{\geq 0} \cdot (-1,-1)$.
\item $\trop_{-+}(\oU)$ consists of the two 1-dimensional cones $\R_{\geq 0} \cdot (1,0)$ and $\R_{\geq 0} \cdot (-1,-1)$.
\item $\trop_{--}(\oU)$ is empty.
\end{itemize}
This agrees with $|\pi_0(\M_{0,4}(\RR))|= (4-1)!/2 = 3$.
\end{example}

\subsection{Bounded invertible functions}
We suggest an alternative to positive tropicalization, using bounded invertible functions, as in \cref{ex:intrinsic}.
\begin{definition}
Let $\oU_\kappa$ be a non-empty component.  Define the submonoid $B_\kappa \subset \lat$ by
$$
B_\kappa = \{u \in \lat \mid u|_{\oU_\kappa} \text{ takes bounded values}\}.
$$
Define the $\kappa$-positive part of $\trop(U)$ by
$$
\trop^\kappa(U) := \{q \in \trop(U) \mid u(q) \geq 0 \text{ for all } u \in B_\kappa\}.
$$
Here, $u \in \lat$ is viewed as a linear function on $\lat^\vee_\R$.
\end{definition}
We expect that under good conditions, $u$ belongs to $B_\kappa$ if and only if $u(\Trop_\kappa(\oU)) \subset \R_{\geq 0}$.

\begin{example}
For $\oU= \M_{0,4}$, 
\begin{itemize}
\item $B_{++}$ is generated by $u_{13},u_{24}$.  
\item $B_{+-}$ is generated by $u_{13}^{-1}, u_{24}u_{13}^{-1}$.
\item $B_{-+}$ is generated by $u_{24}^{-1}, u_{13}u_{24}^{-1}$.
\end{itemize}
\end{example}

Finally, we can take $\TT = \trop_\kappa(\oU)$ or $\TT = \trop^\kappa(\oU)$ in \eqref{eq:ampldef} as the definition of the planar scalar amplitude of $\oU$.  
\newcommand{\etalchar}[1]{$^{#1}$}

\end{document}